\newcommand{\KN}{\mathbin{\bigcirc\mspace{-15mu}\wedge\mspace{3mu}}}
\newcommand{\ip}[2]{\langle #1 , #2 \rangle}
\newcommand{\fop}{\mathcal{F}}
\newcommand{\dop}{\mathcal{D}}
\newcommand{\lop}{\mathcal{L}}
\newcommand{\qop}{\mathcal{Q}}
\newcommand{\rop}{\mathcal{R}}
\newcommand{\twoeta}[3]{\left| #1 \right|_{2, \eta; #2, #3}}
\newcommand{\lie}{\mathcal{L}}
\newcommand{\real}{\mathbb{R}}
\newcommand{\nats}{\mathbb{N}}
\newcommand{\lap}{\Delta}
\newcommand{\oh}{\tfrac{1}{2}}
\newcommand{\of}{\tfrac{1}{4}}
\newcommand{\on}[1]{\tfrac{1}{#1}}
\newcommand{\onf}[1]{\frac{1}{#1}}
\newcommand{\symme}[3]{\left[#3 \right]_{#1 \leftrightarrow #2}}
\newcommand{\rmplus}{\Lambda_{\Rm}}
\newcommand{\dd}[2]{{#2}^{[#1]}}
\newcommand{\barr}[1]{\overline{#1}}
\newcommand{\berr}[1]{\underline{#1}}
\newcommand{\uf}[1]{\mathrm{#1}}
\newcommand{\V}{{\uf{V}}}
\newcommand{\mr}{k}
\newcommand{\ein}{\mu}
\newcommand{\fund}{\kappa}
\newcommand{\hu}{{\hat u}}
\newenvironment{claim}[1]{\par\noindent\underline{Claim:}\space#1}{}
\newenvironment{claimproof}[1]{\par\noindent\underline{Proof of Claim:}\space#1}{\hfill $\blacksquare$}
\newcommand{\Bry}{{Bry}}
\newcommand{\Pert}{{Pert}}
\newcommand{\defeq}{\mathrel{\mathop:}=}
\newcommand{\eqdef}{\mathrel{\mathop=}:}
\newtheorem{lemma}{Lemma}
\newtheorem{theorem}[lemma]{Theorem}
\newtheorem{corollary}[lemma]{Corollary}
\theoremstyle{definition}
\newtheorem{definition}[lemma]{Definition}
\numberwithin{lemma}{section}
\theoremstyle{remark}
\newtheorem*{remark}{Remark}
\DeclareMathOperator{\Rc}{Rc}
\DeclareMathOperator{\Rf}{Rf}
\DeclareMathOperator{\Rm}{Rm}
\DeclareMathOperator{\grad}{grad}
\DeclareMathOperator{\Cov}{Cov}
\DeclareMathOperator{\UT}{UT}
\DeclareMathOperator{\Vol}{Vol}
\begin{document}
\author{Timothy Carson}
\title[Asymptotically cylindrical singularities]{Ricci flow from some spaces with asymptotically cylindrical singularities}
\email{carstimon@gmail.com}
\begin{abstract}
  We prove the existence of Ricci flow starting from a class of metrics with unbounded curvature, which are doubly-warped products over an interval with a spherical factor pinched off at an end.  These provide a forward evolution from some known and conjectured finite-time local singularities of Ricci flow, generalizing previous examples.  The class also includes metrics with non-compact singular ends which become instantaneously compact.  Furthermore, we prove local stability of the forward evolution, which allows us to glue it to other manifolds and create a forward evolution from spaces which are not globally warped products.
\end{abstract}
\maketitle

\section{Introduction}
For any complete Riemannnian manifold $(M, g)$ with bounded curvature, there is a smooth solution to the Ricci flow,
\begin{align}
  \partial_t g(t) = - 2 \Rc[g(t)]
\end{align}
with $g(0) = g$ \cite{Shi}.  The solution exists up to some time $T \in (0, \infty]$.  In this paper, we prove the existence of a forward evolution of Ricci flow from a certain class of Riemannian manifolds with unbounded curvature.  The initial metrics we consider have singular neighborhoods which are asymptotically cylindrical warped products of spheres.

Our primary motivation for considering this problem is the continuation of Ricci flow after singularities.  The forward evolution of a smooth manifold often encounters local singularities in finite time.  In some cases, the local singularities can be understood well enough so that Ricci flow with surgery can be implemented, e.g. \cite{h_posiso}, \cite{Perelman2}, \cite{brendlesurg}.  In the three dimensional case the body of knowledge is by now quite powerful \cite{singular}, \cite{uniquenessBK}.

All of these surgery examples work by proving that every local singularity encountered has a part close to a shrinking cylinder $\real \times S^{n-1}$.  The ideal situation for Ricci flow encountering such a singularity is when the metric is a warped product on $I \times S^{n-1}$ for some interval $I$:
\begin{align}
  g = a(x)^2dx^2 + \phi(x)^2 g_{S^{n-1}}.
\end{align}
(Here $g_{S^{n-1}}$ is the standard metric on the $S^{n-1}$ factor.)  By choosing $\phi$ correctly, the  forward evolution from $g$ encounters a local singularity, named a neckpinch.  This was conjectured in \cite{h_formation} and first shown by Simon \cite{simon_pinch}.  In \cite{AK}, \cite{AKPrecise} Angenent and Knopf expanded on these singularities and gave a precise asymptotic description.  Their description in particular gives a description of the metric at the final time, when the singularity has occurred.  In \cite{ACK}, Angenent, Caputo, and Knopf proved the existence of a forward evolution of Ricci flow from these final-time singular metrics.  

The first main theorem in the present work provides the forward evolution from a family of singular metrics which includes those explored in \cite{ACK}.  It also includes the forward evolution, in the ideal (doubly-warped product) case, from (conjectured) singularities which are modeled on $\real^k \times S^{n-k}$.  Our description of the forward evolution is very precise, and we hope to provide a testbed for a more general theory that can deal with these singularities.  We hope that our generalizations clarify the role played by various pieces.

The general question of which singular spaces have a forward Ricci flow has received attention from many authors.  Particular success has been had with curvature bounds from below \cite{Simon_1}, \cite{Simon_2}, \cite{CRW}, \cite{lowerricci_integralcurvature}, \cite{almostnonneg}.  Another work addressing spaces with specific singularity models is \cite{conicalSing}. For some results with low regularity on the initial metric, see \cite{Simon_0} and \cite{kochlamm}. Furthermore, the Ricci flow of warped products lends itself to comparison to reaction-diffusion equations in Euclidean space, where there are quite general existence and uniqueness theories \cite{continuation}.

\subsection{Model Pinches}
We now give a definition of the singular metrics, which we call model pinches.  Let $q \geq 2$, and let $(S^q, g_{S^q})$ be the round sphere of sectional curvature 1, which satisfies $2\Rc[g_{S^q}] = \ein g_{S^q}$ for $\ein = 2(q-1)$.  Also let $(F, g_F)$ be any Einstein manifold with $2 \Rc[g_F] = \ein_F g_F$.  The metrics will be metrics on $I \times S^q \times F$ of the form
\begin{align}
  g_{mp} = dx^2 + \phi(x)^2 g_{S^q} + \psi(x)^2 g_F.
\end{align}
The main case of interest is $F = S^p$ but $F$ may be zero dimensional (landing us in the singly warped product case) or have negative Ricci curvature.  The function $\phi$ will be increasing, so we can use $u = \phi^2$ as a coordinate and write
\begin{align}\label{mp_form}
  g_{mp} = \frac{du^2}{u V_0(u)} + u g_{S^q} + W_0(u) g_F.
\end{align}
Here $V_0(u) = u^{-1}|du|^2_{g_{mp}} = 4 |d\phi|^2_{g_{mp}}$.  

For the rest of the paper we fix some $\eta \in (0, \oh)$. For any metric $g$, function $f:M \to \real$, and scale function $\rho: M \to \real_+$ we use the notation
$
  \twoeta{f}{\rho}{g} : M \to \real_{\geq 0}
$
to mean the following.  Take any point $p \in M$, scale the metric $g$ to $\hat g = \frac{g}{\rho(p)^2}$, and then take the $C^{2, \eta}$ norm in the ball of radius $1$ around $p$ with respect to $\hat g$.

\begin{definition}\label{definition:model_pinch}
  A metric on $M = (0, \infty) \times S^q \times F$ of the form \eqref{mp_form} is a \emph{model pinch} if
  \begin{enumerate}[label=(MP\arabic{*}), ref=(MP\arabic{*})]
  \item \label{modelpinch_vsmall}As $u \searrow 0$, $V_0(u) \searrow 0$.
  \item \label{w_big} If $\ein_F > 0$, there is a $c > 0$ such that $\frac{W_0(u)}{u} \geq (1 + c) \frac{\ein_F}{\ein} $.
  \item \label{modelpinch_reg}
    For some $C>0$
    \begin{align}
      \frac{\twoeta{V_0}{u/2}{(du)^2}}{V_0}
      +
      \frac{\twoeta{W_0}{u/2}{(du)^2}}{W_0}
      \leq C
    \end{align}
  \item  For any $u_1 > 0$, on the set $\{u > u_1\}$ the curvature of $g_{mp}$ is strictly bounded, $V_0$ and $W_0$ are $C^{\infty}$ and strictly positive, and $V_0$ is bounded.
  \end{enumerate}
\end{definition}
One way to interpret this definition is that as $u \searrow 0$ the metric is asymptotically some sort of cylinder.  At the distance scale given by $\sqrt{u}$, if $W_0(u) \gg u$ the metric is close to the product $(\real, g_{\real})\times (S^q, g_{S^q})\times(\real^p, g_{\real^p})$, and if $W_0(u) \sim au$ it is close to $(\real, g_{\real}) \times (S^q, g_{S^q}) \times (F, ag_F)$.   For some precision, see Corollaries \ref{prish_asymptotic_cylinder} and \ref{prish_curvature_control}, which are stated for the forward evolution but in particular hold for the initial metric.  

The H\"older condition implies the first and second derivatives of $V_0$ satisfy $|u \partial_u V_0| + |u^2\partial_u^2V_0| \leq CV_0$ (for a different $C$) and similarly for $W_0$.  This allows for change by a factor of $(1 + O(\epsilon C))$ in the region where $u$ is $(1 + O(\epsilon))u$.

Our first theorem is the following short-time existence result in the class of warped products.  We identify $(0, \infty) \times S^q$ with $\real^{1 + q} \setminus \{0\}$ and write $M \defeq \left( \real^{1+q}\setminus\{0\} \right) \times F \subset \real^{1+q} \times F \eqdef \bar M$.

\begin{theorem}\label{theorem:model_pinch_flow}
  Let $(M, g_{mp})$ be a model pinch.   For some $T_2 \in (0, \infty]$ there is a Ricci flow $(\bar M, g_{wp}(t))$ for $t \in (0, T_2)$.  As $t \searrow 0$, $g_{wp}(t) \to g_{mp}$ in $C^{\infty}_{loc}(M)$.   There are choices of the parameters of Definitions \ref{productish_barricaded} and \ref{tip_barricaded} such that $g_{wp}$ is controlled in the productish region and in the tip region.
\end{theorem}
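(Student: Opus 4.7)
The plan is to exploit the fact that the doubly-warped product structure is preserved by Ricci flow, reducing the PDE to a quasilinear parabolic system for two scalar unknowns. If $g(t) = a(x,t)^2\,dx^2 + \phi(x,t)^2 g_{S^q} + \psi(x,t)^2 g_F$ is the flow and we take $u = \phi^2$ as the spatial variable, with $V(u,t) = 4|d\phi|_g^2$ and $W(u,t) = \psi^2$ as unknowns, then $(V,W)$ satisfies a parabolic system whose coefficients degenerate only at $u = 0$ (the pinched end) and at the eventual tip where $\phi \to 0$. The hypotheses \ref{modelpinch_vsmall}--\ref{modelpinch_reg} are tailored to this coordinate system: \ref{modelpinch_vsmall} is the boundary condition $V(0,0) = 0$ at the pinched end; \ref{w_big} provides the crucial gap above the ``critical'' ratio $W/u = \ein_F/\ein$; and \ref{modelpinch_reg} is exactly the scale-invariant $C^{2,\eta}$ control on initial data needed for parabolic regularity.

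For the construction I would proceed by approximation. For each large integer $i$, replace $g_{mp}$ on $\{u \le 2/i\}$ by a smooth doubly-warped cap of bounded curvature, arranged so the \ref{modelpinch_reg} estimate holds uniformly in $i$, and invoke Shi's theorem to get a smooth Ricci flow $g^{(i)}(t)$ on $\bar M$ on some interval $[0, T_i)$. The technical heart of the proof is to produce uniform (in $i$) a priori estimates of precisely the form demanded by Definitions \ref{productish_barricaded} and \ref{tip_barricaded}: scale-invariant $C^{2,\eta}$ control in the \productish\ region $\{u \text{ small}\}$ where the metric is close to a shrinking cylinder times a nearly-Einstein factor, and a separate tip analysis in the region where $\phi \to 0$.

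The principal obstacle is the construction of sub- and super-solution pairs for $V$ and for $W/u$ on the \productish\ region. These barriers must start at $V_0$ and $W_0$, preserve the degeneracy $V \searrow 0$ as $u \searrow 0$ at each later time so the asymptotic-cylinder geometry persists, and use the gap constant $c$ of \ref{w_big} in an essential way: when $\ein_F > 0$ the reaction in the $W$-equation drives $W/u$ toward $\ein_F/\ein$, and only a strict gap allows the parabolic maximum principle to close the argument. I expect the right barriers to be small perturbations of explicit ODE-in-$u$ (stationary) profiles, with the H\"older bound \ref{modelpinch_reg} propagated by parabolic Schauder theory in rescaled coordinates so that the pointwise quantity $\twoeta{V}{u/2}{(du)^2}/V$ stays bounded. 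Near the tip I would switch to arc length $s$ from the pole, use an Angenent--Knopf analysis of the $\phi$-equation to obtain smooth closure at the origin of $\bar M$, and then compare $\psi$ and its derivatives against the frozen-coefficient linear heat equation to obtain the scale-invariant tip estimates; hypothesis \ref{modelpinch_vsmall} is precisely what ensures a smooth tip forms at every positive time.

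Once the uniform estimates are in place, Hamilton's compactness produces a $C^\infty_{loc}$ subsequential limit $g_{wp}(t)$ on $\bar M \times (0, \tcross)$, where $\tcross$ is determined by the first time either barrier family must be widened beyond the parameters fixed in the definitions. Convergence $g_{wp}(t) \to g_{mp}$ in $C^\infty_{loc}(M)$ as $t \searrow 0$ follows from bounded-curvature uniqueness of Ricci flow on any compact $K \subset M$, since the approximants $g^{(i)}_{mp}$ agree with $g_{mp}$ on $K$ for $i$ large. The \productish\ and tip ``barricaded'' properties for $g_{wp}$ are then obtained by passing the uniform estimates to the limit and choosing the parameters in Definitions \ref{productish_barricaded} and \ref{tip_barricaded} in terms of the constant $C$ of \ref{modelpinch_reg} and the gap $c$ of \ref{w_big}.
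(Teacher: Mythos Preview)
Your overall strategy---mollify, obtain uniform barrier and regularity estimates, extract a limit---is correct and matches the paper. But two steps in your outline would not close as stated.

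\textbf{The mollification.} You propose a generic bounded-curvature cap on $\{u\le 2/i\}$ and flow from $t=0$. The difficulty is that the tip-region control (Definition~\ref{tip_barricaded}) asks the solution to be close to $V_{tip}=V_{\Bry}(\sigma)+\beta V_{\Pert}(\sigma)$ with $\sigma=u/(t\nu(t))$; this profile only makes sense for $t>0$ and prescribes a Bryant soliton at the specific scale $t\nu(t)$. A generic cap at $t=0$ is not close to this, so you cannot invoke the tip barriers from the start, and you have provided no mechanism (a stability argument for arbitrary caps) to show the cap is absorbed into the Bryant structure uniformly in $i$. The paper sidesteps this entirely: it picks a small positive initial time $T_1^{(m)}$ and \emph{defines} the mollified metric to equal the approximate forward solution at time $T_1^{(m)}$, interpolating $V_{tip}(\cdot,T_1^{(m)})$, $V_{prish}(\cdot,T_1^{(m)})$, and $V_0$ across the regions (Section~\ref{section:mollifying_section}). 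With this choice the mollified metric is initially controlled in both regions by construction (Lemma~\ref{gm_good_init}), and the barrier arguments run from $T_1^{(m)}$ forward.

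\textbf{The productish approximate solutions and barriers.} These are not ``stationary ODE-in-$u$ profiles'' but time-dependent functions obtained via characteristics of the first-order operator $\partial_t-\ein\partial_u$ that remains after dropping all $O(v)$ terms: $V_{prish}(u,t)=\tfrac{u+\ein t}{u}V_0(u+\ein t)$ and $\hat W_{prish}(u,t)=W_0(u+\ein t)$ (Section~\ref{examining_section}, Appendix~\ref{nearly_cnst_pde_sect}). The barriers are $(1\pm DV_{prish})V_{prish}$ and $(1\pm DV_{prish})\hat W_{prish}$. A stationary ansatz would not track the $t$-dependence and the barriers would fail. Relatedly, the paper avoids the arclength coordinate you propose for the tip (it introduces a nonlocal term); it works in the rescaled variable $\sigma$ throughout and identifies the Bryant soliton, with first-order correction $V_{\Pert}$, as the tip model. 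The interaction of the two barrier families at their overlap (``buckling,'' Section~\ref{section:buckling}) is a further nontrivial step that your outline does not address.
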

The last sentence of the theorem gives a good description of the forward evolution near the origin, we will give an overview in Section \ref{section:shape_description}.  Here we just give a rapid tour of some properties that we think are important.    In the forward evolution a small Bryant soliton appears at the origin, and the radius of the $F$ factor is strictly positive for positive time even if it was not for the initial metric $g_{mp}$.  The distance-squared scale of the Bryant soliton is on the order of $t V_0(t)$, and the largest Ricci curvature forward in time, which occurs at the origin, is on the order of $1/(t V_0(t))$.  At the origin, the distance-squared scale of the $F$ factor is $W_0(t) - \ein_F t$.  If $\ein_F \neq 0$ then this is at least order $t$: property \ref{w_big} says $W_0(t) \gtrsim t$ if $\ein_F > 0$, and if $\ein_F < 0$ the term $- \ein_F t$ helps.  This is not the case if $\ein_F = 0$ and $W_0(u) = o(u)$, so the largest Riemannian curvature could be on $F$ (and see cancellation in the Ricci curvature). Finally, we can use the control to provide a more precise rate of convergence of $g_{wp}(t)$ to $g_{mp}$; see Corollary \ref{improved_convergence}.

The next theorem removes the global warped product part of the model pinch assumption.  For this, we need some additional assumptions on the curvature of the factor $F$, which is inevitable since we allow perturbations of the initial metric in any direction.  In particular, we rule out the case $W_0(t) - \ein_F t \lesssim t\nu(t)$.
For the metric $g_F$, let
$
\Lambda_F = \sup_{p \in F}\max_{h \in Sym_2(T_pF), |h|=1}(\Rm)_{abcd}h^{ac}h^{bd}
$.  
For example, if $F$ has dimension $p$ and constant sectional curvature $k$ then $\Lambda_F = k(p-1)$.  In particular, $2\Lambda_{S^q} = \ein = 2(q-1)$.
\begin{definition}\label{reasonable_def}
  A model pinch is \emph{$\Rm$-permissible} if the following is satisfied.
  \begin{enumerate}[label=(RP\arabic{*}), ref=(RP\arabic{*})]
  \item \label{lambda_assump}
    In the case $\Lambda_F > 0$, we additionally require
    $\frac{W_0(u)}{u} \geq \frac{\Lambda_F}{\Lambda_{S^q}}$ 
  \item \label{w_big2}
    In the case $\ein_F = 0$ and $\Lambda_F = 0$ (i.e. $(F, g_F)$ is flat) we additionally require $\frac{W_0(u)}{u V_0(u)} \to \infty$ as $u \searrow 0$.
  \end{enumerate}
\end{definition}
\begin{theorem}\label{theorem:unsymmetrical_flow}
  Let $g_{mp}$ be an $\Rm$-permissible model pinch.  There is an $\epsilon_0$ depending on $g_{mp}$ with the following property.

  Let $(N^n, g)$ be a (possibly non-complete) Riemannian manifold.  Let $U \subset N$ be open, and assume that $(N \setminus U, g)$ is a complete manifold with boundary, satisfying, for some $r_0 > 0$ and all $p \in N\setminus U$, $|\Rm|(p) \leq r_0^{-2}$ and $\Vol(B(p, r_0)) \geq (1-\epsilon_0) \omega_n$.
  
    Suppose that $u_1>0$ and $\Phi: U \to (0, u_1) \times S^q \times F$ is a diffeomorphism such that in $U$,
  \begin{align}
    \twoeta{g - \Phi^* g_{mp}}{r_0 |\Rm_{\Phi^*g_{mp}}|}{\Phi^*g_{mp}}
    \leq
    \epsilon_0 V_0 \circ \Phi.
  \end{align}
  
  Let $\bar N \supset N$ be the differential manifold obtained by replacing $U\sim (L, L') \times S^q \times F$ with $\bar U \sim D^{1+q} \times F$.  For some $T_* > 0$, there is a Ricci flow $g(t)$, for $t \in [0, T_*]$ on $\bar N$ such that $g(t) \to g$ in $C^{\infty}_{loc}(N)$ as $t \searrow 0$.
\end{theorem}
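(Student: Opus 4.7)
The plan is to construct a family of Ricci flows $g_s(t)$ on $\bar N$ indexed by a small time-shift parameter $s \searrow 0$, using $g_{wp}(s)$ to fill in the cap, and then extract a subsequential limit. Fix small $s > 0$ and define an approximating metric $\tilde g_s$ on $\bar N$ as follows: on the filled-in cap $\bar U$, pull back $g_{wp}(s)$ from Theorem \ref{theorem:model_pinch_flow} via (an extension of) $\Phi$; on $\{u > 2u_1\} \cup (N \setminus U)$, use $g$ unchanged; in a transition collar around $\{u_1 < u < 2u_1\}$, interpolate smoothly between the two. By the assumed initial closeness $\twoeta{g - \Phi^*g_{mp}}{r_0 |\Rm_{\Phi^*g_{mp}}|}{\Phi^*g_{mp}} \leq \epsilon_0 V_0$ together with the convergence $g_{wp}(s) \to g_{mp}$ on compact subsets of $M$ (Corollary \ref{improved_convergence}), this interpolation produces a complete metric of bounded curvature whose curvature scale is comparable to $|\Rm_{g_{wp}(s)}|$ in the cap and to $r_0^{-2}$ outside $U$.

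Shi's theorem then gives a complete Ricci flow $g_s(t)$ on $\bar N$ with $g_s(s) = \tilde g_s$, defined on some interval $[s, s + T_s)$. The key claim is that $T_s \geq T_* > 0$ uniformly in $s$, and that $g_s$ converges in $C^\infty_{loc}$ on $\bar N \setminus \{\mathrm{tip}\}$ at each positive time, with the limit extending continuously to $g$ on $N$ at $t = 0$. I would establish this region by region. On the cap, the local stability statement for the model flow that underlies the productish and tip barricaded conclusions of Theorem \ref{theorem:model_pinch_flow} absorbs an initial perturbation of size $\epsilon_0 V_0$ into the same barricades on $[s, s + T_*]$. On $N \setminus U$, the bounded-curvature and volume non-collapsing hypothesis place us in the domain of Perelman's pseudolocality, yielding $|\Rm_{g_s(t)}| \leq C r_0^{-2}$ on a uniform time interval $[s, s + c r_0^2]$. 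The transition collar, where $g_{mp}$ already has strictly bounded curvature, is handled by Shi--Bando--Bernstein interior estimates fed with the bounds from the two sides.

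Compactness then produces a subsequential limit $g(t)$ on $\bar N$ for $t \in (0, T_*]$. The required initial condition $g(t) \to g$ in $C^\infty_{loc}(N)$ comes from two parts: on compact subsets of $N \setminus U$ it is the usual short-time continuity of Shi's flow in bounded-curvature initial data, and on compact subsets of $U$ it follows from the cap-region bound on $g_s(t) - \Phi^* g_{wp}(t)$ of order $\epsilon_0 V_0$ combined with the convergence $g_{wp}(t) \to g_{mp}$ built into Theorem \ref{theorem:model_pinch_flow}.

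The main obstacle is the uniform stability in the cap as $s \searrow 0$: the curvature $|\Rm_{g_{mp}}|$ is blowing up and the allowed perturbation $\epsilon_0 V_0$ is shrinking, so naive energy estimates cannot close. One must work in a scale-invariant weighted norm adapted to the productish and tip barricades, in which the linearized flow operator is coercive uniformly down to the singular end. The $\Rm$-permissibility assumptions \ref{lambda_assump} and \ref{w_big2} are precisely what guarantees this coercivity, by ruling out instabilities driven by the full curvature tensor of $F$ in the case $\Lambda_F > 0$ and by a degenerate balance between the Bryant-soliton scale $t V_0(t)$ and the $F$-scale in the case $\Lambda_F = \ein_F = 0$. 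Once this weighted stability is in hand, the remainder of the argument reduces to standard Arzel\`a--Ascoli compactness combined with the pseudolocality and interior estimates already mentioned.
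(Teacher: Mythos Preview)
Your outline has the right large-scale architecture---mollify, flow, use pseudolocality outside $U$, take a limit---but there is a genuine gap in the cap region.  You propose to run plain Ricci flow from an interpolated metric and then say that ``the local stability statement for the model flow that underlies the productish and tip barricaded conclusions of Theorem \ref{theorem:model_pinch_flow} absorbs an initial perturbation of size $\epsilon_0 V_0$.''  This does not work as written.  The barricades of Theorem \ref{theorem:model_pinch_flow} control the scalar, diffeomorphism-invariant quantities $v(u,t)$ and $w(u,t)$ of a \emph{warped-product} metric; they say nothing about a general symmetric two-tensor perturbation $h$ that breaks the warped-product structure.  Moreover, under plain Ricci flow there is no canonical way to even measure $g_s(t)-g_{wp}(t)$, because of the diffeomorphism freedom.

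What the paper actually does is run \emph{Ricci--DeTurck flow} with time-dependent background $g_{wp}(t)$ (glued to $g$ away from the tip), so that $h=g-g_{bg}$ satisfies a strictly parabolic equation and $|h|$ obeys the scalar differential inequality \eqref{rdt_norm}.  The stability in the cap is then the content of Theorem \ref{local_stability}: in the productish region one builds a supersolution $(1+DV)QV_0(\hat u)$ for $|h|^2$ using the machinery of Appendix \ref{nearly_cnst_pde_sect} together with the bound $\rmplus\le \tfrac12\ein u^{-1}+Cu^{-1}v$ (Lemma \ref{rmplus_prish}, where $\Rm$-permissibility enters), while in the tip region one rescales to the Bryant soliton and invokes Theorem \ref{bry_stabil}, whose proof rests on the Anderson--Chow inequality \eqref{ac_inequality} and a carefully constructed supersolution $F$ (Lemma \ref{bry_rdt_supsoln}).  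These two barriers are then buckled together (Lemma \ref{lemma:gluing_asymmetric}).  Your final paragraph gestures at a ``coercive linearized operator in a weighted norm,'' which is morally right, but the mechanism you are missing is precisely this DeTurck linearization plus the Anderson--Chow-based supersolution on the Bryant soliton; without naming those ingredients the cap estimate cannot close.
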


Immediate extensions of our theorems allow for multiple singular neighborhoods of $N$ each close to some model pinch, or for multiple extra warped factors $g_{F_i}$ each satisfying the requirements in the definition of model pinch.

\subsection{Overview of the proofs}
Both theorems are proven by constructing smooth mollified initial metrics, which agree with the singular initial metrics outside of a small set, and controlling the forward evolution of the smooth mollified metrics.  By sending the size of the mollification to zero, we construct a forward evolution from the singular initial metric.

To prove Theorem \ref{theorem:model_pinch_flow}, we control the relevant functions for the mollified initial metrics in terms of $u$.  The advantage of this is that the control is diffeomorphism-invariant-- for example, the value of $w$ or $v = u^{-1}|\nabla u|^2$ at the point where $u = u_1 \in \real_{> 0}$ is a diffeomorphism-invariant property.  A usual difficulty in controlling solutions to Ricci flow is that the linearization is only weakly parabolic because of the diffeomorphism invariance of Ricci flow, and this gets around that issue.  The most common response is to use Ricci-DeTurck flow, but we were not able to find a sufficiently good background metric to use in our case (and we tried some exotic possibilities).  Another option in our case would be to use an arclength coordinate, but that introduces an annoying nonlocal term.

The forward evolution is split into two regions-- the tip region, where a Bryant soliton forms, and the productish region, which includes the initial value and is where the metric continues to look locally like a product metric on $\real \times S^q \times F$.  In Lemmas \ref{main_prish_estimates} and \ref{main_tip_estimates}, we obtain local control in the productish and tip regions, assuming a priori boundary control.  In Section \ref{section:full_flow} we put this control together.  Section \ref{section:buckling} shows that the boundary control needed at the right of the tip region is ensured by the local estimates in the productish region, and the boundary control needed at the left of the productish region is ensured by the local estimates in the tip region.  We now have to prove the boundary conditions at the right boundary of the productish region, where the metrics are uniformly smooth. This is accomplished in Section \ref{section:ccc}.

\begin{figure}[tp]
  \includegraphics[width=\textwidth]{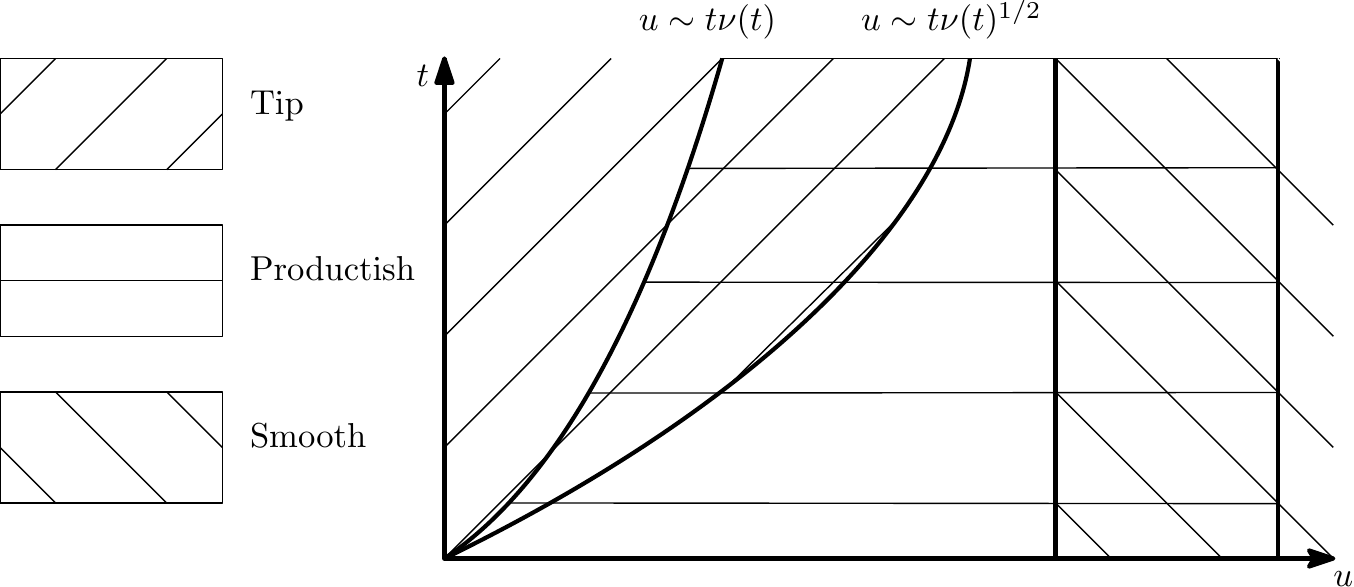}
  \caption{
    Map of the tip, productish, and uniformly smooth regions.  Here $\nu(t) = V_0(\ein t)$.\label{region_map}
    }
\end{figure}

The local estimates in the productish region use generic estimates for the solution to some reaction-diffusion equations in regions where they are nearly constant, which is dealt with in Appendix \ref{nearly_cnst_pde_sect}.  The local estimates in the tip region are more specific.

The proof of Theorem \ref{theorem:unsymmetrical_flow} uses Ricci-DeTurck flow around the already-constructed warped product evolution to control an arbitrary metric.  Theorem \ref{local_stability} is the main point in the proof, this gives us control of the Ricci-DeTurck flow in a neighborhood $U$ of the form $\{u < u_*\}$, assuming a priori boundary control.  Once we have Theorem \ref{local_stability}, we wrap up by controlling the evolution in the boundary region (where everything is uniformly smooth) in Section \ref{asymmetric}.

Again, the local control in Theorem \ref{local_stability} is split into two parts: control in the productish region, and control in the tip region.  As in the warped-product case, in the productish region we control the evolution using the results of Appendix \ref{nearly_cnst_pde_sect}.  On the other hand, in the tip region (where the solution is close to a small perturbation of the Bryant soliton), we use a contradiction-compactness argument to move the situation to the Bryant soliton.  Then, we use a stability result for the Bryant soliton, Theorem \ref{bry_stabil}.  Theorem \ref{bry_stabil} might be compared to results from Section 7 of \cite{uniquenessBK}; see the remark after the statement of the theorem.

\subsection{Infinitely long pinched ends}
In an attempt to simplify the initial exposition we have hidden that the left end of a model pinch, $\{u \leq u_1\}$, may have infinite length.  An arc length coordinate for the interval factor for a metric of the form \eqref{mp_form} is given by $ds =  \frac{1}{\sqrt{u V_0(u)}} du$, so the length is hidden in the integrability of $\frac{1}{\sqrt{uV_0(u)}}$ near $0$.  In the case when the left end of the initial model pinch has infinite length, the left end of the evolution on $\bar M$ is compact for positive time.

In two dimensions, Topping \cite{topping_reversecusp} constructed similar examples of noncompact surfaces which immediately become compact.  These examples actually have initial metrics with bounded curvature, so it is especially interesting when compared with Shi's existence result, which guarantees that the initial metric has a unique forward complete Ricci flow on the same topology.  This means that in two dimensions there is an alternative, perhaps more natural, forward evolution besides the instantaneously compact one.  In more than two dimensions, the analysis is different because the $S^q$ factor in the singly warped product has positive curvature and the initial metric must have unbounded curvature.  We do not expect a natural forward evolution on the same topology in this case.

\subsection{Some related short-time existence results}
Recent work that is close in spirit to ours is \cite{conicalExpanders} and \cite{conicalSing}.  In \cite{conicalExpanders}, Deruelle showed that for any cone with positive curvature,
i.e. a metric $ds^2 + s^2 g_{X}$ where $\Rm[g_{X}] \geq 1$,
there is an expanding Ricci soliton which limits, backwards in time, to the cone.  This can be considered as Ricci flow starting from the singular conical space.  In \cite{conicalSing}, Gianniotis and Schulze allow us to start Ricci flow from any manifold which has local singularities modeled on these cones, by using local stabilitiy similarly to our Theorem \ref{theorem:unsymmetrical_flow}.  Such cones that are especially relevant to us are the singly-warped products $ds^2 + a s^2 g_{S^q}$, for $a \in (0,1)$; these are singly warped products over intervals which are not covered by our theorem.

Alexakis, Chen, and Fournodavlos \cite{singlargerderiv} show the existence of a steady Ricci soliton of the form $ds^2 + \phi(s)^2 g_{S^q}$ with $\phi(s) \sim s^{1/\sqrt{q}}$.  They also examine forward evolutions of metrics close to their steady Ricci soliton.

Bamler, Cabezas-Rivas, and Wilking \cite{almostnonneg} examine the Ricci flow of manifolds with a variety of assumptions that curvature is bounded from below.  In particular, they deal with complete, bounded curvature manifolds $(M, g)$ satisfying
\begin{align}\label{almostnonneg_conditions}
 \Rm \geq -1, \quad \Vol_g (B_g(p, 1)) \geq v_0 \text{ for all } p \in M.
\end{align}
They show that there is a forward evolution for a time which \emph{only} depends on $v_0$ and the dimension.  An application is creating forward evolutions from singular spaces which can be approximated by manifolds with curvature bounded from below.  This gives an alternative approach to \emph{some} of the initial spaces considered by Gianniotis and Schulze in \cite{conicalSing}.

We wish to remark that we cannot apply the results in \cite{almostnonneg} in our case, but we need to use two different reasons.  First note that in the examples with an infinitely long pinched end, the assumption on the volume of balls in \eqref{almostnonneg_conditions} cannot be satisfied by approximating metrics, since the left end has balls of radius one with arbitrarily small volume.  We claim that in the compact case the curvature condition in \eqref{almostnonneg_conditions} is not satisfied.  Consider just the singly-warped metrics of form \eqref{mp_form}, so $g = \frac{du^2}{u V_0(u)} + u g_{S^q}$.  The curvature of such a metric is
\begin{align}
  \Rm =
  L \left( (u g_{S^q}) \KN (u g_{S^q}) \right)
  + K \left( (u g_{S^q}) \KN \frac{du^2}{u V_0(u)} \right)
\end{align}
where $L = u^{-1} ( 1 - \on4 V_0)$ and $K = - \on2 \partial_u V_0$.
The distance between $\{u = 0\}$ and $\{u = u_2\}$ is
$
  \int_{0}^{u_2}\frac{1}{\sqrt{u V_0(u)}} du
  = \int_{0}^{u_2} \frac{1}{u} \sqrt{ \frac{u}{V_0(u)}} du.
$
If $K$ is bounded from below, $\partial_u V_0 \leq C$ and so $V_0 \leq C u$ and this integral diverges.  So in the compact case $K$ goes to $-\infty$ and \eqref{almostnonneg_conditions} is not satisfied.  The other possible conditions of Theorem 2 from \cite{almostnonneg} are also not satisfied: $\Rm$ as an operator on $\bigwedge^2TM$ has the negative eigenvalue $K$ with multiplicity $q \geq 2$ so the curvature is not 2-non-negative, and we can check that the curvature operator is never weakly PIC1, although in the singly-warped case it has positive isotropic curvature.

Note that the model pinches do (in the case when $g_F$ has positive curvature or $W_0(u) \gg u$)  satisfy an almost-nonnegativity condition relevant to singularity analysis of Ricci flow, namely $\Rm \geq - f(|\Rm|)|\Rm|$ for a function $f$ satisfying $f(x) \to 0$ as $x \to \infty$.  This comes up, for example, in 12.1 of \cite{Perelman}.  In this case $f$ is a multiple of $V_0$ and we can use the assumption \ref{modelpinch_reg} to bound $K$, and the assumptions \ref{w_big} and \ref{modelpinch_reg} to bound curvatures involving the other factor.

\subsection{Model Pinches that arise as final-time limits}
Here we list some examples of smooth Ricci flows which have a model pinch has final-time limits.
\subsubsection{Singly-warped product singularities}
In \cite{AKPrecise}, Angenent and Knopf considered neckpinches occuring on singly warped products over an interval.  They proved that the warping function of the final-time limit of a neckpinch satisfies the asymptotics $\phi = \sqrt{u} \sim \frac{s}{\sqrt{|\log s|}}$,  where $s$ is the arclength from the singular end.  This implies $V_0(u) \sim \frac{1}{\log u}$.
Another singularity that may arise in the category of warped products of spheres over an interval is the degenerate neckpinch.  In this case, Angenent, Isenberg, and Knopf showed in \cite{AIK} that the final-time limit has the asymptotics $\phi \sim s^{\beta_k}$ where $\beta_k = \frac{2}{2k+1}$, $k \in \nats \setminus \{0\}$.  Forward evolutions from these specific cases were created in \cite{ACK} and \cite{recoverdegen}, respectively.

\subsubsection{Generalized cylinder singularities}\label{generalized_cylinder}
\begin{figure}[t]
  \includegraphics[width=\textwidth]{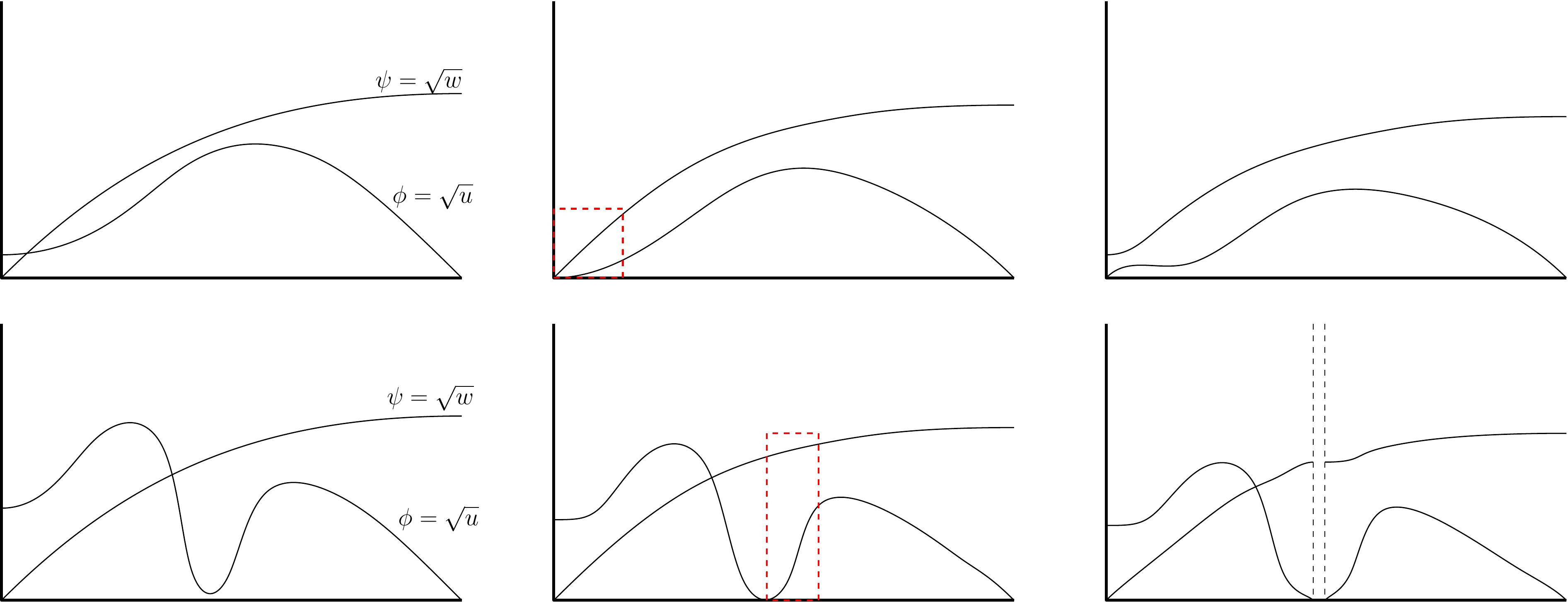}
  \caption
  [Simplified drawings of Ricci flow through model pinches]
  {
    Top: the singularity described in Section \ref{generalized_cylinder}.  Bottom: the singularity described in Section \ref{submanifold_neckpinches}.  The pictures depict the manifold, from left to right, before, during, and after the singular time.  The horizontal axis is the arclength from the left side.   On each row, the rectangle in the middle picture shows a neighborhood which is a part of a model pinch.  (In the second row, there are actually two model pinches: to the left and to the right). 
    \label{singularities_phipsigraphs}
    }
\end{figure}
For another example of a singularity, consider the doubly-warped product depicted in the top row of Figure \ref{singularities_phipsigraphs}.  A more stylized picture of a neighborhood of the singularity is Figure \ref{pancake_pinch}.  The metric is a doubly warped product over an interval, with $(F, g_F) = (S^p, g_{S^p})$, and the singularity occurs at the left endpoint of the interval.  Before the singular time, the metric satisfies the following boundary conditions at the left endpoint:
\begin{align}
  \phi > 0, \quad \partial_s \phi = 0, \quad \psi = 0, \quad \partial_s \psi = 1.
\end{align}
Here $s$ is the distance from the left endpoint.  A neighborhood of the left endpoint has topology $S^q \times D^{1+p}$ before the singular time.  For the initial metric, the size of the $S^q$ factor has a deep minimum at the center of the $D^{1+p}$.

As time goes on, the $S^q$ factor shrinks drastically, and the metric encounters a singularity which can be rescaled to a generalized cylinder $S^q \times \real^{1+p}$.  Without rescaling, at the singular time the metric takes on the topology of the cone over $S^q \times S^p$ (but is not asymptotically a metric cone).  This singularity has not been rigorously constructed, but formal calculations suggest that the singular pinched metric should have asymptotics
\begin{align}\label{pinched_pancake_asymptotics}
  \phi \sim \frac{s}{\sqrt{|\log s|}}, \quad \psi \sim s.
\end{align}
This is an unsurprising guess.  The factor corresponding to the $S^q$ behaves similarly to a standard neckpinch.  The $1+p$ dimensional part of the metric, $dx^2 + \psi^2 g_{S^p}$, is close to being a flat $D^{1+p}$, which corresponds to $\psi = x$ exactly.  The flat metric is stable enough that the perturbation from the pinching factor does not affect it too much.

In the forward evolution of metrics with asymptotics \eqref{pinched_pancake_asymptotics}, which we do investigate here, the size of the $S^p$ factor expands and the neighborhood takes on the topology $D^{1+q} \times S^p$.  
\begin{figure}[t]
  \includegraphics[width=\textwidth]{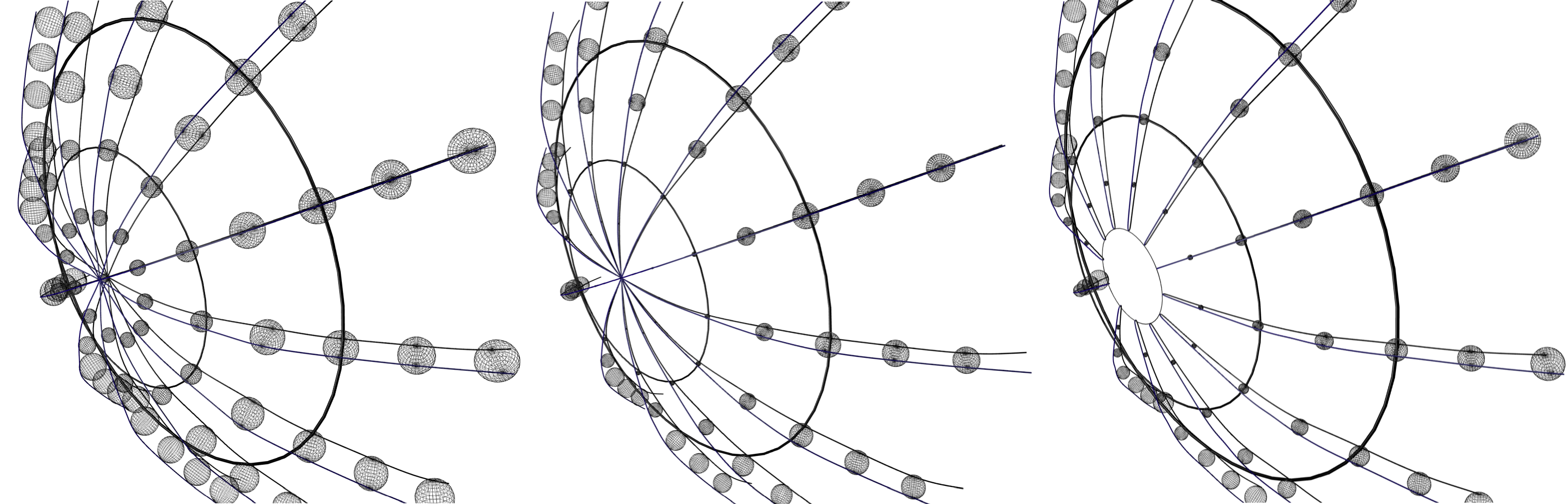}
  \caption
  [Ricci flow through the cone over $S^1 \times S^2$]
  {
    A Ricci flow through a model pinch with $q = 2$ and $(F, g_F) = (S^1, g_{S^1})$. The initial picture is a neighborhood with topology $S^2 \times D^{1+1}$, the middle picture has topology of the cone over $S^1 \times S^2$, and last picture has topology $D^{1+2} \times S^1$.  \label{pancake_pinch}
    }
\end{figure}

\subsubsection{Families of neckpinches}\label{submanifold_neckpinches}
Here is another example which is a singularity modeled on $\real^{1+p} \times S^q$, but which is qualitatively different from the previous.  We can also consider a doubly-warped product over an interval where $\phi$ has a neck somewhere in the interior of the interval.  Then we can force a singularity to occur in the interior of the interval modeled on $\real^{1 + p} \times S^q$.  Here there is an $S^p$ worth of one-dimensional neckpinches forming.  A trivial example of this is when we just cross a standard neckpinch with $S^p$.  While the previous example was also modeled on $\real^{1+p} \times S^q$, this one is qualitatively different: for example, the topological change through the singularity is different.

This type of singularity should be stable in the class of doubly warped products; perturbations leave $\phi$ with a local minimum.  However, in contrast to the previous example, it should not be stable in the full class of Riemannian metrics.  It should not even be stable in the class of singly warped products $g_B + \phi(b)^2 g_{S^q}$  where $B = \real \times S^p$ and $g_B$ is now an arbitrary metric on $B$.  (The original metric has $g_B = dx^2 + \psi(x)^2 g_{S^p}$.)  Indeed, if we allow $\phi$ to also depend on the $S^p$ factor and perturb it so it has a strict local minimum at some point on that factor, we should approach a singularity at a single point on the $S^p$ factor.  (Intuition for this may come from \cite{blowupsinglept_general}, which shows in particular that we can perturb the constant solution of simple reaction-diffusion equations on $\real^n$ to get a single point blowup.)
\subsubsection{Scarred neckpinches}\label{scarring}
Here is an example which leads to a metric which is not quite a model pinch.  First consider a standard singly warped neckpinch with spheres of dimension $S^{q}$: the initial metric is of the form $dx^2 + u(x)g_{S^{q}}$ and the metric at the singular time is a model pinch. This has a forward evolution, which recovers with a smooth disc of dimension $1 + q$ at the tip.  So, we have a Ricci flow of a singly warped product, at least on $((-1, 0) \cup (0, 1)) \times S^{q}$, for times $t \in [T_1, T_2]$, $T_1 < 0 < T_2$.  

Now, the Ricci flow of warped products with Einstein fibers does not care about the Riemannian curvature tensor of the fiber metric, it only cares about the Ricci curvature.  In other words: suppose we have a Ricci flow on $B \times F_1$ of the form
$g_B(t) + u(t)g_{F_1}$
(where for each $t$, $u(t):B \to \real_+$) and $\Rc_{g_{F_1}} = \ein g_{F_1}$.  Suppose $(F_2, g_{F_2})$ is another Einstein manifold with $\Rc_{g_{F_2}} = \ein g_{F_2}$.  Then
$g_B(t) + u(t)g_{F_2}$
is also a Ricci flow.

Therefore, in the Ricci flow through a standard neckpinch, we can swap out $g_{S^q}$ with any Einstein manifold $(F_{2}^q, g_{F_2})$ of our choosing, provided it has the same scalar curvature as $g_{S^q}$.  The resulting object satisfies Ricci flow wherever $u > 0$, but is not a manifold for $t > 0$.  Around the new points at the tip, the result has the topology of the cone over $F_2$.  The forward evolution has a scar as a result of its surgery.  

A special case of this situation is when $g_{F_2}$ is the standard metric on $F_2 = S^q / \Gamma$ for some group $\Gamma$.  This case is important because it cannot be ruled out by a pointwise curvature condition, and so it is relevant to trying to implement Ricci flow with surgery under curvature assumptions.  The resulting object after the singularity is an orbifold.   This case was dealt with in four dimensions in \cite{completecompactposiso}, and they removed a topological assumption of Hamilton's work in \cite{h_posiso} by considering Ricci flow with orbifold singularities.

Of relevance to us is the case $q = 2k$ and $F_2 = S^k \times S^k$. In this case, the metric at the singular time has the form
\footnote{
  We are always lazy with writing the lifts of metrics and tensors etc.  Here we use the notation $\oplus$ to emphasize that the two terms $g_{S^q}$ which appear are different, one is the lift of the $g_{S^q}$ on the first factor, and the other is the list of the $g_{S^q}$ on the second factor.
}
\begin{align}
  g = dx^2 + u(x) g_{S^k} \oplus u(x) g_{S^k}.
\end{align}
It satisfies all of the conditions of a model pinch except for \ref{w_big}, since $u = w$ and $\ein_F = \ein$.  Since $S^k \times S^k$ is unstable under Ricci flow (we can perturb the size of one of the factors) we thought perhaps there could be two alternative forward evolutions where either of the factors becomes positive after the singular time.  We now believe that this is not possible, see Section \ref{w_big_sharpness}.

\subsection{Shape of the forward evolution}\label{section:shape_description}
In this section we describe various properties of the forward evolution $g(t)$ of a model pinch.  As time goes on, the metric continues to be a doubly warped product:
\begin{align}
  g(t) = a(x,t) dx^2 + u(x,t) g_{S^q} + w(x,t) g_{F}.
\end{align}
Furthermore, we prove that $u$ continues to be increasing in $x$.  Therefore we may continue to consider $v = u^{-1}|\nabla u|^2$ and $w$ as functions of $u$, and write the metric as
\begin{align}
  g(t) = \frac{du^2}{uv} + u g_{S^q} + w g_F.
\end{align}
For the initial metric, the derivatives of $u$ and $w$ are relatively small.  Therefore after investigating the curvature of warped products we see 
\begin{align}\label{rc_prish}
  -2\Rc(X,Y) \approx
   - \ein g_{S^q} - \ein_F g_F.
\end{align}

Forward in time, this approximation continues to hold for a short time, while the derivatives of $u$ and $w$ continue to be small.  We call the region where $v = u^{-1}|\nabla u|^2$ continues to be small the ``productish'' region.  Let $\nu(t) = V_0(\ein t)$.  The productish region is the set
\begin{align}
  \Omega_{prish} = \left\{(x,t): \frac{u(x,t)}{t \nu(t)} \geq \sigma_* \text{ and } u < u_* \right\}
\end{align}
for some sufficiently large $\sigma_*$ and small $u_*$.  In this region, we have $v \leq \epsilon$; by choosing $\sigma_*$ and $u_*$ we can have $\epsilon$ as small as we wish.

In the productish region, we get the approximations
\begin{align}
  v &\approx V_{prish}
      \defeq \frac{u + \ein t}{u} V_0\left(u + \ein t \right)
      \label{v_approx_prish}
  \\
  w &\approx W_{prish} \defeq W_0\left( u + \ein t \right) - \ein_F t
      \label{w_approx_prish}
      .
\end{align}
Note that these approximations would be exact if the approximation \eqref{rc_prish} were exact so $u(x,t) = u(x,0) - \ein t$, $w(x,t) = w(x,0)-\ein_F t$, and
\begin{align}
  v(x,t)
  = \frac{|\nabla u(x,t)|^2}{u(x,t)}
  = \frac{|\nabla u(x,0)|^2}{u(x,0)} \frac{u(x,0)}{u(x,t)}
  = V_0(u+\ein t) \frac{u + \ein t}{u}.
\end{align}

In Section \ref{corollaries_productish} we give some corollaries of our control in the productish region.

Now we come to a crucial juncture in the calculation of our approximate solution.  The approximations \eqref{v_approx_prish} and \eqref{w_approx_prish} work for $u(x,t) \gtrsim t \nu(t)$- in particular they work for $u \ll t$.  To understand the approximations for small $u$, put $\nu(t) = V_0(\ein t)$, $\omega(t) = W_0(\ein t)$ and write
\begin{align}
  V_{prish} &= (1 + \ein t/u)\nu(t)
  \frac{V_0\left( \ein t( 1 + \ein^{-1}u/t)\right)}{V_0(\ein t)} \\
  W_{prish} + \ein_F t &= \omega(t)
   \frac{W_0\left( \ein t( 1 + \ein^{-1}u/t)\right)}{W_0(\ein t)} 
\end{align}
Using our assumptions on $V_0$ and $W_0$, particularly \ref{modelpinch_reg}, we can estimate the quotients for $u/t \ll 1$.  
Then our approximations say
\begin{align}
  v &\approx \ein \sigma^{-1}(1 + \ein^{-1}(1 + \dd1{\nu}(t)) \nu(t) \sigma)\label{expose_v_approx}\\
  w + \ein_F t
    &\approx \omega(t)(1 + \ein^{-1}\dd1{\omega}(t)\nu(t) \sigma) \label{expose_w_approx} 
\end{align}
where $\sigma = u/(t\nu(t))$, $\dd1{\nu} = t\nu'(t)/\nu$, $\dd1{\omega} = t\omega'(t)/\omega$.

If the left end of the manifold is to be smooth and compact, $v$ cannot be small up to $u=0$.  In fact, $v \to 4$ is a necessary condition to have a smooth closed disc at the left endpoint.  At the left end, on the factor $I \times S^q$, we glue in a steady Bryant soliton of size $\approx t\nu(t) \eqdef \alpha(t)$.  This is a metric on $\real^{1+q}$ that moves only by diffeomorphisms under Ricci flow. We call the region where $\sigma$ stays small, where we see the Bryant soliton, the ``tip region''.  The asymptotics of the Bryant soliton as $u \to \infty$ match with the term $\ein \sigma^{-1}$ in \eqref{expose_v_approx}.   A steady soliton is in accordance with the fact that we expect scaling at a rate faster than $t$: as a general principle, if we scaled at rate $t$ we would expect an expanding soliton, whereas if we scale at a faster rate we find a steady soliton.

For the factor $F$, the warping function is approximately constant.  Therefore we expect to be able to attach a large $F$ factor to our Bryant soliton.  The approximate size of the unrescaled $F$ factor is $\omega(t)-\ein_F t = W(\ein t) - \ein_F t$.  Taking for simplicity the case $\ein_F \neq 0$, our assumptions imply that $\omega-\ein_F t \gtrsim t \gg t \nu(t)$.  Therefore when we scale by $t \nu(t)$ the size of this factor goes to infinity, and around any point it approaches a Euclidean factor.

Thus, the zeroth order approximation of the metric near the tip (in other words, the expected limit of the rescaled metric as $t \searrow 0$) is $(\text{Bryant Soliton}) \times (\text{Euclidean metric})$.  We can get this approximation in a region of the form
\begin{align}
  \Omega_{tip} = \left\{(x,t): \sigma < \nu^{-1/2} \right\}.
\end{align}
As $t \searrow 0$ (so $\nu \searrow 0$) this region covers the whole Bryant soliton.

We also need to find the first order approximation near the tip.  The perturbation has size $\approx \nu$.  The equation we get in space is
\begin{align}
  (\text{Linearization of Ricci Flow})[g_1] = g_0,
\end{align}
where $g_0$ and $g_1$ represent the zeroth and first order approximations.  This gives us an equation to solve for $g_1$.  On the $F$ factor, the solution coincides with the soliton potential, times $g_F$.  Our first order approximation matches with all of the terms in \eqref{expose_v_approx}, \eqref{expose_w_approx}.

In Section \ref{corollaries_tip} we give some corollaries of our control in the tip region.

\subsection{Sharpness and further questions}
\subsubsection{Regularity conditions \ref{modelpinch_reg}}
Note that an implication of $|u \partial_u W_0| + |u^2 \partial_u^2 W_0| < CW_0$ is that
$
  \frac{W_0(ru)}{W_0(u)}
$
can be bounded for small $r$, independently of $u$.  In particular, $W_0(u) = e^{u^{-1}}$ and $W_0(u) = e^{-u^{-1}}$ both do not satisfy our assumptions.  We cannot offer any guess as to whether our results hold for these functions.

As examples of wild profiles for $W_0$, consider $W_0(u) = 2 + \sin(\log(u))$ or $W_0(u) = u^{-1}$.  Note that if the initial metric has bounded length near $u = 0$, then this may appear bad.  Still, around any point where $u = u_\sharp$, rescaling by $u_{\sharp}$ we will see approximately a product metric on a long (length $\approx 1/\sqrt{V_0(u_\sharp)}$) scale.  Note in all cases, in the forward evolution $w$ is bounded and positive near the tip for finite time.

We can think of the conditions on $V_0$ in the same way, but it may be more reasonable to look at examples in terms of the arclength coordinate $s$.  So, consider the $I \times S^q$ part of the metric written as
\begin{align}
  ds^2 + u(s)^2 g_{S^q}, \quad s \in (L_0, \infty), \quad L_0 = 0 \text{ or } L_0 - \infty .
\end{align}
The condition that $\dd1{V_0} \defeq \frac{u \partial_u V_0(u)}{V_0(u)} < C$ actually says, in a sense, that $u$ must be small enough in terms of $s$.  (Written in terms of $s$, this condition will involve the functional inverse of $u$.)  The following functions satisfy the regularity conditions on $V$:
\begin{itemize}
\item $L_0 = 0$ and $u(s) = s^a |\log(s)|^b$, where $a > 2$ and $b \in \real$, or $a = 2$ and $b < 0$.
\item $L_0 = -\infty$ and $u(s) = |s|^{-a}\log(|s|)^{b}$, where $a > 0$ and $b \in \real$.
\item If we write $u(s) = \exp(-f)$ where $f \to \infty$ as $s \searrow L$, then the  condition that $|\dd1{V_0}|<C$ is equivalent to $(1/f')' < C$.  For example, $u(s) = \exp(-1/s), L_0 = 0$ or $u(s) = \exp(s)$, $L_0 = -\infty$ are both valid model pinches.
\end{itemize}

\subsubsection{The profile $\phi(s) = \log(|s|)^{-1}$}
Our results do not provide a forward evolution from the initial metric with $I = (-\infty, \infty)$, and $u(s) \sim \log(|s|)^{-2}$ at $s = -\infty$.  Note in that case 
\begin{align}
  v = u^{-1}|\nabla u|^2 = 4 \log(|s|)^{-4}s^{-2}
\end{align}
so
$
  V_0(u) = u^2 \exp(-2/u)
$.
Then
$u \partial_uV_0/V_0 = 2u^{-1} + 2$,
which violates condition \ref{modelpinch_reg}.  It would be interesting to know whether there is a solution to Ricci flow emerging from this example.

In this example, for any $r > 0$, the region which looks approximately like a skinny cylinder of radius $r$ is quite long in comparison to $r$.  More precisely, fixing $\epsilon$ there is a $C_\epsilon > 1$ such that for any $r$ we have the following. The region where the radius $\phi$ is within a factor of $(1 \pm \epsilon)$ of $r$ has length $(C_\epsilon)^{1/r^2}$.  Maybe this means the cylinder must collapse before anything far away can save it.

\subsubsection{The conditions on the size of $W_0$ \ref{w_big}}\label{w_big_sharpness}
For simplicity say $(F, g_F) = (S^q, g_{S^q})$.
We find it striking that in the case $W_0(u) = (1 + c)u$, for the initial metric $w$ and $u$ are comparable, but if we rescale the forward evolution to keep the curvature bounded at the origin, the $w$ factor goes to infinity.

  We believe that it is possible to relax the condition \ref{w_big} and still have a forward evolution with the same asymptotics.  Let's rapidly go through a calculation.  Suppose $W_0(u) = (1 + H_0(u))u$, where $H_0(u) \searrow 0$ (violating \ref{w_big}).  Calculating from \eqref{w_approx_prish}, in the productish region where $u > C t \nu(t)$,
\begin{align}
  w
  &\approx (1 + H_0(u + \ein t))(u + \ein t) - \ein t\\
  &= u + H_0(u + \ein t)(u + \ein t).
\end{align}
If we write $\eta(t) = H_0(\ein t)$ then for points where $C t \nu(t) < u \ll t$ we have (recall $\sigma \defeq \frac{u}{t\nu(t)}$):
\begin{align}
  \frac{w}{t\nu(t)} &\approx \sigma  + \ein \frac{\eta(t)}{\nu(t)}.\label{speculative_tip_asympts}
\end{align}

First consider the case $H_0(u) \gg V_0(u)$ (i.e. $\eta(t) \gg \nu(t)$), which is still a weaker condition than \ref{w_big}.  Then scaling $w$ in the same way we scale $u$ sends it to infinity, and $w$ is approximately a constant.  We expect this case to behave similarly to the case that is rigorously dealt with in this paper.  The major road block in dealing with it, for us, is reproving Lemma \ref{lemma:y_control_tip} which controls the derivative of $w$ and therefore controls the level of interaction between the evolution of $v$ and $w$.  Unfortunately our method gives us no more wiggle room in this lemma, but we think that our control on the distance from $w$ to a constant is not optimal.

To continue with our speculation, consider the case when $H_0(u) = c_0 V_0(u)$.  Then in \eqref{speculative_tip_asympts} we find
$  \frac{w}{t\nu(t)} = \sigma + c_0 \ein$.
We still would have the approximation \eqref{expose_v_approx} for $v$.  This gives us the asymptotics for an \emph{Ivey soliton} \cite{ivey}, which is a complete soliton on $\real^{1+q} \times F$ of the form $dx^2 + u_{sol}(x)g_{S^q} + w_{sol}(x)g_F$. (The function $u(x)$ goes to zero at $x = 0$, and $w(x)$ stays positive.)  So, in this case we expect to see the Ivey soliton in the rescaled limit at the tip.  This case should be more difficult, because the system is more strongly coupled.

In the case when $H_0(u) \ll V_0(u)$, we do not think that there is a smooth forward evolution, but there may be a forward evolution with bounded Ricci curvature everywhere.  In this forward evolution we glue in a Bryant of dimension $1 + (q + q)$, but with the sphere fibers $S^{q + q}$ replaced with the Einstein manifold $S^q \times S^q$ (with proper scaling to make the scalar curvatures match).  The case $H_0(u) = 0$ is the situation discussed in Section \ref{scarring}.

The reason we do not expect a smooth forward evolution is the following: consider $H_0(u) = \epsilon V_0(u)$. Then, we are in the case when we expect the Ivey soliton.  The exact asymptotics of the Ivey soliton we get are determined by $\epsilon$, and as $\epsilon \searrow 0$, this family of Ivey solitons approaches the Bryant soliton with $S^{q + q}$ replaced with $S^q \times S^q$.  Therefore, even trying to approximate the singular initial metric with smooth ones it seems we are led to the nonsmooth case.

\subsubsection{Pinched sphere warped products over other bases}
Consider a manifold with boundary $(B, \partial B)$ with a metric $g_B$ and a function $u: B \to \real_+$ which tends to zero at the boundary such that $v = u^{-1}|\nabla u|^2$ also goes to zero.  Let's stipulate that everywhere $|\Rm_{g_B}| \ll u^{-2}|\nabla u|^2$.  Now we want to ask whether there is a forward evolution from the metric $g = g_B + u g_{S^q}$.  Note that model pinches with $W_0(u) \gg u$ are a special case, where $g_B = \frac{du^2}{u V_0(u)} + W_0(u)g_F$.

The nice property of the doubly warped products is that the hessian of $u$ is easier to control, because the level sets of $u$ are equidistant.  It should be relatively possible to extend to other such cases, like cohomogeneity-one manifolds.

\subsubsection{The closeness required in the asymmetric case}
Our condition for Theorem \ref{theorem:unsymmetrical_flow} is that the distance between the asymmetric metric and the model pinch goes to zero near the tip at least as fast as a specific rate.  There is a sense in which this is probably not optimal.  Our proof technique yields more than is stated in Theorem \ref{theorem:unsymmetrical_flow}: it says that $g(t)$ actually stays close to the forward evolution from $g_{mp}$.  We make no attempt to update the approximate model pinch, whereas perhaps the best warped-product forward evolution not the forward evolution from the initial warped-product.

A theorem that we can compare Theorem \ref{theorem:unsymmetrical_flow} to is Theorem 1.3 of \cite{conicalSing}.  That theorem constructs forward evolution from metrics close to having conical singularities.  There, $g_{c}$ is a cone and the requirement (1.1) is that near the singularity the singular metric $g$ satisfies $|g - \Phi^* g_c| \leq \epsilon_0$.  This seems stronger than our theorem, because it makes no exact assumption on the rate at which it approaches the model singularity.  On the other hand, the case of a singly-warped cone (which our theorem does not handle) is the case when $V_0$ is constant, so perhaps our condition is not dissimilar.

\subsection{Notation and preliminaries}\label{overview_symmetric}
More notation is densely listed in Appendix \ref{appendix_notation}.

Partial derivatives are denoted with $\partial_{\cdot}$.  For an arbitrary function $u$ with nonzero derivative, we have $\partial_u = |\nabla u|^{-2}\nabla_{\grad u}$ which is the derivative with respect to $u$, using a metric.  We define $\partial_{t;u} = \partial_t - (\partial_t u)\partial_u$ which is the derivative with respect to time along a curve which moves orthogonally to the level sets of $u$ in order to keep $u$ constant.

We adopt the shorthand that when stating hypotheses, the statement ``$x \leq \barr x(y, z)$'' means ``there exists an $\barr x$, depending on $y$ and $z$, such that if $x \leq \barr x$, the following holds.''  This allows us to quickly state ``if $x \leq \barr x(y,z)$ and $w \leq \barr w(x, y)$ then \dots''.

\subsubsection{Equations}
We can consider our metrics as singly warped products of spheres over a general base: $g(t) = g_B(t) + u(t)g_{S^q}$ where for each $t$, $u(t):B\to \real_+$.  Under Ricci flow, $u$ evolves by
\begin{align}
  \square_B u = - \ein + \on4 (\ein-2) v,\label{evo_u_in_overview}
\end{align}
where $\square_B$ is the heat operator $\square_B = \partial_t - \lap_B$ and $\lap_B$ is the laplacian for $g_B(t)$.  Equivalently,
\begin{align}
  \square_M u = - \ein -  v.
\end{align}
where $\square_M$ is the heat operator for $g$.  Similarly, the function $w$ which controls the size of $g_F$ evolves by
\begin{align}
  \square_M w = - \ein_F - w^{-1}|\nabla w|^2 = -\ein_F - y,
\end{align}
where we have defined $y = w^{-1}|\nabla w|^2$.
We use this point of view to find the approximate solutions in the productish region.  For an exposition of these equations for Ricci flow on warped products, see Section \ref{warped_product_section}.  

For finer control, we need the evolution of $v$ and $w$ as functions of $u$.  These are derived in Sections \ref{v_deriving_section} and \ref{additional_wp}. We have
\begin{align}
  \partial_{t;u} v
  &= u v \partial_u^2 v - \oh u (\partial_u v)^2 \label{vevo_basic2}\\
  &+ \ein \left(1 - \on4 v \right) u^{-1} v +  \ein \partial_u v \\
  &- 2 (\fund^2) v,
\end{align}
where $\kappa^2 = \on4 (dim(F))w^{-2}u^2 v^2 (\partial_u w)^2$, and 
\begin{align}
    \partial_{t;u}w - u v \partial_u^2 w 
    = - \ein_F - y + \ein \partial_u w - \ein/2 v\partial_u w \label{evo_w_in_u2}.
\end{align}

\subsubsection{Regularity}\label{regularity}
We work in $C^{2, \eta}$ H\"older spaces using interior Schauder estimates.  Bamler wrote a clean statement of the interior Schauder estimates he needed in \cite{stabilBamler} (Section 2.5).  We co-opt this statement, because it is exactly what we need except for standard generalizations.  His statement does not allow for the time-dependence of the coefficients that we will have, but in fact the proof carries through exactly; the time dependence enters in the estimate on the $C^{2m-2, 2\alpha; m-1, \alpha}$ norm of $f_i$ in the middle of page 424.  Furthermore, his statement does not allow the parabolic ball to hit the initial time, as we will need to. Accounting for this is also standard. In the proof of Lemma 2.6 of \cite{stabilBamler}, one may apply Exercise 9.2.5 of \cite{krylov} rather than Theorem 8.11.1 of \cite{krylov}.  

\subsubsection{Ricci-DeTurck flow}\label{section:rdt}
We use Ricci-DeTurck flow to control the Ricci flow of metrics near our warped product forward evolutions.  For two metrics $(M, g)$ and $(M, \tilde g)$ we define
\begin{align}
  (V[g,\tilde g])^i = g^{ab}\left(
  \left( \Gamma_g\right)_{ab}^i  
  - \left( \Gamma_{\tilde g} \right)_{ab}^i
  \right)
\end{align}
which is the map Laplacian of the identity map from $(M, g)$ to $(M, \tilde g)$.  For a time-dependent metric we define $\Rf[g] = \partial_t g - (-2 \Rc[g])$.  The Ricci-DeTurck flow from $g(0)$ with background metric $\tilde g$ is the solution to 
\begin{align}
  g(0)&\quad  \text{ given},\\
  \Rf[g]
  &= \lie_{V[g,\tilde g]}g. \label{RDT_def}
\end{align}
We allow $\tilde g$ to also be time-dependent.
It will be useful to consider Ricci flow and Ricci-DeTurck flow modified by a vector field.  We set $\Rf_X[g] = \partial_t g - (-2 \Rc[g] - \lie_X g)$, and if $\Rf_X[g] = \lie_{V[g, \tilde g]}g$ then we say that $g$ is a solution to Ricci-DeTurck flow, modified by $X$, with background metric $\tilde g$.

We will not use the exact form of the evolution of $h$, except to know that we can apply regularity.  What we will use is the following evolution of $|h|$ and $|h|^2$.  For $p \in M$ We set 
\begin{align}\label{rmplus_def}
  \rmplus(p) = \max_{h \in Sym_2(T_pM) : |h| = 1}\ip{\Rm[h]}{h}(p).
\end{align}
Now, assuming that $\Rf_X[\tilde g] = 0$, and that $|h| \leq \oh$, for $y = |h|^2$ we have (we allow $c_0$ to change from line to line)
\begin{align}
  \square_{X, \tilde g, g} y
  &\leq 4 \rmplus y - 2(1 - c_0 y^{1/2})|\nabla h|^2 + c_0 |\Rm|y^{3/2}. \label{dtevo_square}
\end{align}
For $z = |h|$ we have
\begin{align}
  \square_{X, \tilde g, g}z
  &\leq 2 \rmplus z + c_0 \left(|\Rm| z^2 + |\nabla h|^2 \right).\label{rdt_norm}
\end{align}
We show these in Appendix \ref{rcdt}.


\section{Control in the productish region}\label{section:productish}
In this section we create some interior estimates for our warped-product forward evolution.  We define the productish region as a region of the form
\begin{align}\label{omega_prish_def}
  \Omega_{prish}=
  \left\{
  (u, t): u + \ein t < u_* \text{ and } \sigma = \frac{u}{tV_0(\ein t)} > \sigma_*
  \right\}.
\end{align}
In particular, $\Omega_{prish}$ touches an open part of the initial time slice (see Figure \ref{region_map}).  All constants and definitions in this section implicitly depend on dimensions, $g_F$, and the chosen functions satisfying the model pinch conditions $V_0$ and $W_0$.
We define $\hat w = w + \ein_F t$.  In the productish region, we will have approximations of the form
\begin{align}
  v \approx V
  \defeq
  \left(\frac{u + \ein t}{u} \right)
  V_0(u + \ein t),
\quad
  \hat w 
  \approx \hat W
  \defeq
  W_0(u + \ein t).
\end{align}
These come directly from the calculations in Appendix \ref{nearly_cnst_pde_sect}.  They may be guessed by ignoring all terms in the evolution of $u$ and $w$ which depend on space derivatives of $u$ or $w$.  We will prove that $v$ is between $V^-$ and $V^+$, and $\hat w$ is between $\hat W^-$ and $\hat W^+$, where
\begin{align}
  V^{\pm} = (1 \pm D V)V, \quad \hat W^{\pm} = (1 \pm D V)\hat W. \label{barriers_def}
\end{align}
We call $V^{\pm}$ and $W^{\pm}$ the barriers.

We make some definitions to state the main result of this section.  We will assume that $g(t) = a(x,t) dx^2 + u(x,t) g_{S^q} + w(x,t) g_F$ is a solution to Ricci flow on $[T_1, T_2]$.  Our definitions depend on constants $u_*$, $\sigma_*$, controlling the size of the produtish region, and $D$ controlling the separation of the barriers, as well as $c_{safe}$ and $C_{reg}$.  

\begin{definition}\label{productish_barricaded}
  We say that $g(t)$ is \emph{barricaded} (by the productish barriers)
  \footnote{In this section we only say ``barricaded'' but in Section \ref{section:full_flow} we will have to refer to either barricaded by the productish barriers, or barricaded by the tip barriers.}
  at a point if it satisfies
  $
    V^- < v < V^+$ and $\hat W^- < \hat w < \hat W^+$
  at that point.
  
  We say that $g(t)$ is \emph{initially controlled in the productish region} if
  at $t = T_1$ and for all points satisfying $(1/2)\sigma_* T_1 \nu(T_1) < u < 2u_*$
  it is barricaded
  and
  \begin{align}
      \frac{\twoeta{v-V}{u/2}{(du)^2}}{V}
      +
      \frac{\twoeta{w-W}{u/2}{(du)^2}}{W}
    < c_{safe}C_{reg}DV, \label{prish_regularity_ineq_v}
  \end{align}

  We say that $g(t)$ is \emph{barricaded at the left of the productish region} if it is barricaded for all points satisfying $(1/2)\sigma_* t \nu(t) < u < \sigma_* t \nu(t)$ and $t \in [T_1, T_2]$.

  We say that $g(t)$ is \emph{barricaded at the right of the productish region} if it is barricaded for all points satisfying $u_* < u < 2 u_*$ and $t \in [T_1, T_2]$.

  We say that $g(t)$ is \emph{controlled in the productish region} if for all points in $\Omega_{prish}$,
  \begin{enumerate}[label=(P\arabic{*}), ref=(P\arabic{*})]
  \item \label{conc:prish_barrier} The solution is barricaded.
  \item \label{conc:prish_reg} We have the inequality
    \begin{align} 
      \frac{\twoeta{v-V}{u/2}{(du)^2}}{V}
      +
      \frac{\twoeta{w-W}{u/2}{(du)^2}}{W} < C_{reg}DV. \label{prish_regularity_ineq_v}
    \end{align}
  \end{enumerate}
\end{definition}

\begin{lemma}\label{main_prish_estimates}
  There is a $c_{safe}$ such that if we let $C_{reg} > \berr C_{reg}$,  $D > \berr D$, $u_* < \barr u_*(D, C_{reg})$, and $\sigma_* > \berr \sigma_*(D, C_{reg})$, there is a $T_*$ depending on all other parameters with the following property.

  Suppose $0 < T_1 < T_2 < T_*$ and $g(t)$ is defined on $[T_1, T_2]$, initially controlled, and barricaded at the left and the right of the productish region. Then $g(t)$ is controlled in the productish region, for all times in $[T_1, T_2]$.
\end{lemma}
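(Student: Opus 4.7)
The plan is a continuity argument exploiting the strict nature of every inequality in Definition~\ref{productish_barricaded}. Let $T^\sharp \in [T_1, T_2]$ be the supremum of $\tau$ such that $g(t)$ is controlled in $\Omega_{prish}$ for every $t \in [T_1, \tau]$. Initial control together with a brief short-time opening argument gives $T^\sharp > T_1$, and continuity of $v, w$ makes the set relatively open, so it suffices to rule out first failure at $t = T^\sharp$. Such a failure must be either (i) saturation of the barricade at an interior point of $\Omega_{prish}$, meaning one of $v = V^{\pm}$ or $\hat w = \hat W^{\pm}$ is first attained, or (ii) saturation of the $C^{2,\eta}$ regularity bound \eqref{prish_regularity_ineq_v}. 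The parabolic boundary of $\Omega_{prish}$ is excluded from generating the failure: the sides $\{u = \tfrac{1}{2}\sigma_* t\nu(t)\}$ and $\{u = u_*\}$ by the left/right barricade hypotheses, and the initial slice $\{t = T_1\}$ by the initial control together with the slack $c_{safe} < 1$ built into the initial regularity bound.

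For case (i) I apply the maximum principle to equations \eqref{vevo_basic2} and \eqref{evo_w_in_u2}, which are quasilinear parabolic in $(u,t)$ with principal part $\partial_{t;u} - u v \partial_u^2$. The approximations $V$ and $\hat W$ are the nearly-constant-PDE solutions from Appendix~\ref{nearly_cnst_pde_sect}; modulo lower-order terms they exactly solve these equations. Plugging $V^{\pm} = (1 \pm DV)V$ into \eqref{vevo_basic2} produces a forcing of order $DV^2$ with a definite sign relative to $v - V^{\pm}$, and similarly for $\hat W^{\pm}$ in \eqref{evo_w_in_u2}. The residual error from the approximation being inexact is of order $V$ times quantities that \ref{modelpinch_reg} makes small once $u_* + \ein t$ is small and $\sigma > \sigma_*$ is large; the coupling term $\kappa^2$ in \eqref{vevo_basic2} is quadratic in $\partial_u w$, and $\partial_u w$ is controlled by the $C^{2,\eta}$ regularity on $w - W$ and hence by the barricade. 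Taking $D$ large absorbs all of these, so $V^{\pm}$ and $\hat W^{\pm}$ become strict super/sub-solutions and contradict the Hopf-style inequality forced at an interior extremum.

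For case (ii) I apply the interior Schauder estimates of Section~\ref{regularity} to the parabolic equations satisfied by $v - V$ and $w - W$, on parabolic neighborhoods of radius $\sim u/2$ in $(du)^2$-scale. The coefficients of these equations are $C^{0,\eta}$-controlled by the barricade, and the forcing consists of the residual errors discussed in case (i) plus quadratic terms in the differences that the barricade also controls. The Schauder estimate then bounds the local $C^{2,\eta}$ norm by a dimensional constant times the $C^0$ norms of $v - V$ and $w - W$, plus, on parabolic neighborhoods reaching $t = T_1$, the initial regularity. The $C^0$ norms are at most $DV \cdot V$ and $DV \cdot W$ by the barricade, so taking $C_{reg}$ larger than the Schauder constant times $c_{safe}^{-1}$ together with a factor for the forcing gives a bound strictly smaller than $C_{reg} DV$, ruling out saturation.

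The main obstacle is orchestrating the hierarchy $c_{safe} \ll 1$, $C_{reg} \gg 1$, $D \gg 1$, $u_* \ll 1$, $\sigma_* \gg 1$, $T_* \ll 1$ in the right order: the Schauder step fixes $C_{reg}$, the smallness condition \ref{modelpinch_reg} on the approximation residuals then fixes $D$, the linear-order errors in the barrier step fix $u_*$ and $\sigma_*$, and $T_*$ is chosen so that $\ein T_* \leq u_*$ and $\Omega_{prish}$ is well-defined on $[T_1, T_2]$. A secondary subtlety is the coupling between the $v$- and $\hat w$-equations through $\kappa^2$: one must verify that at a simultaneous lower-barrier contact the sign of $\kappa^2$ in \eqref{vevo_basic2} preserves $V^-$ as a strict subsolution rather than obstructing it, which it does because $\kappa^2 \geq 0$ enters $\partial_{t;u} v$ with a negative coefficient.
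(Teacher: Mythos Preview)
Your proposal is correct and follows essentially the same two-part continuity argument as the paper: Lemma~\ref{barriers_prish} handles case~(i) via the maximum principle after first isolating the $\kappa^2$ coupling into a separate estimate (Lemma~\ref{lemma:ycontrol_productish}, which is exactly your observation that $\partial_u w$ is controlled by the regularity on $w-W$), and Lemma~\ref{regularity_prish} handles case~(ii) via interior Schauder estimates. The one technical refinement the paper adds, which you may find useful if you carry this out in detail, is that the regularity step is performed not on $v-V$ directly but after the change of variables $\hat u = u+\ein t$, $\hat v = (\hat u)^{-1} u v$; this makes the approximate solution $\hat V = V_0 \circ \hat u$ a function of $\hat u$ alone and puts the equations in a form where the Schauder estimate on balls of $\hat u$-radius $\sim u_1$ is cleaner to execute.
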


In proving the conclusions of Lemma \ref{main_prish_estimates}, we can assume that they hold on the interval $[T_1, T_2)$.  This is because they hold strictly at the initial time by assumption, so we can consider $T_2$ to be the infimum of the times at which the fail.  This extra assumption is usually useful for controlling terms when we don't care about the exact constant involved, because in any case we can choose our constants $u_*$, $\sigma_*$, and $T_*$ so that it is as small as we want  (see e.g. Lemma \ref{lemma:ycontrol_productish}).

With this in mind, Lemma \ref{main_prish_estimates} will be proven by Lemmas \ref{barriers_prish} and \ref{regularity_prish} below, which show items \ref{conc:prish_barrier} and \ref{conc:prish_reg} respectively.  First, in Section \ref{examining_section}, we inspect our approximations $V$ and $W$ more closely.

\subsection{Examining our approximate solution}\label{examining_section}
We are claiming that $v(p, t) \approx V(u(p,t),t)$ where $V$ is the function
\begin{align}
  \label{v_expr}
  \V(u,t) = \frac{u + \ein t}{u}\V_0(u + \ein t) = \left( 1 + \ein \frac{t}{u} \right)\V_0(u + \ein t)
\end{align}
The effectiveness of the barriers defined in \eqref{barriers_def} is dependent on $V$ staying small.  In this section, we prove Lemma \ref{lemma:V_small} which tells us that $V$ does stays small exactly in the productish region $\Omega_{prish}$, and also gives another description of $V$ and $W$.   The proof is elementary, but the reformulation of $V$ is key to how the productish region hooks up with the tip region.

We aim to understand where $V$ stays small.  An apparent scary term in \eqref{v_expr} is $t/u$.  Defining $\rho = u/t$, we can write
$
  \label{eq:79}
  \V = \left( 1 + \ein \rho^{-1} \right) \V_0(u + \ein t)
$.
If we keep in mind that our main assumption on $\V_0$ is that $\V_0(u) = o(1, u \to 0)$, then the following lemma, which says something about where $V$ is small, is immediately apparent.
\begin{lemma}\label{lemma:weaker_V_small}
  Let $\epsilon$ be given.  For any $\rho_*$ there is $u_*(\epsilon)$ and $T_*(\rho_*, \epsilon)$ so that if $t < T_*$, $u < u_*$, and $u/t > \rho*$ then $V < \epsilon$.
\end{lemma}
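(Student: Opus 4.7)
The plan is to decompose $V$ using the hypothesis on $\rho = u/t$ and then invoke \ref{modelpinch_vsmall} to make the remaining factor small. Writing
\[
  V(u,t) = \bigl(1 + \ein \rho^{-1}\bigr)\,V_0(u + \ein t),
\]
the lower bound $\rho > \rho_*$ immediately controls the first factor by $1 + \ein/\rho_*$, which depends only on $\rho_*$. So the entire task reduces to making $V_0(u + \ein t)$ small in a manner that beats this prefactor.

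To do this, first use \ref{modelpinch_vsmall}: since $V_0(s) \searrow 0$ as $s \searrow 0$, one can fix a threshold $s_*(\rho_*, \epsilon) > 0$ such that $V_0(s) < \epsilon/(1 + \ein/\rho_*)$ whenever $0 < s < s_*$. Then I would set (for instance) $u_* = s_*/2$ and $T_* = s_*/(2\ein)$, so that the hypotheses $u < u_*$ and $t < T_*$ force $u + \ein t < s_*$, hence $V_0(u + \ein t) < \epsilon/(1 + \ein/\rho_*)$, and finally $V < \epsilon$. (If one genuinely wants $u_*$ to depend only on $\epsilon$ as literally stated, one can instead pick $u_*$ so that $V_0(u) < \epsilon$ for $u < u_*$ and absorb the factor $1 + \ein/\rho_*$ into the choice of $T_*$ by first shrinking $t$ enough that $u + \ein t \leq u(1 + \text{small})$; either version of the dependency is immediate from the same argument.)

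There is essentially no obstacle: this lemma is a direct unpacking of the definition of $V$ together with the defining smallness of $V_0$ from \ref{modelpinch_vsmall}. The only thing to be careful about is the order of quantifiers and the bookkeeping of which constants depend on which, since this lemma is a warm-up meant to set up the stronger Lemma \ref{lemma:V_small} that will actually identify the productish region with the region where $V$ is small.
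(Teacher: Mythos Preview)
Your proposal is correct and matches the paper's approach exactly: the paper does not even give a proof, merely noting that the lemma is ``immediately apparent'' from the expression $V = (1 + \ein \rho^{-1}) V_0(u + \ein t)$ together with \ref{modelpinch_vsmall}. Your write-up simply fills in this obvious argument, and your parenthetical about the precise dependency of $u_*$ is a fair reading of a slightly loose statement (indeed, in the proof of Lemma~\ref{lemma:V_small} the paper applies this lemma only after $\rho_*$ has already been fixed in terms of $\epsilon$).
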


The discussion is not over: $V$ does not get large if we fix $\rho$  and send $u + \ein t \searrow 0$, as the factor $V_0(u + \ein t)$ helps us.  To understand this factor better, let $\nu(t) = V_0(\ein t)$.
Then by definition,
\begin{align}
  \label{eq:81}
  V_0(u + \ein t) = \nu(t) \frac{V_0((1 + u/(\ein t)) \ein t)}{V_0(\ein t)}.
\end{align}
Now we can use the regularity assumption on $V_0$ \ref{modelpinch_reg} to calculate using the Taylor expansion:
\begin{align}
  V_0(u + \ein t)
  &= V_0(\ein t) + u V_0'(\ein t) + u^2 V_0''((1 + r) \ein t) \\
  &= \nu(t)
    +
    \nu(t)^2
    \frac{u}{t \nu(t)}
    \ein^{-1} \left( t \ein \frac{V_0'(\ein t)}{V_0(\ein t)} \right)
    + u^2 V_0''((1 + r) \ein t).
\end{align}
Here $r \in [0, u/(\ein t)]$ comes from the remainder term in the Taylor expansion.  Now let $\dd1{\nu}(t) = \frac{t \partial_t\nu(t)}{\nu(t)}$ and $\sigma = \frac{u}{t\nu(t)}$, and calculate further:
\begin{align}
  V_0(u + \ein t)
  &= \nu + \nu^2 \sigma \ein^{-1}\dd1{\nu} \\
  &+ u^2 V_0''(\ein t) + u^2 \left( V_0''((1 + r)\ein t) - V_0(\ein t) \right) \\
  &= \nu + \nu^2 \sigma \ein^{-1}\dd1{\nu} \\
  &+ \sigma^2 \nu^3 \frac{t^2 V_0''(\ein t)}{V_0(\ein t)}
    + \sigma^{2+\eta} \nu^{3+\eta}
    \left(
    (t/u)^{\eta}
    \frac{V_0''((1 + r)\ein t) - V_0(\ein t)}{V_0(\ein t)}
    \right)\\
  &=  \nu + \nu^2 \sigma \ein^{-1}\dd1{\nu} + O(\sigma^2 \nu^3).
\end{align}
Here we used more of the regularity assumption \ref{modelpinch_reg}.  Now coming back to our expression for $V(u,t)$ and manipulating it,
\begin{align}
  \label{eq:84}
  V(u, t)
  &= (1 + \ein \rho^{-1})V_0(u + \ein t) \\
  &= \ein \sigma^{-1}\nu^{-1}(1 + \ein^{-1}\sigma\nu)V_0(u + \ein t) \\
  &= \ein \sigma^{-1}\left( 1 + (1 + \dd1 \nu)\ein^{-1}\nu\sigma + O((\nu \sigma)^2) \right). \label{V_expr_smallrho}
\end{align}
This makes it apparent that if we look at where $\sigma > \sigma_*$ for some large $\sigma_*$, $V$ is still small.  We present Lemma \ref{lemma:V_small}.
\begin{lemma}\label{lemma:V_small}
  Let $\epsilon$ be given.  If $\sigma_* > \berr \sigma_*(\epsilon)$ and $u_* < \barr u_*(\epsilon)$, and $T_* < \barr T_*(\sigma_*, u_*, \epsilon)$, then $V < \epsilon$ in the productish region \eqref{omega_prish_def}.
\end{lemma}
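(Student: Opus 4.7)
The plan is to split the productish region into two overlapping subregimes according to the size of $\rho = u/t$, and bound $V$ in each subregime using one of the two representations of $V$ developed just above. Pick a convenient threshold such as $\rho_0 = 1$.

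On the set $\rho \geq \rho_0$, the prefactor $1 + \ein\rho^{-1}$ in \eqref{v_expr} is bounded by $1 + \ein$, so $V \leq (1 + \ein)V_0(u + \ein t)$. Since $u + \ein t \leq u_* + \ein T_*$ throughout $\Omega_{prish}$, assumption \ref{modelpinch_vsmall} allows us to make this less than $\epsilon$ by shrinking $u_*$ and $T_*$. This is really just Lemma \ref{lemma:weaker_V_small} applied with $\rho_* = \rho_0$.

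On the set $\rho < \rho_0$, the Taylor remainder in the derivation of \eqref{V_expr_smallrho} is valid uniformly, since $r \in [0, u/(\ein t)]$ is bounded by $\rho_0/\ein$ and \ref{modelpinch_reg} controls the relevant quotients of $V_0$ and its derivatives. Reading off the three contributions in \eqref{V_expr_smallrho}: the leading term $\ein\sigma^{-1}$ is bounded by $\ein/\sigma_*$, which is $< \epsilon/3$ as soon as $\sigma_* > 3\ein/\epsilon$; the first correction equals $\ein\sigma^{-1}\cdot(1 + \dd1{\nu})\ein^{-1}\nu\sigma = (1 + \dd1{\nu})\nu(t)$, which is $O(\nu(T_*))$ because $\dd1{\nu}$ is bounded by \ref{modelpinch_reg}; and the higher-order piece satisfies $\ein\sigma^{-1}\cdot O((\nu\sigma)^2) = O(\nu(t)\cdot\nu\sigma)$, where the crucial observation is that $\nu\sigma = \rho \leq \rho_0$ in this subregime, so this is again $O(\nu(T_*))$. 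Both corrections can then be made $< \epsilon/3$ by shrinking $T_*$, using $\nu(T_*) = V_0(\ein T_*) \searrow 0$ from \ref{modelpinch_vsmall}.

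The quantifier order is therefore: fix $\epsilon$; choose $\sigma_*$ large depending on $\epsilon$; choose $u_*$ small depending on $\epsilon$; then shrink $T_*$ depending on $\sigma_*$, $u_*$, and $\epsilon$, so that the large-$\rho$ bound and the three terms in the small-$\rho$ expansion are simultaneously controlled. No real obstacle appears; the only point requiring attention is uniformity of the Taylor remainder in the small-$\rho$ subregime, which is secured precisely by the bound $\nu\sigma = \rho \leq \rho_0$ that prevents $\sigma$ from spoiling the smallness of $\nu$.
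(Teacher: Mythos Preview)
Your proof is correct and follows essentially the same approach as the paper's: split the productish region according to whether $\rho = u/t$ is above or below a threshold, use Lemma \ref{lemma:weaker_V_small} in the large-$\rho$ regime and the expansion \eqref{V_expr_smallrho} in the small-$\rho$ regime, with the key observation that $\nu\sigma = \rho$ keeps the error terms uniformly controlled. The paper's only cosmetic difference is that it chooses the threshold $\rho_*$ adaptively rather than fixing $\rho_0 = 1$, and it orders the choices of $\sigma_*$, $\rho_*$, $u_*$, $T_*$ slightly differently, but the logic is identical.
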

\begin{proof}[Proof. (Lemma \ref{lemma:V_small})]
  First, choose $\underline{\sigma}_*$ small enough, and $\barr T_*$ at least small enough, so that $(\sigma^{-1} + \nu) < \epsilon/100$ for all $u, t$ satisfying $\sigma > \underline{\sigma}_*$ and $t < \barr T_*$.  Next, by the expression \eqref{V_expr_smallrho}, we can choose $\rho_*$, and decrease $\barr T_*$, so that for $\sigma > \sigma_*$ and $\rho = \nu \sigma < \rho_*$, we have $V < \epsilon/50$.  Finally, by Lemma \ref{lemma:weaker_V_small} we can chose $\overline{u}_*$ so that $V < \epsilon$ for all $u,t$ satisfying $\rho > \rho_*$ and $u < \overline{u}_*$.
\end{proof}

We also examine the approximate solution for $\hat w$, namely $\hat W = W_0 \circ U_0$ (so $W = W_0 \circ U - \ein_F t$). Similarly to how we handled $V$, we write $\omega(t) = W_0(\ein t)$ and we find  
\begin{align}
  \hat W
  &= \omega(t) \left(
    1 + \mu^{-1}\nu \sigma \dd1{\omega}(t) + O((\nu \sigma)^2)
    \right)\label{Wprish_asymptotic_exp}
\end{align}

\subsection{Trapping between barriers}

Recall the equations of warped product Ricci flow (see section \ref{rf_warped_products}).  The functions $u$ and $\hat w = w + \ein_F t$ satisfy
\begin{align}
  \square_M u &= - \ein + c_{v} v, \\
  \square_M \hat w &= - y = - \frac{|\nabla \hat w|^2}{\hat w - \ein_F t}
\end{align}
First, we find bounds given to us by our regularity \ref{conc:prish_reg}.
\begin{lemma}\label{lemma:ycontrol_productish}
  Suppose we are in the setting of Lemma \ref{main_prish_estimates}.  Assume additionally that items \ref{conc:prish_barrier} and  \ref{conc:prish_reg} hold on $[T_1, T_2)$. If $\sigma_* > \berr \sigma_*(D, C_{reg})$ and $u_* < \berr u_*(D, C_{reg})$ then
  \begin{align}
    \frac{uy}{vw} < C, \quad
    \frac{|\nabla\nabla u|}{v} \leq C
  \end{align}
  in $\Omega_{prish}$, where $C$  depends only on the initial data.
\end{lemma}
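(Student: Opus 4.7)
The plan is to use the warped product structure to convert both estimates into bounds on first- and second-order $u$-derivatives of $w$ and $v$, which can then be read off from the regularity hypothesis \ref{conc:prish_reg} combined with the model pinch regularity \ref{modelpinch_reg}. Since $u$ and $w$ depend only on the interval variable, $\nabla w = (\partial_u w)\nabla u$, so
$$
y = w^{-1}(\partial_u w)^2|\nabla u|^2 = \frac{uv(\partial_u w)^2}{w}, \qquad \frac{uy}{vw} = \left(\frac{u\,\partial_u w}{w}\right)^2,
$$
and the first estimate reduces to bounding $u|\partial_u w|/w$. To that end I would split $w = W + (w - W)$: the regularity \ref{conc:prish_reg} immediately gives $|u\,\partial_u(w-W)| \leq C\cdot C_{reg} D V \cdot W$, while \ref{modelpinch_reg} applied at $u+\ein t$ gives $u|\partial_u W| = u|W_0'(u + \ein t)| \leq C\hat W$ (via the elementary bound $(u+\ein t)|W_0'(u+\ein t)| \leq C W_0(u+\ein t) = C\hat W$). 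Together these yield $u|\partial_u w| \leq C(\hat W + C_{reg} D V \cdot W)$.

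The main obstacle is then to bound $w$ from below by a uniform positive multiple of both $\hat W$ and $W$, which requires a case split on the sign of $\ein_F$. If $\ein_F \leq 0$, I would simply observe that $w = \hat w - \ein_F t \geq \hat w \geq (1-DV)\hat W$ from the lower barrier \ref{conc:prish_barrier}, which also gives $w \geq (1-DV)\hat W + |\ein_F| t \geq (1-DV) W$. If $\ein_F > 0$, I would invoke \ref{w_big}: $\hat W = W_0(u+\ein t) \geq (1+c)\ein_F(u + \ein t)/\ein \geq (1+c)\ein_F t$, so $\ein_F t \leq \hat W/(1+c)$, and the lower barrier gives $w \geq (1-DV)\hat W - \ein_F t \geq (c/(1+c) - DV)\hat W$. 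By Lemma \ref{lemma:V_small}, $DV$ can be made smaller than any prescribed constant by enlarging $\sigma_*$ and shrinking $u_*$, so in either case $w \geq C'\max(\hat W, W)$ for some $C' > 0$ depending only on initial data. Combined with the previous paragraph this gives $u|\partial_u w|/w \leq C$, hence the first estimate.

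For the Hessian I would work in arc-length coordinates $s$ along the interval ($a \equiv 1$), where the warped product symmetries force $\nabla\nabla u$ to be diagonal in the natural orthonormal frame. A short direct computation shows the eigenvalues are $\partial_s^2 u$ on the $s$-direction, $v/2$ with multiplicity $q$ on the $S^q$ directions (since $-\Gamma^s_{ij}\partial_s u = \tfrac12(\partial_s u)^2 (g_{S^q})_{ij}$ and $(\partial_s u)^2 = uv$), and $(2w)^{-1}\partial_s u\,\partial_s w$ on the $F$ directions. The $F$-eigenvalue has magnitude $\tfrac12\sqrt{uvy/w} = \tfrac{v}{2}\sqrt{uy/(vw)}$, which is $O(v)$ by the first estimate. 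For the $s$-eigenvalue I would write $\partial_s^2 u = \partial_s\sqrt{uv} = \tfrac12(v + u\,\partial_u v)$, then split $v = V + (v - V)$: the regularity \ref{conc:prish_reg} bounds $|u\,\partial_u(v-V)| \leq C_{reg} DV \cdot V \leq CV$, and the explicit formula $V = (1+\ein t/u)V_0(u+\ein t)$ together with \ref{modelpinch_reg} yields $|u\,\partial_u V| \leq CV$ (the term $(\ein t/u) V_0(u+\ein t) = V - V_0(u+\ein t)$ is bounded by $V$, and $(u+\ein t)|V_0'(u+\ein t)| \leq CV_0(u+\ein t) \leq CV$ by \ref{modelpinch_reg}). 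Hence $|\partial_s^2 u| \leq Cv$, and assembling all eigenvalues gives $|\nabla\nabla u|/v \leq C$.
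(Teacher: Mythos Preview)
Your proof is correct and follows essentially the same route as the paper. Both arguments rewrite $\tfrac{uy}{vw} = (u\partial_u w/w)^2$, bound the numerator via \ref{conc:prish_reg} and \ref{modelpinch_reg}, and bound $w$ from below using the barrier \ref{conc:prish_barrier} together with \ref{w_big} (your explicit case split on the sign of $\ein_F$ just unpacks the paper's terse claim that $W/W^- < 2$); for the Hessian, your direct eigenvalue computation in the arclength frame is equivalent to the paper's second-fundamental-form formula, and both reduce to bounding $u\partial_u v/v$ (via \ref{conc:prish_reg} and the bound $|\dd1 V| \leq C$, which the paper cites from Lemma~\ref{approx_soln} and you rederive) together with the already-established bound on $uy/(vw)$.
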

\begin{proof}
  First we tackle $\frac{uy}{vw}$.  Note that
  $
    \frac{uy}{vw}
    = \frac{ u^2 |\nabla w|^2}{|\nabla u|^2 w^2} 
    = \left( \frac{u \partial_u w}{w} \right)^2 
    $
    so by item  \ref{conc:prish_reg}
  \begin{align}
    \frac{uy}{vw} \leq \left( \frac{u\partial_u W}{w} + C_{reg}DV\frac{W}{w} \right)^2.\label{bnda}
  \end{align}
  By Lemma \ref{lemma:V_small} we can decrease $u_*$ and increase $\sigma_*$ so that $C_{reg}DV < 1$ and $\frac{W}{W^-} < 2$.  Then since $w$ is between its barriers, we can bound $w$ in \eqref{bnda} in terms of $W$.
  \begin{align}
    \frac{uy}{vw}
    &\leq 4 \left( \frac{u\partial_u W}{W} + 1 \right)^2
      =
      4 \left(
      \frac{u \partial_u W_0(u + \ein t)}{W_0(u + \ein t) - \ein_F t} + 1
      \right)^2\\
    &=
      4 \left(
      \frac{W_0(u + \ein t)}{W_0(u + \ein t) - \ein_F t}
      \dd1{W_0}(u + \ein t)
      + 1
      \right)^2.\label{bndb}
  \end{align}
  By the assumption \ref{w_big} on $W_0$,
  \begin{align}
    \frac{W_0(u + \ein t)}{W_0(u + \ein t) - \ein_F t}
    &= \frac{1}{1 - \frac{\ein_F t}{W_0(u + \ein t)}}
      \leq  \frac{1}{1 - \frac{1}{1 + c}}
  \end{align}
  Therefore \eqref{bndb} is bounded by a constant depending only on the initial data, using also our assumption \ref{modelpinch_reg} that $\dd1 W_0 = \frac{u\partial_u W_0}{W_0}$ is bounded.

  Now we bound the hessian, which requires the geometry of the warped products.  Thinking of the multiply warped product manifold as a family of equidistant hypersurfaces, the norm of the hessian of a function depending only on the hypersurface is given by
  \begin{align}
    |\nabla \nabla f|^2
    &= \on4 |\nabla f|^{-4}\ip{\nabla |\nabla f|^2}{ \nabla f}^2
      + |\nabla f|^2 |A|^2
  \end{align}
  where $|A|^2$ is the norm of the second fundamental form of the hypersurfaces, which in our case is 
  \begin{align}
    |A|^2 = \on4 q u^{-1} v + \on4 \dim(F) w^{-1}y.
  \end{align}
  Therefore we find,
  \begin{align}
    |\nabla \nabla u|^2
    &\leq C \left(
    |\nabla u|^{-4}|\nabla |\nabla u|^2||\nabla u|^2
    + u^{-1}v |\nabla u|^2 + w^{-1}y |\nabla u|^2 
      \right) \\
    &= C \left(
      u^{-1}v^{-1} |\nabla (uv)|^2
      + v^2 + w^{-1}yuv
      \right)\\
    &\leq C\left(
      uv^{-1}|\nabla v|^2 + \left( 1 + \frac{u}{v}\frac{y}{w} \right)v^2
      \right) \\
    &=C\left(
      \left( \frac{u \partial_u v}{v} \right)^2 + \left( 1 + \frac{u}{v}\frac{y}{w} \right)
      \right) v^2.
  \end{align}
  By \ref{conc:prish_reg}, and by the bound on $\dd1{V}$ in Lemma \ref{approx_soln}, we get the desired inequality.
 \end{proof}

 Now we are in the position to prove that \ref{conc:prish_barrier} continues to hold.

\begin{lemma}\label{barriers_prish}
  Suppose we are in the setting of Lemma \ref{main_prish_estimates}, and items \ref{conc:prish_barrier} and \ref{conc:prish_reg} holds on $[T_1, T_2)$.
  If $D> \berr D$, $u_* < \barr u_*(D, C_{reg})$, $\sigma_* > \berr \sigma_*(D, C_{reg})$, $T_* < \barr T_*(D, u_*, \sigma_*)$ then \ref{conc:prish_barrier} holds at $t = T_2$.
\end{lemma}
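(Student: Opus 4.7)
The plan is a standard barrier argument using the parabolic maximum principle applied to the four quantities $v - V^+$, $V^- - v$, $\hat w - \hat W^+$, $\hat W^- - \hat w$ in the productish region. By the ``initially controlled'' hypothesis these quantities are strictly negative at $t=T_1$, and by the ``barricaded at the left/right'' hypotheses they are strictly negative on the spatial boundary $\{u = \tfrac12\sigma_* t\nu(t)\} \cup \{u=2u_*\}$ of $\Omega_{prish}$ for all $t \in [T_1,T_2]$. Thus if the conclusion fails at $t=T_2$, by continuity there is a first interior point and time $(u_0,t_0)$ with $t_0 \leq T_2$ at which one of the four quantities vanishes. I would argue at that point.

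Consider first $v - V^+$. Since $(u_0,t_0)$ is an interior max, $v = V^+$, $\partial_u v = \partial_u V^+$, $\partial_u^2 v \leq \partial_u^2 V^+$, and $\partial_{t;u}(v - V^+) \geq 0$. Plugging these into the evolution equation \eqref{vevo_basic2} and rearranging yields
\begin{align}
  0 \;\leq\; \partial_{t;u} V^+ - u V^+ \partial_u^2 V^+ + \tfrac12 u (\partial_u V^+)^2 - \ein(1-\tfrac14 V^+)u^{-1} V^+ - \ein\partial_u V^+ + 2\kappa^2 V^+.
\end{align}
The strategy is to show that the right-hand side is strictly negative, giving a contradiction. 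The approximate solution $V$ was engineered (see Appendix \ref{nearly_cnst_pde_sect}, and the asymptotic expansion \eqref{V_expr_smallrho}) so that the analogous expression with $V$ in place of $V^+$ is of order $V^3$ or smaller. I would then expand $V^+ = V + DV^2$ and track the leading contributions of the added $DV^2$ term through each of the summands: the key cancellation will identify a linear-in-$D$ ``pulling down'' term coming from $-\ein u^{-1}V^+$ and from the quadratic $\tfrac14 V^{+2}$ piece, of order $D \cdot V \cdot V$, that dominates the leftover error of order $V^3$ once $D$ is taken large and $V$ is taken small (via Lemma \ref{lemma:V_small}, by choosing $\sigma_*$ large and $u_*$ small). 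The $\kappa^2$ term, which couples to $w$, is controlled by Lemma \ref{lemma:ycontrol_productish} giving $\kappa^2 \lesssim v \cdot y/w \cdot u \lesssim V^2$; so it is an acceptable error. The hessian-like pieces needing $\partial_u^2 V^+$ and $(\partial_u V^+)^2$ are bounded using \ref{modelpinch_reg} applied to $V_0$, which controls the logarithmic derivatives of $V$. The $c_{safe}$ slack in the regularity hypothesis and the freedom in $D$ will provide the final strict sign.

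The same argument applied to $V^- - v$ is symmetric, with the sign of the $DV^2$ perturbation reversed; the dominant term again has the right sign, for the same structural reason. For $\hat w - \hat W^\pm$ the argument is parallel but uses the evolution \eqref{evo_w_in_u2} for $w$, the fact that $\hat W = W_0 \circ U_0$ and the asymptotic expansion \eqref{Wprish_asymptotic_exp}, and the bound on $y = w^{-1}|\nabla w|^2$ from Lemma \ref{lemma:ycontrol_productish}. The cross-coupling (the $\kappa^2$ in the $v$-equation and the $y$ in the $w$-equation) is handled by the a priori interior regularity \ref{conc:prish_reg} assumed to hold on $[T_1,T_2)$ together with Lemma \ref{lemma:ycontrol_productish}.

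The main obstacle I expect is bookkeeping: several error terms in the expansion of $\partial_{t;u}V^+ - uV^+ \partial_u^2 V^+ + \dots$ are formally of the same order as the favorable $DV^2$ term, and one has to check that the sign and constants of the favorable term actually dominate, uniformly in the productish region. Concretely, the critical step is isolating the coefficient of $DV^2$ obtained by linearizing the $v$-equation around $V$, verifying that this coefficient is bounded below by a positive constant (coming from the $\ein u^{-1} v$ reaction term, which has the right sign when $V$ is small), and then choosing $\barr D$ large enough, and $\barr u_*, \berr \sigma_*, \barr T_*$ appropriately, so that all remaining errors — including the $\kappa^2$ coupling, the Hölder-norm errors from \ref{conc:prish_reg}, and the $O(V^3)$ residual from $V$ not exactly solving the $v$-equation — are absorbed. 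Once this sign verification is done for all four barriers, the contradiction closes and (P1) holds up to and including $t=T_2$.
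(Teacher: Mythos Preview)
Your approach is correct and is the same barrier/maximum-principle argument as the paper's, but the paper packages the verification more cleanly and thereby avoids the bookkeeping you anticipate as the main obstacle. Instead of working in the $(u,t)$-coordinate evolution \eqref{vevo_basic2} and expanding $V^+ = V + DV^2$ by hand, the paper works with the manifold heat operator $\square$: from \eqref{EQ:22} together with the hessian bound of Lemma \ref{lemma:ycontrol_productish} one gets $|(\square - \ein u^{-1})v| \leq C v^2$, and from the $\hat w$-evolution together with the $y$-bound one gets $|\square \hat w| \leq C u^{-1} v \hat w$. Both inequalities are now exactly of the form treated in Appendix Lemma \ref{sup_solns}, which shows in one stroke that $(1 \pm DV)Z$ are strict sub/super\-solutions for $D$ large; the maximum principle then finishes. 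The structural reason the barriers work---the $D$-linear term from the reaction $\ein u^{-1}$ dominating $O(V)$ errors---is the same one you identified, and your handling of the $\kappa^2$ and $y$ couplings via Lemma \ref{lemma:ycontrol_productish} matches the paper's. So your plan would succeed, but you can replace the term-tracking in your third paragraph by a single citation of Lemma \ref{sup_solns}.
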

\begin{proof}
  By the evolution equation for $v$, \eqref{EQ:22}, and by our bound on the hessian from Lemma \ref{lemma:ycontrol_productish}, we have
  $
    |(\square - u^{-1} \ein ) v| \leq C v^2
  $
  for a constant $C$ depending only on the initial data $V_0$ and $W_0$.  Also, by the evolution equation for $w$ and our bound on $y$, we have
  $
    |\square \hat w | \leq C u^{-1} v \hat w
  $.

  Therefore, Lemma \ref{sup_solns} shows that, if we chose $D$ sufficiently large, $V^{\pm}$ and $W^{\pm}$ are sub- and supersolutions for the equations satisfied by $v$ and $w$.  The maximum principle proves the claim.
\end{proof}

\subsection{Regularity}
\begin{lemma}\label{regularity_prish}
  Suppose we are in the setting of Lemma \ref{main_prish_estimates}. We can choose $\barr c_{safe}$, $\berr C_{reg}$, $\berr u_*$, and $\berr T_*$ such that if \ref{conc:prish_barrier} holds for $t \in [T_1, T_2)$ then \ref{conc:prish_reg} holds for $t\in [T_1, T_2]$.
\end{lemma}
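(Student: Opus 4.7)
The plan is to apply parabolic Schauder estimates to the PDEs satisfied by the residuals $\tilde v = v - V$ and $\tilde w = w - W$. The barrier assumption \ref{conc:prish_barrier} supplies the $C^0$ closeness $|\tilde v| \lesssim DV^2$ and $|\tilde w| \lesssim DV W$ throughout $\Omega_{prish}$; I will bootstrap this $C^0$ control to $C^{2,\eta}$ control at the cost of a controllable multiplicative constant.

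First I would subtract the equations \eqref{vevo_basic2} and \eqref{evo_w_in_u2} from the corresponding equations solved approximately by $V$ and $W$ (cf.\ Appendix \ref{nearly_cnst_pde_sect}) to get schematic equations
\begin{align}
\partial_{t;u}\tilde v - u v \partial_u^2 \tilde v &= L_1[\tilde v, \tilde w] + \mathcal{E}_V, \\
\partial_{t;u}\tilde w - u v \partial_u^2 \tilde w &= L_2[\tilde v, \tilde w] + \mathcal{E}_W,
\end{align}
where $L_1, L_2$ are first-order linear operators with coefficients bounded in terms of the barriers and of the regularity assumption \ref{modelpinch_reg} on $V_0, W_0$, and $\mathcal{E}_V, \mathcal{E}_W$ are the defects of $V, W$ as approximate solutions. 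Using the expansions \eqref{V_expr_smallrho} and \eqref{Wprish_asymptotic_exp} and \ref{modelpinch_reg}, these defects are $O(V^3)$ and $O(V^2 W)$ respectively, with corresponding $C^\eta$ bounds at the scale $u/2$ in $(du)^2$.

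Next, around each point $p = (u_0, t_0) \in \Omega_{prish}$, I rescale so that $u_0 = 1$ and the time scale matches $t_0$; after further rescaling the arclength by $\sqrt{t\nu}$ and time by $t$, the principal part becomes uniformly parabolic with coefficients of controlled Hölder norm, placing the equation in the ``nearly constant'' regime of Appendix \ref{nearly_cnst_pde_sect}. I would then invoke the interior Schauder estimate of Section \ref{regularity} on a parabolic ball of unit rescaled radius. If the ball lies in the interior of $\Omega_{prish}$, Schauder gives
\begin{align}
\twoeta{\tilde v}{u/2}{(du)^2} \le C_{Sch}\Bigl(\sup|\tilde v| + \twoeta{\mathcal{E}_V}{u/2}{(du)^2}\Bigr) \le C_{Sch}(D V^2 + C'V^3),
\end{align}
and dividing by $V$ yields $\twoeta{\tilde v}{u/2}{(du)^2}/V \le C_{Sch} DV(1 + C' V/D)$, which is at most $C_{reg} D V$ once $C_{reg} \ge 2C_{Sch}$ and $u_*, \sigma_*$ are chosen (via Lemma \ref{lemma:V_small}) so that $V$ is small. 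If the ball touches the initial time slice, the initial regularity \eqref{prish_regularity_ineq_v} with the smaller constant $c_{safe} C_{reg} D V$ absorbs the Schauder loss at the bottom of the cylinder. If the ball reaches the lateral boundaries $\{u \sim u_*\}$ or $\{\sigma \sim \sigma_*\}$, the barricading hypotheses at the left and right of the productish region supply the required $C^0$ input there, and the same Schauder estimate applies after covering. The argument for $\tilde w$ is identical, using that $y/w$ is controlled by Lemma \ref{lemma:ycontrol_productish}.

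The main obstacle is arranging the constants: the initial data is assumed with the small constant $c_{safe} C_{reg} D V$, whereas the Schauder inequality picks up a universal factor $C_{Sch}$, and we want to close the argument at constant $C_{reg} D V$. This forces a careful ordering of the quantifiers: $c_{safe}$ must be chosen small enough (depending only on $C_{Sch}$) to survive the propagation from the initial time; $C_{reg}$ must be chosen large compared to $C_{Sch}$ to absorb the interior Schauder constant; and then $u_* \le \overline{u}_*(D, C_{reg})$ and $T_* \le \overline{T}_*$ must be made small so that the forcing term $C' V^3$ is dominated by $C_{Sch} D V^2$, i.e. so that $V \le D/(2C')$ throughout $\Omega_{prish}$. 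A minor technical point is that the principal coefficient $uv$ is small; this is not an obstruction once one rescales to arclength, where the diffusion coefficient becomes order one, but it requires verifying that the version of Bamler's Schauder statement cited in Section \ref{regularity} applies in the geometric normalization we use.
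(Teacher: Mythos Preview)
Your overall strategy---apply interior Schauder estimates to the residuals $\tilde v = v - V$ and $\tilde w = w - W$, using the barrier bounds from \ref{conc:prish_barrier} as $C^0$ input---is the paper's strategy as well. However, there is a genuine gap in the execution.

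The equation \eqref{vevo_basic2} for $v$ contains the linear terms $\ein u^{-1}v + \ein\,\partial_u v$, so the equation for $\tilde v$ contains $\ein u^{-1}\tilde v + \ein\,\partial_u \tilde v$ inside your $L_1$. These are \emph{not} coefficients ``bounded in terms of the barriers'' in the sense Schauder requires. At the natural parabolic scale---space $\sim u_0$ in the $u$-variable, time $\sim u_0^2/(u_0 V) = u_0/V$, which is what makes the principal part $uv\,\partial_u^2$ unit size---each of these terms carries a dimensionless coefficient of order $1/V$. Since $V$ is small throughout the productish region, the Schauder constant blows up and the estimate does not close. (The rescaling you describe, ``arclength by $\sqrt{t\nu}$ and time by $t$'', is the tip-region scaling; it does not make the productish equation uniformly parabolic either, and the same $1/V$ obstruction persists in arclength coordinates, now appearing in the first-order coefficient $\ein(uv)^{-1/2}$.)

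The paper's proof removes this obstruction by the change of variables $\hat u = u + \ein t$, $\hat v = (u/\hat u)v$. The identity
\[
\partial_{t;u}v - \ein\,\partial_u v - \ein u^{-1}v \;=\; u^{-1}\hat u\,\partial_{t;\hat u}\hat v
\]
absorbs both large linear terms into the new time derivative, and simultaneously turns the approximate solution into the time-independent profile $\hat V = V_0(\hat u)$. After dividing the resulting equation by $\hat u_1\hat V_1$ at a fixed point $(u_1,t_1)$, the equation for $\hat v/\hat V_1$ has coefficients that are uniformly bounded (in terms of the barriers and the assumption \ref{modelpinch_reg}) on a parabolic ball of space-radius $\sim u_1$ in $\hat u$ and time-radius $\sim u_1^2/(\hat u_1\hat V_1)$; standard interior Schauder then gives the $C^{2,\eta}$ bound $\leq CDV$ on $\hat v - \hat V$, and converting back to $v - V$ introduces only bounded factors. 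This change of variables is the essential missing ingredient in your sketch.
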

\begin{proof}
We prove this theorem by applying parabolic regularity to the equations solved by $v$ and $w$ in terms of $u$.  From \eqref{vevo_basic2} and \eqref{evo_w_in_u2}, we have the equations
\begin{align}
  \partial_{t; u} v - \ein \partial_u v - \ein u^{-1} v
  &= \left( u v \right) \partial^2_u v - \oh u \left( \partial_u v \right)^2 \\
  &+ a_1 v \partial_u v + a_2 u^{-1} v^2 + a_3 \left( \frac{vu}{\hat w - \ein_F t} \right)^2 \left( \partial_u \hat w \right)^2, \\
  \partial_{t; u} \hat w - \ein \partial_u \hat w 
  &= \left( u v \right) \partial_u^2  \hat w + b_1 v \partial_u \hat w + b_2 \left( \frac{vu}{\hat w - \ein_F t} \right) \left( \partial_u \hat w \right)^2,
\end{align}
where $a_1, a_2, a_3$ and $b_1, b_2$ are constants.

We let $\hu = u + \ein t$, $\hat v = \hu^{-1}u v$, and similarly $\hat V = \hu^{-1} u V = V_0(\hu)$.  Calculate,
\begin{align}
  v &= u^{-1} \hu \hat v, \\
  \partial_u v &= - \ein t u^{-2} \hat v + u^{-1} \hu \partial_\hu \hat v, \\
  \partial_u^2 v &=
  2 \ein t u^{-3} \hat v
  - 2 \ein t u^{-2} \partial_\hu \hat v
                   + u^{-1} \hu \partial_\hu^2 \hat v.
\end{align}
Also note that
\begin{align}
  \partial_{t; u} v - \ein \partial_u v - \ein u^{-1} v
  = u^{-1} \hu \partial_{t; \hu} \hat v
    \quad \text{and} \quad
  \partial_{t;u} w - \ein \partial_u  w + \ein_F 
  = \partial_{t; \hu} \hat w.
\end{align}
This lets us derive the following equation for $\hat v$: for some constants
$c_1, c_2, c_3, c_4$,
\begin{align}
  \partial_{t;\hu} \hat v
  &=  \hu\hat v  \partial_\hu ^2 \hat v \\
  &+ c_1 t u^{-2} \hat v^2
    + c_2 t u^{-1} \hat v \partial_\hu \hat v\\
  &+ c_3 \hu (\partial_\hu \hat v)^2
    + c_4 u \hu^{-1}\left( \frac{ v u }{w} \right)^2 (\partial_u \hat w)^2.
\end{align}
We also derive the evolution for $\hat w$:
\begin{align}
  \partial_{t;w} \hat w
  &= (\hu \hat v) \partial_\hu^2 \hat w 
    + b_1 (u^{-1}\hu \hat v) \partial_\hu \hat w
    + b_2 \left( \frac{\hu \hat v}{w} \right) \left( \partial_\hu \hat w\right)^2.
\end{align}

Now, let $u_1, t_1$ be any point in the productish region, let $\hu_1 = u_1 + \mu t_1$, $\hat v_1 = \hat v(u_1, t_1)$, and $\hat w_1 = \hat w(u_1, t_1)$.  Divide through in both equations by $\hu_1 \hat v_1$.  Also divide the equation for $\hat v$ by $\hat V_1 = \hat V(u_1, t_1) = V_0(\hu_1)$ and the equation for $\hat w$ by $\hat W_1 = \hat W(u_1, t_1) = W_0(\hu_1)$.
\begin{align}
  \frac{1}{\hu_1 \hat V_1} \partial_{t;\hu} \left( \frac{\hat v}{\hat V_1} \right)
  &= \left[ \frac{\hu \hat v}{\hu_1 \hat V_1}  \right] \partial_\hu^2 \hat v \\
  &+ c_1 \left[ \frac{t}{\hu_1} \frac{\hat v}{\hat V_1}\frac{u_1^2}{u^2} \right]
    u_1^{-2}  \left( \frac{\hat v}{\hat V_1} \right)
    + c_2 \left[ \frac{t}{\hu_1} \frac{\hat v}{\hat V_1}\frac{u_1}{u} \right]
    u_1^{-1}\partial_\hu \left( \frac{\hat v}{\hat V_1} \right) \\
  &+ c_3 \left[ \frac{\hu}{\hu_1} \right]
    \left(\partial_\hu \left( \frac{\hat v}{\hat V_1}\right)  \right)^2
    + c_4 \left[\frac{v}{v_1} \frac{u^2}{\hu \hu_1}\frac{w_1^2}{w^2}\frac{v}{v_1}\right]
     \left(\partial_u \left( \frac{w}{w_1} \right) \right)^2
\end{align}
\begin{align}
  \frac{1}{\hu_1 \hat V_1}\partial_{t;\hu} \left( \frac{\hat w}{\hat W_1} \right)
  &= \left[ \frac{\hu \hat v}{\hu_1 \hat V_1} \right]
    \partial_\hu^2 \left( \frac{\hat w}{\hat W_1} \right) \\
  &+ b_1 \left[ \frac{\hu \hat v}{\hu_1 \hat V_1} \right]
    u^{-1}\partial_\hu \left( \frac{\hat w}{\hat W_1} \right)
    + b_2 \left[ \frac{\hu}{\hu_1} \frac{\hat v}{ \hat V_1} \frac{\hat w}{w} \frac{\hat w}{\hat W_1} \right] \left( \partial_\hu \left( \frac{\hat w}{\hat W_1} \right) \right)^2
\end{align}
We will apply interior parabolic regularity to these equations, in the region
\begin{align}
  \Xi = \{(\hu,t) : (\hu, t) \in
  [\hu_1 - \oh u_1, \hu_1 + \oh u_1]
  \times
  [t_1 - \max(T_1, t_1 - \oh \hu_1^{-1}v_1^{-1}u_1^2), t_1],
  \}
\end{align}
which is a parabolic ball around $(\hu_1, t_1)$ of radius $\oh u_1$, if we were to scale time to $\hat t = \hu_1 \hat V_1 t$.  We have written the equation so that the factors in square brackets are smooth functions of $u$, $t$,  $\frac{\hat v}{\hat V_1}$, and  $\frac{\hat w}{\hat W_1}$ in this parabolic ball- this requires the knowledge that $v$ and $w$ are trapped between our barriers, so for example $\frac{w}{w_1}$ is not too far from 1 within $\Xi$.  The important thing about this smoothness is that we have bounds on relevant quantities (the $C^{2, \eta}$ norms of the functions) are not dependent on $u_1$ or $t_1$.

All in all, we can apply regularity to bound the $\hu$ derivatives of the functions $\frac{\hat v}{\hat V_1} - \frac{\hat V}{\hat V_1}$ and  $\frac{\hat w}{\hat W_1} - \frac{\hat W}{\hat W_1}$.  Our barriers tell us that the $C^0$ norm for both of these, in $\Xi$, is bounded by $C D V(u_1, t_1)$, where $C$ depends on on the initial functions only.   The regularity theory implies, for some bigger constant $C$ we have,
\begin{align}
  \frac{\twoeta{ \hat v - \hat V }{r_0u}{(du)^2}}{\hat V}
  +
  \frac{\twoeta{ \hat w - \hat W }{r_0u}{(du)^2}}{\hat W}
  \leq
  CDV
\end{align}
  Now we convert this back to a statement in terms of the hatless functions $v$, and $w$.  For instance, just using the definition of the quantities, calculate
 \begin{align}
   \frac{u}{V}|\partial_u (v-V)|
  &= \frac{u}{V}|\partial_u \left( \frac{ u + \ein t}{u} (\hat v - \hat V) \right)| \\
  &\leq \frac{u}{V} \ein t u^{-2} |\hat v - \hat V|
    + \frac{u}{V}|\partial_u (\hat v - \hat V)| \\
  &=
    \ein \frac{t}{\hu}\frac{1}{V} | v -  V|
    +
    \frac{u}{\hu}\frac{u}{\hat V}|\partial_u (\hat v - \hat V) |.
 \end{align}
 Now using our barriers for the first term, and using the bound for regularity on the second term, as well as $\frac{t}{\hu} \leq 1$ and $\frac{u}{\hu} < 1$,
 \begin{align}
   \frac{u}{V}|\partial_u (v-V)|
   &\leq  CDV.
 \end{align}
Performing similar calculations, we can make the bounds we need.
\end{proof}
\subsection{Corollaries of control}\label{corollaries_productish}
The following corollaries state some precise results which hold for a metric satisfying the conclusions of Lemma \ref{main_prish_estimates}.
The corollaries above are just a matter of checking various derivatives and bounds.  For Corollary \ref{prish_curvature_control} one can use the calculations of the curvatures for warped products in Appendix \ref{curvatures_our_coordinates}.

First we rephrase our results in terms of how close the metric is to a cylinder.
\begin{corollary}\label{prish_asymptotic_cylinder}
  There is $C>0$ depending on the initial data and the parameters of control such that the following holds.
  Suppose that $g(t)$ is controlled in the productish region at time $t = t_\#$.  For $u_\#$ such that $(u_\#, t_\#)$ is in the productish region, let
  $$
  g_{cyl} = dx^2 + g_{S^q} + \frac{W_{prish}(u_\#, t_\#)}{u_\#} g_{F}.
  $$
  Let $L$ be given such that $\epsilon = L\sqrt{V_{prish}(u_\#, t_\#)} < 1$.  There is a map $\Phi: [-L, L] \times S^q \times F \to M$ which is the identity on the second two factors such that $u(\Phi(0, \cdot, \cdot), t_\#) = u_\#$ and
  \begin{align}
    \left| g_{cyl} - \Phi^* \left(u_{\#}^{(-1)} g(t_\#) \right) \right|_{C^2([-L, L] \times S^q \times F)} \leq \epsilon C
  \end{align}
\end{corollary}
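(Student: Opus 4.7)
The plan is to construct $\Phi$ explicitly by solving an ODE in the $x$-variable so that the $dx^2$ block of the rescaled metric is matched exactly. Writing $g(t_\#)$ in $u$-coordinates as $\tfrac{du^2}{uv} + u g_{S^q} + w g_F$, the rescaled metric is
\[
u_\#^{-1} g(t_\#) = \frac{du^2}{u_\# u v} + \frac{u}{u_\#} g_{S^q} + \frac{w}{u_\#} g_F.
\]
I would take $\Phi(x,\theta,\phi) = (u(x),\theta,\phi)$, where $u(\cdot)$ solves $u(0) = u_\#$ and $du/dx = \sqrt{u_\# u \, v(u,t_\#)}$; this makes the pullback of the first block identically $dx^2$. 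Then the entire discrepancy is
\[
g_{cyl} - \Phi^*\!\bigl(u_\#^{-1}g(t_\#)\bigr) = \bigl(1 - u(x)/u_\#\bigr) g_{S^q} + u_\#^{-1}\bigl(W_{prish}(u_\#, t_\#) - w(u(x), t_\#)\bigr) g_F,
\]
and it remains to bound this and its first two $x$-derivatives in $C^2(g_{cyl})$ by $\epsilon C$.

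First I would control $u(x)/u_\# - 1$. By Lemma \ref{lemma:V_small} together with \ref{conc:prish_barrier}, $v(u, t_\#)$ is approximately $V_{prish}(u_\#, t_\#)$ on a neighborhood of $u_\#$, so the ODE gives $du/dx \approx \sqrt{u_\# V_{prish}(u_\#, t_\#)}$ and hence $|u(x)/u_\# - 1| \lesssim L\sqrt{V_{prish}(u_\#, t_\#)} = \epsilon$ on $[-L,L]$; in particular $u(x)$ stays in the productish region, so the map is well defined. Derivatives of $u/u_\#$ follow by differentiating the ODE and using the $u$-regularity of $v$ supplied by \ref{conc:prish_reg}. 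For the coefficient of $g_F$ I would split
\[
w(u(x), t_\#) - W_{prish}(u_\#, t_\#) = \bigl(w - W_{prish}\bigr)(u(x), t_\#) + \bigl(W_{prish}(u(x),t_\#) - W_{prish}(u_\#, t_\#)\bigr).
\]
The first term is bounded by $C_{reg} D V_{prish}(u_\#, t_\#) \, W_{prish}$ via \ref{conc:prish_barrier}; the second, since $W_{prish}(u,t_\#) = W_0(u + \ein t_\#) - \ein_F t_\#$, is bounded by Taylor expansion and the regularity assumption \ref{modelpinch_reg} on $W_0$ (which gives $|u \partial_u W_0| \lesssim W_0$) by $O(\epsilon W_0(u_\# + \ein t_\#))$. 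Assumption \ref{w_big} forces $W_0/W_{prish}$ to be bounded by a constant (trivially when $\ein_F \leq 0$), so after dividing by $u_\#$ and remembering that the $g_F$ block of $g_{cyl}$ itself has size $W_{prish}/u_\#$, the relative error measured in $g_{cyl}$ is $O(\epsilon)$. The $x$-derivatives of $w(u(x),t_\#)/u_\#$ are handled analogously, combining the $u$-derivative bounds from \ref{conc:prish_reg} with the ODE for $u(x)$.

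The argument is essentially bookkeeping once the ODE for $\Phi$ is in place. The main thing to keep straight is the interplay between the two small parameters: the productish control yields relative errors in $v$ and $w$ of size $DV_{prish}(u_\#, t_\#)$, whereas the sought-after bound scales like $\epsilon = L\sqrt{V_{prish}(u_\#, t_\#)}$. These are reconciled by absorbing a prefactor into the constant $C$ and by observing that the $W_{prish}/u_\#$ weight in the $g_{cyl}$-norm of $g_F$ cancels exactly against the $u_\#^{-1}$ rescaling of $w$. I do not anticipate any serious obstacle beyond carefully tracking these scalings.
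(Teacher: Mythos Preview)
Your approach is correct and is exactly the ``checking various derivatives and bounds'' that the paper has in mind; the paper gives no detailed proof beyond that remark, and your ODE construction of $\Phi$ together with the splitting of the $g_F$ error is the natural way to carry it out. One small caveat: your reconciliation of the $O(DV_{prish})$ barrier error with the target bound $\epsilon = L\sqrt{V_{prish}}$ by ``absorbing a prefactor into $C$'' only works once $L$ is bounded below (say $L\ge 1$), since at $x=0$ the $g_F$ discrepancy is of order $V_{prish}$ independently of $L$ --- this is harmless because the corollary is only meaningful for large $L$, but the statement as written is slightly loose on this point and your write-up should make the implicit assumption explicit.
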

We also state a result in terms of the curvature of the metrics.
\begin{corollary}\label{prish_curvature_control}
  Suppose $g(t)$ is controlled in the productish region.
  Then there is a constant $C$ depending on the initial data and the parameters of control, such that for all points in the productish region the curvature of $g(t)$ satisfies
  \begin{align}
    \Rm
    &= u^{-1} \left( u g_{S^q} \KN u g_{S^q} \right) + w \Rm_{g_F} + \Rm_{warp} \\
    &= u \Rm_{g_{S^q}} + w \Rm_{g_F} + \Rm_{warp}
  \end{align}
  where $|\Rm_{warp}| \leq C u^{-1} v$.
\end{corollary}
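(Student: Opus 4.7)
The plan is to use the explicit curvature formulas for doubly warped products collected in Appendix \ref{curvatures_our_coordinates} and show that, after subtracting the two intrinsic pieces $u^{-1}(u g_{S^q} \KN u g_{S^q})$ and $w\Rm_{g_F}$, every remaining component of the Riemann tensor has absolute value at most $C u^{-1} v$. First I would work in an adapted orthonormal frame at each point: a unit radial vector $N$ in the direction of $\nabla u$, a local frame $\{e_i\}$ tangent to $u g_{S^q}$, and a local frame $\{f_\alpha\}$ tangent to $w g_F$. In such a frame the sectional curvatures of a doubly warped product split into intrinsic pieces on each fiber factor (of sizes $1/u$ and $\sec_{g_F}/w$ respectively), plus correction terms built from the scalars $v$, $y = w^{-1}|\nabla w|^2$, the inner product $\langle \nabla u, \nabla w\rangle/(uw)$, the hessian of $u$ along $N$, and $\partial_u w$, $\partial_u^2 w$.

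The sphere-plane intrinsic contribution is precisely $u^{-1}(u g_{S^q} \KN u g_{S^q})$ (this matches the alternate form $u \Rm_{g_{S^q}}$ quoted in the statement under the usual normalization $g_{S^q} \KN g_{S^q} = 2\Rm_{g_{S^q}}$ for the unit-curvature sphere), and $w \Rm_{g_F}$ recovers the $F$-plane intrinsic piece; everything else is absorbed into $\Rm_{warp}$. My task is then to bound each such correction term by $C u^{-1} v$ in the adapted frame.

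The key bounds needed are all already available from the hypotheses of Lemma \ref{main_prish_estimates}. The barrier $v \leq V^+$ of \ref{conc:prish_barrier}, together with Lemma \ref{lemma:V_small}, gives $v$ uniformly small in the productish region, so terms of the form $v/u$ are bounded by $C u^{-1} v$ directly. Lemma \ref{lemma:ycontrol_productish} provides $u y /(v w) \leq C$, hence $y/w \leq C u^{-1} v$; by Cauchy-Schwarz $|\langle \nabla u, \nabla w\rangle|/(uw) \leq (v/u)^{1/2}(y/w)^{1/2} \leq C u^{-1} v$. The same lemma bounds $|\nabla \nabla u| \leq C v$, which controls the radial-sphere sectional curvatures (of schematic form $(\nabla^2 u)/u$). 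For the radial-$F$ sectional curvatures, which involve $\partial_u w$ and $\partial_u^2 w$, the regularity \ref{conc:prish_reg} together with the $C^{2,\eta}$ control of $W_0$ from \ref{modelpinch_reg} yields estimates in the same form.

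Once each piece of $\Rm_{warp}$ is shown to satisfy a bound of the form $\leq C u^{-1} v$, summation gives $|\Rm_{warp}| \leq C u^{-1} v$. The argument is essentially mechanical; no single step is an analytic obstacle, and the only genuine care required is bookkeeping—tracking which factors of $u$ and $w$ appear in each correction term so that every one of them uniformly reduces to the stated form $C u^{-1}v$, and reconciling the two equivalent expressions in the statement via the Kulkarni-Nomizu convention.
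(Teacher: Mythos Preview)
Your proposal is correct and is exactly the approach the paper indicates: the paper's entire argument for this corollary is the sentence ``For Corollary \ref{prish_curvature_control} one can use the calculations of the curvatures for warped products in Appendix \ref{curvatures_our_coordinates},'' and your sketch faithfully carries that out using the bounds from Lemma \ref{lemma:ycontrol_productish} and the regularity \ref{conc:prish_reg}. The only minor remark is that the appendix already expresses the radial--sphere curvature $K_1$ as $-\tfrac12\partial_u v$, so you can bound it directly via the regularity $|u\partial_u v| \leq Cv$ rather than detouring through $|\nabla\nabla u|$; both routes work.
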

One further basic statement about the curvature is the following.
\begin{corollary}\label{prish_curvature_control_time}
  Suppose $g(t)$ is controlled in the productish region.  There is a constant $C$ depending on the initial data, such that the following holds for all points in the productish region.
  \begin{itemize}
  \item 
    If $\ein_F = 0$, suppose additionally that $(F, g_F)$ is flat.  Then $|\Rm| \leq C \sigma_*^{-1}(t \nu(t))^{-1}$.
  \item 
    If $\ein_F \leq 0$, suppose additionally that $(F, g_F)$ is flat or $W_0(u) \geq c' u$ for some $c' > 0$.  Then $|\Rm| \leq Cu^{-1}$.
  \end{itemize}
\end{corollary}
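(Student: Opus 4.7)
The plan is to invoke Corollary \ref{prish_curvature_control}, which already gives the decomposition
\[
\Rm \;=\; u^{-1}\bigl(u g_{S^q} \KN u g_{S^q}\bigr) + w\,\Rm_{g_F} + \Rm_{warp}, \qquad |\Rm_{warp}| \leq C u^{-1}v.
\]
Measured in the ambient metric $g$, the first summand has norm of order $u^{-1}$, and the middle term is bounded by $C w^{-1}$ when $F$ has nonzero curvature and vanishes outright when $(F, g_F)$ is flat. Lemma \ref{lemma:V_small} guarantees that, after choosing $\sigma_*$ large and $u_*$ small, $V$ is uniformly small throughout $\Omega_{prish}$; since $v$ lies between the barriers $V^{\pm} = (1 \pm DV) V$, it too is uniformly bounded. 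Thus $|\Rm_{warp}| \leq C u^{-1}$ in all cases, and the whole question reduces to showing the middle term is $Cu^{-1}$ (for case 2) or vanishes (for case 1).

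For the first case, $\ein_F = 0$ together with flatness of $F$ kills the middle term and leaves $|\Rm| \leq Cu^{-1}$. The defining inequality $\sigma > \sigma_*$ of the productish region, i.e.\ $u > \sigma_*\, t\nu(t)$, immediately upgrades this to
\[
|\Rm| \;\leq\; C \sigma_*^{-1} (t\nu(t))^{-1},
\]
as claimed.

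For the second case, if $F$ is flat the argument is identical to case 1 and already yields the stronger bound $|\Rm| \leq Cu^{-1}$. If instead $W_0(u) \geq c' u$, then I would use the lower productish barrier to write $\hat w \geq (1 - DV)\hat W = (1 - DV)\,W_0(u + \ein t)$; after arranging $DV \leq 1/2$ via Lemma \ref{lemma:V_small}, this gives $\hat w \geq \tfrac{c'}{2}(u + \ein t) \geq \tfrac{c'}{2}u$. Since $\ein_F \leq 0$, we have $w = \hat w - \ein_F t \geq \hat w \geq \tfrac{c'}{2} u$, hence $|w\,\Rm_{g_F}| \leq C w^{-1} \leq C' u^{-1}$. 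Combining the three pieces produces $|\Rm| \leq C u^{-1}$.

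There is no serious obstacle here; everything rests on Corollary \ref{prish_curvature_control} plus the uniform smallness of $V$ on $\Omega_{prish}$. The only point that requires any care is keeping track of the sign of $\ein_F$ and the hypothesis $W_0(u) \geq c' u$ so that the lower productish barrier on $\hat w$ actually transfers to a lower bound $w \gtrsim u$, which is what makes the $F$-curvature term harmless.
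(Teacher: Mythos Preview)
Your proof is correct and follows the same route as the paper: invoke Corollary \ref{prish_curvature_control}, bound the sphere and warp pieces by $Cu^{-1}$ using the uniform smallness of $V$, then use $u > \sigma_* t\nu(t)$ and case analysis on the $F$-curvature term. You are in fact slightly more careful than the paper in explicitly passing through the barrier $\hat w \geq \hat W^-$ before using $W_0(u+\ein t) \geq c'(u+\ein t)$. One small point: the paper reads each bullet as holding for \emph{all} values of $\ein_F$, with the ``If $\ein_F = 0$ \ldots'' and ``If $\ein_F \leq 0$ \ldots'' clauses being extra hypotheses needed only in those degenerate cases; so its proof also dispatches $\ein_F > 0$ (via assumption \ref{w_big}) and $\ein_F < 0$ (via $-\ein_F t > 0$), which you may want to add for completeness.
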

\begin{proof}
In Corollary \ref{prish_curvature_control}, since $V$ is uniformly bounded in the productish region, we get $|\Rm_{warp}| \leq C u^{-1}$.  We also have $|u \Rm_{g_{S^q}}| \leq C u^{-1}$.  (In fact, it is exactly $C_q u^{-1}$ for some constant $C_q$ depending on $q$.)  Since in the productish region, we have $u \geq t \nu(t)\sigma_*$, this proves that the terms $|u \Rm_{g_{S^q}}|$ and $|\Rm_{warp}|$ from Corollary \ref{prish_curvature_control} satisfy both of the desired bounds.

We have $|w \Rm_{g_F}| = C_F w^{-1}$.  If $\ein_F < 0$, then $W_0(u + \ein t) - \ein_F t \geq (-\ein_F) t$ and so we get the first conclusion.  If $\ein_F = 0$ and $(F, g_F)$ is flat, then $C_F = 0$ so we get both conclusions.  If $\ein_F \leq 0$ and $W_0(u + \ein t) \geq c' u$, then $W_0(u + \ein t) - \ein_F t \geq c' u$ so we get the second conclusion.  If $\ein_F > 0$ then the assumption \ref{w_big} tells us $W_0(u + \ein t) - \ein_F t \geq (1 + c) \ein^{-1}\ein_F u$, so we get both conclusions.
\end{proof}




\section{Control in the tip region}\label{section:tip}
We are still considering a Ricci flow of model pinches.
Recall $\nu(t) = V_0(\ein t)$ and $\omega(t) = W_0(\ein t)$.  Section \ref{examining_section} shows that our approximate solutions in the productish region work up to where $\sigma = \frac{u}{t\nu(t)}$ stays very large.  In order to examine the solution where $\sigma$ is bounded, we will rescale the metric $g$ by $\alpha = t \nu(t)$: set $\tilde g = \alpha^{-1} g$.  Instead of scaling $w$ by $\alpha(t)$ as well, we will work with the function $\bar w = \omega^{-1} (w + \ein_F t) = \omega^{-1}\left(\alpha(t)\tilde w + \ein_F t\right) = \omega^{-1}\hat w$. We also introduce a rescaled time derivative $\partial_{\theta} = \alpha \partial_t$.

The tip region will be, for a constant $\zeta_*$ to be determined,
\begin{align}
  \Omega_{tip} = \left\{
  (u, t) : \frac{u}{t\nu(t)} < \frac{\zeta_*}{\nu^{1/2}}
  \right\}.
\end{align}
In this section, we find the approximate solutions for $v$ and $\bar w$ in the tip region
\footnote{We use the same notation $V$ and $V^\pm$ here for different functions than the barriers in Section \ref{section:productish}.  In the following section, where we need to refer to both the functions defined here and the functions from Section \ref{section:productish}, we will use e.g. $V_{tip}$ for the function defined here and $V_{prish}$ for the function defined there.}
:
\begin{align}
  V \defeq V_{\Bry}(\sigma) + \beta V_{\Pert}(\sigma), \\
  \bar W \defeq 1 + (\log \omega)_{\theta} W_{\Pert}(\sigma),
\end{align}
where $\beta = \alpha'$, $(\log \omega)_{\theta} = \partial_{\theta}(\log \omega)$, and $V_{\Bry}$, $V_{\Pert}$, and $W_{\Pert}$ are functions which are to be defined.  In Lemmas \ref{vsupsoln} and \ref{wsupsoln} we define functions $V^{\pm}$ and $W^{\pm}$, which satisfy $V^- < V < V^+$ and $\bar W^- < \bar W < \bar W^+$, and will serve as barriers for $v$ and $\bar w$.  These functions depend on constants $\epsilon_v$, $\epsilon_w$, and $\delta$.  The barriers $V^-$ and $V^+$ are carefully defined so that if $V^- < v < V^+$ then $\tilde L = \sigma^{-1}(1 - \on4 v)$ is bounded near $\sigma = 0$.  

Here we make definitions similar to Definition \ref{productish_barricaded}.  We use the notation $x^{a, b} = x^a(1 + x)^{b-a}$; which is approximately $x^a$ near $x = 0$ and $x^b$ near $x = \infty$.  
\begin{definition}\label{tip_barricaded}
  We say that $g(t)$ is \emph{barricaded (by the tip barriers)} at a point in space-time if it satisfies
  \begin{align}
    V^- < v < V^+, \quad W^- < w < W^+ \label{tip_barrier_ineq}
  \end{align}
  at that point.
  
  We say that $g(t)$ is \emph{initially controlled in the tip region}
  if at $t = T_1$
  for all points satisfying $\sigma \leq 2\zeta_* \nu^{-1/2}(T_1)$
  it is barricaded,
  and
  \begin{align}
    \twoeta{v-V}{1}{ (d\sigma^{1/2, 1})^2}
     &\leq c_{safe}C_{reg}\left( \delta^{-1}\epsilon_{v} \right)\nu^{1/2}\sigma^{1, -1},
      \label{v_reg}\\
    \twoeta{w-W}{1}{ (d\sigma^{1/2, 1})^2}
    &\leq c_{safe}C_{reg}\epsilon_{w}\nu^{1/2}.
  \end{align}
  
    We say that $g(t)$ is \emph{barricaded at the right of the tip region} if it is barricaded for all points satisfying $\zeta_* \nu^{-1/2} < \sigma < 2 \zeta_* \nu^{-1/2}$.

    We say that $g(t)$ is \emph{controlled in the tip region} if
  \begin{enumerate}[label=(T\arabic{*}), ref=(T\arabic{*})]
  \item\label{conc:tip_barrier}
    For all points in $\Omega_{tip}$, the solution is barricaded.
  \item\label{conc:tip_reg_noncompact}
    For all points in $\Omega_{tip}$ with $\sigma \geq 1$ and
    \begin{align}
      \twoeta{v-V}{1}{ (d\sigma^{1/2, 1})^2}
      &\leq C_{reg}\left( \delta^{-1}\epsilon_{v} \right)\nu^{1/2}\sigma^{1, -1},\\
      \twoeta{w-W}{1}{ (d\sigma^{1/2, 1})^2}
      &\leq C_{reg}\epsilon_{w}\nu^{1/2}.
    \end{align}
  \end{enumerate}  
\end{definition}
\begin{remark}
  $V$ satisfies $(1/C)\sigma^{0,-1} < V < C\sigma^{0,-1}$ and $V^+ - V^-$ satisfies $V^+ - V^- \leq C \sigma^{-1, 0}$.  This is the reason for the factor $\sigma^{1, -1}$ in \eqref{v_reg}.  To understand $(d \sigma^{1/2,1})^2$, know that for the Bryant soliton, $d(\sigma^{1/2, 1})$ is uniformly comparable to the arc length element- the radius $\sqrt{\sigma}$ grows like distance near $\sigma = 0$ and grows like the square root of distance as $\sigma \to \infty$.  So, this metric is comparable to the Bryant soliton metric.  Furthermore, $(\delta^{-1} \epsilon_v)$ controls the separation between the barriers for $v$, whereas $\epsilon_w$ controls the separation between the barriers for $w$- this explains is the reason for the appearance of those constants.  \end{remark}

The following is the main result of this section.

\begin{lemma}\label{main_tip_estimates}
  There is a $c_{safe}$ such that if we let $C_{reg} > \berr C_{reg}$, $\epsilon_v$, $\epsilon_w < \barr \epsilon_w(\epsilon_v)$, $\zeta_*$, and $\delta < \barr \delta(\zeta_*)$, there is a $T_*$ depending on all other constants with the following property.

  Suppose $0 < T_1 < T_2 < T_*$ and $g(t)$ is defined on $[T_1, T_2]$, initially controlled in the tip region, and barricaded at the right of the tip region.  Then $g(t)$ is controlled in the tip region.
\end{lemma}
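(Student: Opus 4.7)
The plan is to mimic the bootstrap structure of Lemma \ref{main_prish_estimates}. Because the initial data is strictly barricaded and the regularity estimates hold at $t = T_1$ with the safety factor $c_{safe} < 1$, both conclusions \ref{conc:tip_barrier} and \ref{conc:tip_reg_noncompact} hold strictly there. Taking $T_2$ as the infimum of times at which one of them fails, we may assume both hold on $[T_1, T_2)$ and need only upgrade them to hold at $t = T_2$. The first task is to rewrite the evolution equations \eqref{vevo_basic2} and \eqref{evo_w_in_u2} as a coupled quasilinear parabolic system for $v$ and $\bar w = \omega^{-1}(w + \ein_F t)$ in the rescaled variables $(\sigma, \theta)$ with $\partial_\theta = \alpha \partial_t$. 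At leading order in $\nu$ this system admits the Bryant profile $V_{\Bry}$ paired with the constant $1$ as a stationary solution; the corrections $\beta V_{\Pert}$ and $(\log\omega)_\theta W_{\Pert}$ are defined precisely so that $V$ and $\bar W$ solve the linearized equations at the next order in $\nu$, leaving $O(\nu^{3/2})$ residuals that can be made arbitrarily small by shrinking $T_*$.

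Next, I would execute a maximum-principle argument analogous to Lemma \ref{barriers_prish} to upgrade \ref{conc:tip_barrier}. The functions $V^+, \bar W^+$ are to be verified as strict supersolutions and $V^-, \bar W^-$ as strict subsolutions of the coupled system on $\Omega_{tip}$. The barrier gap for $v$ is of order $\delta^{-1}\epsilon_v \nu^{1/2}$ times the weight $\sigma^{-1, 0}$, and the gap for $\bar w$ is of order $\epsilon_w \nu^{1/2}$; both are wide enough to dominate the residual driving and the cross-coupling from the partner equation once $\epsilon_w$ is sufficiently small in terms of $\epsilon_v$ and $\delta$ sufficiently small in terms of $\zeta_*$, which is exactly what the hypotheses $\epsilon_w \leq \barr{\epsilon_w}(\epsilon_v)$ and $\delta \leq \barr{\delta}(\zeta_*)$ record. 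The right boundary $\sigma = \zeta_*\nu^{-1/2}$ is controlled by the barricaded-at-the-right assumption; the left boundary at the tip $\sigma = 0$ is handled by the careful design of $V^{\pm}$, which keeps $\sigma^{-1}(1 - v/4)$ bounded so that the flow remains smooth through the origin and the barriers persist at the cap.

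For the regularity conclusion \ref{conc:tip_reg_noncompact} I would apply interior parabolic Schauder estimates on parabolic cylinders of spatial radius $\oh \sigma^{1/2, 1}$ centred at each point. This is the intrinsic Bryant scale: $\sqrt\sigma$ behaves like arclength near $\sigma = 0$ and like the square root of arclength as $\sigma \to \infty$, so $(d\sigma^{1/2,1})^2$ is uniformly comparable to the rescaled metric $\tilde g$. Rescaling a cylinder to unit size renders the system uniformly parabolic with coefficients that are $C^{2,\eta}$-bounded independently of the centre (thanks to \ref{conc:tip_barrier} and the smoothness of $V_{\Bry}, V_{\Pert}, W_{\Pert}$). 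Standard interior Schauder estimates then control the $C^{2,\eta}$ norms of $v - V$ and $\bar w - \bar W$ by their $C^0$ norms (bounded by the barrier widths) plus the size of the driving terms; unwinding the scaling yields the weighted bounds of \ref{conc:tip_reg_noncompact} with $C_{reg}$ in place of the initial $c_{safe}C_{reg}$, closing the bootstrap strictly exactly as in Lemma \ref{regularity_prish}.

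The main obstacle I expect is the coupled parameter hierarchy. The $v$-barrier width $\delta^{-1}\epsilon_v\nu^{1/2}\sigma^{-1, 0}$ \emph{grows} as $\delta \searrow 0$, yet $\delta$ must be chosen small to make the matching of $V$ with the productish asymptotics near $\sigma = \zeta_*\nu^{-1/2}$ accurate enough for the supersolution/subsolution property to hold on the outer part of $\Omega_{tip}$. Reconciling this against the requirement $\epsilon_w \ll \epsilon_v$ that allows the $\bar w$-coupling to be absorbed, and against the need for a single $c_{safe} < 1$ that closes both the barrier and the regularity bootstraps simultaneously, is what pins down the precise order of quantifiers $c_{safe}$, $C_{reg}$, $\epsilon_w(\epsilon_v)$, $\delta(\zeta_*)$, $T_*$ in the statement.
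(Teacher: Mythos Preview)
Your high-level strategy matches the paper's: a bootstrap on $[T_1,T_2)$, barrier verification for \ref{conc:tip_barrier}, and interior Schauder estimates for \ref{conc:tip_reg_noncompact}. Two concrete points are underdeveloped and would bite in implementation.

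First, the cross-coupling in the $v$-equation enters through $\tilde\kappa^2 = \tfrac14\dim(F)\,\tilde w^{-1}y$, which depends on $\partial_\sigma\bar w$. At a point where $v$ touches $V^+$ you get no free bound on derivatives of $\bar w$ from the $\bar w$-barriers alone, so the maximum-principle step cannot proceed as a pure barrier argument. The paper isolates this as Lemma~\ref{lemma:y_control_tip}, which uses the \emph{regularity} hypothesis \ref{conc:tip_reg_noncompact} (not just the barriers) to obtain $\tilde\kappa^2 \le C\,C_{reg}^2\epsilon_w^2\,\sigma^{1,0}\nu$; only then does choosing $\epsilon_w\lesssim\sqrt{\epsilon_v}$ make $\tilde\kappa^2$ small enough for Lemma~\ref{vsupsoln}. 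Your bootstrap framework accommodates this, but you should name the mechanism explicitly, since it is the reason the regularity and barrier conclusions must be propagated jointly. (Incidentally, the barrier gap for $v$ scales like $\sigma^{1,-1}$, not $\sigma^{-1,0}$; see Lemma~\ref{barrier_order_lemma}.)

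Second, your Schauder argument ``on cylinders of radius $\tfrac12\sigma^{1/2,1}$'' degenerates at the tip. For $\sigma\ge 1$ the paper does exactly what you describe (Lemma~\ref{lemma:tip_reg_noncompact}), but for $\sigma\le 1$ the function $v$ satisfies both $v=4$ and $\partial_{\tilde\phi}v=0$ at $\tilde\phi=0$, and applying interior estimates to $v-V$ in the $\sigma$ (or $\tilde\phi$) variable does not cleanly capture smoothness across the origin. The paper instead rewrites the system in terms of $\tilde L=\sigma^{-1}(1-\tfrac14 v)$ and $\bar w$ as functions of the radial coordinate $\tilde\phi$, so that only the natural Neumann condition remains and the singular $\tilde\phi^{-1}\partial_{\tilde\phi}$ terms assemble into a Laplacian (Lemma~\ref{lemma:tip_reg}). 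There is also a potential circularity to avoid: the $\tilde\phi^{-2}(\partial_{\tilde\phi}\bar w)^2$ term in the $\tilde L$-equation carries a factor $C_{reg}$ from the inductive hypothesis, and one must argue (by shrinking $T_*$ depending on $C_{reg}$) that this term is nonetheless bounded independently of $C_{reg}$.
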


\subsection{A summary of functions}\label{function_summary}
We will be introducing many functions of $\sigma$.  Here, we provide the reader with a little cheat sheet to recall the asymptotics of the functions.  This makes us feel better about possibly using the asymptotics without warning.

We use the notation $\sigma^{a,b} = \sigma^a (1 + \sigma)^{b-a}$ and $|F|_3 = F + \sigma \partial_\sigma F + \sigma^2 \partial_\sigma^2 F + \sigma^3 \partial_\sigma^3 F$.  As usual, $c < C$ are constants depending only on the given model pinch.  We use $F \sim G$ if $c G < F < CG$.

We have
\begin{align}
  V_{\Bry} \sim \sigma^{0, -1},
  &\quad V_{\Pert} \sim \sigma^{1, 0},
  \quad W_{\Pert} \sim \sigma^{0, 1}, \\
  |V_{\Bry}|_3 \leq C\sigma^{0, -1},
  &\quad |V_{\Pert}|_3 \leq C\sigma^{1, 0},
  \quad |W_{\Pert}|_3 \leq C\sigma^{0, 1}.
\end{align}
More precisely as $\sigma \to \infty$ we have
\begin{align}
  V_{\Bry} = \ein \sigma^{-1} + O(\sigma^{-2}),
  \quad V_{\Pert} = \oh + O(\sigma^{-1}),
  \quad W_{\Pert} = \oh \ein \sigma + O(\log \sigma).
\end{align}
Our approximate solutions are $V = V_{\Bry} + \beta V_{\Pert}$ and $W = 1 + (\log \omega)_{\theta} W_{\Pert}$.  Here are crude bounds on our barriers: for $\nu^{1/2}\sigma < \zeta_*$
\begin{align}
  \oh V_{\Bry} &< V^- < V < V^+ < 2 V_{\Bry} \\
  \oh &< \bar W^- < \bar W < \bar W^+ < 2.
\end{align}
More precise bounds are given in Lemma \ref{barrier_order_lemma}:
\begin{align}
  V^+ - V \sim 
  \left(\delta^{-1}\epsilon_v\right) \nu^{1/2}\sigma^{1,-1},
  \quad
  W^+ - W \sim   \epsilon_{w} \nu^{1/2}, 
\end{align}
and similarly for $V-V^-$ and $W-W^-$.

We also have the following facts about the functions of time $\nu(t) = V_0(\ein t)$, $\omega(t) = W_0(\ein t)$, and $\alpha(t) = t \nu(t)$.  We define $\dd{k}{f}(t) = t^k \partial_t^k f/f$, which is bounded for $f = \nu, \omega$ and $k = 1, 2$ by the model pinch assumption \ref{modelpinch_reg}.  The following are straightforward calculations.
\begin{align}
  \beta \defeq \partial_t \alpha = (1 + \dd1 \nu)\nu,
  &\quad
    \partial_{\theta} \beta = O(\nu^2), \label{beta_bounds}\\
  \partial_{\theta} \log \omega = \dd1 \omega \nu,
  &\quad
  \partial_{\theta}^2 \log \omega = O(\nu^2).\label{omega_bounds}
\end{align}
\subsection{Type-II rescaling}
Note that $v = u^{-1}|\nabla u|^2_{g} = \sigma^{-1}|\nabla \sigma|^2_{\tilde g}$. Also,
$\tilde \grad \sigma = \grad u$ and $|\nabla \sigma|_{\tilde g}^2 = \alpha |\nabla u|_{g}^2$ so $\partial_u = \alpha^{-1}
\partial_\sigma$. So,
\begin{align}
  \label{eq:85}
  \partial_{t;u}
  &= \partial_t - (\partial_t u)\partial_u \\
  &= \alpha^{-1} \partial_\theta - (\partial_t (\alpha \sigma))(\alpha^{-1}\partial_\sigma) \\
  &= \alpha^{-1} \left( \partial_{\theta;\sigma} - \beta \sigma \partial_\sigma \right). \label{partialt_rewrite}
\end{align}

We define $\qop_{\sigma}$ and $\lop_{\sigma}$ to be $\qop$ and $\lop$, from \eqref{V_evo}, with $\partial_u$ replaced with $\partial_\sigma$.
Then using our equation for $\partial_{t;u}v$, \eqref{V_evo}, we find
\begin{align}
  \label{v_evo_theta}
  \partial_{\theta;\sigma}v  &= \sigma^{-1}\qop_{\sigma}[v,v] + \sigma^{-1}\lop_{\sigma}[v] + \beta \sigma \partial_\sigma v - 2 \tilde \kappa^2v 
\end{align}
where $\tilde \kappa = \of \dim(F) \tilde w^{-1}y$.
Let
\begin{align}
  \label{eq:125}
  \fop_{\sigma}[v, \tilde \kappa] = \left( \sigma^{-1}\qop_\sigma[v,v] + \sigma^{-1}\lop_\sigma[v] - 2 \tilde \kappa^2 v \right).
\end{align}
So \eqref{v_evo_theta} is
$\partial_{\theta;\sigma}v - \fop_{\sigma}[v, \tilde \kappa] - \beta \sigma \partial_\sigma v= 0$.

\subsection{The Bryant Soliton}
The Bryant soliton $(\Bry, g_{\Bry}, f_{\Bry})$ is a steady gradient Ricci soliton on the topology $\Bry = \real^{q+1}$. The metric may be written as
\begin{align}
  \label{eq:58}
  g_{\Bry} = \frac{d\sigma^2}{\sigma V_{Bry}(\sigma)} + \sigma g_{S^q}.
\end{align}
As a steady soliton, under Ricci flow it moves only by diffeomorphisms, which fix the warped product structure.  The value of $v = \sigma^{-1}|\nabla \sigma|^2$ at a point where $\sigma = \sigma_*$ is a diffeomorphism-invariant property, so $\partial_{t;\sigma}v_{\Bry} = 0$.  
Therefore,
\begin{align}
  \label{eq:60}
  \qop[V_\Bry, V_\Bry] + \lop[V_\Bry] = 0.
\end{align}

The Bryant soliton has strictly positive sectional curvature, and its scalar curvature has a maximum at $\sigma = 0$.  The soliton is defined up to scaling and diffeomorphism, so let's say we have chosen the scaling with maximum scalar curvature $\ein$.  As $\sigma \to \infty$, $V_{\Bry}$ has the asymptotics
\begin{align}\label{vbry_infty_asymptotics}
  V_{\Bry}(\sigma) = (1 + O(\sigma^{-1})) \ein \sigma^{-1}
\end{align}
and as $\sigma \to 0$, $V_{\Bry}$ has the asymptotics
\begin{align}\label{vbry_zero_asymptotics}
  V_{\Bry}(\sigma) = 4 \left( 1 - \frac{\ein}{q(q-1)}\sigma + o(\sigma) \right). 
\end{align}

For any $k > 0 $ we may scale the metric by $k^{-1}$, resulting in the Bryant soliton with maximum scalar curvature $k \ein$.  The corresponding function $V_{k \Bry}$ is related by
$V_{k\Bry}(\sigma) = V_{\Bry}(k \sigma)$.

\subsection{Approximation for $v$}

Suppose that $v$ satisfies \eqref{v_evo_theta}, and also converges sufficiently smoothly to a limit $v_0$ as $\theta \searrow -\infty$.  Suppose also that $\tilde \kappa^2$ converges to zero as $\theta \searrow 0$.  Then we learn,
$
  \qop_{\sigma}[v_0, v_0] + \lop_\sigma[v_0] = 0.
$
That is, $v_0$ describes a steady soliton.
If the limit metric has $\sigma \in [0, \infty)$, and has nonzero curvature, then we learn that as a function of $u$, $v_0 = V_{k \Bry}(u)$ for some scaling factor $\mr$. Comparing the asymptotics \eqref{vbry_infty_asymptotics} with our approximate solution in the parabolic region \eqref{V_expr_smallrho}, we choose $k = 1$.

Now we address the term $\beta \sigma v_\sigma$.  This term suggests that our approximation $v \approx v_0$ for small $\theta$ is off by a term of order $\beta$.  Write $\tilde v(\sigma,\theta) = v_0(\sigma) + \beta v_1(\sigma)$, and plug into $\partial_{\theta;\sigma} v - \fop_\sigma[v, \kappa]-\beta \sigma \partial_\sigma v = 0$.   This gives us,
\begin{align}
  \label{eq:73}
  \partial_{\theta;\sigma} v - \fop_\sigma[v, \kappa]-\beta \sigma \partial_\sigma v 
  &= \beta_{\theta} v_1 \\
  &- \beta \left( 2 \sigma^{-1}\qop[v_0, v_1] + \sigma^{-1}\lop[v_1] +  \sigma \partial_\sigma v_0 \right) \\
  &+ \left( \dots \right).
\end{align}
Here the term $(\dots )$ is bounded by
\begin{align}
  \label{eq:104}
   |\dots|
  &\leq C\beta^2 \left( \sigma^{-1}|v_1|^2_2  + \beta^{-2}\tilde \kappa^2 |v_0| + \beta^{-1}\tilde \kappa^2 |v_1| \right).
\end{align}
We also have $\beta_{\theta} = O(\nu^2)$. (See the end of Section \ref{function_summary}.) So,
\begin{align}
  \label{fop_calcone}
  \partial_{\theta;\sigma} v - \fop_\sigma[v, \kappa]-\beta \sigma \partial_\sigma v 
  &=- \beta \left( 2 \sigma^{-1}\qop[v_0, v_1] + \sigma^{-1}\lop[v_1] +  \sigma \partial_\sigma v_0 \right) \\
  &+ \beta^2 E
\end{align}
where
$
  E \leq C \left( |v_1| + \sigma^{-1}|v_1|^2_3 + \beta^{-2}\tilde \kappa^2 |v_0| + \beta^{-1}\tilde \kappa^2 |v_1| \right)
$.
Concerning the equation approximately satisfied by $v_{1}$, we have the following lemma, which is Lemma 4 of \cite{ACK}.  Recall $\sigma^{a,b} = \sigma^{a}(1 + \sigma)^{b-a}$.
\begin{lemma}\label{pert_fun}
  There is a solution $V_{Pert}$ to
  \begin{align}
    \label{eq:94}
    2 \sigma^{-1}\qop_\sigma[V_{\Bry}, V_{Pert}] + \sigma^{-1}\lop[V_{Pert}] = -\sigma \partial_\sigma V_{\Bry}.
  \end{align}
  on $[0, \infty)$, which extends to a smooth even function on $(-\infty, \infty)$.

  The function $V_{\mr Pert}(\sigma) = \mr^{-1}V_{Pert}(\mr \sigma)$ is a solution to
  \begin{align}
    \label{eqn_V_pert}
    2 \sigma^{-1}\qop_\sigma[V_{\mr\Bry}, V_{\mr\Pert}] + \sigma^{-1}\lop[V_{\mr\Pert}] = -\sigma \partial_\sigma V_{\mr \Bry}.
  \end{align}
  As $\sigma \to \infty$, $V_{\mr \Pert}$ has the asymptotics
  \begin{align}
    \label{Vpert_asymptotics}
    V_{\mr \Pert} = (1 + O(\sigma^{-1})) k^{-1}.
  \end{align}
  There is a $C > 0$ depending on the dimension such that
  \begin{align}
    \label{vpert_bnd}
    |V_{\Pert}|_2 < C \sigma^{1,0}
  \end{align}
\end{lemma}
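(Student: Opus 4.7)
The plan is to treat equation \eqref{eq:94} as a linear inhomogeneous second-order ODE in $\sigma$ for $V_{\Pert}$, with coefficients built from the known Bryant profile $V_{\Bry}$, and then in turn to (i) construct a smooth even solution near $\sigma=0$, (ii) extend it globally on $[0,\infty)$, (iii) read off the asymptotic behavior as $\sigma\to\infty$, (iv) verify the scaling identity, and (v) obtain the pointwise bound.

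Bilinearity of $\qop_\sigma$ makes $2\qop_\sigma[V_{\Bry},V_{\Pert}]$ the Fr\'echet linearization of $v\mapsto\qop_\sigma[v,v]$ at $v=V_{\Bry}$, so expanding the left-hand side gives a linear differential operator of schematic form $V_{\Bry}\,\partial_\sigma^2 V_{\Pert} + A(\sigma)\,\partial_\sigma V_{\Pert} + B(\sigma)\,V_{\Pert}$ with $A,B$ smooth on $(0,\infty)$ and $B$ having at worst a simple pole at $\sigma=0$ (coming from the $\ein u^{-1}v$-type term visible in \eqref{vevo_basic2} after rescaling). Since $V_{\Bry}(0)=4>0$ by \eqref{vbry_zero_asymptotics}, the singular endpoint $\sigma=0$ is of Fuchs type, so one writes out the indicial relation, identifies the smooth branch as an even power series $V_{\Pert}=\sum_{j\geq 1}a_j\sigma^{2j}$, and inductively solves the recursion for the $a_j$; this is possible because the inhomogeneity $-\sigma\partial_\sigma V_{\Bry}$ vanishes at $\sigma=0$ and expands in even powers of $\sigma$ by the smooth even structure of the Bryant tip. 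The usual Frobenius majorant argument gives convergence on some interval $[0,\sigma_0]$, and standard linear ODE theory then continues the solution to all of $[0,\infty)$ since $V_{\Bry}>0$ ensures no further singular points. The scaling identity \eqref{eqn_V_pert} is an immediate substitution check, using that the operators $\qop_\sigma,\lop_\sigma$ and the inhomogeneity $-\sigma\partial_\sigma V_{\Bry}$ all transform homogeneously under $\sigma\mapsto k\sigma$ combined with $V_{\Bry}\mapsto V_{k\Bry}$.

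For the asymptotics at infinity I would substitute $V_{\Bry}=\ein\sigma^{-1}+O(\sigma^{-2})$ from \eqref{vbry_infty_asymptotics} into the equation. A WKB-style reduction reveals two homogeneous solutions as $\sigma\to\infty$, one bounded and one growing; the smooth-even condition at $\sigma=0$ selects a one-parameter family, and matching the formal ansatz $V_{\Pert}=c_0+c_1\sigma^{-1}+\cdots$ against $-\sigma\partial_\sigma V_{\Bry}\sim \ein\sigma^{-1}$ fixes $c_0$ algebraically, yielding \eqref{Vpert_asymptotics} (with $c_0=1$ when $k=1$). The uniform bound $|V_{\Pert}|_2\leq C\sigma^{1,0}$ then follows by combining the expansions at both endpoints ($V_{\Pert}=O(\sigma^2)$ at $0$ and $V_{\Pert}=O(1)$ at $\infty$, both comfortably within $\sigma^{1,0}$) with continuity on any compact subinterval, while the bounds on $\sigma\,\partial_\sigma V_{\Pert}$ and $\sigma^2\,\partial_\sigma^2 V_{\Pert}$ are read off from the ODE itself after rearrangement, using one further differentiation of the equation for the second-derivative estimate.

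I expect the main technical obstacle to be the Fuchs analysis at $\sigma=0$: one must verify that the two indicial roots are compatible with an even smooth extension and, crucially, that no logarithmic resonance occurs between the regular and singular branches when the recursion for the coefficients is solved, as such a resonance would force a $\log\sigma$ term into the particular solution and destroy smoothness at the tip. This is essentially the content of Lemma~4 of \cite{ACK} in the present notation, and would be carried out by writing the recurrence for the $a_j$ explicitly and confirming that it closes; once this is done, every other step in the plan is routine.
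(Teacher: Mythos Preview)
The paper does not give a proof of this lemma at all: it simply states that the result ``is Lemma 4 of \cite{ACK}''. Your outline is a correct sketch of how such a result is established and you explicitly identify the ACK citation yourself, so you and the paper are in full agreement on the source.

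One small point of caution in your sketch: you write the local expansion as $V_{\Pert}=\sum_{j\geq 1}a_j\sigma^{2j}$, giving $V_{\Pert}=O(\sigma^2)$ near the tip. But the paper's own function summary (Section~\ref{function_summary}) records $V_{\Pert}\sim\sigma^{1,0}$, i.e.\ $V_{\Pert}$ is of exact order $\sigma$ near $0$. The ``smooth even'' clause should be read as evenness in the radial variable $\phi=\sqrt{\sigma}$ (the natural smoothness condition for a rotationally symmetric function at the tip), which then gives a series in integer powers of $\sigma$ starting at the linear term, not only even powers of $\sigma$. This does not affect the shape of your argument---the Fuchsian analysis, global continuation, and asymptotic matching all go through---but the indicial computation and the bound $|V_{\Pert}|_2\leq C\sigma^{1,0}$ should be phrased in that variable.
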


This invites the choice of approximate solution
\begin{align}
  V = V_{Bry} + \beta V_{Pert}.
\end{align}

\subsection{Approximation for $w$}\label{rescale_w}
The expression for our approximation in the productish region \eqref{Wprish_asymptotic_exp} suggests that, in the tip region, $\bar w = \omega^{-1}(w + \ein_F t)$ is approximately constantly $1$.  This is a complete solution to the equation satisfied by $\bar w$ (the term $\ein_F t$ takes care of the reaction part of the equation).  We derive an equation for $\bar w$, to find the next order term.  We can come from the evolution of $\hat w = w + \ein_F t$ in terms of $u$, \eqref{evo_w_in_u}:
\begin{align}
  \partial_{t;u} \hat w
    &= u v \partial_u^2 \hat w - y + \ein \partial_u \hat w - \ein/2 v\partial_u \hat w.
\end{align}
Multiplying by $\omega^{-1}\alpha$, we have
\begin{align}
  \alpha \partial_{t;u} \bar w
  &= \sigma^{-1}\rop[\bar w, v] - (\alpha \omega^{-1})y - (\log \omega)_{\theta} \bar w,
\end{align}
where
\begin{align}
  \rop[z,v]
  = \sigma^2 v \partial_\sigma^2 z + (\mu - (c_v-\oh q)v)\sigma \partial_\sigma z.
\end{align}
Then using \eqref{partialt_rewrite},
\begin{align}
  \partial_{\theta; \sigma} \bar w
  &= \sigma^{-1}\rop[\bar w, v] - (\alpha \omega^{-1})y - (\log \omega)_{\theta} \bar w - \beta \sigma \partial_\sigma \bar w \\
  &= \sigma^{-1}\rop[\bar w, v]
    - \frac{1}{\bar w - \ein_F \frac{t}{\omega}}v \sigma (\partial_\sigma \bar w)^2 
  - (\log \omega)_{\theta} \bar w - \beta \sigma \partial_\sigma \bar w. \label{evo_w_in_sigma}
\end{align}

Concerning the operator $\rop$, we have the following Lemma.
\begin{lemma}\label{wpert_exist}
  There is a solution $W_{\Pert}(\sigma)$ to
  \begin{align}
    \sigma^{-1}\rop[W_{\Pert}, V_{\Bry}] = 1
  \end{align}
  which extends to a smooth even function on $(-\infty, \infty)$.

  The function $W_{k\Pert}(\sigma) = W_{\Pert}(k \sigma)$ is a solution to
  \begin{align}
    \sigma^{-1}\rop[W_{k\Pert}, V_{k\Bry}] = 1
  \end{align}
  As $\sigma \to \infty$, $W_{k\Pert}$ has the asymptotics
  \begin{align}\label{Wpert_asymptotics}
    W_{kPert} = (1 + o(1))\oh \ein k\sigma
  \end{align}
\end{lemma}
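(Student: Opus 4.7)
The plan is to proceed in complete analogy with Lemma~\ref{pert_fun}, observing that the operator $\sigma^{-1}\rop[\,\cdot\,, V_\Bry]$ involves only the first and second derivatives of its first argument. Setting $P = W_\Pert'$ reduces the problem to the first-order linear ODE
\begin{equation*}
a(\sigma)\, P'(\sigma) + b(\sigma)\, P(\sigma) = 1,
\qquad a(\sigma) = \sigma V_\Bry(\sigma), \quad b(\sigma) = \ein - (c_v - \tfrac{q}{2}) V_\Bry(\sigma).
\end{equation*}
Since $V_\Bry(0) = 4$ and $V_\Bry > 0$ on $[0, \infty)$, the coefficient $a$ has a simple zero at $\sigma = 0$ and is smooth and positive elsewhere, so the ODE has a regular singular point at the origin and is standard on $(0, \infty)$.

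Next I would construct a smooth local solution near $\sigma = 0$ by a Frobenius-type argument. Evaluating the equation at $\sigma = 0$ forces $P(0) = 1/b(0)$ (provided $b(0) \neq 0$, which can be verified from the explicit value of $c_v$), and successive differentiation at $\sigma = 0$ determines a formal Taylor expansion $P = \sum_{k \geq 0} p_k \sigma^k$. Convergence follows either from a Cauchy--Kowalevski argument exploiting the real-analyticity of $V_\Bry$, or by invoking standard Fuchsian theory, which separates the regular branch (indicial root $0$) from a singular branch of order $\sigma^{-b(0)/4}$. Standard linear ODE theory then extends $P$ smoothly to all of $[0, \infty)$, and $W_\Pert(\sigma) \defeq \int_0^\sigma P(s)\, ds$ is the desired solution.

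The smooth even extension to $(-\infty, \infty)$ mirrors the corresponding step for $V_\Pert$: because $V_\Bry$ itself extends as a smooth even function across $\sigma = 0$ (a consequence of the smoothness of the Bryant metric at its tip), so do $a$ and $b$, and uniqueness of the regular Frobenius branch forces $W_\Pert$ to inherit this evenness. For the asymptotics at $\sigma = \infty$, I would substitute $V_\Bry(\sigma) = \ein \sigma^{-1} + O(\sigma^{-2})$ into the equation for $P$; at leading order this becomes $\ein P' + \ein P = 1$, whose homogeneous solutions decay like $e^{-\sigma}$, so an integrating-factor argument pins down the precise constant appearing in the linear growth of $W_\Pert$. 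The scaling identity relating $W_{k\Pert}$ and $W_\Pert$ then follows from $V_{k\Bry}(\sigma) = V_\Bry(k\sigma)$ by direct substitution into the ODE.

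The main obstacle is the careful handling of the regular singular point at $\sigma = 0$: showing that the formal Taylor series converges to an actual smooth solution, and that this smooth branch extends evenly, rather than merely formally. Since the analogous step for $V_\Pert$ is carried out in \cite{ACK} by very similar ODE techniques, I expect only cosmetic modifications to be needed, with no new ideas beyond Fuchsian ODE theory.
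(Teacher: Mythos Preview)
Your approach is correct but takes a genuinely different route from the paper. You attack the problem as a first-order linear ODE for $P = W_\Pert'$ with a regular singular point at $\sigma = 0$, invoking Frobenius theory for local existence, analyticity of $V_\Bry$ for smoothness and even extension, and an integrating-factor analysis at infinity for the asymptotics. This is self-contained and works.

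The paper instead makes a one-line geometric identification: on any steady gradient soliton the normalized potential $\bar f = -f/R_0$ satisfies $\lap_X \bar f = 1$, and the operator $\sigma^{-1}\rop[\,\cdot\,, V_\Bry]$ is precisely the drift Laplacian $\lap_X$ rewritten in the $u$-coordinate (the derivation is in Appendix~\ref{bryant_facts}, Section~\ref{wpert_exist_continuation}). Hence $W_\Pert$ \emph{is} $\bar f$ up to scaling, and existence, smoothness at the tip, and even extension all come for free from the smoothness of the soliton potential. The asymptotics at infinity are then read off from the expansion \eqref{bry_second_order_expansion} of $\bar f$, which the paper has already computed for other purposes. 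The paper's argument is shorter and more conceptual, and explains \emph{why} such a solution exists rather than just producing it; your argument has the virtue of being independent of soliton geometry and would generalize to right-hand sides other than the constant $1$, where no such identification is available.
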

\begin{proof}
  The main idea is that $W_{\Pert}$ is just a scaling of the gradient potential function $f$.  On any steady soliton, the gradient potential function satisfies
$\lap_X f = 1 $
where $\lap_X$ is the drift laplacian. The operator $\sigma^{-1}\rop$ is a recasting of the laplacian in these coordinates.  For the derivation see page \pageref{wpert_exist_continuation}.
\end{proof}

This suggests the approximate solution $\bar W = 1 + (\log \omega)_{\theta}W_{Pert}$.  To find this, plug in $\bar w(\sigma, t) = 1 + \bar w_1(\sigma, t)$ as an initial approximation and assume that $\bar w_1$ goes to zero as $t \searrow 0$. Then taking the highest order terms in the limit $t \searrow 0$ we are left with the equation
\begin{align}
  - \left( \sigma^{-1} \rop[\bar w_1, V_{Bry}] - (\log \omega)_{\theta}\cdot 1 \right)
\end{align}
for $\bar w_1$.  Note $\log (\omega)_{\theta} = O(\nu)$ (see the end of Section \ref{function_summary}).

\subsection{$y$ control}
One tricky term which appears in the evolution of $v$, \eqref{v_evo_theta}, is $\tilde \kappa^2 = \on4 \dim(F) w^{-1} y$.  This cannot be controlled with simple barrier arguments: at a point where $w$ is trapped between barriers for $w$, and $v$ touches barriers for $v$, we only know that the derivative $v$ matches the derivative of the barrier for $v$, but we do not get a free bound on the derivative of $w$.  For this reason we need to use regularity.  

\begin{lemma}\label{lemma:y_control_tip}
  Suppose we are in the setting of Lemma \ref{main_tip_estimates}, with \ref{conc:tip_barrier} and \ref{conc:tip_reg_noncompact} for $t \in [T_1, T_2)$.  Then, if $T_*$ is sufficiently small,
  \begin{align}
    \tilde \kappa^2 \leq C C_{reg}^2 \epsilon_{w}^2 \sigma^{1,0} \nu
  \end{align}
\end{lemma}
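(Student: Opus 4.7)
The strategy is to express $\tilde\kappa^2$ directly in terms of the gradient of $\bar w$ in the rescaled metric $\tilde g$, then bound that gradient using the regularity of $\bar w$. Unwinding the rescaled definitions $\tilde w = \alpha^{-1}w$, $w = \omega\bar w - \ein_F t$, and $|\nabla \bar w|_{\tilde g}^2 = \sigma v(\partial_\sigma \bar w)^2$, the quantity $\tilde\kappa^2$ is a bounded multiple of
\[
  \sigma v (\partial_\sigma \bar w)^2 \bigl(\bar w - \ein_F t/\omega\bigr)^{-2}.
\]
The denominator is bounded below by a positive constant: the tip barrier \ref{conc:tip_barrier} keeps $\bar w$ within $\oh$ of $1$, and assumption \ref{w_big} implies $\ein_F t/\omega(t) < 1/(1+c)$ in the case $\ein_F>0$, while the cases $\ein_F\le 0$ make the denominator only larger. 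The upper $v$-barrier satisfies $V^+\le 2V_{\Bry}\le C\sigma^{0,-1}$ by Section \ref{function_summary}, so $\sigma v\le C\sigma^{1,0}$. It therefore suffices to prove $(\partial_\sigma\bar w)^2\le C C_{reg}^2\epsilon_w^2\nu$ uniformly on $\Omega_{tip}$.

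I decompose $\partial_\sigma\bar w = \partial_\sigma \bar W + \partial_\sigma(\bar w-\bar W)$. The explicit formula $\bar W = 1 + (\log\omega)_\theta W_{\Pert}$, combined with $(\log\omega)_\theta = O(\nu)$ from \eqref{omega_bounds} and $|\partial_\sigma W_{\Pert}|\le C$ (since $|W_{\Pert}|_3\le C\sigma^{0,1}$ in Section \ref{function_summary}), gives $|\partial_\sigma\bar W|\le C\nu \le C\nu^{1/2}$ once $T_*$ is small. On the range $\sigma\ge 1$, the regularity hypothesis \ref{conc:tip_reg_noncompact} directly bounds $|\partial_\sigma(\bar w-\bar W)|\le C C_{reg}\epsilon_w\nu^{1/2}$, because there $(d\sigma^{1/2,1})^2$ is uniformly comparable to $(d\sigma)^2$ and the Schauder derivative agrees with $\partial_\sigma$ up to constants.

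The main obstacle is the complementary range $\sigma<1$, where \ref{conc:tip_reg_noncompact} is not directly available. My plan is to propagate the $C^{2,\eta}$-control supplied by the ``initially controlled in the tip region'' hypothesis forward in time, by applying interior parabolic Schauder estimates to the evolution \eqref{evo_w_in_sigma} for $\bar w$ in the natural Bryant-soliton coordinate $r=\sigma^{1/2}$ (in which $(d\sigma^{1/2,1})^2\sim(dr)^2$ near the tip, and in which the equation is uniformly parabolic thanks to the $v$-barriers, with all coefficients smoothly controlled by the barriers and Section \ref{function_summary}). Schauder then produces a $C^{2,\eta}$-bound of size $C C_{reg}\epsilon_w\nu^{1/2}$ for $\bar w-\bar W$ in $r$. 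Since both $\bar w$ and $\bar W$ (the latter by Lemma \ref{wpert_exist}) extend smoothly across the Bryant tip $r=0$, their $r$-derivatives vanish at $r=0$, so Taylor expansion converts the second-derivative bound into $|\partial_r(\bar w-\bar W)|\le C C_{reg}\epsilon_w\nu^{1/2}\cdot r$ and hence $|\partial_\sigma(\bar w-\bar W)|=(2r)^{-1}|\partial_r(\bar w-\bar W)|\le CC_{reg}\epsilon_w\nu^{1/2}$ uniformly. Combining with the large-$\sigma$ case yields $(\partial_\sigma\bar w)^2\le CC_{reg}^2\epsilon_w^2\nu$ throughout $\Omega_{tip}$, and multiplication by $\sigma v\le C\sigma^{1,0}$ completes the proof.
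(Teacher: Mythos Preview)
Your reduction to bounding $(\partial_\sigma\bar w)^2$ by $CC_{reg}^2\epsilon_w^2\nu$ and your treatment of $\sigma\ge 1$ match the paper exactly: write $\tilde\kappa^2$ as a constant times $\sigma v(\partial_\sigma\bar w)^2(\bar w-\ein_F t/\omega)^{-2}$, bound the denominator via \ref{w_big} and the $\bar w$-barriers, use $v\le C\sigma^{0,-1}$, then invoke \ref{conc:tip_reg_noncompact} and $|\partial_\sigma\bar W|\le C\nu$.

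The difference is the $\sigma<1$ range. The paper treats \ref{conc:tip_reg_noncompact} as available there too (the phrase ``with $\sigma\ge 1$'' in Definition~\ref{tip_barricaded} is misleading; Lemmas~\ref{lemma:tip_reg_noncompact} and~\ref{lemma:tip_reg} together establish \ref{conc:tip_reg_noncompact} on all of $\Omega_{tip}$, and the continuity argument assumes it throughout). The paper then simply reads off the second-derivative bound $\partial_{\tilde\phi}^2\bar w\le\partial_{\tilde\phi}^2\bar W+2C_{reg}\epsilon_w\nu^{1/2}$ from \ref{conc:tip_reg_noncompact} in the $\tilde\phi$-variable, integrates once from $\tilde\phi=0$ (where $\partial_{\tilde\phi}\bar w=0$ by smoothness at the tip), and obtains $\partial_{\tilde\phi}\bar w\le 4C_{reg}\epsilon_w\nu^{1/2}\tilde\phi$, hence $|\partial_\sigma\bar w|\le CC_{reg}\epsilon_w\nu^{1/2}$.

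Your proposed workaround --- re-establishing the $\sigma<1$ regularity by Schauder on \eqref{evo_w_in_sigma} alone --- has a gap: the leading coefficient of that equation in the $r=\sigma^{1/2}$ variable is $v/4$, and Schauder requires $v$ to be H\"older, which the barriers \ref{conc:tip_barrier} by themselves do not give. The paper's Lemma~\ref{lemma:tip_reg} handles this by applying regularity to the \emph{coupled} system $(\tilde L,\bar w)$ and explicitly flagging the circularity (the $\tilde\phi^{-2}(\partial_{\tilde\phi}\bar w)^2$ term in the $\tilde L$-equation is absorbed by shrinking $T_*$). If you want your route to stand on its own you must either run that coupled argument or supply H\"older control of $v$ from elsewhere; the cleaner fix is simply to take \ref{conc:tip_reg_noncompact} as a hypothesis on all of $\Omega_{tip}$, as the paper does.
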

\begin{proof}
We rewrite $\tilde \kappa^2$ as,
\begin{align}
  \tilde \kappa^2
  &= C \tilde w^{-1} y
  = C v \frac{|\nabla \tilde w|^2_{\tilde g}}{\tilde w^2}\frac{1}{|\nabla \tilde \phi|_{\tilde g}^2} = C v\frac{1}{\tilde w^2} \left( \partial_{\tilde \phi} \tilde w \right)^2
\label{kappa_rewrite}
\end{align}
Here, we used that $y$ and $v$ are scale-invariant, and that $v = \on4 |\nabla \tilde \phi|^2_{\tilde g}$.  
Using the assumption \ref{w_big} on $W_0$, $\omega(t) > (1 + c)\ein t$ (whether $\ein$ is positive or not), so we have
\begin{align}
  \tilde \kappa^2
  &= C v\left( \frac{1}{\bar w - \ein_F \frac{t}{\omega}} \right)^2 \left( \partial_{\tilde \phi} \bar w \right)^2
  \leq C v\left( \frac{1}{\bar w - \frac{1}{1+c}} \right)^2 \left( \partial_{\tilde \phi} \bar w \right)^2.
\end{align}
In the region under consideration, we can take $T_*$ small enough so that
$\bar W^- > 1 - \frac{1}{2}\frac{c}{1 + c}$.
Therefore since $\bar w > \bar W^-$, increasing $C$,
\begin{align}
  \tilde \kappa^2
  &\leq C v \left( \partial_{\tilde \phi} \bar w \right)^2.\label{basic_tildekappa_ineq}
\end{align}

To control $\kappa^2$ in $\{\sigma < 1\}$,  we need to use that $\partial_{\tilde \phi} \bar w = 0$ at $\tilde \phi = 0$.  Copying \ref{conc:tip_reg_noncompact} for $k = 2$, and writing it in terms of $\tilde \phi$, 
\begin{align}
  \partial_{\tilde \phi}^2 \bar w < \partial_{\tilde \phi}^2 \bar W + 2C_{reg} \epsilon_{w} \nu^{1/2}.
\end{align}
Since $\partial_{\tilde \phi}^2 \bar W \leq C \nu$, for sufficiently small times the first term dominates, and $\partial_{\tilde \phi}^2 \bar w \leq 4 C_{reg} \epsilon_w \nu^{1/2}$.  We can integrate this from $\phi = 0$ to find 
$
  \partial_{\tilde \phi} \bar w < 4C_{reg} \epsilon_{w} \nu^{1/2}\tilde \phi.
$
Then \eqref{basic_tildekappa_ineq} proves the claim for $\sigma < 1$.

To control $\kappa^2$ in $\{\sigma > 1\}$, we use \ref{conc:tip_reg_noncompact}:
\begin{align}
  \partial_{\sigma} \bar w \leq \partial_\sigma \bar W + C_{reg} \epsilon_{w}\nu^{1/2}.
\end{align}
We have $\partial_\sigma \bar W \leq C \nu$ since $\partial_\sigma W_{pert}$ is bounded.  So for small times $\partial_\sigma \bar w \leq 2C_{reg} \epsilon_w \nu^{1/2}$.  In terms of $\phi$ this says $\partial_\phi \bar w \leq 4C_{reg} \epsilon_w \phi \nu^{1/2}$.  Since $v \leq V^+ \leq C \sigma^{0, -1}$, \eqref{basic_tildekappa_ineq} gives us $\tilde \kappa^2 \leq C C_{reg}^2 \epsilon_w^2 \nu$.
\end{proof}

\subsection{Barriers}\label{tip_barriers_section}
In this section we define the barriers $V^{\pm}$ and $\bar W^{\pm}$ and prove that item \ref{conc:tip_barrier} continues to hold.  The barriers are defined as follows.  Let $k(t)^{\pm} = 1 \mp \delta^{-1}\epsilon_{v} \nu^{1/2}$ and then set
\begin{align}
  V^{\pm} &= V_{k^{\pm}(t) \Bry}
  + \left(\beta \mp \epsilon_v \nu\right)  V_{k^{\pm}(t)\Pert} \label{v_barriers_def}\\
  \bar W^{\pm} &= 1 \pm \epsilon_{w}\nu^{1/2}
  + \left( (\log \omega)_{\theta} \mp \delta \epsilon_w \nu  \right)W_{\Pert} .\label{w_barriers_def}
\end{align}
We will prove that these are sub- and supersolutions to the equations satisfied by $\bar w$ and $v$.  The power $\nu^{1/2}$ is a bit mysterious here, but it is the best possible for barriers of this form.  We discuss its derivation after Lemma \ref{buckling_lemma}.  It is helpful to remember that $\beta \sim \nu$, and $(\log \omega)_{\theta} \lesssim \nu$ (see the end of Section \ref{function_summary}).

The terms $- \epsilon_v \nu V_{k^{\pm}\Pert}$ and $- \nu \delta \epsilon_w W_{\Pert}$ are the terms which will give us that $V^+$ and $W^+$ are strict supersolutions to their equations.  They are chosen by taking the approximate solution, which is found by starting from a limit at $t = 0$ and adding a perturbation which solves an elliptic equation, and then fiddling with the size of the perturbation.

Because the extra amount of the perturbation needed for a supersolution comes with a negative sign in both cases, we need to add something else to ensure that the supersolution lies above the intended approximate solution.  This is the role of $k^{\pm}(t)$ and of $\pm\epsilon_w \nu^{1/2}$.  (If it's not clear what's going on with $k^+$, recall that $V_{\Bry}$ is decreasing so $V_{k^+\Bry}(\sigma) = V_{\Bry}(k^+\sigma) > V_{\Bry}(\sigma)$.)  The role of $\delta$ in both equations is to control the ratio of the extra positive term used to make the supersolution bigger than the approximate solution, to the extra negative term used to make the supersolution a supersolution to the equation.

Lemma \ref{barrier_order_lemma} clarifies the role of $\delta$.  Recall $\sigma^{a,b} = \sigma^a (1 + \sigma)^{b-a}$.  The significance of the factor $\sigma^{1, -1}$ in the inequalities for $V$ in this lemma is the following. At infinity, $V \sim \sigma$ so this is a normalization.  At $0$, $V^+ - V^- \sim \sigma$ is necessary to ensure smoothness of a solution with $V$ trapped between $V^-$ and $V^+$.  On the other hand $\bar W \sim 1$ everywhere so $\bar W$ requires no normalization.
\begin{lemma}\label{barrier_order_lemma}
  There are constants $c < C$ depending only on the dimension such that the following holds.
  
  We have, for $V_{diff} = V^+ - V$ or $V_{diff} = V - V^-$,
  \begin{align}
    c \delta^{-1}\epsilon_{v} \nu^{1/2} \sigma^{1, -1} \left(1 - C\delta \nu^{1/2}\sigma^{0,1} \right)
    \leq V_{diff}
    \leq C \delta^{-1}\epsilon_{v} \nu^{1/2} \sigma^{1,-1}.
  \end{align}
  Similarly, for $W_{diff} = \bar W^+ - \bar W$ or $W_{diff} = W - W^-$,
  \begin{align}
    c \epsilon_{w} \nu^{1/2}\left( 1 - C\delta \nu^{1/2}\sigma^{0, 1} \right)
    \leq W_{diff}
    \leq C \epsilon_{w} \nu^{1/2}.
  \end{align}
  In particular, if we choose $\delta < \frac{1}{2C}\zeta_*^{-1}$ then, renaming $c$, for all $\sigma < \zeta_* \nu^{-1/2}$
  \begin{align}
    c \delta^{-1}\epsilon_{v} \nu^{1/2} \sigma^{1, -1} 
    &\leq V_{diff}
    \leq C \delta^{-1}\epsilon_{v} \nu^{1/2} \sigma^{1,-1},\\
    c \epsilon_{w} \nu^{1/2}
    &\leq \bar W_{diff}
    \leq C \epsilon_{w} \nu^{1/2}.
  \end{align}
\end{lemma}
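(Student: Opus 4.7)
My plan is to expand each of the four differences directly from the definitions \eqref{v_barriers_def} and \eqref{w_barriers_def}, isolate the dominant term, and control the remainder using the asymptotic bounds collected in Section \ref{function_summary} together with the scaling identities $V_{k\Bry}(\sigma) = V_{\Bry}(k\sigma)$, $V_{k\Pert}(\sigma) = k^{-1}V_{\Pert}(k\sigma)$, and $W_{k\Pert}(\sigma) = W_{\Pert}(k\sigma)$ from Lemmas \ref{pert_fun} and \ref{wpert_exist}.

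For the $V$-differences, I first rearrange $V^+ - V$ (with $1 - k^+ = \delta^{-1}\epsilon_v\nu^{1/2}$) into the three pieces
\begin{align*}
V^+ - V = \bigl[V_{\Bry}(k^+\sigma) - V_{\Bry}(\sigma)\bigr] + \beta\bigl[V_{k^+\Pert} - V_{\Pert}\bigr] - \epsilon_v\nu\, V_{k^+\Pert}.
\end{align*}
The first bracket is the leading piece: the mean value theorem expresses it as $\delta^{-1}\epsilon_v\nu^{1/2}\bigl(-\sigma\partial_\sigma V_{\Bry}(\xi)\bigr)$ for some $\xi\in[k^+\sigma,\sigma]$, and combining \eqref{vbry_zero_asymptotics}--\eqref{vbry_infty_asymptotics} gives the two-sided bound $-\sigma\partial_\sigma V_{\Bry}\sim\sigma^{1,-1}$ uniformly on $(0,\infty)$, which passes to $\xi$ once $T_*$ is small enough that $k^+$ stays bounded away from $0$. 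The remaining two brackets are lower order: using $|V_{\Pert}|_3\leq C\sigma^{1,0}$ and $\beta\sim\nu$ yields $|\beta(V_{k^+\Pert} - V_{\Pert})|\leq C\delta^{-1}\epsilon_v\nu^{3/2}\sigma^{1,0}$ and $|\epsilon_v\nu V_{k^+\Pert}|\leq C\epsilon_v\nu\sigma^{1,0}$. Factoring out $\delta^{-1}\epsilon_v\nu^{1/2}\sigma^{1,-1}$ and using the identity $\sigma^{1,0}/\sigma^{1,-1} = 1+\sigma\leq 2\sigma^{0,1}$, the net relative perturbation is $O\bigl((\nu+\delta\nu^{1/2})\sigma^{0,1}\bigr)$; the $\nu\sigma^{0,1}$ term is absorbed into $\delta\nu^{1/2}\sigma^{0,1}$ once $T_*$ is small, which produces both the upper and lower bound in the stated form. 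The case $V-V^-$ is the mirror image with $k^-=1+\delta^{-1}\epsilon_v\nu^{1/2}$.

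For the $\bar W$-differences the algebra collapses. Direct subtraction gives
\begin{align*}
\bar W^+ - \bar W = \epsilon_w\nu^{1/2} - \delta\epsilon_w\nu\, W_{\Pert} = \epsilon_w\nu^{1/2}\bigl(1 - \delta\nu^{1/2}W_{\Pert}\bigr),
\end{align*}
and the asymptotic $0\leq W_{\Pert}\leq C\sigma^{0,1}$ from Section \ref{function_summary} produces $\epsilon_w\nu^{1/2}(1-C\delta\nu^{1/2}\sigma^{0,1})\leq \bar W^+ - \bar W\leq \epsilon_w\nu^{1/2}$ immediately; the case $\bar W - \bar W^-$ is identical. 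The ``in particular'' clause then follows by observing that if $\delta\leq\tfrac{1}{2C}\zeta_*^{-1}$ and $\sigma\leq\zeta_*\nu^{-1/2}$ then
\begin{align*}
\delta\nu^{1/2}\sigma^{0,1}\leq \delta\nu^{1/2} + \delta\zeta_*\leq \delta\nu^{1/2} + \tfrac{1}{2C},
\end{align*}
and shrinking $T_*$ makes $\delta\nu^{1/2}$ arbitrarily small, so $1 - C\delta\nu^{1/2}\sigma^{0,1}$ is uniformly bounded below by a positive constant that can be absorbed into the $c$ on the left.

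I do not expect a genuine obstacle: the argument is essentially bookkeeping. The one place requiring care is the bimodal nature of the symbols $\sigma^{a,b}$; products, quotients, and mean-value evaluations of these symbols must be checked separately in the $\sigma\to 0$ and $\sigma\to\infty$ regimes to confirm that the estimates of Section \ref{function_summary} combine in the claimed way, and in particular that the two-sided asymptotic for $-\sigma\partial_\sigma V_{\Bry}$ transfers to the mean-value point $\xi$.
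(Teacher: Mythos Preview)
Your proposal is correct and follows essentially the same approach as the paper. The only presentational difference is that you package the estimate for $V_{\Bry}(k^+\sigma)-V_{\Bry}(\sigma)$ via the mean value theorem together with the uniform two-sided bound $-\sigma\partial_\sigma V_{\Bry}\sim\sigma^{1,-1}$, whereas the paper obtains the same two-sided bound by an explicit three-region argument (near $\sigma=0$ using \eqref{vbry_zero_asymptotics}, near $\sigma=\infty$ using \eqref{vbry_infty_asymptotics}, and on a compact middle interval using the strict monotonicity of $V_{\Bry}$); the $W$-analysis and the handling of the perturbation terms are identical in both.
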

\begin{proof}
  The asymptotics of the $V_{Bry}$ are given in \eqref{vbry_infty_asymptotics} and \eqref{vbry_zero_asymptotics}.  Also recall that
  $V_{kBry}(\sigma) = V_{Bry}(k\sigma)$.
  Using these asymptotics, for small enough $\sigma$,
  \begin{align}
    c \delta^{-1}\epsilon_v \nu^{1/2} \sigma
    < V_{k^+Bry}(\sigma) - V_{Bry}(\sigma)
    < C \delta^{-1}\epsilon_v \nu^{1/2}\sigma,
  \end{align}
  and for large enough $\sigma$,
  \begin{align}
    c \delta^{-1}\epsilon_v \nu^{1/2}\sigma^{-1}
    < V_{k^+Bry}(\sigma) - V_{Bry}(\sigma)
    < C \delta^{-1}\epsilon_v \nu^{1/2}\sigma^{-1}.
  \end{align}
  Furthermore, since $V_{Bry}$ is strictly decreasing in any compact set away from the origin, for any $\sigma_1 < \sigma_2$ there are constants $c_{\sigma_1, \sigma_2}$ and $C_{\sigma_1, \sigma_2}$ such that
  \begin{align}
    c_{\sigma_1, \sigma_2} \delta^{-1}\epsilon_v \nu^{1/2}
    < V_{k^+Bry}(\sigma) - V_{Bry}(\sigma)
    < C_{\sigma_1, \sigma_2} \delta^{-1}\epsilon_v \nu^{1/2}.
  \end{align}
  These three sets of bounds prove that for some $c < C$,
  \begin{align}
    c \delta^{-1}\epsilon_v \nu^{1/2} \sigma^{1, -1}
    < V_{k^+Bry}(\sigma) - V_{Bry}(\sigma)
    < C \delta^{-1}\epsilon_v \nu^{1/2} \sigma^{1,-1}.
  \end{align}

  Putting this together with the bound on $V_{Pert}$, \eqref{vpert_bnd}, which says
  \begin{align}
    - \epsilon_v \nu V_{pert}(\sigma) > -C \epsilon_v\nu \sigma^{1,0},
  \end{align}
  and using $\beta \leq C \nu$ (from \eqref{beta_bounds}), we have the claim for $V$.

  The proof for $\bar W$ is similar but more straightforward.  One needs to use the properties from Lemma \ref{wpert_exist}.
\end{proof}

We now prove that $V^{\pm}$ and $\bar W^{\pm}$ are sub- and supersolutions to the equations satisfied by $v$ and $\bar w$.  In Lemma \ref{lemma:barrier_wrapup} we will summarize by saying that item \ref{conc:tip_barrier} continues to hold.

\begin{lemma}\label{vsupsoln}
  Let
    $\zeta_* > 0$, $\epsilon_v > 0$, and $\delta > 0$
  be given.  Let $V^{\pm}$ be the functions defined in \eqref{v_barriers_def}.
  
    Suppose $\tilde \kappa = \frac{y}{\tilde w} = \alpha \frac{y}{w}$ satisfies $\tilde \kappa^2(\sigma, t) \leq c_{ytip}\epsilon_v \nu \sigma^{1,0}$ where $c_{ytip}$ is a constant (chosen in the proof) depending only on dimensions.
  
   Then there is a $T_*$ depending on all parameters so that for $t < T_*$ and $\sigma < \zeta_* \nu^{-\oh}$ we have, for a constant $c$ depending only on the dimensions,
  \begin{align}
    \partial_{\theta; \sigma}V^+ - \fop_{\sigma}[ V^+, \tilde \kappa] - \beta \sigma \partial_\sigma V^+ \geq c \epsilon_v \nu \sigma^{1,-1} \label{ineq_vplus}
  \end{align}
  and
  \begin{align}
    \label{eq:119}
    \partial_{\theta; \sigma}V^- - \fop_{\sigma}[ V^-, \tilde \kappa] - \beta \sigma \partial_\sigma V^- \leq - c \epsilon_v \nu \sigma^{1,-1}.
  \end{align}
\end{lemma}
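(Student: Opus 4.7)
The plan is to verify \eqref{ineq_vplus} and \eqref{eq:119} by direct expansion, exploiting that $V_{k\Bry}$ satisfies the Bryant soliton equation $\sigma^{-1}\qop_\sigma[V_{k\Bry},V_{k\Bry}] + \sigma^{-1}\lop_\sigma[V_{k\Bry}] = 0$ for every $k > 0$ (by scaling of $V_\Bry$), and that $V_{k\Pert}$ is the linearization described in Lemma \ref{pert_fun}, namely $2\sigma^{-1}\qop_\sigma[V_{k\Bry},V_{k\Pert}] + \sigma^{-1}\lop_\sigma[V_{k\Pert}] = -\sigma\partial_\sigma V_{k\Bry}$. The subsolution statement for $V^-$ is symmetric, so I focus on $V^+$. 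Writing $V^+ = V_{k^+\Bry} + \epsilon\, V_{k^+\Pert}$ with $\epsilon(t) = \beta(t) - \epsilon_v\nu(t)$ and expanding $\sigma^{-1}\qop_\sigma[V^+,V^+] + \sigma^{-1}\lop_\sigma[V^+]$ by bilinearity, the two defining identities kill both the zeroth-order and the $\epsilon$-linear contributions except for a single term $\epsilon\cdot(-\sigma\partial_\sigma V_{k^+\Bry})$, leaving only the genuinely quadratic piece $\epsilon^2\sigma^{-1}\qop_\sigma[V_{k^+\Pert},V_{k^+\Pert}]$. Combining with $-\beta\sigma\partial_\sigma V^+$ and the curvature contribution from $\fop_\sigma$, this gives
\[
-\fop_\sigma[V^+,\tilde\kappa] - \beta\sigma\partial_\sigma V^+
= (\epsilon-\beta)\sigma\partial_\sigma V_{k^+\Bry} - \epsilon\beta\sigma\partial_\sigma V_{k^+\Pert} - \epsilon^2\sigma^{-1}\qop_\sigma[V_{k^+\Pert},V_{k^+\Pert}] + 2\tilde\kappa^2 V^+.
\]

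The leading term is $(\epsilon-\beta)\sigma\partial_\sigma V_{k^+\Bry} = -\epsilon_v\nu\cdot\sigma\partial_\sigma V_{k^+\Bry}$, and it has precisely the desired profile: $V_\Bry$ is strictly decreasing, with $V_\Bry'(\sigma)\sim -4\ein/(q(q-1))$ as $\sigma\searrow 0$ by \eqref{vbry_zero_asymptotics} and $V_\Bry'(\sigma)\sim -\ein/\sigma^2$ as $\sigma\to\infty$ by \eqref{vbry_infty_asymptotics}, so $-(k^+\sigma)V_\Bry'(k^+\sigma) \geq c_0\sigma^{1,-1}$ uniformly for $k^+\in [\tfrac12,2]$; the latter range is ensured by taking $T_*$ small enough that $\delta^{-1}\epsilon_v\nu^{1/2} < \tfrac12$. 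This yields a clean lower bound $-(\epsilon-\beta)\sigma\partial_\sigma V_{k^+\Bry} \geq c_0\epsilon_v\nu\sigma^{1,-1}$ on the main term.

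It then remains to show that each of the other four contributions (the two perturbation cross terms, the curvature term $2\tilde\kappa^2 V^+$, and the previously neglected $\partial_{\theta;\sigma}V^+$) is bounded in absolute value by $(c_0/8)\epsilon_v\nu\sigma^{1,-1}$, so together they cannot defeat the main term. For the two cross terms, $\epsilon,\beta = O(\nu)$ together with the derivative bounds on $V_{k^+\Pert}$ from Lemma \ref{pert_fun} yield a pointwise bound $C\nu^2\sigma^{1,0}$; in the tip region, $\sigma^{1,0}\leq \zeta_*\nu^{-1/2}\cdot\sigma^{1,-1}$, so they are each at most $C\zeta_*\nu^{3/2}\sigma^{1,-1}$, which is negligible once $T_*$ is small relative to $\zeta_*$, $\delta$, $\epsilon_v$. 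The term $2\tilde\kappa^2 V^+$ is at most $2Cc_{ytip}\epsilon_v\nu\sigma^{1,-1}$ using the hypothesis on $\tilde\kappa^2$ together with the crude bound $V^+ \leq C\sigma^{0,-1}$; this fixes the dimension-only choice of $c_{ytip}$ (small enough that $2Cc_{ytip} < c_0/8$). Finally, the time derivative $\partial_{\theta;\sigma}V^+$ sees the $t$-dependence only through $k^+(t)$, $\beta(t)$, and $\nu(t)$; using \eqref{beta_bounds} and $\partial_\theta\nu = \nu^2\dd1\nu$, one computes $\partial_\theta k^+ = -\tfrac12\delta^{-1}\epsilon_v\nu^{-1/2}\partial_\theta\nu = O(\delta^{-1}\epsilon_v\nu^{3/2})$ and $\partial_\theta\epsilon = O(\nu^2)$, and since $\partial_k V_{k\Bry}$ has the same $\sigma$-profile as $\sigma\partial_\sigma V_{k\Bry}$, one obtains $|\partial_{\theta;\sigma}V^+| \leq C\delta^{-1}\epsilon_v\nu^{3/2}\sigma^{1,-1}$, again negligible for $T_*$ small.

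The main obstacle, and the reason the barrier \eqref{v_barriers_def} uses $\nu^{1/2}$ rather than $\nu$ in $k^\pm$, is the last of these: a naive barrier whose $t$-dependence entered only through $\beta$ and $\nu$ would produce a time derivative of order $\nu$, matching the main term and leaving no margin. The $\nu^{1/2}$ factor in $k^\pm$ is exactly what makes $\partial_\theta k^\pm = O(\nu^{3/2})$ and thus opens the gap that permits $V^\pm$ to be strict super/sub-solutions. Once the main term is isolated and all error terms shown to be $o(\epsilon_v\nu\sigma^{1,-1})$ as $T_*\to 0$, the argument for $V^-$ is obtained by flipping the signs of $\epsilon_v\nu$ in $\epsilon$ and of $\delta^{-1}\epsilon_v\nu^{1/2}$ in $k^-$, giving \eqref{eq:119} with the same constant $c = c_0/2$.
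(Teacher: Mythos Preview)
Your argument is correct and follows the paper's proof essentially line by line: expand $\fop_\sigma[V^+,\cdot]$ by bilinearity of $\qop_\sigma$, use the Bryant soliton identity and the $V_{k\Pert}$ equation to isolate the main term $-\epsilon_v\nu\,\sigma\partial_\sigma V_{k^+\Bry}\geq c\epsilon_v\nu\sigma^{1,-1}$, and then bound the quadratic, drift, $\tilde\kappa^2$, and $\partial_{\theta;\sigma}$ contributions as $o(\epsilon_v\nu\sigma^{1,-1})$ for $T_*$ small (with $c_{ytip}$ chosen to absorb the $\tilde\kappa^2$ term). Two minor remarks: the term $+2\tilde\kappa^2 V^+$ actually has a favorable sign for the $V^+$ inequality, so the absolute-value bound you give is only genuinely needed for the $V^-$ case; and your closing heuristic for the role of $\nu^{1/2}$ in $k^\pm$ is not quite the paper's reason---in the paper the exponent is dictated by the buckling argument (Lemma~\ref{buckling_lemma}) rather than by the size of $\partial_{\theta;\sigma}V^\pm$, since a barrier depending only on $\beta,\nu$ already has $\partial_\theta$-derivative of order $\nu^2$.
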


\begin{proof}
  Let us first demonstrate the main calculation, implicitly defining error terms $E_1$ and $E_2$.  Calculate
  \begin{align}
    - \fop_{\sigma}[V^+, 0]
    &= - \left( \sigma^{-1} \qop[V_{k^+\Bry}, V_{k^+\Bry} ] + \sigma^{-1}\lop[V_{k^+\Bry}] \right)\\
    &- (\beta - \epsilon_v \nu)
      \left(
      2\sigma^{-1}\qop[V_{k^+\Bry}, V_{k^+\Pert}] + \sigma^{-1} \lop[V_{k^+\Bry}]
      \right) \\
    &- (\beta - \epsilon_v \nu)^2 \sigma^{-1}\qop[V_{k^+\Pert}, V_{k^+\Pert}].
  \end{align}
  The first line vanishes, and the second line can be computed from the equation solved by $V_{k^+\Pert}$.  The last line is error.
  \begin{align}
    -\fop_{\sigma}[V^+,0]
      = +(\beta - \epsilon_v\nu) \sigma \partial_\sigma V_{k_+\Bry} + E_1
  \end{align}
  Also calculate,
  \begin{align}
    - \beta \sigma \partial_\sigma V^+
    &= - \beta \sigma \partial_\sigma V_{k^+\Bry}
      - (\beta -\epsilon_v\nu)\beta \sigma \partial_\sigma V_{k^+\Pert} \\
    &= - \beta \sigma \partial_\sigma V_{k^+\Bry} + E_2
  \end{align}
  Putting these together,
  \begin{align}
    - \fop_{\sigma}[V^+, 0] - \beta \sigma \partial_\sigma V^+
    &=
      -\epsilon_v \beta \sigma \partial_\sigma V_{k^+\Bry} + C (\beta \sigma^{1,-1}) \left( \beta \sigma^{0,1}\right) \\
    &\geq c \epsilon_v \nu \sigma^{1,-1} + E_1 + E_2
  \end{align}
  where we used that $\sigma \partial_\sigma V_{k^+ \Bry} \leq -c\sigma^{1,-1}$ for some $c$.  Therefore it remains to bound $E_1$ and $E_2$, as well as the other terms in \eqref{ineq_vplus}, namely
  \begin{align}
    \partial_{\theta; \sigma}V^+ \quad\text{and}\quad \fop_\sigma[V^+, \tilde \kappa] - \fop_\sigma[V^+,0] = \tilde \kappa^2 V^+.
  \end{align}

  For the following, note we can assume that $k(t)$ is in $[1/2,2]$. Using $\beta \sim \nu$, and using the notation $|f|_2 = |f| + \sigma |\partial_\sigma f| + \sigma^2 |\partial_\sigma^2 f|$, we have the bound on $E_1$ and $E_2$,
  \begin{align}
    |E_1| + |E_2|
    &= \left|(\beta-\epsilon\nu)^2 \sigma^{-1}\qop [V_{k^+\Pert}, V_{k^+\Pert}]\right|
      +
      \left|
      (\beta - \epsilon \nu)
      \beta \sigma \partial_\sigma  V_{k^+\Pert}
      \right|
    \\
    &\leq C\nu^2 \left( \sigma^{-1}|V_{\Pert}|_2^2 + |V_{\Pert}|_2 \right)\\
    &\leq C\nu^2 \left( \sigma^{-1}\left( \sigma^{1,0} \right)^2 + \sigma^{1,0} \right) \leq C \nu^2  \sigma^{1,0} = C \left( \nu \sigma^{1,-1} \right) \left( \nu \sigma^{0,1} \right)
  \end{align}

Now we bound the time term, using \eqref{beta_bounds}.
  \begin{align}
    \partial_{\theta;\sigma}V^+
    &= \partial_{\theta; \sigma}
      \left(
      V_{\Bry}(k(t)\sigma)
      +
      (1-\epsilon)\frac{\beta}{k(t)}V_{\Pert}(k(t)\sigma)
      \right) \\
    &\leq C
      \left(
      \sigma \partial_\sigma V_{\Bry} \partial_\theta k
      + (\partial_\theta \beta + \beta \partial_\theta k)V_{\Pert}
      + \beta \sigma \partial_\sigma V_{\Pert} \partial_\theta k
      \right) \\
    &\leq C
      \left(
      \sigma^{1,-1}\nu^{1+1/2}
      + (\nu^2 + \beta \nu^{1+1/2})\sigma^{1,0}
      + \beta \sigma^{1,0}\nu^{1+1/2}
      \right) \\
    &\leq C \nu \sigma^{1,-1}
      \left(
      \nu^{1/2} + \nu \sigma^{0,1}
      \right)
  \end{align}
  Finally, we use our assumption on $\tilde \kappa$ to bound the term $\tilde\kappa^2 V^+$ by
  \begin{align}
    \tilde \kappa^2 V^+
    &\leq C
      \left(
      c_{ytip}\epsilon_v \nu \sigma^{1,0}
      \right)
      \sigma^{0,-1}
      \\
    &\leq
      C\nu \sigma^{1,-1}
      \left( c_{ytip}\epsilon_v \right)
  \end{align}
  
  All in all, we find
  \begin{align}
    \partial_{\theta;\sigma} - \fop[V^+, \tilde \kappa ] - \beta \sigma \partial_\sigma V^+
    &\geq \nu \sigma^{1,-1}(c\epsilon_v - Cc_{ytip}\epsilon_v - o(1)) 
  \end{align}
  Here the term $o(1)$ goes to zero as $t \searrow 0$, in any region where $\sigma < \zeta_* \nu^{-1/2}$.  The lemma follows by choosing the $c$ in the statement to be one half of the $c$ above, choosing $c_{ytip}$ to be sufficiently small, and choosing $T_*$ to be small enough so that the $o(1)$ term is sufficiently small.
\end{proof}

For arbitrary functions $\bar w$ and $v$ we define
\begin{align}
  \dop(\bar w, v)
  \defeq
   \partial_{\theta; \sigma} \bar w
    -\left(
    \sigma^{-1}\rop[\bar w, v] - \frac{1}{\bar w - \ein_F \frac{t}{\omega}}v \sigma (\partial_\sigma \bar w)^2 - \beta \sigma \partial_\sigma \bar w - (\log \omega)_{\theta} \bar w
    \right).
\end{align}
The equation solved by $\bar w$ \eqref{evo_w_in_sigma} is therefore $\dop(\bar w, v) = 0$.

\begin{lemma}\label{wsupsoln}
  Let $\zeta_*>0$, $\epsilon_{v}>0$, $\epsilon_w>0$, and $\delta>0$ be given.  Let $V^{\pm}$ and $W^{\pm}$ be the barriers defined in \eqref{v_barriers_def} and \eqref{w_barriers_def}.
  
 There is a $T_*$ depending on all parameters such that for all $t < T_*$ and $\sigma < \zeta_* \nu^{-1/2}$ we have
  \begin{align}
    \dop(\barr W^+,v)
    > \oh \delta \epsilon_w \nu
  \end{align}
  and 
  \begin{align}
    \dop(\barr W^-, v)
    < - \oh \delta \epsilon_w \nu
  \end{align}
\end{lemma}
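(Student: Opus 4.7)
The plan is to expand $\dop(\bar W^+, v)$ term by term using the definition $\bar W^+ = 1 + \epsilon_w\nu^{1/2} + ((\log\omega)_\theta - \delta\epsilon_w\nu)W_{\Pert}$, extract a leading $\delta\epsilon_w\nu$ arising from a cancellation between the diffusion term $-\sigma^{-1}\rop[\bar W^+,v]$ and the reaction term $(\log\omega)_\theta\bar W^+$, and verify that all other contributions are of lower order. The inequality for $\bar W^-$ follows from the same computation with signs swapped, so I focus on $\bar W^+$ and treat $v$ as an arbitrary function trapped between $V^{\pm}$ (so that the output is usable in the maximum-principle step of the next section).

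The crux is the defining identity $\sigma^{-1}\rop[W_{\Pert}, V_{\Bry}] = 1$ from Lemma \ref{wpert_exist}. Since $\rop[\,\cdot\,, v]$ annihilates constants in its first slot,
\[
\sigma^{-1}\rop[\bar W^+, v] = \bigl((\log\omega)_\theta - \delta\epsilon_w\nu\bigr)\,\sigma^{-1}\rop[W_{\Pert}, v].
\]
Subtracting the identity yields $\sigma^{-1}\rop[W_{\Pert}, v] = 1 + (v - V_{\Bry})\,R(\sigma)$ for a function $R$ bounded by $C\sigma^{1,0}$ (from the smoothness and evenness of $W_{\Pert}$ at the origin together with its linear growth at infinity), and the barrier estimate $v - V_{\Bry} = O(\nu\sigma^{1,0}) + O(\delta^{-1}\epsilon_v\nu^{1/2}\sigma^{1,-1})$ from Lemma \ref{barrier_order_lemma} then gives $\sigma^{-1}\rop[W_{\Pert}, v] = 1 + O(\nu + \delta^{-1}\epsilon_v\nu)$. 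Since $(\log\omega)_\theta\bar W^+ = (\log\omega)_\theta + O(\nu^{3/2})$, the leading $\pm(\log\omega)_\theta$ contributions cancel, leaving
\[
-\sigma^{-1}\rop[\bar W^+, v] + (\log\omega)_\theta\bar W^+ = \delta\epsilon_w\nu + O(\nu^{3/2}) + O(\delta^{-1}\epsilon_v\nu^2).
\]

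The remaining terms of $\dop$ are all $O(\nu^{3/2})$ inside $\Omega_{tip}$: $\partial_{\theta;\sigma}\bar W^+$ is controlled using \eqref{beta_bounds}--\eqref{omega_bounds} together with the cutoff $\sigma < \zeta_*\nu^{-1/2}$, which converts factors of $W_{\Pert} \lesssim \sigma^{0,1}$ into $\zeta_*\nu^{-1/2}$; the convection term $\beta\sigma\partial_\sigma\bar W^+$ is controlled analogously using $\beta \lesssim \nu$ and $\sigma\partial_\sigma W_{\Pert} \lesssim \sigma^{0,1}$; and the quadratic term $v\sigma(\partial_\sigma\bar W^+)^2/(\bar W^+ - \ein_F t/\omega)$ is controlled using $v \leq V^+ \lesssim \sigma^{0,-1}$, the smoothness of $W_{\Pert}$ at the origin, and the fact that \ref{w_big} keeps $\bar W^+ - \ein_F t/\omega$ bounded below for $T_*$ small. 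Assembling these bounds gives $\dop(\bar W^+, v) \geq \delta\epsilon_w\nu - C(\zeta_*)\nu^{3/2} - C\delta^{-1}\epsilon_v\nu^2$, and the lemma follows by choosing $T_*$ small enough (depending on $\zeta_*,\delta,\epsilon_v,\epsilon_w$) to reduce the error to $\oh\delta\epsilon_w\nu$. The main subtlety is the balance of scales: the target $\delta\epsilon_w\nu$ is tiny, but the error $\delta^{-1}\epsilon_v\nu^2$ from the $v - V_{\Bry}$ correction carries an extra power of $\nu$ and is harmless; likewise, the $\nu^{1/2}$ power in the constant pieces of $\bar W^{\pm}$ is precisely what is needed to keep the time-derivative error at $O(\nu^{3/2})$ rather than a larger power.
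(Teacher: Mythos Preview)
Your proposal is correct and follows essentially the same route as the paper: expand $\dop(\bar W^+,v)$ term by term, use $\sigma^{-1}\rop[W_{\Pert},V_{\Bry}]=1$ to extract the leading $\delta\epsilon_w\nu$ from the cancellation between $-\sigma^{-1}\rop[\bar W^+,v]$ and $(\log\omega)_\theta\bar W^+$, and check that all remaining terms are $o(\nu)$ in the tip region. There is one arithmetic slip: from $R(\sigma)=O(\sigma^{1,0})$ and $v-V_{\Bry}=O(\nu\sigma^{1,0})+O(\delta^{-1}\epsilon_v\nu^{1/2}\sigma^{1,-1})$ you get $(v-V_{\Bry})R=O(\nu)+O(\delta^{-1}\epsilon_v\nu^{1/2})$, not $O(\delta^{-1}\epsilon_v\nu)$, so after multiplying by $((\log\omega)_\theta-\delta\epsilon_w\nu)=O(\nu)$ the error is $O(\delta^{-1}\epsilon_v\nu^{3/2})$ rather than $O(\delta^{-1}\epsilon_v\nu^2)$ --- this matches the paper's final bound $\dop(\bar W^+,v)\geq\nu(\delta\epsilon_w-C(\zeta_*+\delta^{-1}\epsilon_v)\nu^{1/2})$ and does not affect the conclusion.
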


\begin{proof}
  The main idea is that 
  \begin{align}
    (\log \omega)_{\theta} \barr W^+ - \sigma^{-1}\rop[\barr W^+, v]
    &= (1 + \epsilon_w\nu^{1/2})(\log \omega)_{\theta} \\
    &+ (\log \omega)_{\theta}\left( (\log \omega)_{\theta} - \delta \epsilon_w \nu \right) W_{Pert}\\
    &- (\log \omega)_{\theta}\sigma^{-1}\rop( W_{pert}, V_{Bry}) +  \delta \epsilon_w \nu \sigma^{-1}\rop(W_{Pert}, V_{Bry}) \\
    &+ ((\log \omega)_{\theta} - \delta \epsilon_w \nu)
      \cdot (\sigma^{-1}\rop(W_{pert}, V_{Bry}) - \sigma^{-1}\rop(W_{Pert}, v))
  \end{align}
  We can simplify the first and third lines to find
  \begin{align}
    (\log \omega)_{\theta} \barr W^+ - \sigma^{-1}\rop[\barr W^+, v]
    &\geq \epsilon_w (\log \omega)_{\theta} \nu^{1/2} + \delta \epsilon_w \nu \\
    &+ (\log \omega)_{\theta}\left( (\log \omega)_{\theta} - \epsilon_w \nu \right) W_{Pert}\\
    &+ ((\log \omega)_{\theta} - \epsilon_w \nu)\cdot (\sigma^{-1}\rop(W_{pert}, V_{Bry}) - \sigma^{-1}\rop(W_{Pert}, v))
  \end{align}
  The first line has the correct sign, we will use it to bound the other lines and the rest of the terms.  First, let's bound the other lines above:
  \begin{align}
    (\log \omega)_{\theta} \barr W^+ - \sigma^{-1}\rop[\barr W^+, v]
    &\geq \epsilon_w \nu \\
    &- C\nu^2 \sigma^{0, 1} \\
    &- C \nu (\delta^{-1}\epsilon_v)\nu^{1/2} \sigma^{1,0}
  \end{align}
  Here we used the bound $|V_{\Bry} - v| < c \delta^{-1}\epsilon_v\nu^{p}\sigma^{1,-1}$ together with $|\sigma \partial_\sigma W_{Pert}| + |\sigma^2 \partial_\sigma^2W_{pert}| \leq  \sigma^{0,1}$.   In the second inequality we also used $(\log \omega)_{\theta} = \nu \dd1{\omega} \leq C \nu$.

  Next we find the term $\partial_{\theta;\sigma}\bar W^+$.  The term $\partial_{\theta; \sigma}\epsilon_w\nu^{1/2}$ has the correct sign, so we ignore it.  For the other time derivatives, we can use $|\partial_{\theta}^2 (\log \omega)| + |\partial_{\theta} \nu| \leq C \nu^2$ (from \eqref{beta_bounds}, \eqref{omega_bounds}):
  \begin{align}
    \left| \partial_{\theta}\left( (\log \omega)_{\theta} - \epsilon_w\nu  \right) W_{Pert} \right|
    \leq C \nu^2 W_{pert} 
    \leq C \nu^{2} \sigma^{0, 1}.
  \end{align}
  To bound the remaining terms, note
  $
    |\sigma \partial_\sigma \bar W^+|
    \leq \nu \sigma^{1,1}
  $
  and $v \leq C\sigma^{0, -1}$.  Also, as in the proof of Lemma \ref{lemma:y_control_tip} we can bound
  $
    \frac{1}{\bar W^\pm - \ein_F t} \leq C
    $.
  So
  \begin{align}
    \left|
    \frac{1}{\bar W^+ - \ein_F \frac{t}{\omega}}v \sigma (\partial_\sigma \bar W^+)^2 + \beta \sigma \partial_\sigma \bar W^+
    \right|
 &\leq C \left(
   \sigma^{-1}v |\sigma \partial_\sigma \bar W^+|^2 +
   \nu |\sigma \partial_\sigma \bar W^+|
   \right) \\
 &\leq C \left( \nu^2 \sigma^{1,0} + \nu^2 \sigma^{1,1} \right)
   \leq C \nu \left( \nu \sigma^{1,1} \right)
  \end{align}

  Putting together all of the inequalities, we have
  \begin{align}
    \dop(\bar W^+, v)
    &\geq \nu 
      \left(
      \delta \epsilon_w
      - C \nu \sigma^{1,1}-C (\delta^{-1}\epsilon_v)\nu^{1/2} \sigma^{1,0}
      \right).
  \end{align}
  In the space-time region under consideration,
  \begin{align}
    \dop(\bar W^+, v)
    &\geq \nu 
      \left(
      \delta \epsilon_w
      - C \nu^{1/2}\zeta_*-C (\delta^{-1}\epsilon_v)\nu^{1/2}
      \right)
      = \nu
      \left( \delta \epsilon_w - C(\zeta_* + \delta^{-1}\epsilon_v) \nu^{1/2} \right).
  \end{align}
  For small enough $T_*$, the positive term dominates.
\end{proof}

\begin{lemma}\label{lemma:barrier_wrapup}
  Suppose we are in the setting of Lemma \ref{main_tip_estimates}.  Suppose $\epsilon_{w} \leq \barr \epsilon_{w}(\epsilon_{v}, C_{reg})$.  There is a $T_*$ depending on all parameters such that the following holds.
  
  If items \ref{conc:tip_barrier} and \ref{conc:tip_reg_noncompact} hold for $t \in [T_1, T_2)$,  then item \ref{conc:tip_barrier} holds for $t \in [T_1, T_2]$.
\end{lemma}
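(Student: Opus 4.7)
The plan is to propagate the strict barrier inequalities forward in time by a first-contact maximum principle argument, leveraging the strict sub/supersolution properties already obtained in Lemmas \ref{vsupsoln} and \ref{wsupsoln}. Since Lemma \ref{vsupsoln} requires the bound $\tilde \kappa^2 \leq c_{ytip} \epsilon_v \nu \sigma^{1,0}$, the first step is to feed the standing regularity \ref{conc:tip_reg_noncompact} on $[T_1, T_2)$ into Lemma \ref{lemma:y_control_tip}, which yields $\tilde \kappa^2 \leq C C_{reg}^2 \epsilon_w^2 \sigma^{1,0} \nu$, and then to choose $\bar \epsilon_w(\epsilon_v, C_{reg})$ so that $C C_{reg}^2 \bar \epsilon_w^2 \leq c_{ytip} \epsilon_v$. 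This is precisely the source of the stated dependence of $\bar \epsilon_w$ on $\epsilon_v$ and $C_{reg}$; Lemma \ref{wsupsoln} then applies without further preparation.

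Next I would let $T_\dagger$ denote the supremum of times $s \in [T_1, T_2]$ for which the strict inequalities of \ref{conc:tip_barrier} hold on $[T_1, s]$, and argue $T_\dagger = T_2$ by contradiction. If $T_\dagger < T_2$, continuity produces a first contact point $(\sigma_0, T_\dagger)$ at which one of $v - V^\pm$ or $\bar w - \bar W^\pm$ vanishes. The boundary $t = T_1$ is excluded by the strict initial control, and $\sigma = \zeta_* \nu^{-1/2}$ by the right-barricading assumption. At an interior contact with $\sigma_0 > 0$, the standard first-touch comparison -- matching zeroth and first $\sigma$-derivatives, correct sign on $\partial_\sigma^2(v - V^+)$ and on $\partial_{\theta;\sigma}(v - V^+)$, and the parabolicity of the nonlinear operators in their first slots (leading coefficient $\sigma v > 0$) -- combined with the strict supersolution inequalities $\dop(\bar W^+, v) > \oh \delta \epsilon_w \nu$ and $\partial_{\theta;\sigma}V^+ - \fop_\sigma[V^+, \tilde\kappa] - \beta \sigma \partial_\sigma V^+ \geq c \epsilon_v \nu \sigma_0^{1,-1}$, gives the desired contradiction; the three symmetric configurations ($V^-$, $\bar W^-$, touching from the other side) are handled identically.

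The main technical obstacle I anticipate is the tip $\sigma_0 = 0$, where the $v$-equation degenerates and where $V_\Pert(0) = 0$ forces $V^\pm(0, t) = 4 = v(0, t)$, so the barriers meet $v$ automatically at the tip. To rule out this contact from being genuine, one must examine the leading $\sigma$-coefficient in the Taylor expansion of $v - V^\pm$ at the tip: the strict supersolution property, specialized to $\sigma = 0$, should propagate to a strict comparison of first derivatives $\partial_\sigma(v - V^+)(0, T_\dagger) < 0$, which together with $(v - V^+)(0, T_\dagger) = 0$ would force $v < V^+$ just beyond the tip, contradicting the assumption that the contact extends off the tip. The analogous $\bar w$-case at $\sigma_0 = 0$ is easier because $\bar W^\pm(0, t)$ retains a gap of order $\epsilon_w \nu^{1/2}$ around $\bar w(0, t) \approx 1$ for small $T_*$, reducing to the standard argument.
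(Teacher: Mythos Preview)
Your proposal is correct and matches the paper's approach: use Lemma~\ref{lemma:y_control_tip} with the standing regularity to bound $\tilde\kappa^2 \leq CC_{reg}^2\epsilon_w^2\nu\,\sigma^{1,0}$, choose $\bar\epsilon_w$ so that $CC_{reg}^2\bar\epsilon_w^2 \leq c_{ytip}\epsilon_v$, and then apply the maximum principle at a first touch using the strict sub/super\-solution inequalities of Lemmas~\ref{vsupsoln} and~\ref{wsupsoln}. The paper's own proof is in fact terser than yours and does not discuss the endpoint $\sigma=0$ at all.

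On your third paragraph: the mechanism you sketch --- specializing the supersolution inequality to $\sigma=0$ to force $\partial_\sigma(v-V^+)(0,T_\dagger)<0$ --- does not work literally, since the supersolution margin $c\epsilon_v\nu\,\sigma^{1,-1}$ vanishes at $\sigma=0$. The simpler resolution is that $v(0,t)=V^{\pm}(0,t)=4$ holds \emph{identically} in $t$: this is the smoothness condition at the pole, and the barriers are built from $V_{k^\pm\Bry}(0)=4$, $V_{k^\pm\Pert}(0)=0$ precisely to respect it. Hence $\sigma=0$ is never a \emph{new} contact, and any genuine first touch must occur at some $\sigma_0>0$, where your standard argument applies directly. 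Equivalently --- and this is the precise version of your derivative idea --- the inequalities $V^-<v<V^+$ are, near $\sigma=0$, the same as two-sided bounds on $\tilde L=\sigma^{-1}(1-\tfrac14 v)$, for which Lemma~\ref{barrier_order_lemma} gives a genuine gap of order $\delta^{-1}\epsilon_v\nu^{1/2}$ at $\sigma=0$; the paper adopts this viewpoint explicitly in the regularity Lemma~\ref{lemma:tip_reg}.
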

\begin{proof}
  Choose $\barr \epsilon_{w}$ small enough (i.e. $\lesssim \sqrt{\epsilon_v}$) so that Lemma \ref{lemma:y_control_tip} implies that we have the desired inequality $\tilde \kappa^2 \leq c_{ytip} \epsilon_v \beta \sigma^{1, 0}$
  needed to apply Lemma \ref{vsupsoln}.

  Now, suppose that $v$ or $w$ touches one of its barriers at time $t = T_2$.  By Lemma \ref{vsupsoln} or \ref{wsupsoln}, we get a contradiction to the maximum principle since these lemmas say that $V^\pm$ and $W^{\pm}$ are strict sub- and supersolutions to the corresponding equations.
\end{proof}

\subsection{Regularity}
We prove the regularity \ref{conc:tip_reg_noncompact} separately for $\sigma \geq 1$ and $\sigma \leq 1$.  

\begin{lemma}\label{lemma:tip_reg_noncompact}
  Suppose we are in the setting of Lemma \ref{main_tip_estimates}. Suppose $\delta < \barr \delta(\zeta_*)$ so that the conclusion of Lemma \ref{barrier_order_lemma} holds.  We can choose $c_{safe}$ and $\berr C_{reg}$ depending only on the dimensions such that the following holds.  Suppose item \ref{conc:tip_barrier} holds for $t \in [T_1, T_2)$.   Then item \ref{conc:tip_reg_noncompact} holds for $t \in [T_1, T_2]$ and for $\sigma \geq 1.$
\end{lemma}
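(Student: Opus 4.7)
The plan is to mimic the strategy of Lemma \ref{regularity_prish}: write linearized parabolic equations for the perturbations $\tilde v = v - V$ and $\tilde w = \bar w - \bar W$, rescale so that on a unit parabolic cylinder everything is uniformly parabolic with smooth bounded coefficients, and then cite the interior Schauder estimates of Section \ref{regularity}. The key simplifying feature here, as opposed to the $\sigma \leq 1$ case, is that for $\sigma_1 \geq 1$ the weight $\sigma^{1/2,1}$ is comparable to $\sigma$, so the ball of radius $1$ in $(d\sigma^{1/2,1})^2$ is just a standard $\sigma$-interval of width $\sim 1$; the degenerate behavior of the Bryant soliton near $\sigma = 0$ plays no role.

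First I would fix a point $(\sigma_1, t_1)$ with $1 \le \sigma_1 < \zeta_* \nu(t_1)^{-1/2}$, and choose a parabolic cylinder $\Xi$ around $(\sigma_1, t_1)$ of spatial radius $2$ in $\sigma$ and a comparable radius in the rescaled time $\theta$, thrown in so that $\Xi \subset \{1/2 \le \sigma \le 2\zeta_* \nu^{-1/2}\}$ where barricading by the tip barriers is assumed. Using \eqref{v_evo_theta}--\eqref{eq:125} for $v$ and \eqref{evo_w_in_sigma} for $\bar w$, subtracting the corresponding expressions evaluated on $V$ and $\bar W$, one writes
\begin{align*}
\partial_{\theta;\sigma}\tilde v - a^{\sigma\sigma}\partial_\sigma^2 \tilde v - a^\sigma \partial_\sigma \tilde v - a^0 \tilde v &= F_v, \\
\partial_{\theta;\sigma}\tilde w - b^{\sigma\sigma}\partial_\sigma^2 \tilde w - b^\sigma \partial_\sigma \tilde w - b^0 \tilde w &= F_w.
\end{align*}
The coefficients $a^\bullet, b^\bullet$ are polynomial expressions in $v, \bar w, V, \bar W, \partial_\sigma V, \partial_\sigma \bar W, \beta, (\log\omega)_\theta$, and their smoothness is governed by the function summary of Section \ref{function_summary}. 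The barrier conclusion \ref{conc:tip_barrier} together with Lemma \ref{barrier_order_lemma} give $v, \bar w$ uniformly comparable on $\Xi$ to the profile functions, which in turn gives $C^\eta$ bounds on all coefficients that are independent of $(\sigma_1, t_1)$, and gives uniform parabolicity since $a^{\sigma\sigma} \sim \sigma v \sim 1$ and $b^{\sigma\sigma} \sim \sigma v \sim 1$ for $\sigma \ge 1$. The inhomogeneity $F_v$ contains the residual $\partial_{\theta;\sigma} V - \fop_\sigma[V, 0] - \beta\sigma \partial_\sigma V$, which is $O(\epsilon_v \nu\, \sigma_1^{-1})$ by the computation inside Lemma \ref{vsupsoln}, plus the $\tilde\kappa^2 v$ term controlled by Lemma \ref{lemma:y_control_tip}; analogously $F_w = O(\delta \epsilon_w \nu)$ using Lemma \ref{wsupsoln}. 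In particular, $|F_v|_{C^\eta(\Xi)}$ and $|F_w|_{C^\eta(\Xi)}$ are smaller than the $C^0$ sizes of $\tilde v, \tilde w$ by a factor of $\nu^{1/2}$.

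Second I would apply the interior parabolic Schauder estimate from Section \ref{regularity} to each equation separately, treating the coupling (that is, $v$ entering the equation for $\bar w$ through $\rop[\bar w, v]$, and $\partial_\sigma \bar w$ entering the equation for $v$ through $\tilde\kappa^2$) as frozen inhomogeneity bounded by the barriers and Lemma \ref{lemma:y_control_tip}. This yields, on a slightly smaller cylinder $\Xi' \ni (\sigma_1, t_1)$,
\[
|\tilde v|_{C^{2,\eta}(\Xi')} \le C\bigl(|\tilde v|_{C^0(\Xi)} + |F_v|_{C^\eta(\Xi)}\bigr),
\qquad
|\tilde w|_{C^{2,\eta}(\Xi')} \le C\bigl(|\tilde w|_{C^0(\Xi)} + |F_w|_{C^\eta(\Xi)}\bigr),
\]
with $C$ depending only on the model pinch data and the fixed choice of $\zeta_*$, $\delta$. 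By Lemma \ref{barrier_order_lemma} the two $C^0$ norms are at most $C\delta^{-1}\epsilon_v\nu(t_1)^{1/2}\sigma_1^{-1}$ and $C\epsilon_w \nu(t_1)^{1/2}$ respectively, and as noted above the inhomogeneity contributions are smaller by a factor $\nu^{1/2}$, so absorbed into $C_{reg}$ once $T_*$ is small. Translating back to the $(d\sigma^{1/2,1})^2$-weighted norm gives exactly the bounds in \ref{conc:tip_reg_noncompact}, with a universal constant $\berr C_{reg}$; choosing $c_{safe}$ much smaller than $1/C$ ensures that the initial-data contribution at $t = T_1$ (supplied by the initially-controlled hypothesis) can also be absorbed when the cylinder $\Xi$ abuts the initial time, using the initial-time version of Schauder noted in Section \ref{regularity}. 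The main obstacle is the bookkeeping needed to verify that after rescaling, the coefficients $a^\bullet, b^\bullet$ really do have $C^\eta$ norms independent of $(\sigma_1, t_1)$, and that the nonlinear lower-order term $(\partial_\sigma \bar w)^2/(\bar w - \ein_F t/\omega)$ in the equation for $\bar w$ is controlled uniformly; the bound $\bar w > 1 - \frac{c}{2(1+c)}$ from the proof of Lemma \ref{lemma:y_control_tip}, together with the barriers for $\bar w$, is what makes this nonlinearity harmless at this scale.
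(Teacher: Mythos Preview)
Your overall strategy matches the paper's: work on a unit parabolic cylinder in $(\sigma,\theta)$ around a point with $\sigma_1\ge 1$, note that $\sigma v\sim 1$ there so the equations are uniformly parabolic, feed the $C^0$ bounds from Lemma~\ref{barrier_order_lemma} into interior Schauder, and read off the $C^{2,\eta}$ estimate. The paper does one cosmetic thing you do not: it multiplies through by $\sigma_1$ and works with $\sigma_1 v$ (which is $\sim 1$ on $\Xi$) rather than with $v$ itself, which makes the uniform parabolicity and coefficient bounds transparent without tracking the weight $\sigma^{1,-1}$ by hand.

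There is, however, a genuine gap in your decoupling step. You propose to apply Schauder to the $\tilde v$ and $\tilde w$ equations \emph{separately}, treating the cross-terms as frozen inhomogeneities, and you invoke Lemma~\ref{lemma:y_control_tip} to bound the $\tilde\kappa^2$ contribution to $F_v$. But Lemma~\ref{lemma:y_control_tip} has \ref{conc:tip_reg_noncompact} among its hypotheses, while Lemma~\ref{lemma:tip_reg_noncompact} assumes only \ref{conc:tip_barrier}; so that invocation is circular. More basically, even the coefficient-level coupling ($v$ appearing in the principal coefficient $\sigma v$ of the $\bar w$ equation) already requires $C^\eta$ control of $v$ to run linear Schauder on $\tilde w$ alone, and that is exactly what you are trying to produce.

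The paper sidesteps this by not decoupling: it writes the pair of equations for $(\sigma_1 v,\bar w)$ as a quasilinear parabolic \emph{system} whose coefficients (the bracketed factors) are smooth functions of $(\sigma,\sigma_1 v,\bar w)$, and whose lower-order nonlinearities include the gradient-squared terms $(\partial_\sigma(\sigma_1 v))^2$ and $(\partial_\sigma\bar w)^2$. The interior Schauder estimates of Section~\ref{regularity}, applied to this system, give an a~priori $C^{2,\eta}$ bound on $(\sigma_1(v-V),\bar w-\bar W)$ in terms of the $C^0$ bound alone; the only input needed is that $\sigma_1 v$ and $\bar w$ stay in a fixed compact range on $\Xi$, which is precisely what \ref{conc:tip_barrier} delivers. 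Rewriting your argument this way (or, equivalently, adding \ref{conc:tip_reg_noncompact} on $[T_1,T_2)$ to your hypotheses and then arguing, as in the proof of Lemma~\ref{lemma:tip_reg}, that the $C_{reg}$-dependent cross-term is multiplied by an extra $\nu^{1/2}$ and hence absorbable by shrinking $T_*$) closes the gap.
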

\begin{proof}
We copy equation \eqref{v_evo_theta}, using the expression \eqref{kappa_rewrite} for $\tilde \kappa^2$:
\begin{align}
  \partial_{\theta;\sigma}v
  &= \sigma v \partial_\sigma^2 v + c_1 \sigma^{-1}v + c_2 \partial_\sigma v + c_3 \sigma^{-1}v^2 + c_4 \sigma (\partial_\sigma v)^2 \\
  &+ \beta \sigma \partial_\sigma v
    + c_5 \sigma v
    \left( \frac{1}{\bar w - \ein_F t/\omega } \right)^2
    \left(\partial_\sigma \bar w \right)^2 v.
\end{align}
For $\sigma_1$ arbitrary, we multiply this by $\sigma_1$ to find,
\begin{align}
  \partial_{\theta;\sigma}\left( \sigma_1 v\right)
  &= \left[\sigma v \right]
     \partial_\sigma^2 \left(\sigma_1 v\right)
    + c_1 \left[\frac{\sigma_1}{\sigma} \right] \sigma_1^{-1} \left(\sigma_1v\right)
    + c_2 \partial_\sigma \left(\sigma_1 v \right)\\
  &+ c_3 \left[ \frac{\sigma_1}{\sigma} \right] \sigma_1^{-2} \left(\sigma_1v\right)^2
    + c_4 \left[ \frac{\sigma}{\sigma_1} \right] \sigma_1^{-1}(\partial_\sigma \left(\sigma_1 v \right))^2 \\
   &+ \left[ \beta \sigma \right]\partial_\sigma \left( \sigma_1 v \right)
     +c_5 
     \left[
     \frac{\sigma}{\sigma_1} (\sigma_1 v)^2
     \left( \frac{1}{\bar w - \ein_F t/\omega } \right)^2
     \right]
     \left(\partial_\sigma \bar w \right)^2 .
\end{align}
We also have the equation, from \eqref{evo_w_in_sigma},
\begin{align}
  \partial_{\theta; \sigma}\bar w
  &= \left[ \sigma v \right] \partial_\sigma^2 w
    + \left[ c_6 - c_8v \right]\partial_\sigma \bar w\\
  &- \left[ \frac{1}{\bar w - \ein_F \frac{t}{\omega}}v \right]\sigma (\partial_\sigma \bar w)^2 \\
  &- (\log \omega)_{\theta} \bar w - \left[ \beta \sigma  \right]\partial_\sigma \bar w.
\end{align}

For $\sigma_1$ and $t_1$ arbitrary but satisfying
\begin{align}
  1 < \sigma_1 < \zeta_* \nu^{-1/2}
\end{align}
we will apply parabolic regularity to $\sigma_1 v$ and $w$ in the region
\begin{align}
\Xi = (\sigma, \theta) \in [\sigma_1 - 1/2, \sigma_1 + 1/2] \times [\max(\theta(t_1) - 1/2, \theta(T_1)), \theta(t_1)]. \label{xi_def}
\end{align}
By Lemma \ref{barrier_order_lemma}, for $\oh < \sigma < \zeta_* \beta^{-1/2}$ we have
\begin{align}
  \sigma V^+ - \sigma V^- &< C \delta^{-1}\epsilon_v \nu^{1/2}, \\
  \bar W^+ - \bar W^- &< C \epsilon_w \nu^{1/2}.
\end{align}
Also, using that the solution is barricaded, the terms we have written in square brackets are smooth functions of $\sigma$, $\sigma_1 v$, and $w$ within \eqref{xi_def}, independently of the choice of $\sigma_1$ and $t_1$.

Therefore, we may apply regularity to $\sigma_1 (v -  V)$ and $\bar w - \bar W$ to find \ref{conc:tip_reg_noncompact} at $(\sigma_1, t_1)$.
\end{proof}

The control for $\sigma \leq 1$ requires more delicacy.  This requires the knowledge that at $\phi = 0$, we have $v = \sigma^{-1}|\nabla \sigma|_{\tilde g}^2 = 4|\nabla \tilde \phi|_{\tilde g}^2 = 4$  so that as $\tilde \phi$ goes to zero the metric closes off with $\tilde \phi$ behaves likes the radius of polar coordinates near the origin.  Here we pay for our choice of using the length-squared warping function (which we chose to minimize the number of square roots), it is much easier to see the equations in terms of $\tilde \phi = \sqrt{\sigma}$.  Furthermore, instead of controlling $v$ it is easier to understand the evolution of $\tilde L = (1 - \on4 v)/\sigma = (1-|\nabla \tilde \phi|_{\tilde g}^2)/\tilde \phi^2$ which is a smooth function on the warped product.  This is because $v$ naturally satisfies both a Neumann and Dirichlet condition at $\tilde \phi = 0$, whereas $\tilde L$ only satisfies the Neumann condition $\partial_{\tilde \phi}\tilde L = 0$ which comes from it being a rotationally symmetric function.

\begin{lemma}\label{lemma:tip_reg}
  Assume that we are in the setting of Lemma \ref{main_tip_estimates}. We can choose $c_{safe}$ and $\berr C_{reg}$ depending only on the dimensions such that the following holds.  Suppose additionally that item \ref{conc:tip_barrier} and \ref{conc:tip_reg_noncompact} hold for $t \in [T_1, T_2)$.  Then item \ref{conc:tip_reg_noncompact} holds for $t \in [T_1, T_2]$ and for $\sigma \leq 1$.
\end{lemma}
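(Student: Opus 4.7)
The plan is to repeat the Schauder-regularity strategy of Lemma \ref{lemma:tip_reg_noncompact}, but carry it out near the tip in a coordinate system in which the equations are genuinely non-degenerate. In the region $\sigma\le 1$, a parabolic ball of radius $\tfrac12$ in the $(\sigma,\theta)$-coordinates would cross $\sigma=0$, so I would instead work in the smooth coordinate $\tilde\phi=\sqrt\sigma$. In this coordinate the rescaled warped product becomes
\begin{align}
  \frac{4}{v}\,d\tilde\phi^2 + \tilde\phi^2\, g_{S^q} + \tilde w\, g_F,
\end{align}
a smooth Riemannian metric on a neighborhood of the origin in $\real^{q+1}\times F$, with $\tilde\phi$ playing the role of the Euclidean radius in the first factor. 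Rather than work with $v$ directly---which at the tip is forced to satisfy both $v(0)=4$ and $\partial_{\tilde\phi}v(0)=0$, and so is not a smooth function on $\real^{q+1}$---I would work with
\begin{align}
  \tilde L \defeq \frac{1-\tfrac14 v}{\sigma} = \frac{1-|\nabla\tilde\phi|^2_{\tilde g}}{\tilde\phi^2},
\end{align}
which by the standard smoothness criterion for rotationally symmetric functions extends to a $C^\infty$ function on $\real^{q+1}$. The analogous smooth barrier is $\tilde L_V = (1-\tfrac14 V)/\sigma$, and $\tilde L - \tilde L_V = -(v-V)/(4\sigma)$.

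Differentiating \eqref{v_evo_theta} and converting to $\tilde\phi$, the terms $\sigma^{-1}\lop_\sigma$ and $\sigma^{-1}\qop_\sigma[v,v]$ reorganize: the combination $\tilde\phi^{-1}\partial_{\tilde\phi}$ together with $\partial_{\tilde\phi}^2$ is exactly the radial part of the standard Laplacian on $\real^{q+1}$ acting on rotationally symmetric functions, so the $v$-equation rewrites as a quasilinear parabolic equation for $\tilde L$ whose coefficients are smooth functions of $\tilde\phi$, $\tilde L$ and $\bar w$. The $\bar w$-equation \eqref{evo_w_in_sigma} is already an equation for a smooth rotationally symmetric function and undergoes the same rewrite. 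Item \ref{conc:tip_barrier} controls these coefficients in $C^0$, and Lemma \ref{lemma:y_control_tip} (which uses item \ref{conc:tip_reg_noncompact} for $\sigma\ge 1$ together with the barrier control near the tip) controls the coupling through $\tilde\kappa^2$, so the system is uniformly parabolic in a parabolic ball (in $\real^{q+1}\times F$) around any point with $\sigma_1\le 1$, with coefficients bounded in $C^{2,\eta}$ independently of the center.

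Interior parabolic Schauder (see Section \ref{regularity}) then bounds the $C^{2,\eta}$ norm of $\tilde L - \tilde L_V$ and $\bar w - \bar W$ on a half-sized ball by their $C^0$ norms plus the initial data, and Lemma \ref{barrier_order_lemma} gives $|\tilde L - \tilde L_V|\le C\delta^{-1}\epsilon_v\nu^{1/2}$ and $|\bar w - \bar W|\le C\epsilon_w\nu^{1/2}$; the initial control enters with a factor $c_{safe}$ chosen small relative to the Schauder constant, which fixes $\berr C_{reg}$. Translating back via $v-V = -4\sigma(\tilde L - \tilde L_V)$ converts this into the bound on $v-V$ claimed in \ref{conc:tip_reg_noncompact}, with the extra factor of $\sigma$ matching the weight $\sigma^{1,-1}$ on $\sigma\le 1$. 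The main obstacle is verifying that after the coordinate change every apparently singular term---the $\sigma^{-1}\lop_\sigma$, the $\sigma^{-1}\qop_\sigma$, and the $\tilde\kappa^2$ coupling---really reorganizes into smooth rotationally symmetric operators on $\real^{q+1}\times F$; once this algebraic check is done, the Schauder bootstrap is essentially identical to that in Lemma \ref{lemma:tip_reg_noncompact}.
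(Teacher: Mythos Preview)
Your proposal is correct and follows essentially the same approach as the paper: pass to the smooth variable $\tilde L=(1-\tfrac14 v)/\sigma$, write everything in the radial coordinate $\tilde\phi=\sqrt{\sigma}$ so that the $\tilde\phi^{-1}\partial_{\tilde\phi}$ terms assemble into a Euclidean Laplacian on $\real^{1+q}$, apply interior Schauder to $(\tilde L-\tilde L_{approx},\,\bar w-\bar W)$, and convert back via $v-V=-4\sigma(\tilde L-\tilde L_{approx})$.

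One point you skate over that the paper handles explicitly: the coupling term in the $\tilde L$-equation is $c\,\tilde\phi^{-2}\tfrac{\alpha}{\omega}\,v\,(\bar w-\ein_F t/\omega)^{-2}(\partial_{\tilde\phi}\bar w)^2$, and for Schauder you need its $C^{0,\eta}$ norm, not just the pointwise bound $\tilde\kappa^2\le C C_{reg}^2\epsilon_w^2\sigma^{1,0}\nu$ from Lemma~\ref{lemma:y_control_tip}. The $C^{0,\eta}$ control on $\partial_{\tilde\phi}\bar w$ comes precisely from the hypothesis \ref{conc:tip_reg_noncompact} on $[T_1,T_2)$, so the resulting bound carries a factor of $C_{reg}$. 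This would be circular, except that the whole term is multiplied by $\alpha/\omega\to 0$ as $t\searrow 0$; the paper absorbs it by shrinking $T_*$ depending on $C_{reg}$, after which the Schauder constant fixes $\berr C_{reg}$ and $c_{safe}$ as you describe. You should make this non-circularity step explicit.
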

\begin{proof}
We can derive the evolution for $\tilde L$ from \eqref{evo_L_in_phi}.
  \begin{align}
    \partial_{\theta;\phi} \tilde L
    &= \left(1 - \tilde\phi^2 \tilde L\right)\partial^2_{\tilde\phi} \tilde L + \oh \tilde\phi^2 (\partial_{\tilde\phi} \tilde L)^2\\
    &+ \tilde\phi^{-1}(\oh \ein + 5 - \tilde\phi^2\tilde L)\partial_{\tilde\phi} \tilde L + (\ein + 2)\tilde L^2 \\
    &+ c \tilde \phi^{-2} \frac{\alpha}{\omega} v
      (\bar w - \ein_F t/\omega)^{-2} \left( \partial_{\tilde \phi} \bar w \right)^2\\
    &+ \beta \tilde L + \on2 \beta \phi \partial_\phi \tilde L.
  \end{align}
  We can also derive the equation for $\bar w$ in terms of $\tilde \phi$:
  \begin{align}
  \partial_{\theta;\tilde \phi}\bar w
    &= v \partial_{\tilde \phi}^2 \bar w - \frac{\alpha}{\omega}y
      + (\on2\ein - (\on4\ein - 1) v) \tilde \phi^{-1}\partial_{\tilde \phi} \bar w \\
    &+ (\log \omega)_{\theta} \bar w + \on2 \beta \phi \partial_\phi \bar w \\
    &= v \partial_{\tilde \phi}^2 \bar w
      -\on4 v\frac{1}{( \bar w - \ein_F t/\omega)}(\partial_{\tilde \phi}\bar w)^2
      + (\on2\ein - (\on4\ein - 1) v) \tilde \phi^{-1}\partial_{\tilde \phi} \bar w \\
    &+ (\log \omega)_{\theta} \bar w + \on2 \beta \phi \partial_\phi \bar w \\
  \end{align}

  Let $\tilde L_{approx} = \sigma^{-1}( 1 - \on4 V) = \tilde \phi^{-2}(1 - \on4 V)$ which is the approximation for $\tilde L$ given by the approximate solution for $V$.  Our barriers tell us that, for $\sigma < 1$, we have
  \begin{align}
    |\tilde L- \tilde L_{approx}| < c \delta^{-1}\epsilon_{v} \nu^{1/2}, \quad |\bar w - \bar W| < c \epsilon_{w}\nu^{1/2}.
  \end{align}

  The terms $\tilde \phi^{-1}\partial_{\tilde \phi} \tilde L$ and $\tilde \phi^{-1} \partial_{\tilde \phi} \bar w$ appear with integer coefficients, these are not a problem if we consider $\tilde \phi$ as a radial coordinate from $\phi = 0$, then working with the second derivative $\tilde \phi^2$ they make a laplacian.  Furthermore, the term $\tilde \phi^{-2} \left( \partial_{\tilde \phi} \bar w \right)^2$, which appears in the evolution of $\tilde L$, may be controlled as follows.  The regularity up to time $T_2$ gives us control on the $C^{0, \eta}$ norm of this term- specifically, that $|\partial_{\tilde \phi} w|_{0, \eta}^2 \leq C_{reg}\epsilon_w \nu$.  We have to be careful not to have a circular argument here: since this term with $C_{reg}$ appearing is multiplied by something that goes to zero as $t \searrow 0$, we can restrict $T_*$ depending on $C_{reg}$ and thereby bound this term independently of $C_{reg}$.
  
  So, we can apply regularity to $\tilde L - \tilde L_{approx}$ and $\bar w - \bar W$.  Rewriting $\tilde L - \tilde L_{approx} = -\on4 \tilde \phi^{-2}(v - V)$ proves the claim.  
\end{proof}

\subsection{Corollaries of control}\label{corollaries_tip}

The following corollary follows quickly from the control we have, by checking the curvatures of warped products.
\begin{corollary}\label{tip_curvature_control}
  Suppose $g_{wp}(t)$ is controlled in the tip region.
  If $\ein_F = 0$, suppose $F$ has constant curvature.  Then for some $C$, in the tip region,
  \begin{align}
    |\Rm| \leq \frac{C}{t\nu(t)}
  \end{align}
\end{corollary}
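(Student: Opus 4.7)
The plan is to rescale to the Type-II scale. Set $\tilde g = \alpha(t)^{-1} g$ with $\alpha = t\nu(t)$; then the desired bound $|\Rm_g| \leq C/\alpha$ is equivalent to $|\Rm_{\tilde g}| \leq C$. The rescaled metric is still a doubly warped product, $\tilde g = \tilde a\, dx^2 + \sigma\, g_{S^q} + \tilde w\, g_F$ with $\tilde w \defeq w/\alpha$, and I would bound each of its sectional curvatures using the warped-product formulas recalled in Appendix \ref{curvatures_our_coordinates}.

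The sectional curvatures split into five groups: intrinsic $S^q$, intrinsic $F$, and the three cross-terms base--$S^q$, base--$F$, $S^q$--$F$. The first is $\sigma^{-1}(1-v/4) = \tilde L$, which is bounded by the tip barriers (they are designed exactly so that $\tilde L$ stays bounded near $\sigma = 0$, as noted before Definition \ref{tip_barricaded}). The base--$S^q$ term is a rational expression in $v$ and $\partial_\sigma v$; the tip barriers together with the regularity from Lemmas \ref{lemma:tip_reg_noncompact} and \ref{lemma:tip_reg} (applied in the arclength-like coordinate $\tilde \phi = \sqrt \sigma$ when $\sigma \leq 1$) bound it.

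The three $F$-involving groups are rational expressions in $\tilde w$, $\hat K_F$, $y$, $v$, $\sigma$, and derivatives of $\bar w$. The two critical quantities are $\tilde w^{-1}\hat K_F$ and $y/\tilde w$. Since $\tilde \kappa$ is a constant multiple of $\tilde w^{-1} y$, Lemma \ref{lemma:y_control_tip} gives $y/\tilde w \leq C\epsilon_w \nu^{1/2}$ uniformly on $\Omega_{tip}$; near $\sigma = 0$ the finer estimate $\partial_{\tilde \phi}\bar w = O(\nu^{1/2}\tilde \phi)$ from the proof of that lemma is what handles the $S^q$--$F$ cross-term, which otherwise picks up a bad $v/\sigma$ factor as $\sigma \to 0$. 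For $\tilde w^{-1}\hat K_F$: if $\ein_F > 0$ then \ref{w_big} yields $\omega \geq (1+c)\ein_F t$, hence $\tilde w \gtrsim 1/\nu$; if $\ein_F < 0$ then $\bar w > 0$ combined with $w = \hat w + |\ein_F|t$ gives $\tilde w \gtrsim 1/\nu$ again; and if $\ein_F = 0$ the constant-curvature hypothesis forces $F$ flat (since an Einstein manifold of constant curvature with $\ein_F = 0$ is flat), so $\hat K_F = 0$ and the term vanishes.

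The main (mild) obstacle is just the case analysis on the sign of $\ein_F$ — this is the sole reason the corollary needs the extra hypothesis on $F$ when $\ein_F = 0$. The remaining work is a direct substitution of the tip-region bounds into the warped-product curvature formulas.
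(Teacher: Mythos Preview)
Your proposal is correct and follows the paper's own (implicit) argument: the paper simply says the corollary ``follows quickly from the control we have, by checking the curvatures of warped products,'' and you have carried out precisely that check---rescaling by $\alpha$, splitting into the five warped-product sectional curvatures, and bounding each using the tip barriers, the regularity lemmas, Lemma~\ref{lemma:y_control_tip}, and the case split on $\ein_F$. Your observation that the $\sigma^{1,0}$ factor in Lemma~\ref{lemma:y_control_tip} is exactly what cancels the $v/\sigma$ blowup in $K_{mix}$ near the tip is the one nontrivial point, and you have it right.
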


We now give a specific result about the convergence in tip region as $t \searrow 0$.  We assume that $g(t)$ is controlled in the tip region for $t \in (0, T_2)$.  For each time, the scaled warping function $\sigma = \frac{u}{t \nu(t)}$ is a function $\sigma : I \to (0, \infty)$ which we extend by the identity to a map $\sigma : M = I \times S^q \times F \to (0, \infty) \times S^q \times F$.  For each $t$, $\sigma$ is a bijection if we restrict to some subset of $I$, i.e. we have an inverse
\begin{align}
  \sigma^{-1}: (0, \sigma_{max}(t)) \times S^q \times F \to I \times S^q \times F.
\end{align}
By our bounds on $v$, specifically since we keep it positive, $\sigma_{max}(t) \to \infty$ as $t \searrow 0$.  We may define
\begin{align}
  G(t) = \frac{1}{\alpha(t)} \left( \sigma^{-1} \right)^* g(t).
\end{align}
As $t \searrow 0$ the domain of definition of $G$ exhausts $(0, \infty) \times S^q \times F$.  Essentially, we can use $\sigma$ to find the diffeomorphisms such that neighborhoods of the tip converge to the Bryant soliton times a Euclidean factor.
\begin{corollary}\label{lemma:barricaded_convergence}
  Suppose that $g(t)$ is controlled in the tip region.
  
  The (for each $t$ partially defined) metric $G(t)$, restricted to $(0,\infty) \times S^q$, converges in $C^{\infty}$ as $t \searrow 0$ to the Bryant soliton metric
  \begin{align}
    \frac{d\sigma_{Bry}^2}{\on4 \sigma_{Bry} v_{Bry}} + \sigma_{Bry} g_{S^q}.
  \end{align}
  The pullback of the vector field $(\partial_\theta \sigma)\partial_\sigma$,
  \begin{align}
    X(t) = \left(\sigma^{-1}\right)^* \left( (\partial_\theta \sigma )\partial_\sigma \right)
  \end{align}
  converges to the soliton vector field for the Bryant soliton.
  
  Put $p = \dim(F)$.  Suppose additionally that $g_{mp}$ is $\Rm$-permissible (Definition \ref{reasonable_def}). For any point $P \in (0, \infty) \times S^q \times F$ the pointed manifolds $((0,\infty) \times S^q \times F, G(t), P)$ converge, as $t \searrow 0$, to
  \begin{align}
    \left(
    (0, \infty) \times S^q \times \real^p
    ,
    \frac{d\sigma^2}{\sigma v_{Bry}} + \sigma^2 g_{S^q} + g_{\real^p}
    ,
    \star
    \right).
  \end{align}
 The target point $\star$ doesn't matter since the target manifold is homogeneous.  The convergence is in the sense of pointed $C^{\infty}$ Riemannian manifolds, which allows a pullback by a time-dependent diffeomorphism.
\end{corollary}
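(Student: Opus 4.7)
The plan is to unpack the definitions: in the coordinate $\sigma$ the rescaled metric has the explicit warped-product form
\begin{align*}
G(t) = \frac{d\sigma^2}{\sigma\, v(\sigma, t)} + \sigma g_{S^q} + \frac{w(\sigma,t)}{\alpha(t)}\, g_F,
\end{align*}
reducing the three claims to (i) smooth convergence $v(\cdot,t) \to V_{\Bry}$, (ii) convergence of the field generating the time-dependent diffeomorphism $\sigma^{-1}$, and (iii) $w/\alpha \to \infty$ locally uniformly in $\sigma$.

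First I would verify that $v(\cdot,t) \to V_{\Bry}$ smoothly on compact subsets of $(0,\infty)$. Lemma \ref{main_tip_estimates} gives $|v - V| \leq C\nu^{1/2}\sigma^{1,-1}$ with the matching $C^{2,\eta}$-style bound on any $\sigma$-compact set, while $V = V_{\Bry} + \beta V_{\Pert}$ with $\beta = O(\nu) \to 0$. This yields $v \to V_{\Bry}$ in $C^{2,\eta}_{loc}((0,\infty))$. Standard parabolic bootstrap on \eqref{v_evo_theta}, whose coefficients are smooth and uniformly bounded on $\sigma$-compact subsets in the rescaled picture, upgrades this to $C^\infty_{loc}$, giving the first claim.

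For the vector field, I would exploit the identity $\partial_\theta G(t) = -2\Rc[G(t)] - \mathcal{L}_{X(t)}G(t)$, which expresses that $G(t)$ differs from a genuine Ricci flow by the diffeomorphism generated by $X(t) = (\sigma^{-1})^*((\partial_\theta \sigma)\partial_\sigma)$. The $C^\infty_{loc}$ convergence $G(t) \to g_{\Bry}$ already established implies $\partial_\theta G(t) \to 0$ and $\Rc[G(t)] \to \Rc[g_{\Bry}]$, so $\mathcal{L}_{X(t)} G(t) \to -2\Rc[g_{\Bry}] = \mathcal{L}_{X_{\Bry}}g_{\Bry}$. The equation $\mathcal{L}_X g_{\Bry} = -2\Rc[g_{\Bry}]$ determines $X$ up to Killing fields of $g_{\Bry}$; compatibility with the rotational symmetry and the vanishing at $\sigma=0$ pin down $X(t) \to X_{\Bry}$ in $C^\infty_{loc}$.

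For the pointed convergence, I would first show $w/\alpha \to \infty$ locally uniformly in $\sigma$, by a short case analysis on $\ein_F$ and $\Lambda_F$: if $\ein_F > 0$, \ref{w_big} gives $\omega \gtrsim t$ so $\omega/\alpha \gtrsim \nu^{-1} \to \infty$; if $\ein_F < 0$, the term $-\ein_F t/\alpha = -\ein_F/\nu$ in $w/\alpha$ diverges; if $\ein_F = 0$ and $\Lambda_F > 0$, \ref{lambda_assump} yields $\omega/\alpha \gtrsim \nu^{-1}$; and if $\ein_F = 0$ with $\Lambda_F = 0$, \ref{w_big2} directly gives $\omega/\alpha \to \infty$. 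Combined with $\bar w \to 1$, this yields $w/\alpha \to \infty$. Since $g_F$ is a fixed smooth metric, rescaling it by a diverging factor makes any geodesic ball around $P$ converge, via $\exp_P$-coordinates rescaled by $(w/\alpha)^{-1/2}$, to the corresponding Euclidean ball in $\real^p$ in $C^\infty$. The product structure of the warped metric then gives pointed $C^\infty$ convergence to the claimed limit. The main obstacle is the careful bookkeeping in step (ii): the time-dependent diffeomorphism $\sigma^{-1}$ has a moving domain, and identifying its infinitesimal generator with the geometric quantity in the statement requires tracking several Lie-derivative identities. Step (iii) is a routine composition of diffeomorphisms to recenter at $P$ once $w/\alpha \to \infty$ is in hand.
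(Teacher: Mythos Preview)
Your proposal is correct and follows essentially the same route as the paper: barrier control gives $C^{2,\eta}$ convergence of $v$ to $V_{\Bry}$, parabolic regularity bootstraps to $C^\infty$, the vector-field convergence is read off from the modified Ricci flow equation satisfied by $G$, and the $F$-factor blows up by the case analysis on $\ein_F$ and the $\Rm$-permissibility conditions.

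One small correction: your identity $\partial_\theta G = -2\Rc[G] - \lie_{X}G$ is missing a term. Since $G$ is obtained from the Ricci flow $g$ by both a time-dependent rescaling $\alpha^{-1}$ and a pullback, the correct equation is
\[
\partial_\theta G = -2\Rc[G] - \lie_{X}G - \beta G,
\]
where $\beta = \alpha'$. This does not affect your argument because $\beta \to 0$ as $t \searrow 0$, but you should include it. Also, your Killing-field uniqueness step is more than you need: since $X(t)$ is already of the form $f(\sigma,t)\partial_\sigma$ by construction, the limiting equation $\lie_X g_{\Bry} = -2\Rc[g_{\Bry}]$ (which is an ODE for the scalar $f$) determines $f$ directly without appealing to the full Killing-field classification.
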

\begin{proof}
  The convergence to the Bryant soliton in terms of $\sigma$ happens up to some number of derivatives just because of the consequences of Lemma \ref{main_tip_estimates}.  To get $C^{\infty}$ convergence, we need extra regularity, i.e. item \ref{conc:tip_reg_noncompact} for larger $k$.  To get this, we use interior parabolic regularity in the same way as Lemmas \ref{lemma:tip_reg_noncompact} and \ref{lemma:tip_reg}.  In this situation, we no longer need estimates on the initial data.  This is because the time variable $\theta$ goes to $-\infty$ as $t \searrow 0$, so the parabolic ball $\Xi$ in \eqref{xi_def} never touches $t=0$, the initial time for $g(t)$.

  Note that $\tilde g(t) = \alpha^{-1}g(t)$ satisfies
  \begin{align}
   \partial_{\theta} \tilde g
    = -2 \Rc[\tilde g] - \beta \tilde g.
  \end{align}
  So $G(t)$ satisfies
  \begin{align}
    \partial_{\theta} G
    =
    -2 \Rc[G]
    - \lie_{\left( \partial_{\theta}\sigma \right) \partial\sigma} G
    - \beta \tilde g.
  \end{align}
  As $t \searrow 0$, we have $\beta \searrow 0$, $G \to G_{Bry}$, and $\partial_{\theta}G \to 0$.  This shows the convergence of $\partial_{\theta}\sigma$ to the soliton vector field.

  To get the final convergence of the $wg_F$ factor to $g_{\real^{p}}$, note that we have
  \begin{align}
    w \sim \omega - \ein_F t
  \end{align}
  so $\alpha^{-1}w \sim \alpha^{-1}\omega \left( 1 - \ein_F t/\omega\right)$.  In the case $\ein_F < 0$, this goes to $\infty$ at least as fast as $\frac{t}{\alpha} = \frac{1}{\nu}$ goes to infinity.  In the case $\Lambda_F > 0$ or $\ein_F = 0$, this goes to infinity by the assumption that $g_{mp}$ is $\Rm$-permissible.
\end{proof}


\section{Full flows of mollified metrics}\label{section:full_flow}


In Sections \ref{section:productish} and \ref{section:tip}, we studied the flow in two regions- the productish region and the tip region.  We now want to start from one of our model pinches and create mollified initial metrics.  The mollified metrics will exist for a uniform amount of time and satisfy the estimates from Lemmas \ref{main_prish_estimates} and \ref{main_tip_estimates}.  We will then take a limit of the mollified flows to construct a forward evolution from the model pinch.

In the previous two sections we constructed functions, which depend on $u$ and time, and serve as barriers of the flow around approximate solutions.  Let $V_{prish}$ and $W_{prish}$ be the approximate solutions constructed in Section \ref{section:productish}, and let $V^+_{prish}$, $V^-_{prish}$, $W^+_{prish}$, $W^-_{prish}$ be the functions constructed in Lemma \ref{barriers_prish}.  Let $V_{tip}$ and $W_{tip}$ be the approximate solutions constructed in Section \ref{section:tip}, and let $V^+_{tip}$, $V^-_{tip}$, $W^+_{tip}$, $W^-_{tip}$ be the functions constructed in Section \ref{tip_barriers_section}.  We remind the reader that $\bar w = \omega^{-1}(w - \ein_F t)$, and we decorate the barriers and approximate solutions with a bar analogously.

As a first step, the following lemma tells us how close the approximate solutions are to each other.  Here, $|f|_{2,\eta}$ is the $C^{2, \eta}$ norm for the metric $(d\sigma)^2$, in the ball of radius $1$ around a given point.
\begin{lemma}\label{basic_deriv_closeness}
  For $\sigma < \epsilon \rho_* \nu^{-1}$,
  \begin{align}
    \sigma |V_{prish} - V_{tip}|_{2,\eta} &\leq C(\rho_*)
    \left( \nu^2 \sigma^2 + \sigma^{-1} + \nu  \right) \label{V_approx_diff}\\
    |\bar W_{prish} - \bar W_{tip}|_{2,\eta} &\leq C(\rho_*)
    \left( \nu^2 \sigma^2 + \nu \log \sigma \right)
  \end{align}
\end{lemma}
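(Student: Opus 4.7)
The plan is to prove both inequalities by matching two asymptotic expansions in the overlap region. The function $V_{prish}$ is built directly from $V_0$ by a finite Taylor expansion around $u = \ein t$, while $V_{tip}$ is the leading two terms of the large-$\sigma$ asymptotics of the Bryant soliton plus its steady perturbation. In the regime where $\sigma$ is large but $\nu\sigma$ is small, both expansions yield identical leading behaviour; the lemma quantifies the residual.

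First I would compute $V_{prish}$ exactly as in Section \ref{examining_section} (the chain of expansions leading to \eqref{V_expr_smallrho}), obtaining
\begin{align*}
  V_{prish} = \ein \sigma^{-1} + (1 + \dd{1}{\nu})\nu + O(\nu^2 \sigma) + O(\nu^3 \sigma^2),
\end{align*}
where the error terms carry $C^{2,\eta}$ control inherited from condition \ref{modelpinch_reg} on $V_0$. Then I would expand $V_{tip} = V_{\Bry}(\sigma) + \beta V_{\Pert}(\sigma)$ using \eqref{vbry_infty_asymptotics} and \eqref{Vpert_asymptotics} to get $\ein \sigma^{-1} + O(\sigma^{-2}) + \beta (1 + O(\sigma^{-1}))$. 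Since $\beta = (1 + \dd{1}{\nu})\nu$ by \eqref{beta_bounds}, the leading two terms coincide. Subtracting and multiplying by $\sigma$ gives $\sigma|V_{prish} - V_{tip}| \leq C(\nu^2 \sigma^2 + \sigma^{-1} + \nu + \nu^3 \sigma^3)$, and the cubic term is absorbed into the quadratic using $\nu \sigma \leq \rho_*$ on the allowed range. Derivative control comes from differentiating both expansions: \ref{modelpinch_reg} handles the prish side in $C^{2,\eta}$, while $V_{\Bry}$ and $V_{\Pert}$ are smooth with the derivative asymptotics recorded in Section \ref{function_summary}.

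The $\bar W$ inequality follows by the parallel argument. Taylor expansion gives $\bar W_{prish} = 1 + \ein^{-1}\dd{1}{\omega}\nu \sigma + O((\nu\sigma)^2)$ as in \eqref{Wprish_asymptotic_exp}; the large-$\sigma$ asymptotics \eqref{Wpert_asymptotics} of $W_{\Pert}$ together with $(\log\omega)_{\theta} = \dd{1}{\omega}\nu$ from \eqref{omega_bounds} give the same linear-in-$\sigma$ leading coefficient for $\bar W_{tip}$. The residual carries the $\nu \log \sigma$ term precisely because $W_{\Pert}(\sigma)$ is linear at infinity only up to a logarithmic subleading correction, while the $(\nu\sigma)^2$ piece comes from the Taylor remainder on the prish side. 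As before, the $C^{2,\eta}$ upgrade is routine given \ref{modelpinch_reg} and the smoothness of $W_{\Pert}$.

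The main obstacle is purely bookkeeping: one must verify that the error terms from both sides are controlled uniformly in $\sigma$ throughout the whole overlap $\sigma < \epsilon \rho_* \nu^{-1}$, which includes both bounded $\sigma$ and $\sigma$ up to the size of $\nu^{-1}$. The cubic Taylor remainder $\nu^3\sigma^3$ must be absorbed using $\nu\sigma \leq \rho_*$, and the growth of $W_{\Pert}$ must be tracked carefully enough to extract the $\log\sigma$ correction rather than a constant. Once the leading-order matching is pinned down, the $C^{2,\eta}$ bounds are no more difficult than the pointwise bounds.
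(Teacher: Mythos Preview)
Your proposal is correct and is essentially the same argument as the paper's proof: the paper introduces an explicit common approximation $V_{common} = \ein\sigma^{-1}(1 + (1+\dd1{\nu})\ein^{-1}\nu\sigma)$ and $\bar W_{common} = 1 + \ein^{-1}\dd1{\omega}\nu\sigma$, then bounds each of $V_{prish}$, $V_{tip}$ (and likewise for $\bar W$) against this common expression using exactly the expansions you cite, yielding $\sigma|V_{prish}-V_{common}| \leq C\nu^2\sigma^2$ from the Taylor remainder and $\sigma|V_{tip}-V_{common}| \leq C(\sigma^{-1}+\nu)$ from the Bryant and perturbation asymptotics. Your implicit triangle-inequality through the matched leading terms is the same decomposition, and your identification of the $\nu\log\sigma$ contribution from the subleading asymptotics of $W_{\Pert}$ matches the paper exactly.
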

\begin{proof}
  We claim we can use $V_{common} = \ein \sigma^{-1}\left( 1 + (1 + \dd1{\nu})\ein^{-1}\nu \sigma \right)$ as an approximation for both $V_{prish}$ and $V_{tip}$, and similarly that we can use $\bar W_{common} =  (1 + \ein \dd1{\omega} \nu \sigma)$ as an approximation for both $\bar W_{prish}$ and $\bar W_{tip}$.

  For $V_{prish}$ and $W_{prish}$, the zeroth order statement follows from the approximations \eqref{V_expr_smallrho} and \eqref{Wprish_asymptotic_exp}.  The higher order statements can be found similarly to how we found \eqref{V_expr_smallrho} and \eqref{Wprish_asymptotic_exp}, by estimating the Taylor expansion of the derivatives.  We have,
  \begin{align}
    \sigma |V_{prish}-V_{common}| + |\bar W_{prish} - \bar W_{common}| \leq C \nu^2 \sigma^2.
  \end{align}
  
  For $V_{tip}$, the zeroth order statement follows from the asymptotics \eqref{vbry_infty_asymptotics} and \eqref{Vpert_asymptotics} for $V_{\Bry}$ and $V_{\Pert}$, and the fact that $\beta = (1 + \dd1{\nu})\nu$.  For $W_{tip}$, it follows from the asymptotics \eqref{Wpert_asymptotics} for $W_{\Pert}$.  To get the higher order statements, one needs to use the analyticity of the involved functions.  We have,
  \begin{align}
    \sigma |V_{tip}-V_{common}|_{2, \eta} \leq C(\sigma^{-1} + \nu).
    |\bar W_{tip} - \bar W_{common}|_{2, \eta} \leq C \nu \log \sigma
  \end{align}
  (The term $\sigma^{-1}$ comes from the error in $ V_{\Bry} \sim \ein \sigma^{-1}$, and the term $\nu$ comes from the error in $\nu V_{pert} \sim \nu$.  The term $\nu \log \sigma$ comes from the error in $\nu W_{pert}\sim \nu \sigma$.)
\end{proof}

\subsection{Buckling barriers}\label{section:buckling}

\begin{figure}[t]
  \centering
  \includegraphics[scale=.8]{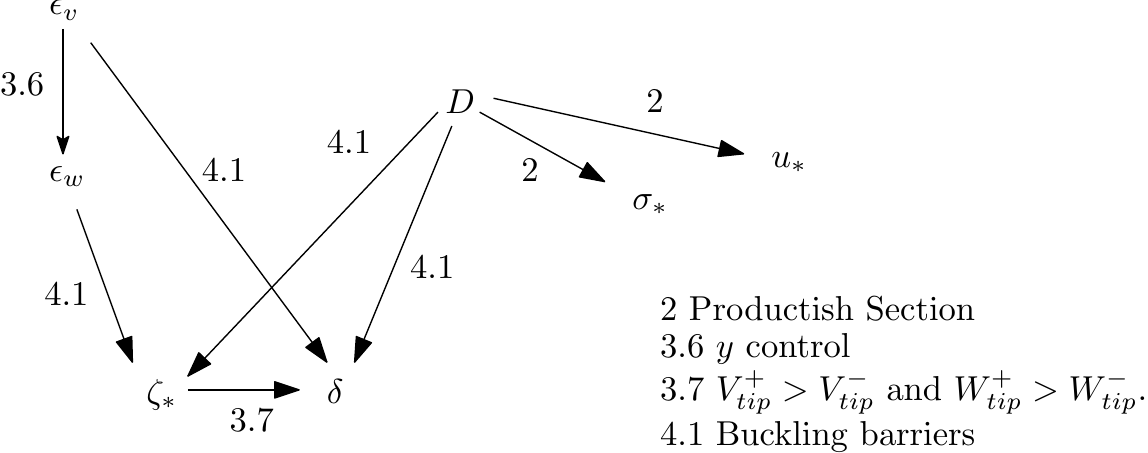}
  \caption[Constant dependency graph]{\textbf{Constant dependency graph.}  All constants only depend on the constants which point to them.  The arrows are marked with the sections where the dependency arises.  $T_*$ is allowed to depend on all constants.}
\end{figure}

In this section, we prove Lemma \ref{buckling_lemma}.  This shows that the barriers are ordered in a specific way: see Figure \ref{figure:barrier_switch}.  The point is that this ordering means that boundary condition for the tip barriers is guaranteed by the productish barriers, and the left-hand boundary condition for the productish barriers is guaranteed by the tip barriers.  We formalize this consequence in  Lemma \ref{buckling_wrapup}.

\begin{lemma}\label{buckling_lemma}
  Let $\epsilon_v$, $\epsilon_w$, and $\sigma_*$ be given.  Assume $D > \berr D$, $\zeta > \berr \zeta_*(D, \epsilon_w)$, $\delta < \barr \delta(\zeta_*, \epsilon_v, D)$, and finally $T_*$ is chosen depending on all other parameters. 
  
  Then we have the following inequalities.  
  For $\zeta_*\nu^{-1/2} \leq \sigma \leq 2\zeta_*\nu^{-1/2}$,
  \begin{align}
    V_{tip}^+ > V_{prish}^+ &\quad V_{tip}^- < V_{prish}^-, \\
    W_{tip}^+ > W_{prish}^+ &\quad W_{tip}^- < W_{prish}^-. 
  \end{align}
  For $\oh \sigma_* \leq \sigma \leq \sigma_*$,
  \begin{align}
    V_{prish}^+ > V_{tip}^+ &\quad V_{prish}^- < V_{tip}^-, \\
    W_{prish}^+ > W_{tip}^+ &\quad W_{prish}^- < W_{tip}^- 
  \end{align}
\end{lemma}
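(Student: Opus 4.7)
The plan is a pure size comparison: we quantify each of the four barrier-to-approximate-solution separations at each overlap boundary, bound the approximate-solution discrepancy via Lemma~\ref{basic_deriv_closeness}, and verify in both overlap regions that the appropriate pair of barriers strictly brackets the other pair. Since the lemma only concerns the ordering of four numbers at each boundary, it reduces to bookkeeping and power-counting in $\sigma$ and $\nu$ using the asymptotic table of Section~\ref{function_summary}.

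In both overlap regions the common leading term is $V_{common}\sim\ein\sigma^{-1}$, so $V_{prish},V_{tip}\sim\sigma^{-1}$ and $\bar W_{prish},\bar W_{tip}\sim 1$. The definition $V^{\pm}_{prish}=(1\pm DV_{prish})V_{prish}$, $\bar W^{\pm}_{prish}=(1\pm DV_{prish})\bar W_{prish}$ together with Lemma~\ref{barrier_order_lemma} then give the gap sizes
\[
V^{\pm}_{prish}-V_{prish}\sim D\sigma^{-2},\qquad V^{\pm}_{tip}-V_{tip}\sim\delta^{-1}\epsilon_v\nu^{1/2}\sigma^{-1},
\]
\[
\bar W^{\pm}_{prish}-\bar W_{prish}\sim D\sigma^{-1},\qquad \bar W^{\pm}_{tip}-\bar W_{tip}\sim\epsilon_w\nu^{1/2}.
\]

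At the tip boundary $\sigma\sim\zeta_*\nu^{-1/2}$ these evaluate to $D\zeta_*^{-2}\nu$ and $\delta^{-1}\epsilon_v\zeta_*^{-1}\nu$ for $V$, and to $D\zeta_*^{-1}\nu^{1/2}$ and $\epsilon_w\nu^{1/2}$ for $\bar W$, while Lemma~\ref{basic_deriv_closeness} yields $|V_{prish}-V_{tip}|\leq C\zeta_*^{-2}\nu+o(\nu)$ and $|\bar W_{prish}-\bar W_{tip}|=o(\nu^{1/2})$ for fixed $\zeta_*$ as $T_*\searrow 0$. Thus the tip barriers strictly enclose the productish ones provided
\[
\delta^{-1}\epsilon_v\zeta_*^{-1}>(D+C)\zeta_*^{-2}\qquad\text{and}\qquad\epsilon_w>(D+C)\zeta_*^{-1},
\]
which is precisely the content of $\zeta_*>\berr\zeta_*(D,\epsilon_w)$ and $\delta<\barr\delta(\zeta_*,\epsilon_v,D)$. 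At the productish boundary $\sigma\sim\sigma_*$ the productish gaps $D\sigma_*^{-2}$, $D\sigma_*^{-1}$ are fixed positives in $\nu$, the tip gaps vanish with $T_*$, and Lemma~\ref{basic_deriv_closeness} bounds the $V$-discrepancy by $C\sigma_*^{-2}+o(1)$ and the $\bar W$-discrepancy by $o(1)$; so a universal $D>\berr D$ together with small $T_*$ suffices for the reverse containment.

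The only genuine issue is bookkeeping: at the tip boundary the productish gap, tip gap, and approximate-solution discrepancy for $V$ are all exactly of size $\zeta_*^{-2}\nu$ (or $\zeta_*^{-1}\nu$), so the inequalities can only be enforced by arranging the scalar coefficients in the correct order. The dependency chain required is $\epsilon_v,\epsilon_w,\sigma_*$ given, then $D$, then $\zeta_*$, then $\delta$, then $T_*$, each depending only on previously fixed quantities; this is exactly the chain announced in the statement, so no circularity arises.
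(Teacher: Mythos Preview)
Your proposal is correct and follows essentially the same approach as the paper: both arguments compute the four barrier separations $V^{\pm}_{prish}-V_{prish}\sim D\sigma^{-2}$, $V^{\pm}_{tip}-V_{tip}\sim\delta^{-1}\epsilon_v\nu^{1/2}\sigma^{-1}$, $\bar W^{\pm}_{prish}-\bar W_{prish}\sim D\sigma^{-1}$, $\bar W^{\pm}_{tip}-\bar W_{tip}\sim\epsilon_w\nu^{1/2}$, bound the approximate-solution discrepancy by Lemma~\ref{basic_deriv_closeness}, evaluate everything at the two overlap boundaries, and then choose $D$, $\zeta_*$, $\delta$, $T_*$ in the stated order to force the desired strict inequalities. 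The paper's proof is just a more explicit write-up of the same chain of inequalities you sketch, including the explicit choices $\zeta_*\geq 10\,CD/(c\epsilon_w)$ and $\delta\leq\tfrac{1}{10}(CD)^{-1}c\epsilon_v\zeta_*$ that realize your conditions.
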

\begin{proof}
  We note the following inequalities:
  \begin{align}
    c D \sigma^{-1} &< \sigma V_{prish}^+ - \sigma V_{prish} < C D \sigma^{-1}, \\
    c D \sigma^{-1} &< \bar W_{prish}^+ - \bar W_{prish} < C D \sigma^{-1}
  \end{align}
  This comes from the definition of the barriers $V_{prish}^{\pm} = (1 \pm D V)V$ and $\bar W_{prish}^{\pm} = (1 \pm D V)\bar W$, together with $V \sim \sigma^{-1}$ and $\bar W \sim 1$.
  Also, provided we take $\delta < c \zeta_*^{-1}$, by Lemma \ref{barrier_order_lemma} we have
  \begin{align}
    c \delta^{-1}\epsilon_{v} \nu^{1/2} 
    &\leq \sigma V_{tip}^+ - \sigma V_{tip}
    \leq C \delta^{-1}\epsilon_v \nu^{1/2} ,\\
    c \epsilon_w \nu^{1/2}
    &\leq \bar W_{tip}^+ - \bar W_{tip}
    \leq C \epsilon_w \nu^{1/2}.
  \end{align}

  We can put all these inequalities, together with \eqref{basic_deriv_closeness}, in terms of $\zeta$:
\begin{align}
  \sigma |V_{prish} - V_{tip}|
  &\leq C(\rho_*)
    \left( \nu \zeta^2 + \zeta^{-1} \nu^{1/2} + \nu  \right), \label{bar_diff}\\
  |\bar W_{prish} - \bar W_{tip}|
  &\leq C(\rho_*)
    \left( \nu \zeta^2 + \nu |\log\nu| + \nu |\log \zeta| \right),
\end{align}
\begin{align}
  c D \zeta^{-1}\nu^{1/2} &< \sigma V_{prish}^+ - \sigma V_{prish} < C D \zeta^{-1}\nu^{1/2}, \label{prish_bar_sep}\\
  c D \zeta^{-1}\nu^{1/2} &< W_{prish}^+ - W_{prish} < C D \zeta^{-1}\nu^{1/2},
\end{align}
\begin{align}
    c \delta^{-1}\epsilon_{v} \nu^{1/2} 
    &\leq \sigma V_{tip}^+ - \sigma V_{tip}
    \leq C \delta^{-1}\epsilon_{v} \nu^{1/2} ,\label{tip_bar_sep}\\
    c \epsilon_w \nu^{1/2}
    &\leq \bar W_{tip}^+ - \bar W_{tip}
    \leq C \epsilon_w \nu^{1/2}.
\end{align}

We now use the inequalities \eqref{bar_diff}, \eqref{prish_bar_sep}, and \eqref{tip_bar_sep} to prove the desired inequalities for the supersolutions.  The desired inequalities for the subsolutions are similar.

First we deal with the inequality at $\sigma_*/2 < \sigma < \sigma_*$, where we wish to show that $V^+_{prish} > V_{tip}^+$.  By applying \eqref{prish_bar_sep}, then \eqref{bar_diff}, then \eqref{tip_bar_sep} we find
\begin{align}
  \sigma V^+_{prish}
  &\geq \sigma V_{prish} + c D \sigma^{-1} \\
  &\geq \sigma V_{tip} + c D \sigma^{-1} \\
  &- C \left( \nu^2 \sigma^2 + \sigma^{-1} + \nu \right) \\
  &\geq \sigma V_{tip}^+ + c D \sigma^{-1}\\
  &- C \left( \nu^2 \sigma^2 + \sigma^{-1} + \nu \right)
   - C \delta^{-1}\epsilon_{v} \nu^{1/2}.
\end{align}
Choosing $D$ such that $c D \geq 2 C$ means that $\delta V^+_{prish} \geq \sigma V^+_{tip}$ at least for short time.  Showing that $W^+_{prish} > W_{tip}^+$ is similar.

Now we deal with the inequalities for $\zeta_*  \leq \zeta \leq 2 \zeta_*$.  First choose $\zeta_* \geq 10\frac{CD}{c \epsilon_w}$, and then chose $\delta \leq \on{10} (CD)^{-1}c \epsilon_v \zeta_*$.  Then we have, using \eqref{tip_bar_sep},
\begin{align}
  \sigma V^+_{tip}
  &\geq \sigma V_{tip} + c \delta^{-1}\epsilon_{v} \nu^{1/2} \\
  &\geq \sigma V_{tip} + 10 C D \zeta^{-1}\nu^{1/2}.
\end{align}
Now using \eqref{bar_diff} and then \eqref{prish_bar_sep}, for $\zeta_* \leq \zeta \leq 2 \zeta_*$,
\begin{align}
  \sigma V^+_{tip}
  &\geq \sigma V_{prish} + 10 C D \zeta^{-1}\nu^{1/2}\\
  &- C \left(  \nu \zeta^2 + \nu \right)
    - C \zeta^{-1}\nu^{1/2} \\
  &\geq \sigma V_{prish}^+ + 10 C D \zeta^{-1}\nu^{1/2}\\
  &- C \left(  \nu \zeta^2 + \nu \right)
    - C \zeta^{-1}\nu^{1/2}
    - CD \zeta^{-1}\nu^{1/2} \\
  &\geq \sigma V_{prish}^+ + 8 CD \zeta^{-1}\nu^{1/2}
\end{align}
with the last line valid for small enough times.  Therefore, for small enough times, $V^+_{tip} \geq V^+_{prish}$ here.  The calculation is similar for $W$; since $\zeta \geq 10 \frac{CD}{c \epsilon_w}$ the upper bound $CD \zeta^{-1} \nu^{1/2}$ on $\bar W^+_{prish} - W_{prish}$ is dominated by the lower bound $c \epsilon_w \nu^{1/2}$ on $\bar W_{tip}^+ - \bar W_{tip}$.

\end{proof}

  We take a moment here to remark on the design of the tip barriers. To understand the term $\nu^{1/2}$ in the barriers' definitions, consider what would happen in Lemma \ref{barrier_order_lemma} if we replaced $\nu^{1/2}$ with some function $f(\nu) \ll \nu^{1/2}$.  We would still have
  \begin{align}
    V_{diff} = V^+ - V \geq c \delta^{-1}\epsilon_{v} f(\nu) \sigma^{1,-1} - C\epsilon_v \nu \sigma^{1,0}
  \end{align}
  and upon pulling out the factor $\delta^{-1}\epsilon_vf(\nu)\sigma^{1,-1}$,
  \begin{align}
    V_{diff} \geq c \delta^{-1}\epsilon_{v} f(\nu) \sigma^{1,-1}\left( 1  - C\delta \frac{\nu}{f(\nu)} \sigma^{0,1}\right).
  \end{align}
  Since $f(\nu) \ll \nu^{1/2}$, $\frac{\nu}{f(\nu)} \gg f(\nu)$, so the region where $V_{diff} > 0$ is not contained in the region $f(\nu) \sigma \leq \zeta_*$ for any $\zeta_*$.

  However, in Lemma \ref{buckling_lemma}, it was important that the region where $V_{diff} > 0$ is contained in the region $f(\nu)\sigma \leq \zeta_*$.  The reason is that, in approximating the first term of $V^+_{tip}$, we use the asymptotics of $V_{Bry}$ to say
  \begin{align}
    V_{k^+\Bry} =
    (\ein + \delta^{-1}\epsilon_v f(\nu))\sigma^{-1}
    + O(\sigma^{-2}).
  \end{align}
    The term $\ein \sigma^{-1}$ matches with the leading order term of the approximation for $V$ coming from the productish region \eqref{V_expr_smallrho}.  The $O(\sigma^{-2})$ term is essentially uncontrollable and falls into the error between $V_{tip}$ and $V_{prish}$ in \eqref{V_approx_diff}. (We could find its sign by studying the Byrant soliton more closely, but that would only help us for either the sub- or supersolution.)  Then we need the left over term $f(\nu)\sigma^{-1}$ to cover $O(\sigma^{-2})$- in other words, we need $f(\nu)\sigma \geq C$ for some $C$.

  Therefore the $\nu^{1/2}$ is somehow optimal, at least for the technique that we are using.

  The point of the inequalities in Lemma \ref{buckling_lemma} is that they immediately imply Lemma \ref{buckling_wrapup} below.  This says that we can remove the assumption in Lemma \ref{main_prish_estimates} which assumed that the solution stays within the productish barriers on the left edge of the productish region, and we can remove the assumption from \ref{main_prish_estimates} which assumed that the solution stays within the tip barriers on the right edge of the tip region.

\begin{figure}[t]
  \centering
  \includegraphics[scale=.4]{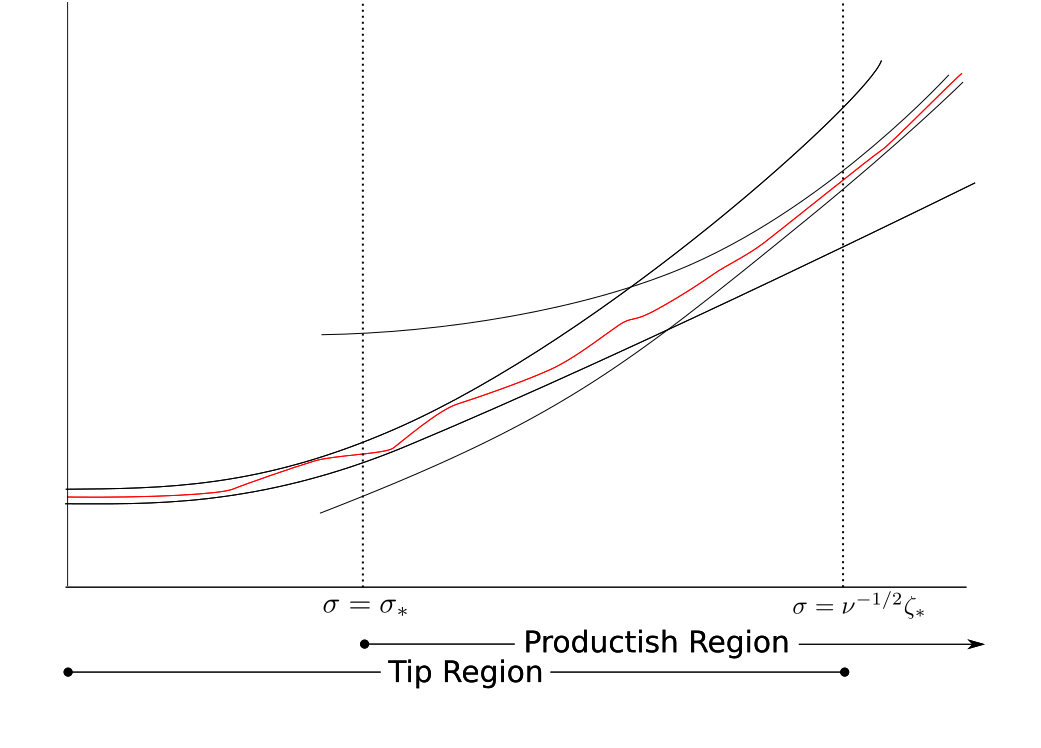}
  \caption
  [Buckling barriers]
{\textbf{Buckling barriers.}  The red solution lies between the productish barriers in the productish region, and the tip barriers in the tip region.  Because of the ordering of the barriers at $\sigma = \sigma_*$, the boundary conditions for the productish barriers are automatically satisfied. Similarly at $\sigma = \beta^{-p}\zeta_*$.\label{figure:barrier_switch}}
\end{figure}

\begin{lemma}\label{buckling_wrapup}
  Let $D > \berr D$, $C_{reg} > \berr C_{reg}$, $u_* < \barr u_*(D, C_{reg})$, $\sigma_* > \berr \sigma_*(D, C_{reg})$, $\epsilon_v$, $\epsilon_w < \berr \epsilon_w(\epsilon_v)$, $\zeta_* \geq \berr \zeta_*(\epsilon_w, D)$, and $\delta < \barr \delta(\epsilon_v, D, \zeta_*)$ be given.  There is a $T_*$ depending on all parameters such that if $T_2 < T_*$ we have the following.

  Let $0 < T_1 < T_2 < T_*$.   Assume that the initial metric is controlled at the initial time in the productish and tip regions, and also controlled at the right of the productish region.  Then we have the conclusions \ref{conc:prish_barrier}, \ref{conc:prish_reg} from Lemma \ref{main_prish_estimates} and \ref{conc:tip_barrier}, \ref{conc:tip_reg_noncompact} from Lemma \ref{main_tip_estimates}.
\end{lemma}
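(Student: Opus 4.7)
The plan is a simultaneous bootstrap argument that couples the productish and tip control by using the buckling inequalities of Lemma \ref{buckling_lemma} to discharge the interface boundary hypotheses of Lemmas \ref{main_prish_estimates} and \ref{main_tip_estimates}. First I would fix the constants in the order dictated by the dependency graph so that the hypotheses of all three lemmas can be invoked simultaneously; the thresholds $\berr D$, $\berr C_{reg}$, $\berr \sigma_*$, $\berr \zeta_*$, $\barr\delta$, $\barr u_*$, $\barr \epsilon_w$, and $\barr T_*$ are chosen to be the pointwise max/min of what the three lemmas require, which is consistent because the graph has no cycles.

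Let $T^\dagger$ be the supremum of $T \in [T_1, T_2]$ such that items \ref{conc:prish_barrier}, \ref{conc:prish_reg}, \ref{conc:tip_barrier}, and \ref{conc:tip_reg_noncompact} all hold on $[T_1, T]$. The hypothesis ``initially controlled in the productish and tip regions'' is stated with the safety factor $c_{safe}<1$, so all four items hold strictly at $t=T_1$; by continuity of $g(t)$ one has $T^\dagger > T_1$.

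Now I would show $T^\dagger = T_2$. Suppose for contradiction $T^\dagger < T_2$. By continuity, the (closed) forms of \ref{conc:prish_barrier}--\ref{conc:tip_reg_noncompact} hold on $[T_1, T^\dagger]$. On that interval, \ref{conc:tip_barrier} confines the solution between the tip barriers throughout $\Omega_{tip}$, in particular on the overlap strip $\{\sigma_*/2 < \sigma < \sigma_*\}$; Lemma \ref{buckling_lemma} then upgrades this to strict containment inside the productish barriers on that strip, which is exactly the ``barricaded at the left of the productish region'' hypothesis of Lemma \ref{main_prish_estimates}. Symmetrically, \ref{conc:prish_barrier} confines the solution between the productish barriers on $\{\zeta_*\nu^{-1/2} < \sigma < 2\zeta_*\nu^{-1/2}\}$, and Lemma \ref{buckling_lemma} upgrades this to strict containment inside the tip barriers, yielding the ``barricaded at the right of the tip region'' hypothesis of Lemma \ref{main_tip_estimates}. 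The ``barricaded at the right of the productish region'' and ``initially controlled'' hypotheses of those two lemmas are assumed directly in the statement of Lemma \ref{buckling_wrapup}.

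Because the inclusions furnished by Lemma \ref{buckling_lemma} are \emph{strict} at each point of the overlap strips, and the flow is smooth in $t$, these boundary barricadings persist on $[T_1, T^\dagger + \epsilon]$ for some $\epsilon > 0$. Applying Lemmas \ref{main_prish_estimates} and \ref{main_tip_estimates} with $T_2$ replaced by $T^\dagger + \epsilon$ produces items \ref{conc:prish_barrier}--\ref{conc:tip_reg_noncompact} on the longer interval, contradicting the definition of $T^\dagger$. Hence $T^\dagger = T_2$, finishing the proof. The main potential obstacle is purely organizational: verifying that the constants chosen for Lemma \ref{buckling_lemma} are simultaneously admissible for Lemmas \ref{main_prish_estimates} and \ref{main_tip_estimates}, which the dependency graph reduces to a routine check.
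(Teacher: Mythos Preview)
Your proposal is correct and follows essentially the same approach as the paper: a continuity/maximal-time argument where Lemma \ref{buckling_lemma} is used to discharge the interface boundary hypotheses of Lemmas \ref{main_prish_estimates} and \ref{main_tip_estimates}. The paper phrases the contradiction slightly more directly---applying the two main lemmas on $[T_1,T_{bad}]$ to recover all conclusions at $T_{bad}$---whereas you extend the interface barricading past $T^\dagger$ by strictness and smoothness, but the substance is identical.
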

\begin{proof}
  Let $T_{bad} > T_1$ be the maximal time such that all the conclusions hold for $g(t)$ on $[T_1, T_{bad})$.  By Lemma \ref{buckling_lemma}, the assumption that the solution is barricaded on the right edge of the tip region is satisfied on $[T_1, T_{bad}]$, since the productish region barriers are tighter than the tip region barriers there.  Similarly, the assumption that the solution is barricaded on the left edge of the productish region holds on $[T_1, T_{bad}]$.  By the assumptions of our lemma, all other assumptions needed to apply Lemmas \ref{main_prish_estimates} and \ref{main_tip_estimates} hold on $[T_1, T_{bad}]$.  Therefore all the conclusions still hold at time $t = T_{bad}$.
\end{proof}

The assumptions that $g$ is well controlled in the productish and tip regions are all assumptions on the metric at time $T_1$.  The only assumption left after Lemma \ref{buckling_wrapup} that is an a priori assumption on the forward evolution is that the metric is barricaded at the right of the productish region.

From now on we consider the constants $D$, $C_{reg}$, $u_*$, $\sigma_*$, $\epsilon_v$, $\epsilon_w$, $\zeta_*$, and $\delta$ to be fixed and satisfying Lemma \ref{buckling_wrapup}.

\subsection{Mollifying metrics}\label{section:mollifying_section}
In this section we will define mollified metrics, and prove some basic properties.  We introduce a smooth cutoff function $\eta(x):[0, \infty) \to [0, 1]$ which satisfies
\begin{align}
  \begin{cases}
    \eta(x) = 1 & x < 1 \\
    \eta(x) \in [0, 1] & 1 \leq x \leq 2\\
    \eta(x) = 0 & x > 2
  \end{cases}
\end{align}
and define $\eta_{r}(x) = \eta(x/r)$.

Now, for arbitrary sufficiently small $m$, and $T^{(m)}_1$ to be determined, we define
\begin{align}
  V_{init}^{(m)}
  &=
    \begin{cases}
      \eta_{2\zeta_*}(\zeta) V_{tip}(u, T^{(m)}_1)
      + (1 - \eta_{2\zeta_*}(\zeta)) V_{prish}(u, T^{(m)}_1)
      & \zeta_* \nu^{-1/2} \leq \zeta \leq 4 \zeta_* \nu^{-1/2}\\
      V_{prish}(u, T^{(m)}_1)
      & 4\zeta_* t\nu^{1/2} \leq u \leq m\\
      \eta_m(u) V_{prish}(u, T^{(m)}_1) + (1-\eta_m(u))V_0(u) 
      & m \leq u \leq \infty
    \end{cases}
\end{align}
and define $W_{init}^{(m)}$ similarly.  These functions agree with $V_0$ and $W_0$ for $ u > 2m$, agree with the productish approximation (evaluated at time $T_1^{(m)}$) for $4 \zeta_* t \nu^{1/2} < u \leq m$, and agree with the tip approximation (evaluated at time $T_1^{(m)}$) for $\zeta < 2 \zeta_*$.

So far we have just been dealing with the diffeomorphism invariant considerations of $v$ and $w$ as functions of $u$ and $t$.  Now fix a model pinch metric $g_{mp}$ on $M = I \times S^q \times F$, with the corresponding functions $V_0(u)$ and $W_0(u)$.  We write $u_0:M \to \real_+$ to be the initial value of $u$ at a given point in $M$.  Then
\begin{align}
  g_{mp} = \frac{du_0^2}{u_0 V_0(u_0)} + u_0 g_{S^q} + W_0(u_0)g_F.
\end{align}
We also define $M_{[u_1, u_2]} = \{p \in M : u_0(p) \in [u_1, u_2]\}$.  Now we define mollifications $g_{init}^{(m)}(t)$.  We define them as
\begin{align}
  g^{(m)}_{init} = \frac{du_0^2}{u_0 V^{(m)}_{init}(u_0)} + u_0^2 g_{S^q} + W^{(m)}_{init}(u_0, t) g_F.
\end{align}
Note that $g^{(m)}_{init}$ is equal to $g_{mp}$ in $\bar M_{[2m, \infty)}$, and is smooth.  It may seem that we have repeated ourselves, since we have already chosen $V^{(m)}_{init}$ and $W^{(m)}_{init}$.  The point here is we are also fixing the coordinate of the interval factor.

The following Lemma says that $g_{init}^{(m)}$ satisfies all of the conditions on the initial metric required by Lemmas \ref{main_prish_estimates} and \ref{main_tip_estimates}.
\begin{lemma}\label{gm_good_init}
  Let $m < \berr m$ and suppose $T^{(m)}_1 < \berr T^{(m)}_1(m) < m$.  Let $g_{init}^{(m)} = g^{(m)}(T^{(m)}_1).$  Then for $T_1 = T^{(m)}_1$, the metric $g_{init}^{(m)}$ is initially controlled in the productish and tip regions.
\end{lemma}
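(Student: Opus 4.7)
The plan is to note that for the mollified metric $g^{(m)}_{init}$, a direct calculation gives $v(\cdot, T_1) = V^{(m)}_{init}(u_0)$ and $\hat w(\cdot, T_1) = W^{(m)}_{init}(u_0)$, so the barricading and H\"older regularity conditions in Definitions \ref{productish_barricaded} and \ref{tip_barricaded} reduce to one-variable inequalities comparing $V^{(m)}_{init}, W^{(m)}_{init}$ against the reference functions $V_{tip}, W_{tip}$, $V_{prish}, W_{prish}$, or $V_0, W_0$ as appropriate. I will check these on the five-piece decomposition of $M$ induced by the construction: the pure tip piece where $V^{(m)}_{init} = V_{tip}$, a tip/productish transition band, the pure productish piece $4\zeta_* T_1 \nu(T_1)^{1/2} \le u \le m$ where $V^{(m)}_{init} = V_{prish}$, the productish/$V_0$ transition $m \le u \le 2m$, and the pure initial piece $u \ge 2m$ where $V^{(m)}_{init} = V_0$.

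On each pure piece, $V^{(m)}_{init}$ coincides identically with one of $V_{tip}$, $V_{prish}$, or $V_0$, so $v-V$ vanishes exactly, both barriers contain the value with room to spare, and the left side of the corresponding regularity inequality is zero; the analogous statement holds for $w$. On the rightmost pure piece $u \ge 2m$, the reference function in the productish regularity is $V_{prish}(u, T_1)$ rather than $V_0$, so one further needs the difference $V_0(u) - V_{prish}(u, T_1)$ and its H\"older derivatives to sit well within the safety margin; this follows from the explicit formula $V_{prish}(u, T_1) = \frac{u + \ein T_1}{u} V_0(u + \ein T_1)$ together with hypothesis \ref{modelpinch_reg}, once $T_1^{(m)}$ is taken small enough relative to $m$ (which bounds $u$ away from the origin there).

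In the tip/productish transition, the convex combination $V^{(m)}_{init}$ differs from $V_{prish}$ by at most $|V_{tip} - V_{prish}|$, which Lemma \ref{basic_deriv_closeness} bounds in $C^{2,\eta}$ by a quantity smaller than the productish barrier separation $V^\pm_{prish} - V_{prish} \sim D V_{prish}^2$; since $D$ was already chosen large in Lemma \ref{buckling_wrapup}, the combination stays strictly inside both productish barriers with the $c_{safe}$ margin, and the cutoff derivatives of $\eta_{2\zeta_*}$ contribute terms which are multiples of $|V_{tip} - V_{prish}|$ and hence remain absorbable. Where the transition band overlaps the tip control range (only at its inner edge by design), Lemma \ref{buckling_lemma} further guarantees that the tip barriers there are even wider than the productish ones, so the tip-side inequalities hold as well. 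The outer transition $m \le u \le 2m$ is treated by the smooth convergence $V_{prish}(u, T_1) \to V_0(u)$ as $T_1 \to 0$, uniform in $C^{2,\eta}$ on the compact slab $[m, 2m]$; since $V_0$ sits strictly inside $V^\pm_{prish}(u, T_1^{(m)})$, for $T_1^{(m)}$ sufficiently small the convex combination does too, which is precisely where the $m$-dependent threshold $\berr T^{(m)}_1(m)$ enters. The main obstacle is quantitative bookkeeping: after $D, C_{reg}, u_*, \sigma_*, \zeta_*, \delta$ are fixed by the buckling lemma, the smallness tolerances in both transitions and the safety factor $c_{safe}$ must all be met simultaneously, which is achieved by choosing $m$ and then $T_1^{(m)}$ in the right order of quantifiers permitted by the statement.
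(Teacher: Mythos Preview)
Your proposal is correct and follows essentially the same approach as the paper: a piece-by-piece verification using exact agreement on the pure pieces, Lemma \ref{basic_deriv_closeness} for the tip/productish transition, and compactness plus $T_1^{(m)} \to 0$ for the outer transition $[m,2m]$. The paper's own proof is terser---it simply notes that tip control is immediate because $\eta_{2\zeta_*}(\zeta)=1$ for $\zeta<2\zeta_*$ (so $V^{(m)}_{init}=V_{tip}$ throughout the full tip check range $\sigma\le 2\zeta_*\nu^{-1/2}$), and then handles the two remaining productish pieces exactly as you do. Your invocation of Lemma \ref{buckling_lemma} for the tip barriers in the transition band is unnecessary for this reason, but harmless; your explicit discussion of the pure piece $u\ge 2m$, where $V^{(m)}_{init}=V_0$ must be compared to $V_{prish}(\cdot,T_1)$, is a point the paper leaves implicit.
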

\begin{proof}
  That $g_{init}^{(m)}$ is initially controlled in the tip region is immediate, because the functions $v$ and $w$ for $g_{init}^{(m)}$ exactly agree with with the functions $V_{tip}$ and $W_{tip}$ in the tip region.

  Where $v$ and $w$ agree with $V_{prish}$ and $W_{prish}$, the assumptions in the productish region are automatic.  This is true in $M_{[4\zeta_* T_1^{(m)} \nu^{1/2}, m]}$.  What's left is to check the assumptions in $M_{[\sigma_*T_1^{(m)} \nu, 2 \zeta_*T_1^{(m)}\nu^{1/2}]}$ and $M_{[m, 2m]}$.

  Both conditions hold for $u_0 \leq \zeta_* t \nu^{1/2}$ by Lemma \ref{basic_deriv_closeness}, the definition of $V_{init}^{(m)}$ and $W_{init}^{(m)}$, and the separation of the barriers.  To check the conditions in $M_{[m, 2m]}$, note that they hold strictly in this compact set at time $t = 0$, so for sufficiently small $T_1^{(m)}$ they will continue to hold.
\end{proof}

\subsection{Controlling curvature and convergence}\label{section:ccc}
Since $g^{(m)}_{init}$ is smooth, there is a solution to Ricci flow $g^{(m)}(t)$ on $[T_1^{(m)}, T_{final}^{(m)})$ with $g^{(m)}(T_1^{(m)}) = g_{init}$.  We want to control $g^{(m)}(t)$.  By Lemmas  \ref{buckling_wrapup} and \ref{gm_good_init}, in order to get the conclusions of Lemmas \ref{main_prish_estimates} and \ref{main_tip_estimates} we just need the condition that the solution is between the barriers for $u_* < u < 2u_*$.  Let $T_2^{(m)}$ be the maximal time such that this condition holds on $[T_1^{(m)}, T_2^{(m)})$. In Corollary \ref{T_lower_bound} we will argue that we have a fixed lower bound on $T_2^{(m)}$. In each lemma we may decrease $T_*$.

We do something sort of silly here.  For this section, we assume that $(F, g_F)$ has constant sectional curvature.  This is so that we can have control on $|\Rm|$ via Corollaries \ref{prish_curvature_control_time} and \ref{tip_curvature_control}.  The control on $|\Rm|$ lets us use the full regularity theory for Ricci flow.  In the end, we can replace the constant sectional curvature fiber with anything we want, since the Ricci flow of warped products only cares about the Ricci curvature of the fiber.  In our case, we could also get this higher regularity by going through the regularity of the involved parabolic PDE on the interval, but it's easier to invoke generic Ricci flow estimates.

\begin{lemma}\label{rm_ctrl_in_x}
    Suppose $m < \barr m$. For any $k$, there is a constant $C_k$ depending only on $V_0$, $W_0$, and $u_*$ such that in $M_{[u_*/4, \infty]}$ and for $t \in [T_1^{(m)}, \min(T_*, T_{2}^{(m)})]$,
  \begin{align}
    |\nabla^k\Rm_{g^{(m)}}| < C_k
  \end{align}
\end{lemma}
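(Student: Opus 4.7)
My plan is to reduce the bound to an application of interior parabolic regularity, after first observing that the mollification is invisible on this region. For $m \leq u_*/16$, inspection of the definitions of $V_{init}^{(m)}$ and $W_{init}^{(m)}$ in Section \ref{section:mollifying_section} shows that $g_{init}^{(m)} = g_{mp}$ on $M_{[u_*/8, \infty)}$, independently of $m$. By item (4) of Definition \ref{definition:model_pinch} the functions $V_0$, $W_0$ are smooth and strictly positive on $\{u \geq u_*/16\}$, and a bootstrap using the H\"older bounds \ref{modelpinch_reg} promotes this to pointwise bounds on all derivatives of $V_0$ and $W_0$, and hence on all $|\nabla^k \Rm_{g_{init}^{(m)}}|$, on $M_{[u_*/8, \infty)}$, depending only on $V_0$, $W_0$, and $u_*$.

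Next I would establish a uniform-in-$m$ bound $|\Rm_{g^{(m)}(t)}| \leq C$ on $M_{[u_*/4, \infty)}$ for $t \in [T_1^{(m)}, \min(T_*, T_2^{(m)})]$. After shrinking $T_*$, the equation $\square_M u = -\ein - v$ combined with barrier bounds on $v$ (in the productish/tip regions) and short-time persistence where $u \geq u_*$ guarantees that any point $p$ with $u_0(p) \geq u_*/4$ satisfies $u(p,t) \geq u_*/8$ throughout this time interval. The curvature bound is then split into two subregions. On $\{u \leq 3u_*/2\}$, the definition of $T_2^{(m)}$ verifies the barricading hypothesis of Lemma \ref{buckling_wrapup}, so by Corollary \ref{prish_curvature_control}, $|\Rm| \leq C$. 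On $\{u \geq 3u_*/2\}$, I would invoke a short-time persistence argument: the warped-product Ricci flow reduces to a quasilinear parabolic system for the functions describing the metric on the base, and uniform smoothness of the initial data on $\{u_0 \geq u_*/8\}$ together with standard parabolic theory preserves uniform $C^2$ bounds on these functions for $t \leq T_*$ (possibly after further shrinking $T_*$), which translates to a uniform curvature bound.

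Finally, having a uniform curvature bound on a parabolic neighborhood of $M_{[u_*/4, \infty)}$ contained in $M_{[u_*/8, \infty)}$, and uniform $C^k$ bounds on the initial data, I would apply Shi's interior derivative estimates in the form valid up to the initial time, i.e.\ a Schauder-type argument of the kind described in Section \ref{regularity}. This yields $|\nabla^k \Rm_{g^{(m)}(t)}| \leq C_k$ on $M_{[u_*/4, \infty)}$ for each $k$, with $C_k$ depending only on $V_0$, $W_0$, and $u_*$.

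The main obstacle is the outer subregion $\{u \geq 3u_*/2\}$, where neither the productish nor the tip barriers give direct curvature control; here one must rely on the uniform smoothness of the initial data and a persistence estimate, taking care that the parabolic neighborhood used in the regularity step stays inside $\{u_0 \geq u_*/8\}$ where the initial geometry is $m$-independent.
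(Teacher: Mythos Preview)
Your approach differs from the paper's. The paper dispatches the lemma in one stroke via Perelman's pseudolocality: since for small $m$ the initial metrics $g_{init}^{(m)}$ coincide with the fixed model pinch $g_{mp}$ on $M_{[u_*/8,\infty)}$, they enjoy a uniform curvature bound and uniform almost-Euclidean volume of small balls there; pseudolocality then yields $|\Rm_{g^{(m)}}| \leq C$ on $M_{[u_*/4,\infty)}$ for a uniform short time, after which Shi's local derivative estimates finish the job. The barrier machinery of Sections \ref{section:productish}--\ref{section:tip} and Lemma \ref{buckling_wrapup} are not invoked at all.

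You instead try to avoid pseudolocality by splicing barrier control with a hands-on persistence argument. There is a concrete gap: Corollary \ref{prish_curvature_control} holds only in the productish region $\Omega_{prish} = \{u + \ein t < u_*,\ \sigma > \sigma_*\}$, so it cannot deliver the curvature bound you claim on all of $\{u \leq 3u_*/2\}$; the strip $u_* - \ein t \leq u \leq 3u_*/2$ is covered by neither of your two subarguments. You could try to close this by pushing your quasilinear persistence argument inward to meet the productish region, but then that argument needs boundary input on its inner edge, which is precisely what is not yet available --- pseudolocality is exactly the device that manufactures such a bound locally and without boundary data. A smaller point: the initial higher-derivative bounds on $\Rm$ do not follow by ``bootstrapping'' the $C^{2,\eta}$ condition \ref{modelpinch_reg}, which caps out at two derivatives; they come instead from item (4) of Definition \ref{definition:model_pinch}, which asserts $C^\infty$ smoothness of $V_0$ and $W_0$ on $\{u > u_1\}$.
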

\begin{proof}
  This is by now standard procedure, see e.g. Corollary A.5 of \cite{topping}.
  The curvatures of the metrics $g^{(m)}_{init}$ have a uniform bound on their curvature and the volume of small enough balls in $M_{u_*/4, \infty}$  Therefore we can apply the pseudolocality theorem (Theorem 10.3 of \cite{Perelman}) at any point there, to get control on $|\Rm|$, and then apply local derivative estimates (14.4.1 of \cite{bookanalytic}) to get control on higher derivatives.
\end{proof}
Since our barrier control is in terms of $u$, we need to be able to transfer the set written in terms of $u$ to being written in terms of $x$.
\begin{lemma}\label{u_ctrl_in_x}
  Suppose $m < \barr m$.  Then for all $t \in [T_1^{(m)}, \min(T_*, T_2^{(m)})]$,
  \begin{align}
    \{p \in M : u^{(m)}(x,t) \in [u_*, 2u_*]\}
    \subset
    \bar N_{[u_*/4, 4u_*]}
  \end{align}
\end{lemma}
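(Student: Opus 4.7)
The plan is to prove the contrapositive: if $u_0(p) \notin [u_*/4, 4u_*]$ then $u^{(m)}(p,t) \notin [u_*, 2u_*]$. I split into the cases $u_0(p) \ge 4u_*$ and $u_0(p) \le u_*/4$, handled respectively by Ricci-flow curvature control and by the barrier control from the previous two sections.

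For $u_0(p) \ge 4u_*$, observe that $p \in M_{[u_*/4,\infty]}$, so Lemma \ref{rm_ctrl_in_x} gives a uniform curvature bound $|\Rm_{g^{(m)}}(p,s)| \le C_0$ for all $s \in [T_1^{(m)}, \min(T_*, T_2^{(m)})]$. The Ricci flow equation then yields $|\partial_s g| \le 2 C_0 g$ as bilinear forms, so picking $X$ tangent to the $S^q$ factor with $|X|_{g_{S^q}} = 1$, the function $u^{(m)}(p,s) = g(X,X)$ satisfies $u^{(m)}(p,s) \ge e^{-2C_0 s} u_0(p) \ge 4u_* e^{-2C_0 T_*}$. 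Choosing $T_*$ small enough that $e^{-2C_0 T_*} > 1/2$ forces $u^{(m)}(p,t) > 2u_*$, contradicting $u^{(m)}(p,t) \le 2u_*$.

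For $u_0(p) \le u_*/4$, suppose toward contradiction that $u^{(m)}(p,t) \ge u_*$, and by continuity let $s^* = \inf\{s : u^{(m)}(p,s) = u_*/2\}$ and $s^{**} = \sup\{s \le s^* : u^{(m)}(p,s) = u_*/4\}$, so that $u^{(m)}(p,s) \in [u_*/4, u_*/2]$ on $[s^{**}, s^*]$. On this subinterval $p$ lies in the productish region: the condition $u + \ein s < u_*$ holds once $T_* < u_*/(2\ein)$; the condition $\sigma > \sigma_*$ holds because $\sigma(p,s) \ge u_*/(4 s \nu(s))$ tends to infinity as $s \searrow 0$; and $p$ cannot lie in the tip region since there $u \le \zeta_* s \nu(s)^{1/2}$, far smaller than $u_*/4$ for small $s$.

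By Lemma \ref{buckling_wrapup} the conclusions \ref{conc:prish_barrier} and \ref{conc:prish_reg} hold, so on $[s^{**}, s^*]$ we have $v \le V^+_{prish} \le C$ uniformly together with the hessian bound $|\nabla\nabla u| \le Cv$ from Lemma \ref{lemma:ycontrol_productish}. The evolution $\partial_s u = \Delta_B u - \ein + \tfrac{\ein-2}{4}v$ then gives $|\partial_s u| \le C$, and integration on $[s^{**}, s^*]$ yields $u_*/4 = u^{(m)}(p,s^*) - u^{(m)}(p,s^{**}) \le C(s^* - s^{**}) \le C T_*$, which is impossible once $T_* < u_*/(4C)$. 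The main subtlety is establishing that $p$ stays in the productish region on the relevant window (so that the hessian bound and $v$-barriers apply), which is the content of the geometric claim in the previous paragraph; once that is in hand, everything reduces to a Gronwall-style integration.
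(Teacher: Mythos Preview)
Your proof is correct. The case $u_0(p)\ge 4u_*$ matches the paper's argument exactly (curvature bound $\Rightarrow$ uniform speed limit on $u$).

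For the case $u_0(p)\le u_*/4$ you take a genuinely different route. The paper exploits \emph{spatial} monotonicity: since the conclusions of Lemmas~\ref{main_prish_estimates} and~\ref{main_tip_estimates} hold on $[T_1^{(m)},T_2^{(m)}]$, the barrier control gives $v>0$ for all $u\in[0,2u_*]$, so $u^{(m)}(\cdot,t)$ is strictly increasing in $x$. It then suffices to bound $u$ at the single point $p_0$ with $u_0(p_0)=u_*/4$, and that point lies in $M_{[u_*/4,\infty)}$ where the curvature speed limit from Lemma~\ref{rm_ctrl_in_x} already applies. You instead run a \emph{temporal} argument at the fixed point $p$: trap an interval where $u^{(m)}(p,\cdot)\in[u_*/4,u_*/2]$, observe this lands in the productish region, and integrate the bound $|\partial_s u|\le C$ coming from \ref{conc:prish_reg} and Lemma~\ref{lemma:ycontrol_productish}. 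This is essentially the mechanism the paper later uses in Lemma~\ref{u_lower_bnd}, repurposed as an upper bound. The paper's route is shorter once one sees the monotonicity; yours is more self-contained in that it does not use the one-dimensionality of the base so directly. One small remark: your sentence about $p$ ``not lying in the tip region'' is unnecessary, since the productish and tip regions overlap and you only need membership in the former.
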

\begin{proof}
  At time $t = T^{(m)}_1$, we have $u_*/4 \leq u^{(m)} \leq 4 u_*{(m)}$ in $M_{[u_*/4, 4u_*]}$ (just by definition).  By Lemma \ref{rm_ctrl_in_x}, there is a uniform speed limit on $u$ in $M_{[u_*/4, \infty]}$.  Therefore for $x \geq 4u_*$, $u$ cannot decrease too fast and so we can get a time $T_*$ so that $u$ will not go below $u_*$ before time $T_*$.

  Also, we can decrease $T_*$ so that $u$ cannot go above $u_*$ in $M_{[0, u_*/4]}$.  Since the conclusions of Lemmas \ref{main_prish_estimates} and \ref{main_tip_estimates} hold for $t \in [T_1^{(m)}, T_2^{(m)}]$, $v$ is between its barriers for these times, and is in particular positive for $u \in [0, 2u_*]$.  Therefore $u$ is increasing up to the value $2u_*$.  Therefore, $u$ is smaller than $u_*$ for $x< u_*/4$.
\end{proof}

\begin{lemma}\label{full_curvature_ctrl_time}
  For any  $k \in \nats \cup \{0\}$, there is a constant $C_{time, k}$ such that
  \begin{align}
    |\nabla^k \Rm_{g^{(m)}}| \leq \frac{C_{time,k}}{t_0^{k/2}t_0 \nu(t_0)}.
  \end{align}
  for all $t \in \left[\max(t_0, (2-2^{-k})T_1^{(m)}), \min(T_*, T_2^{(m)})\right]$.
\end{lemma}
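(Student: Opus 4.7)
The plan is to derive this from standard parabolic (Shi-type) derivative estimates, with the $k=0$ case supplying the curvature input. For $k=0$, I would patch together the curvature controls from Corollaries \ref{prish_curvature_control_time} (in the productish region, where $|\Rm| \lesssim \sigma_*^{-1}/(t\nu(t))$), \ref{tip_curvature_control} (in the tip region, where $|\Rm| \lesssim 1/(t\nu(t))$), and Lemma \ref{rm_ctrl_in_x} (in $\{u \geq u_*/4\}$, where $|\Rm|$ is uniformly bounded). Aligning the $u$-based and $x$-based descriptions via Lemma \ref{u_ctrl_in_x}, and using the near-monotonicity of $t \mapsto t\nu(t)$ which follows from the regularity assumption \ref{modelpinch_reg} on $V_0$, these patch into a global bound $|\Rm_{g^{(m)}}|(p,t) \leq C_0/(t_0\nu(t_0))$ valid for all $t \geq t_0$ in the stated interval.

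For $k \geq 1$, I would rescale to the natural curvature scale $\alpha = t_0 \nu(t_0)$. Fix $p$ and $t \geq \max(t_0, (2-2^{-k})T_1^{(m)})$ and set $\tilde g(\tilde s) = \alpha^{-1} g(t + \alpha \tilde s)$ on $\tilde s \in [-\alpha^{-1}(t - T_1^{(m)}), 0]$. The $k=0$ bound gives $|\tilde\Rm| \leq C_0$ uniformly on this rescaled backward interval. The hypothesis $t \geq (2-2^{-k})T_1^{(m)}$ ensures the interval has rescaled length at least $c_k (1-2^{-k}) T_1^{(m)}/\alpha \geq c_k / \nu(t_0)$, which is $\geq 1$ for $T_* $ sufficiently small (since $\nu(t_0) \to 0$); this is precisely the reason for the $k$-dependent buffer in $(2-2^{-k})T_1^{(m)}$. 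Applying Shi's local interior derivative estimate (in the form of Section \ref{regularity}, adapted near the initial time as discussed there) to $\tilde g$ on a unit backward parabolic neighborhood yields $|\tilde \nabla^k \tilde \Rm|(p,0) \leq C_k$, and rescaling back gives the desired bound in the appropriate combination of powers of $\alpha$ and the time scale.

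The main obstacle is handling the initial time cleanly: the mollified data at $T_1^{(m)}$ is smooth but its higher-order $C^{k,\alpha}$ norms are not controlled uniformly in $m$, so one cannot apply Shi on any parabolic ball reaching back to $T_1^{(m)}$ itself. The buffer $(1-2^{-k})T_1^{(m)}$ is exactly the backward time window needed for the $k$-th derivative bound to emerge from the flow's smoothing effect, independently of the derivatives of the initial data. Spatial completeness of the backward parabolic ball is not a further obstruction: outside $\{u \leq u_*/4\}$ the bounded geometry of Lemma \ref{rm_ctrl_in_x} suffices, while inside, the tip and productish curvature bounds give the control needed to apply the local regularity result (which, as noted in Section \ref{regularity}, does not require completeness but only interior control on a parabolic ball of a fixed rescaled size).
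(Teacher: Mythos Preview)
Your approach is essentially the same as the paper's: for $k=0$ you both patch together the curvature bounds from the productish region (Corollary~\ref{prish_curvature_control_time}), the tip region (Corollary~\ref{tip_curvature_control}), and the uniformly smooth region (Lemma~\ref{rm_ctrl_in_x}), and for $k\geq 1$ you both feed the $k=0$ bound into Shi's derivative estimates on a backward time window guaranteed by the buffer $(2-2^{-k})T_1^{(m)}$. The only cosmetic difference is that the paper phrases the higher-$k$ step as an induction (halving $t_0$ and invoking the $k-1$ bound for times $\geq \max(t_0/2,(2-2^{-k+1})T_1^{(m)})$) rather than a single rescaling; note also that the relevant reference is Shi's derivative estimates (Theorem~1.1 of \cite{Shi}), not the linear Schauder estimates of Section~\ref{regularity}, and that the constant-sectional-curvature assumption on $(F,g_F)$ made in this section is what allows Corollary~\ref{tip_curvature_control} to apply.
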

\begin{proof}
  For $k=0$, this is exactly Lemma \ref{prish_curvature_control_time}, Lemma \ref{tip_curvature_control}, and Lemma \ref{rm_ctrl_in_x}.  (Remember that in this section we assume $(F, g_F)$ has constant sectional curvature.) For $k > 0$, we can apply Shi's derivative estimates (Theorem 1.1 of \cite{Shi}), using the result for $k-1$ and for times larger than $\max\left(t_0/2, (2-2^{-k+1})T_1\right)$.  The factor in front of $T_1$ ensures us that for any $t_0$ and $k$, and for any point where we want to apply the regularity, there is a uniformly sized (in $m$) parabolic ball of Ricci flow which has the order $k-1$ estimates.
\end{proof}

\begin{lemma}\label{T_lower_bound}
  $T_{final}^{(m)} > T_2^{(m)} > T_*$.
\end{lemma}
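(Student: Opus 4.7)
The plan has two parts. First, to show $T_{final}^{(m)} > T_2^{(m)}$: on $[T_1^{(m)}, T_2^{(m)})$, by definition of $T_2^{(m)}$ together with Lemmas \ref{gm_good_init} and \ref{buckling_wrapup}, the hypotheses of Lemmas \ref{main_prish_estimates} and \ref{main_tip_estimates} hold. Consequently Corollaries \ref{prish_curvature_control_time} and \ref{tip_curvature_control} yield uniform bounds on $|\Rm|$ in the productish and tip regions, and Lemma \ref{rm_ctrl_in_x} gives a bound in $M_{[u_*/4,\infty]}$; together these produce a single bound on $|\Rm|$ at $t=T_2^{(m)}$. Shi's existence theorem then extends the smooth Ricci flow strictly past $T_2^{(m)}$.

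Second, to show $T_2^{(m)} > T_*$ after possibly shrinking $T_*$: the plan is to verify that the barricaded condition at $u \in (u_*, 2u_*)$ persists for a time depending only on the original data (not on $m$). For $m<\barr m$ small enough that $2m < u_*/2$, on the initial slice $t=T_1^{(m)}$ the mollified metric $g_{init}^{(m)}$ coincides with $g_{mp}$ on $M_{[u_*/4,4u_*]}$, so there $v=V_0(u_0)$ and $w=W_0(u_0)$ exactly. Since $V_{prish}(u,0)=V_0(u)$ and $W_{prish}(u,0)=W_0(u)$ while the barrier gap satisfies
\begin{align}
V^{\pm}_{prish}(u,T_1^{(m)})-V_{prish}(u,T_1^{(m)}) \;\sim\; D\,V_{prish}^2 \;\gtrsim\; D\,V_0(u_*)^2 > 0,
\end{align}
for $T_1^{(m)}$ small enough continuity in $t$ places $v,w$ strictly inside both productish barriers at $t=T_1^{(m)}$ with a margin $c_0>0$ depending only on $V_0$, $W_0$, $u_*$, and $D$, but not on $m$ or $T_1^{(m)}$.

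Next, Lemma \ref{rm_ctrl_in_x} delivers curvature bounds on $M_{[u_*/4,\infty]}$ uniform in $m$, which transfer to uniform bounds on $|\partial_t v|$, $|\partial_t w|$, and $|\partial_t u|$ throughout the region $u_0 \in [u_*/4, 4u_*]$; the barriers $V^{\pm}_{prish}, W^{\pm}_{prish}$ themselves vary at a uniformly bounded rate in $t$. Combining the initial margin $c_0$ with these speed bounds shows there is a time $T_*^{\#}>0$, independent of $m$, before which $v,w$ cannot touch either barrier in $u_0\in[u_*/4,4u_*]$. Using Lemma \ref{u_ctrl_in_x} to ensure $\{u(\cdot,t)\in[u_*,2u_*]\}\subset M_{[u_*/4,4u_*]}$ throughout this interval, one obtains $T_2^{(m)} \geq T_1^{(m)}+T_*^{\#}$. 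Decreasing $T_*$ if necessary so that $T_* \leq T_*^{\#}/2$ (and recalling we may send $T_1^{(m)} \to 0$) yields $T_2^{(m)} > T_*$.

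The main obstacle is keeping every estimate uniform in the mollification parameter $m$. This is delicate because $V_0(u_*)$ is small, so the barrier gap $c_0 \sim D V_0(u_*)^2$ is small — but it is still a fixed positive number, so it only forces $T_*^\#$ to be small, not zero. The match $g_{init}^{(m)} \equiv g_{mp}$ in $M_{[u_*/4,4u_*]}$ is what allows both the initial margin and the curvature bound of Lemma \ref{rm_ctrl_in_x} to be $m$-independent.
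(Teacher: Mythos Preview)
Your proposal is correct and follows essentially the same approach as the paper: bound curvature via the productish and tip corollaries together with Lemma \ref{rm_ctrl_in_x} to get $T_{final}^{(m)} > T_2^{(m)}$, then use the uniform initial margin (from $g_{init}^{(m)} = g_{mp}$ on $M_{[u_*/4, 4u_*]}$) combined with uniform speed limits from Lemma \ref{rm_ctrl_in_x} and Lemma \ref{u_ctrl_in_x} to keep the barricaded condition for a fixed time. One trivial correction: for $g_{init}^{(m)}$ to coincide with $g_{mp}$ on $M_{[u_*/4, 4u_*]}$ you need $2m < u_*/4$ rather than $2m < u_*/2$.
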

\begin{proof}
  By Lemma \ref{full_curvature_ctrl_time}, Ricci curvature is bounded at time $T_2^{(m)}$.  Therefore, $T_{final}^{(m)} > \min(T_*, T_2^{(m)})$.  
  By Lemmas \ref{rm_ctrl_in_x} and \ref{u_ctrl_in_x} the curvature and its derivatives are bounded for $u^{(m)}(x,t) \in [u_*, 2u_*]$.  This implies a speed limit on the functions $v^{(m)}$ and $w^{(m)}$ there.  Since the functions are uniformly separated from the barriers are time $t = T^{(m)}_1$, they cannot pass the barriers for some fixed time.  Therefore $T_2^{(m)} > T_*$, possibly taking $T_*$ smaller.
\end{proof}

We now have all of the conclusions of Lemmas \ref{main_prish_estimates} and \ref{main_tip_estimates}, for each $g^{(m)}(t)$, on $[T_1^{(m)}, T_*]$.  Now, we get estimates within fixed subsets of $M$, extending the crude ones from Lemmas \ref{rm_ctrl_in_x} and \ref{u_ctrl_in_x}.

\begin{lemma}\label{u_lower_bnd}
  There is a constant $C > 0$ depending only on $g_{mp}$ such that the following holds.  Let $u_\# \in [0, u_*]$ and suppose $m < u_{\#}$.  Then
  in $p \in M_{[u_{\#}, \infty)}$ and for $t < (C+1)^{-1}u_\#$, we have $u^{(m)} \geq u_\# - C t$.
\end{lemma}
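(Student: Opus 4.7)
The plan is to reduce the lemma to a pointwise lower bound on $\partial_t u^{(m)}$ at a single ``leftmost'' boundary slice. By the conclusions of Lemmas \ref{main_prish_estimates} and \ref{main_tip_estimates}, which are now available on $[T_1^{(m)}, T_*]$ thanks to Lemmas \ref{buckling_wrapup}, \ref{gm_good_init}, and \ref{T_lower_bound}, the function $v^{(m)}$ is trapped between positive barriers everywhere, so $\nabla u^{(m)} \neq 0$ throughout and $u^{(m)}(\cdot, t)$ is strictly monotonic along the interval factor for every $t$. Combined with the monotonicity of $u_0$, the set $M_{[u_\#, \infty)}$ is a halfline in the interval factor whose leftmost slice is $\{u_0 = u_\#\}$. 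Therefore the minimum of $u^{(m)}(\cdot, t)$ over $M_{[u_\#, \infty)}$ is attained at any point $p_\#$ on this slice, and it suffices to bound $u^{(m)}(p_\#, t)$ from below.

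The evolution equation $\partial_t u^{(m)} = \Delta_{g^{(m)}(t)} u^{(m)} - \ein - v$ (cf.\ \eqref{evo_u_in_overview}) gives
\[
\partial_t u^{(m)}(p_\#, t) \;\geq\; -\ein - v(p_\#, t) - |\Delta u^{(m)}|(p_\#, t).
\]
My plan is to show that there is a universal constant $C_0 = C_0(g_{mp})$ such that $v + |\Delta u^{(m)}| \leq C_0$ at every point lying in the productish region or the smooth region $\{u^{(m)} > u_*\}$. In the productish region, $v \leq V^+ \leq C_0$ by Lemma \ref{lemma:V_small} and the fact that the parameters are now fixed in terms of $g_{mp}$; and $|\Delta u^{(m)}| \leq (1+q+\dim F)\,|\nabla\nabla u^{(m)}| \leq C_0 v$ by Lemma \ref{lemma:ycontrol_productish}. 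In the smooth region, Lemma \ref{rm_ctrl_in_x} (together with Lemma \ref{u_ctrl_in_x} to place such points in $\{u_0 \geq u_*/4\}$) gives uniform bounds on $|\Rm|$ and its derivatives, from which the bounds on $v$ and $|\Delta u^{(m)}|$ follow by standard computation. Set $C = \ein + C_0 + 1$.

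The only remaining obstacle---and the main one, although minor---is to verify that the track $\{(p_\#, t)\}$ never enters the tip-only region, where the previous pointwise bounds may fail, for $t \in [T_1^{(m)}, (C+1)^{-1} u_\#]$. I plan to handle this by a first-time argument. Suppose for contradiction there is a smallest time $t_0 \leq (C+1)^{-1} u_\#$ at which $(p_\#, t_0)$ leaves the productish $\cup$ smooth region, that is, $u^{(m)}(p_\#, t_0) \leq \sigma_* t_0 \nu(t_0)$. On $[T_1^{(m)}, t_0)$ the bounds above give $\partial_t u^{(m)}(p_\#, \cdot) \geq -C$, whence $u^{(m)}(p_\#, t_0) \geq u_\# - C t_0 \geq u_\#/(C+1)$. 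On the other hand $\sigma_* t_0 \nu(t_0) \leq \sigma_* u_\# \nu(t_0)/(C+1) < u_\#/(C+1)$, once $T_*$ is shrunk so that $\nu(T_*) < \sigma_*^{-1}$. This contradiction shows that $(p_\#, t)$ stays in the productish or smooth region throughout $[T_1^{(m)}, (C+1)^{-1} u_\#]$. Integrating $\partial_t u^{(m)}(p_\#, \cdot) \geq -C$ from $T_1^{(m)}$ up to $t$ then gives $u^{(m)}(p_\#, t) \geq u_\# - C(t - T_1^{(m)}) \geq u_\# - Ct$, and by the monotonicity reduction this yields the claim for every $p \in M_{[u_\#, \infty)}$.
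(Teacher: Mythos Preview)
Your proof is correct and takes essentially the same route as the paper: bound $\partial_t u^{(m)}$ from below using the evolution equation together with the productish-region control on $v$ and $|\nabla\nabla u|$ (Lemma \ref{lemma:ycontrol_productish}), then run a first-time argument to keep the tracked point out of the tip-only region for $t \le u_\#/(C+1)$. The paper does exactly this, but works pointwise at each $p\in M_{[u_\#,u_*]}$ rather than reducing to the single slice $\{u_0=u_\#\}$ via monotonicity.

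Your monotonicity reduction is a harmless extra step, but its justification is slightly overstated: the tip and productish barriers only cover $\{u<2u_*\}$, so ``$v^{(m)}$ is trapped between positive barriers everywhere'' is not literally true. Positivity of $v$ for $u>2u_*$ would come instead from the initial positivity of $V_0$ (model pinch condition~(MP4)) together with the uniform curvature control of Lemma \ref{rm_ctrl_in_x}, which gives a short-time speed limit on $v$. Since the paper's pointwise version of your argument needs no monotonicity at all, this is cosmetic rather than a genuine gap.
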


\begin{proof}
    Since $T_1^{(m)} < m$, at the beginning time $T_1^{(m)}$ any point $p \in M_{[u_{\#}, u_*]}$ lies in the productish region, which is defined as the points where $u \geq t \nu(t) \sigma_*$.  (By restricting $T_*$, we can assume $\nu(T_1^{(m)}) < \frac{1}{\sigma_*}$.)

  The function $u^{(m)}$ satisfies the evolution equation
  \begin{align}
    \partial_t u^{(m)} = \lap_M u^{(m)} - 2 (u^{(m)})^{-1}|\nabla u^{(m)}|^2 - \ein
  \end{align}
  and as long as $u^{(m)}$ is in the productish region, we have the estimate (using the regularity in conclusion \ref{conc:prish_reg})
  \begin{align}
    |\lap_M u^{(m)}| + (u^{(m)})^{-1}|\nabla u^{(m)}|^2 \leq C.
  \end{align}
  Therefore,
  \begin{align}
    u(p, t) \geq u_\# - (\ein + C)(t - T_1^{(m)}) \geq u_{\#} - (\ein + C) t.
  \end{align}
  Now, $p$ continues to be in the productish region as long as $u \geq \sigma_* t \nu(t)$, so at least as long as
  $
    u_{\#} - (\ein + C)t \geq t \nu(t) \sigma_*
  $.
  Since we assume $\nu(t) < 1/\sigma_*$, this will be implied if $t \leq \frac{u_{\#}}{\ein + C + 1}$.  Therefore we have proven the first bullet.

\end{proof}

\begin{lemma}\label{full_ctrl_curvature_x}
  For $u_{\#} \in [0, u_*]$ and $k \in \nats \cup \{0\}$ there is a constant $C_{space,k}(u_{\#})$ such that the following holds.  Suppose $u_\# > (2-2^{-k})m$.  Then in $M_{[u_{\#}, \infty)}$ and for $t \in [T_1^{(m)}, T_*]$,
  \begin{align}
    |\nabla^k \Rm_{g^{(m)}}| \leq C_{space,k}(u_\#).
  \end{align}
\end{lemma}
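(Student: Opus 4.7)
The plan is to combine three ingredients: uniform $C^{k+2}$ control on the mollified initial metric $g_{init}^{(m)}$ over $M_{[u_\#, \infty)}$, a uniform $|\Rm|$ bound on a slightly larger set for all $t \in [T_1^{(m)}, T_*]$, and Shi's interior derivative estimates for Ricci flow. The telescoping condition $u_\# > (2-2^{-k})m$ reserves a buffer of width $2^{-k}m$ between the level set where we want the $k$-th order bound and an intermediate level set on which we have the lower-order bounds from the inductive hypothesis; this buffer plays the role of interior room for the local parabolic regularity. I would proceed by induction on $k$, tacitly assuming $T_1^{(m)}$ has been chosen small enough as a function of $m$ (say $T_1^{(m)} \leq m^{N(k)}$).

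The initial data bound is obtained as follows. On $M_{[u_\#, \infty)}$ one has $V_{init}^{(m)} - V_0 = \eta_m (V_{prish}(\cdot, T_1^{(m)}) - V_0)$, where $\eta_m$ has derivatives of size at most $m^{-j}$, but $V_{prish}(u, T_1^{(m)}) - V_0(u) = O(T_1^{(m)})$ in $C^{k+2}$ uniformly for $u$ bounded away from $0$. By choosing $T_1^{(m)}$ to decay sufficiently fast in $m$, the $C^{k+2}$ norm of $V_{init}^{(m)} - V_0$ on $M_{[u_\#/2, \infty)}$ is bounded independently of $m$, and similarly for $W_{init}^{(m)} - W_0$. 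Since $g_{mp}$ has all curvature derivatives bounded on $\{u \geq u_\#/2\}$ by the last condition of Definition \ref{definition:model_pinch} together with \ref{modelpinch_reg}, this produces $|\nabla^j \Rm_{g_{init}^{(m)}}| \leq C_j(u_\#)$ uniformly in $m$ for all $j \leq k$.

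The base case $k=0$ handles the uniform curvature bound. Fix $p \in M_{[u_\#, \infty)}$ and set $t_0 := u_\#/(2(C_u+1))$ with $C_u$ the constant from Lemma \ref{u_lower_bnd}. For $t \in [T_1^{(m)}, t_0]$, Lemma \ref{u_lower_bnd} gives $u^{(m)}(p,t) \geq u_\#/2$, and then the productish/tip barriers combined with the warped-product curvature identities of Corollary \ref{prish_curvature_control} produce $|\Rm_{g^{(m)}}|(p,t) \leq C(u_\#)$. For $t \in [t_0, T_*]$, Lemma \ref{full_curvature_ctrl_time} applied at this $t_0$ yields the same bound. This simultaneously establishes the lemma for $k=0$ and supplies the uniform $|\Rm|$ input needed for higher $k$.

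For the inductive step, given $u_\# > (2-2^{-k})m$ pick an intermediate level $u_\#' \in ((2-2^{-(k-1)})m, u_\#)$; by the inductive hypothesis all derivatives $|\nabla^j \Rm_{g^{(m)}}|$ with $j \leq k-1$ are uniformly bounded on $M_{[u_\#', \infty)} \times [T_1^{(m)}, T_*]$. Shi's interior derivative estimate (applied as in Lemma \ref{rm_ctrl_in_x}), fed with the initial $C^{k+2}$ bound above and this lower-order curvature control, upgrades the bound to order $k$ on $M_{[u_\#, \infty)}$. The only delicate point is to verify that the spatial buffer $M_{[u_\#, \infty)} \subset M_{[u_\#', \infty)}$ has intrinsic thickness in $g^{(m)}(t)$ bounded below uniformly in $m$ and $t$, so that a parabolic ball of definite size fits inside; this reduces to a lower bound on the $g^{(m)}(t)$-distance between the level sets $\{u^{(m)} = u_\#\}$ and $\{u^{(m)} = u_\#'\}$, which follows from the arclength identity $ds = du/\sqrt{uv}$ together with the uniform upper bound on $v$ from the productish and tip barriers. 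This is the main obstacle; everything else is a direct application of earlier results.
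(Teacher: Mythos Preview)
Your proposal is correct and follows essentially the same route as the paper: the $k=0$ case is handled by the same time-splitting (Lemma \ref{u_lower_bnd} plus Corollary \ref{prish_curvature_control} for early times, Lemma \ref{full_curvature_ctrl_time} for late times), and the step $k-1\to k$ is Shi's local derivative estimate on a slightly larger set.

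The one point where you diverge from the paper is how you obtain the initial derivative bounds that Shi's estimate needs at $t=T_1^{(m)}$. You argue by forcing $T_1^{(m)}\le m^{N(k)}$ so that $\eta_m\bigl(V_{prish}(\cdot,T_1^{(m)})-V_0\bigr)$ is small in $C^{k+2}$. But note that your claim ``$V_{prish}(u,T_1^{(m)})-V_0(u)=O(T_1^{(m)})$ in $C^{k+2}$ uniformly for $u$ bounded away from $0$'' is being applied on $[m,2m]$, where the lower bound on $u$ is $m$ and hence \emph{not} uniform; this is exactly why you are driven to the $k$-dependent decay condition on $T_1^{(m)}$, which is awkward since $T_1^{(m)}$ is fixed once (in Lemma \ref{gm_good_init}) independently of $k$. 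The paper sidesteps this entirely by observing that $g_{init}^{(m)}$ literally coincides with $g_{mp}$ on $M_{[2m,\infty)}$, so the initial derivative bounds there are inherited directly from the smoothness of $g_{mp}$ on $\{u>0\}$ with no condition on $T_1^{(m)}$ at all. This is simpler and is what you should use; your buffer and $k=0$ arguments are otherwise fine.
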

\begin{proof}
  Once we prove the Lemma for $k = 0$, the result follows for $k > 0$ using local derivative estimates and the result for $k' = k-1$ and $x_0' = 2x_0$.  (In particular, we have to use the knowledge that all of the metrics $g^{(m)}(T_1^{(m)})$ agree in $M_{[u_{\#}, \infty)}$ and therefore have uniform curvature bounds, and we can use 14.4.1 of \cite{bookanalytic}.)

  Now we do $k = 0$.  For $t < (C+1)^{-1}u_{\#}$ we an apply \ref{u_lower_bnd} to find that $u^{(m)} > u_{\#} - C t$.  So, we stay in the productish region and we can apply Lemma \ref{prish_curvature_control} to get a bound on $|\Rm|$.  (This bound will depend on $W_0(u)$, but for example if $W_0(u) \gtrsim u$ we can get $|\Rm| \lesssim u_{\#}^{-1}$.)  On the other hand, for $t > (C+1)^{-1}u_{\#}$ we can apply Lemma \ref{full_curvature_ctrl_time} to find $|\Rm| \leq \frac{C_{time, 0}}{(C+1)^{-1}t_\# \nu((C+1)^{-1}t_\#)}$.  
\end{proof}
\begin{lemma}\label{full_ctrl_derivs_x}
  For $u_{\#} \in [0, u_*]$ and $k \in \nats \cup \{0\}$ there is a constant $C_{k}(u_{\#})$ such that the following holds.  Suppose $u_\# > 2m$.  Then in $M_{[u_\#, \infty)}$ and for $t \in [T_1^{(m)}, T_*]$,  
  \begin{align}
    |\left( \nabla^{g_{mp}} \right)^k g^{(m)}(x, t)|_{g_{mp}} \leq C(x_0, k).
  \end{align}
\end{lemma}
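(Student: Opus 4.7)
The plan is to bootstrap from Lemma \ref{full_ctrl_curvature_x}, which controls $|(\nabla^{g^{(m)}})^j \Rm_{g^{(m)}}|_{g^{(m)}}$ uniformly in $m$ on sets of the form $M_{[u_\#, \infty)}$, together with the crucial observation that because $u_\# > 2m$ we have $g^{(m)}(T_1^{(m)}) = g_{mp}$ on the whole region $M_{[u_\#, \infty)}$. Thus the initial data is not just smooth but literally equal to $g_{mp}$, so all $g_{mp}$-derivatives of $g^{(m)} - g_{mp}$ vanish identically at $t = T_1^{(m)}$, and all constants will be $m$-independent.

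For $k = 0$, I would integrate $\partial_t g^{(m)} = -2\Rc_{g^{(m)}}$ in time. The bound $|\Rc_{g^{(m)}}|_{g^{(m)}} \leq C_{space,0}(u_\#)$ from Lemma \ref{full_ctrl_curvature_x} together with Gronwall applied to the ODE for the eigenvalues of $g^{(m)}$ in a $g_{mp}$-orthonormal frame (starting from equality at $t = T_1^{(m)}$) gives a uniform two-sided equivalence $c \, g_{mp} \leq g^{(m)}(t) \leq C \, g_{mp}$ on $[T_1^{(m)}, T_*]$, and then $|g^{(m)}(t) - g_{mp}|_{g_{mp}} \leq C(u_\#) T_*$.

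For $k \geq 1$, I would proceed by induction on $k$. Since $g_{mp}$ is time-independent, $(\nabla^{g_{mp}})^k$ commutes with $\partial_t$, so
\begin{align*}
\partial_t (\nabla^{g_{mp}})^k g^{(m)} = -2 (\nabla^{g_{mp}})^k \Rc_{g^{(m)}}.
\end{align*}
The right-hand side can be rewritten as $(\nabla^{g^{(m)}})^k \Rc_{g^{(m)}}$ plus a polynomial expression in $g^{(m)}$, $(g^{(m)})^{-1}$, $g_{mp}$, $g_{mp}^{-1}$, the difference tensor $A = \Gamma_{g^{(m)}} - \Gamma_{g_{mp}}$, its $g_{mp}$-covariant derivatives through order $k-1$, and $\Rm_{g^{(m)}}$ and its $g^{(m)}$-covariant derivatives through order $k-1$. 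Since $A$ is a linear combination of contractions of $(g^{(m)})^{-1}$ with $\nabla^{g_{mp}} g^{(m)}$, the inductive hypothesis bounds $A$ and its first $k-1$ derivatives in $g_{mp}$; Lemma \ref{full_ctrl_curvature_x} (applied on a slightly enlarged region, e.g. $M_{[u_\#/2, \infty)}$) bounds the curvature terms; and the $k=0$ step gives the metric equivalence needed to pass between $g$- and $g_{mp}$-norms. Hence $|\partial_t (\nabla^{g_{mp}})^k g^{(m)}|_{g_{mp}} \leq C(u_\#, k)$, and integrating in time from $T_1^{(m)}$—where $(\nabla^{g_{mp}})^k g^{(m)} = (\nabla^{g_{mp}})^k g_{mp}$, which is bounded by the smoothness of $g_{mp}$ on $\{u_0 \geq u_\#\}$—yields the desired bound.

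The main bookkeeping hazard is ensuring all constants are $m$-independent. This is handled by the observation above that the initial data on $M_{[u_\#,\infty)}$ is exactly $g_{mp}$ (no $m$-dependence), combined with the fact that Lemma \ref{full_ctrl_curvature_x} already provides $m$-independent curvature estimates. The only mild inconvenience is that, to use local curvature derivative bounds in the inductive step, one should apply Lemma \ref{full_ctrl_curvature_x} with parameter $u_\#/2$ rather than $u_\#$; this is legitimate because the hypothesis $u_\# > 2m$ is preserved after shrinking.
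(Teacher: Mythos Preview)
Your proposal is correct and is exactly the paper's approach: the paper's proof is the single sentence ``This follows by integrating the Ricci flow equation and derivatives of the Ricci flow equation,'' and you have spelled this out correctly, using Lemma~\ref{full_ctrl_curvature_x} for the curvature input and the fact that $g^{(m)}(T_1^{(m)}) = g_{mp}$ on $M_{[u_\#,\infty)}$ when $u_\# > 2m$. One small remark: the enlargement to $M_{[u_\#/2,\infty)}$ is unnecessary (and your justification that ``$u_\# > 2m$ is preserved after shrinking'' is not quite right, since $u_\#/2 > 2m$ can fail) --- the bounds from Lemma~\ref{full_ctrl_curvature_x} are pointwise on $M_{[u_\#,\infty)}$ itself, and its hypothesis $u_\# > (2-2^{-k})m$ is implied by $u_\# > 2m$ for every $k$.
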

\begin{proof}
  This follows by integrating the Ricci flow equation and derivatives of the Ricci flow equation.
\end{proof}


Now we prove Theorem \ref{theorem:model_pinch_flow}.  Recall that we identify $M = I \times S^q \times F$ with $(\real^{1+q} \setminus \{0^{1+q}\}) \times F \subset \bar M \defeq \real^{1+q} \times F$, where $0^{1+q}$ is the origin in $\real^{1+q}$.  We will construct the Ricci flow $g(t)$ provided by Theorem \ref{theorem:model_pinch_flow} as a limit of our flows of mollified metrics $g^{(m)}(t)$.  As a little notational annoyance, we set $g_{shift}^{(m)}(t) = g^{(m)}(t-T_1^{(m)})$, which is a Ricci flow for times at least $[0, T_*/2]$.  ($g^{(m)}$ has the nice property that $g^{(m)}$ evaluated at time $t$ is approximately our approximate solution at time $t$, whereas $g_{shift}^{(m)}$ is nice because it always starts at time $0$.)  We let $P = \bar M \setminus M$.

\begin{lemma}\label{final_lemma}
  There is a sequence $m_k \searrow 0$, and a family of metrics $g_{wp}(t)$, $t \in [0, T_*/2]$ such that
  $
    g_{shift}^{(m_k)}(t) \to g_{wp}(t)
    $
  in $C^{\infty}_{loc}\left( \bar M \times [0, T_*/2] \setminus  P \times \{0\}  \right)$.

  $g_{wp}(t)$ is a Ricci flow satisfying all of the conclusions of Theorem \ref{theorem:model_pinch_flow}.
\end{lemma}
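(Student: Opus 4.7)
The plan is to extract a smooth limit of the mollified Ricci flows $g^{(m)}_{shift}(t)$ using the $m$-uniform estimates assembled in Section \ref{section:ccc}, and then verify that the limit inherits the initial value and the barrier/regularity control.

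First I would collect the $m$-uniform estimates. Each $g^{(m)}_{shift}(t)$ is a smooth Ricci flow on $\bar M$ for $t \in [0, T_*/2]$ by Lemma \ref{T_lower_bound}. Fix a compact $K \subset \bar M \setminus P$ and choose $u_\# > 0$ with $K \subset M_{[u_\#, \infty)}$. For every $m$ small enough that $2m < u_\#$, Lemmas \ref{full_ctrl_curvature_x} and \ref{full_ctrl_derivs_x} give uniform $C^k(K)$ bounds on $g^{(m)}_{shift}(t)$ for every $t \in [0, T_*/2]$ and every $k$. For any $t_0 > 0$, Lemma \ref{full_curvature_ctrl_time} combined with Shi's derivative estimates gives uniform $C^k$ bounds on $g^{(m)}_{shift}(t)$ on all of $\bar M$ for $t \in [t_0, T_*/2]$ and $m$ small. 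Together these yield uniform $C^k$ bounds on every compact subset of $\bar M \times [0, T_*/2]$ not meeting $P \times \{0\}$.

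Next I would extract the limit. Using an exhausting family of such compact sets, a standard diagonal Arzel\`a--Ascoli argument produces a subsequence $m_k \searrow 0$ and a smooth tensor $g_{wp}$ on $\bar M \times [0, T_*/2] \setminus (P \times \{0\})$ such that $g^{(m_k)}_{shift} \to g_{wp}$ in $C^{\infty}_{loc}$. Since each approximating flow solves Ricci flow and the convergence is locally smooth, $g_{wp}(t)$ is a smooth Ricci flow on $\bar M$ for each $t \in (0, T_*/2]$. For the initial value, note that $M \times [0, T_*/2]$ lies in the complement of $P \times \{0\}$, so the $C^{\infty}_{loc}$ convergence holds there; for any compact $K \subset M$ and all sufficiently small $m$ we have $g^{(m)}_{shift}(0)|_K = g_{init}^{(m)}|_K = g_{mp}|_K$, so the limit satisfies $g_{wp}(0)|_K = g_{mp}|_K$, and hence $g_{wp}(t) \to g_{mp}$ in $C^{\infty}_{loc}(M)$ as $t \searrow 0$ by smoothness of the limit.

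Finally, the barrier inequalities and the $C^{2,\eta}$ regularity bounds making up Definitions \ref{productish_barricaded} and \ref{tip_barricaded} are $m$-independent pointwise conditions on $v$, $w$, and their derivatives, which pass to $C^{\infty}_{loc}$ limits. Lemmas \ref{buckling_wrapup}, \ref{gm_good_init}, and \ref{T_lower_bound} show that each $g^{(m_k)}_{shift}(t)$ is controlled in both the productish and tip regions on $[0, T_*/2]$, hence so is $g_{wp}(t)$. The main obstacle is that the estimates in the first step must be genuinely $m$-uniform down to $t = 0$ on compact subsets of $M$; this is the whole point of Section \ref{section:ccc}, and rests on the observation that for any fixed compact $K \subset M$ one can take $m$ small enough that the mollification is supported entirely in $\{u_0 < u_\#\}$ for some $u_\#$ smaller than $\min_K u_0$, so the initial metrics literally agree with $g_{mp}$ on $K$ and the diffeomorphism-invariant barrier control gets upgraded to standard $C^k$ bounds via Shi-type derivative estimates.
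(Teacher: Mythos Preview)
Your proposal is correct and follows essentially the same route as the paper: uniform $C^k$ bounds on $M_{[u_\#,\infty)}\times[0,T_*/2]$ from Lemma~\ref{full_ctrl_derivs_x}, uniform bounds on $\bar M\times[t_0,T_*/2]$ from Lemma~\ref{full_curvature_ctrl_time}, then a diagonal Arzel\`a--Ascoli extraction, with the Ricci flow equation and the barrier/regularity inequalities passing to the $C^\infty_{loc}$ limit. Your treatment of the initial condition (noting that $g^{(m)}_{shift}(0)|_K = g_{mp}|_K$ once $2m<u_\#$) is in fact a bit more explicit than the paper's.
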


\begin{proof}
  First we get convergence in $C^{\infty}_{loc}(M \times [0, T_*/2])$.  For any $u_\# > 0$ and for $m > 2u_\#$ we have $C^{\infty}$ control on the derivatives of $g_{shift}^{(m)}$ in $M_{[u_\#, \infty)} \times [0, T_*/2]$  (Lemma \ref{full_ctrl_derivs_x}).  By the Arzel\`a-Ascoli theorem, we get convergence of a subsequence in any such region.  By taking a diagonal subsequence, we get convergence to a metric on $g(t)$ as desired.  Since the convergence happens in $C^{\infty}$, the Ricci flow equation and all the estimates pass to the limit.

  Next we can continue extracting subsequences to get convergence on $C^{\infty}_{loc}(\bar M \times [t_0, T_*/2))$, for any $t_0 > 0$, using Lemma \ref{full_curvature_ctrl_time}.  By performing another diagonal argument we can get convergence in the claimed $C^{\infty}_{loc}$ space.

  For any $t > 0$, the doubly-warped product metric $g_{wp}(t)$ satisfies the inequalities in the conclusions of Lemma \ref{main_prish_estimates} and \ref{main_tip_estimates}. (Perhaps with non-strict inequalities, but we can make the constants worse to make the inequalities strict.) These imply that the metric has an extension to $\bar M$.
\end{proof}

Here, we use the control that we have on $g_{wp}(t)$ to find more precise estimates on the convergence of $g_{wp}(t)$ to $g_{mp}$ as $t \searrow 0$.
\begin{corollary}\label{improved_convergence}
  Suppose we are in the setting of Theorem \ref{theorem:model_pinch_flow}. Write $g_{mp}$ as $g_{mp} = dx^2 + u_0(x)g_{S^q} + w_0(x) g_{F}$, set $v_0(x) = u_0^{-1}(x)|\nabla u_0(x)|^2$, and set $h(t) = g_{wp}(t) - \left( g_{mp} - t\ein g_{S^q} - t\ein_F g_F\right)$.  There are constants $\epsilon_0 = \epsilon_0(g_{mp})$ and $C_0 = C_0(g_{mp})$ such that the following holds.  For any $x$ with $u_0(x)<u_*/2$, and for $t < \epsilon_0u(x)$, we have $|\partial_t h|_{g_{mp}}(x, t) \leq C_0\left( u_0(x)^{-1}v_0(x) \right)$.

    Let $k \in \nats$.  If in addition to the regularity assumption \ref{modelpinch_reg} we assume
    \begin{align}
      \frac{|V_0|_{2+k, \eta; u/2, (du)^2}}{V_0}
      +
      \frac{|W_0|_{2+k, \eta; u/2, (du)^2}}{W_0}
      \leq C
    \end{align}
    then there are constants $\epsilon_k(g_{mp})$ and $C_k(g_{mp})$ such that for any $x$ with $u(x) < u_*/2$, and for $t < \epsilon_ku_0(x)$, we have
    \begin{align}
      |\partial_t \left(\nabla^{g_{mp}}\right)^{k}h|_{g_{mp}}(x, t) \leq C_k \left( u_0^{-1}v_0 \right)^{2+k}
    \end{align}
\end{corollary}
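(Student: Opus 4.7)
The approach rests on the identity
\[
\partial_t h = -2\Rc[g_{wp}(t)] + \ein g_{S^q} + \ein_F g_F.
\]
Since $2\Rc[g_{S^q}] = \ein g_{S^q}$ and $2\Rc[g_F] = \ein_F g_F$, the right-hand side is precisely the warp contribution to the Ricci tensor of the doubly warped product $g_{wp}(t)$. By Corollary \ref{prish_curvature_control}, wherever $g_{wp}$ is controlled in the productish region, this warp contribution has $g_{wp}$-norm bounded by $C u^{-1} v$, with $u, v$ evaluated at $(x,t)$. So the base bound will follow once one knows $(x,t)$ lies in the productish region and $u^{-1}v \lesssim u_0^{-1}v_0$ there.

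The first step of the plan is therefore to verify that $(x,t)$ lies in the productish region whenever $t < \epsilon_0 u_0(x)$ with $\epsilon_0$ chosen small. By Lemma \ref{u_lower_bnd}, $u(x,t) \geq u_0(x) - Ct \geq (1-C\epsilon_0)u_0(x)$, so $u(x,t)$ is comparable to $u_0(x)$ and, for $T_*$ small, the productish condition $u(x,t) > \sigma_* t \nu(t)$ holds. The productish barriers then give $v(x,t) \leq C V_{prish}(u(x,t), t) \leq C V_0(u_0(x)) = C v_0(x)$, using the regularity assumption \ref{modelpinch_reg} on $V_0$. Combining these gives $u^{-1}v \leq C u_0^{-1}v_0$. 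Finally, since $|g_{wp}(t) - g_{mp}|_{g_{mp}}$ is uniformly small on this time interval (obtained by integrating the same bound in time), the $g_{wp}$- and $g_{mp}$-norms of $\partial_t h$ are comparable, giving the base estimate $|\partial_t h|_{g_{mp}}(x,t) \leq C_0 u_0^{-1}v_0$.

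For the higher-derivative statement, the plan is to bootstrap the parabolic Schauder argument from Lemma \ref{regularity_prish}. The rewritten equations for $\hat v$ and $\hat w$ in the variable $\hu = u + \ein t$ used there have smooth coefficients on the parabolic cylinders $\Xi$; assuming the extra regularity $V_0, W_0 \in C^{2+k,\eta}$, iterating interior Schauder yields $C^{2+k,\eta}$ control on $v - V_{prish}$ and $w - W_{prish}$ in the scaled coordinates, with the same margin factor. Differentiating the curvature decomposition of Corollary \ref{prish_curvature_control} then bounds $\nabla^k \partial_t h$ in the scaled metric. The main obstacle, and what dictates the final power of $u_0^{-1}v_0$ in the conclusion, is converting these scaled-Schauder bounds to the fixed $g_{mp}$-norm: a $g_{mp}$-derivative transverse to the fibres corresponds to a derivative at the natural parabolic scale $\sqrt{u_0 v_0}$, since the arc length element is $ds = du/\sqrt{u V_0}$. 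The bookkeeping of how many factors of $u_0$ and $v_0$ each derivative contributes via the rescaling, combined with the base Ricci factor of $u_0^{-1}v_0$, produces the stated power.
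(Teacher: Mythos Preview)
Your proposal is correct and follows essentially the same route as the paper: identify $\partial_t h = -2\Rc[g_{wp}(t)] + \ein g_{S^q} + \ein_F g_F$, invoke Corollary~\ref{prish_curvature_control} to bound this by $Cu^{-1}v$ in the productish region, and then argue that for $t < \epsilon_0 u_0(x)$ the point stays in the productish region with $u \sim u_0$ and $v \lesssim v_0$. For the higher-derivative statement, both you and the paper appeal to improved interior Schauder estimates enabled by the extra $C^{2+k,\eta}$ assumption on $V_0, W_0$. Your write-up is actually more explicit than the paper's own proof, which is quite terse on both points (it does not spell out the comparison $u^{-1}v \lesssim u_0^{-1}v_0$ or invoke Lemma~\ref{u_lower_bnd} by name).
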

\begin{proof}
  We first consider $k = 0$.  Consider any $x$.  Initially $x$ is in the productish region.   As long as $x$ is in the productish region, we can use Corollary \ref{prish_curvature_control} to control $\Rc_{g_{wp}(t)} - \left( \ein g_{S^q} + \ein_F g_F\right)$.  In particular, for sufficiently small $\epsilon_0$ we will have $u_{g_{wp}}(x, t) > \oh u_0(x)$ for $t < \epsilon_0u_0$, so we stay in the productish region for these times.

  To get the higher regularity, note that the extra assumption allows us to improve the degree at which we are allowed to apply all interior Schauder estimates, so we get control on the gradients of $\Rm$ in Corollary \ref{prish_curvature_control} as well.
\end{proof}

 
\section{Stability of the Bryant soliton}\label{stabil_bryant}
In this section, we will prove a result that we use for stability of Ricci-DeTurck flow around the Bryant soliton, Theorem \ref{bry_stabil}, assuming a priori control at infinity.  This will be used to prove the short-time stability of flows from model pinches.  For a complete stability result for the Bryant soliton, see  \cite{steady_alix}.  That result does not suffice for us because being in the weighted $L^2$ space there requires exponential decay at infinity.

The main result we use from this section is the following.  We let $(\Bry, g_{bry})$ be the Bryant steady soliton metric on $\Bry \sim \real^{1 + q}$ which has soliton vector field $X$, and $(\Bry, g_{bry}) \times (\real^p, g_{\real_p}) = (\Bry \times \real^p, g_{sol})$ .
\begin{theorem}\label{bry_stabil}
  For $C_{reg} \geq 0$ there is a constant $\barr \epsilon(C_{reg}) > 0$ depending only on the dimension with the following property.  Suppose $\epsilon < \barr \epsilon$ and let $\bar F = \epsilon F$, where $F$ is defined in Lemma \ref{bry_rdt_supsoln}.  Suppose that $g(t) = g_{sol} + h(t)$ is a Ricci-DeTurck flow around $g_{sol}$ modified by $X$, on a time interval $I$.  Suppose that for all $P \in \Bry \times \real^p$ and $t \in I$,
  \begin{align}
    |h(P,t)| \leq \bar F(P).
  \end{align}

  Suppose that either $I = (-\infty, T]$, or $I = [0, T]$ with the condition at time $t=0$ that
  \begin{align}\label{init_nabla_bnds}
    u^{0,1/2}|\nabla h| + u^{0,1}|\nabla^2 h| < C_{reg} \bar F.
  \end{align}

  Then the \emph{strict} inequality $|h(P, t)| < \bar F$ holds for all $P \in \Bry$ and $t \in I$.
\end{theorem}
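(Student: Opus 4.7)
The plan is a maximum principle argument comparing $z = |h|$ with the candidate barrier $\bar F = \epsilon F$, using the pointwise inequality \eqref{rdt_norm}. Since $g_{sol}$ is a steady soliton modified by $X$ we have $\Rf_X[g_{sol}]=0$, so \eqref{rdt_norm} applies:
$$\square_{X,g_{sol},g} z \le 2\rmplus z + c_0\bigl(|\Rm|\,z^{2} + |\nabla h|^{2}\bigr).$$
By Lemma \ref{bry_rdt_supsoln} I assume $F$ is a \emph{strict} supersolution of $\square_{X,g_{sol},g_{sol}} - 2\rmplus$, with slack at least of size $c_{1}|\Rm|\,F$. The goal is to show that this slack dominates the error terms on the right of \eqref{rdt_norm} once $\epsilon$ is small enough, making $\bar F - z$ a strict supersolution and forbidding $z$ from ever touching $\bar F$.

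The first step is to upgrade the $C^{0}$ bound $|h|\le \bar F$ to derivative bounds of the same size. On the Bryant soliton times $\real^{p}$, the curvature scale $r(P) = |\Rm|^{-1/2}(P)$ sets a natural parabolic scale, and interior parabolic Schauder estimates applied to the Ricci--DeTurck equation on a parabolic ball of radius $\sim r(P)$ yield
$$r(P)\,|\nabla h|(P,t) + r(P)^{2}\,|\nabla^{2}h|(P,t) \lesssim C_{reg}\,\bar F(P),$$
provided the ball lies inside $I$. In the ancient case $I=(-\infty,T]$ this is automatic; in the initial-value case $I=[0,T]$ I would combine the hypothesis \eqref{init_nabla_bnds} at $t=0$ with the same interior estimates propagated forward, which is what forces the constant $C_{reg}$ into $\barr\epsilon(C_{reg})$. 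With these bounds, $|\nabla h|^{2} \lesssim C_{reg}^{2}|\Rm|\,\bar F^{2}$, so the right-hand side of \eqref{rdt_norm} is bounded by $c_{0}(1+C_{reg}^{2})\,|\Rm|\,\bar F\cdot z$.

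Next I would assemble the maximum principle. Setting $w = \bar F - z$ and replacing the background $g_{sol}$ by $g = g_{sol}+h$ in the heat operator produces perturbation terms of order $|h|\lesssim \epsilon F$ acting on lower derivatives of $F$, all absorbable into $O(\epsilon)\,|\Rm|\,F$. The comparison reads
$$\square_{X,g_{sol},g} w \;\geq\; \epsilon\,|\Rm|\,F\bigl[c_{1} - c_{0}(1+C_{reg}^{2})\epsilon - O(\epsilon)\bigr],$$
which is strictly positive once $\epsilon<\barr\epsilon(C_{reg})$. A first interior touching point of $z=\bar F$ then contradicts the strict supersolution property. The $t=0$ boundary is ruled out by the hypothesis $|h(P,0)|\le \bar F(P)$, after an arbitrarily small shrinking of $\epsilon$ to convert $\le$ into $<$ and passing to the limit. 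For the ancient case, one runs the argument on $[-T',T]$ for arbitrarily large $T'$; the uniform positivity of $F$ plus the uniform bound $|h|\le \bar F$ prevents loss of $w$ as $t\to-\infty$. Spatial infinity is handled by the fact that $F$ grows at the ends of $\Bry\times\real^{p}$ (a property that should be built into Lemma \ref{bry_rdt_supsoln}), so $z/\bar F\to 0$ there.

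The main obstacle is the $|\nabla h|^{2}$ term in \eqref{rdt_norm}, which cannot be absorbed from a $C^{0}$ bound on $h$ alone and is the sole reason that the initial regularity assumption \eqref{init_nabla_bnds} appears in the non-ancient case. The resolution is the Schauder bootstrap at the soliton curvature scale, but this requires a full parabolic neighborhood and therefore either an ancient interval or prescribed initial gradient control. A secondary difficulty, which lives inside Lemma \ref{bry_rdt_supsoln} and which I would simply invoke here, is the construction of $F$ itself: one needs the supersolution slack to dominate $|\Rm|\,F$ (not merely to be positive) on the full Bryant soliton times $\real^{p}$, which is a nontrivial elliptic construction because the Bryant soliton's Ricci operator degenerates at infinity and has only radial positivity relative to the soliton potential.
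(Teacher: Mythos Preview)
Your approach is essentially the paper's: obtain $|\nabla h|\lesssim u^{0,-1/2}\bar F$ by parabolic Schauder at the curvature scale (using \eqref{init_nabla_bnds} only when the parabolic ball reaches $t=0$), feed this into \eqref{rdt_norm}, and compare against the strict supersolution inequality for $F$ from Lemma \ref{bry_rdt_supsoln} to conclude $\square_{X,g_{sol},g}(\bar F-|h|)-2\rmplus(\bar F-|h|)>0$, so no interior touching can occur.

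One correction: $F$ does \emph{not} grow at the ends---Lemma \ref{bry_rdt_supsoln} gives $F\sim c_1 u^{-1}$ as $u\to\infty$, and $F$ is constant in the $\real^p$ direction. Your invocation of growth to handle spatial infinity is unnecessary anyway: the hypothesis already gives $|h|\le\bar F$ globally, so the only thing to rule out is an interior point with $|h|(P_0,t_0)=\bar F(P_0)$, and the strict differential inequality at such a point contradicts the minimum principle locally. No barrier at infinity is required.
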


We wish to compare this result to Section 7 of \cite{uniquenessBK}.  Note that the function $F$ here is asymptotic to $1/u$, as is the scalar curvature $R$ on the Bryant soliton.  So, our hypothesis implies $|h| \leq \epsilon R$ (for some possibly smaller $\epsilon$).  In their Section 7, Bamler and Kleiner find estimates for the perturbation under the assumption that $|h| \leq C R^{1 + \chi}$ for some $\chi > 0$.  Indeed, they get an improvement of $\frac{|h|}{R^{1+\chi}}$, where we have only been able to get stability.  Both Theorem \ref{bry_stabil} and Section 7 of \cite{uniquenessBK} use the Anderson-Chow estimate; unfortunately our Theorem requires some specific calculations on the Bryant soliton and it's not clear what generalization is available.

\subsection{Anderson-Chow Estimate}
We begin with a version of the Anderson-Chow estimate.  In \cite{ac_estimate}, Anderson and Chow proved an estimate in three dimensions for solutions to the linearization of Ricci-DeTurck flow, in terms of the scalar curvature.  The key inequality for their estimate is 
\begin{align}\label{ac_inequality}
  |\Rc|^2 - R \rmplus \geq 0
\end{align}
valid on any three-dimensional manifold. Recall the definition
$$\rmplus = \max_{h \in Sym_2(M) : |h| = 1}\ip{\Rm[h]}{h}$$ from Section \ref{section:rdt}.  This estimate is useful in classifying solitons \cite{brendlesolitonimportant}, and was also vital in \cite{uniquenessBK}. In \cite{ac_higherdim}, Wu and Chen prove a higher-dimensional version of the Anderson-Chow estimate, assuming that the Weyl tensor vanishes identically along the flow (Claim 2.1 in \cite{ac_higherdim}).  For a singly-warped product, the Weyl tensor does vanish identically (since it is conformal to a cylinder) and therefore \cite{ac_higherdim} applies.  We also give a proof in the restricted setting we need, because it is more elementary and we need a statement about strictness.

For a singly warped product, $ds^2 + u(s) g_{S^q}$, we let $L = (1-\partial_su^{1/2})/u$ be the sectional curvature of a plane tangent to $S^q$, and $K = - u^{-1/2}\partial_s^2 u^{1/2}$ be the sectional curvature of a plane spanned by $\partial_s$ and a vector from $S^q$.
\begin{lemma}
  Let $g = ds^2 + u g_{S^q}$ be a warped product metric with nonnegative sectional curvature.  Then the Anderson-Chow inequality \eqref{ac_inequality} holds for $g$.  Equality is achieved only at points where the sectional curvature is constant or where either $K$ or $L$ is $0$.
\end{lemma}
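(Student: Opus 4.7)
The plan is to reduce the Anderson--Chow inequality on a warped product to an elementary two-variable inequality by exploiting the block-diagonal structure of the curvature operator.

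First I would work in a local orthonormal frame $(e_0,e_1,\dots,e_q)$ with $e_0=\partial_s$ and $e_1,\dots,e_q$ tangent to $S^q$. The only independent components of $\Rm$ are the sectional curvatures $R_{0ii0}=K$ and $R_{ijji}=L$ for $i\neq j\geq 1$. Decomposing $h\in Sym_2$ into blocks $\alpha=h_{00}$, $\beta_i=h_{0i}$, and $\gamma_{ij}=h_{ij}$ ($i,j\geq 1$), a direct computation of $(\Rm h)_{ab}=R_{acbd}h^{cd}$ yields
\begin{align}
  (\Rm h)_{00} &= K\,\tr_S\gamma, \\
  (\Rm h)_{0i} &= -K\,\beta_i, \\
  (\Rm h)_{ij} &= K\,\delta_{ij}\alpha + L(\delta_{ij}\tr_S\gamma - \gamma_{ij}),
\end{align}
so that
\begin{align}
  \ip{\Rm h}{h} = 2K\alpha\,\tr_S\gamma - 2K|\beta|^2 + L(\tr_S\gamma)^2 - L|\gamma|^2.
\end{align}
(As a sanity check, $h=g$ gives $\ip{\Rm g}{g}=R$, which matches $2qK+q(q-1)L$.)

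Next I maximize under $|h|^2=\alpha^2+2|\beta|^2+|\gamma|^2=1$. Since $K,L\geq 0$, the terms $-2K|\beta|^2$ and $-L|\tilde\gamma|^2$ (where $\gamma=(\tau/q)I+\tilde\gamma$ is the trace split, $\tau=\tr_S\gamma$) are non-positive, so the maximizer has $\beta=0$ and $\tilde\gamma=0$. Writing $B=(q-1)L/2$, this reduces the problem to maximizing $f(\alpha,\tau)=2K\alpha\tau+\frac{L(q-1)}{q}\tau^2$ on the ellipse $\alpha^2+\tau^2/q=1$. The standard trigonometric parametrization $\alpha=\cos\theta$, $\tau=\sqrt{q}\sin\theta$ converts this to $K\sqrt{q}\sin(2\theta)-B\cos(2\theta)+B$, whose maximum is
\begin{align}
  \rmplus = \sqrt{qK^2+B^2}+B.
\end{align}

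Finally I verify the inequality algebraically. Using $|\Rc|^2=q^2K^2+q(K+2B)^2$ and $R=2q(K+B)$, dividing through by $q$ the Anderson--Chow inequality reduces to
\begin{align}
  (q+1)K^2+2KB+2B^2 \;\geq\; 2(K+B)\sqrt{qK^2+B^2}.
\end{align}
Both sides are nonnegative, so squaring is reversible; after expansion the difference of squares collapses to
\begin{align}
  \bigl[(q+1)K^2+2KB+2B^2\bigr]^2 - 4(K+B)^2(qK^2+B^2) = K^2\bigl[(q-1)K-2B\bigr]^2 \geq 0.
\end{align}
This proves the inequality and shows that equality holds precisely when $K=0$ or $(q-1)K=2B$, i.e.\ $K=L$ (constant sectional curvature), which is contained in the stated equality condition. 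The only real obstacle is bookkeeping in step one (the many index conventions around $R_{acbd}$); the Ricci sanity check guards against sign errors, and the remainder of the proof is bounded-degree polynomial algebra.
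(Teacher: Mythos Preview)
Your proof is correct and follows the same strategy as the paper through the computation of $\Lambda_{\Rm}$: both set up the block decomposition of $h$, argue that the maximizer has $\beta=0$ and the sphere block pure trace, and arrive at the closed form $\Lambda_{\Rm}=\sqrt{qK^{2}+B^{2}}+B$ with $B=\tfrac{q-1}{2}L$ (equivalently the paper's $\tfrac12(1+\sqrt{4q\alpha^{2}+1})$ after normalizing by $(q-1)L$).

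The routes diverge only in the endgame. The paper writes the inequality as a function $A(\alpha,q)$, invokes the three-dimensional Anderson--Chow estimate to handle $q=2$, and then checks $\partial A/\partial q\geq 0$. You instead isolate the radical, square, and factor the difference as the perfect square $K^{2}\bigl[(q-1)K-2B\bigr]^{2}$. Your argument is more self-contained (no appeal to the 3D case as a black box) and yields the sharp equality characterization $K=0$ or $K=L$ directly, which is indeed contained in the lemma's stated necessary condition. One small point worth making explicit: when you drop $\beta$ and $\tilde\gamma$ and rescale, you are implicitly using that the maximizer can be taken with $\alpha\tau\geq 0$ (replace $\alpha\mapsto -\alpha$ if not), so that the rescaling genuinely increases the objective; this is trivial but should be said.
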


\begin{proof}
  Note the calculation below is just done within the vector space $T_PM$ for an arbitrary $P \in M$.
  The scalar curvature of $g$ is
  $
    R = 2qK + q (q-1)L.
    $.
  The Ricci curvature of $g$ is
  $
    \Rc = qK ds^2 + (K + (q-1)L)(u g_{S^q})
    $
  so
  $
    |\Rc|^2 = q^2 K^2 + q(K + (q-1)L)^2
    $.
  Writing $\alpha = \frac{K}{(q-1)L}$ we can rewrite these as
  \begin{align}
    \frac{R}{(q-1)L}
    = q(2 \alpha + 1),
    \quad
    \frac{|\Rc|^2}{((q-1)L)^2} = q\left( q \alpha^2 + (\alpha + 1)^2 \right).
  \end{align}
  We can deal with $L = 0$ by taking the limit as $\alpha \to \infty$ in the end.
  
  Now let's find $h$ with $|h| = 1$ which maximizes $\Rm[h,h]$.  Take an orthonormal basis $V_0 = \partial_s, V_1, \dots, V_q$ for $T_pM$, such that $h$ is diagonal with respect to $V_1 \dots, V_q$, that is for $i$, $j$ nonzero and distinct,
  $h_{ii} = \lambda_i$ and $h_{ij} = 0$.
  Then,
  \begin{align}
    \Rm[h,h]
    &= \Rm_{aibj}h^{ij}h^{ab} \\
    &= \sum_{a = 1}^n h^{00}h^{aa}\Rm_{a0a0}
      + \sum_{i=1}^n h^{ii}h^{00}\Rm_{0i0i}
      + \sum_{i=1}^n \sum_{a=1}^n h^{ii}h^{aa}\Rm_{aiai} \\
    &+ \sum_{j=1}^n h^{0j}h^{j0}\Rm_{j00j}
    + \sum_{i=1}^n h^{i0}h^{0i}\Rm_{0ii0}
      + \sum_{i=1}^n \sum_{j=1, j \neq i}^n h^{ij}h^{ji} \Rm_{ijji}.
  \end{align}
  The first line is the case when $i = j$: the first term is when $i = 0$, the second term is when $a = 0$, and the third term is when neither is 0.  The second line is when $i \neq j$: the first term is when $i = 0$, the second term is when $i \neq 0$ but $j = 0$, and the third term is when neither is 0.  Note that actually this last term vanishes since $h^{ij} = 0$, and since $\Rm_{0ii0} = \Rm_{j00j} = -L$, the second line is negative.  Therefore to optimize $h$ we will take $h^{0i} = 0$. Let $b = h_{00}$.  Simplifying, we have
  \begin{align}
    \Rm[h, h]
    &= 2 b (\sum \lambda_i) K 
      + \left( \left( \sum \lambda_i \right)^2 - \sum \lambda_i^2 \right) L
  \end{align}
  We can assume $b > 0$, since negating $h$ does not change $\Rm[h,h]$.  Then, to maximize either
  \begin{align}
    \sum \lambda_i \quad \text{ or } \quad \left( \left( \sum \lambda_i \right)^2 - \sum \lambda_i^2 \right)
  \end{align}
  we would take the $\lambda_i$ all equal.  Since this maximizes either term, and since $K$ and $L$ are positive, it  maximizes all of $\Rm[h,h]$.  Define $\lambda = \sqrt{q}\lambda_i$, with the motivation that $\lambda$ is the norm of the restriction of $h$ to $TS^q$, so $b^2 + \lambda^2 = 1$.
  So, recalling the definition $\alpha = \frac{K}{(q-1)L}$ we arrive at
  \begin{align}
    \frac{\Rm[h,h]}{(q-1)L}
    = 2  \sqrt{q} \alpha (b \lambda) + (\lambda^2).
  \end{align}
  The positive eigenvalue of the matrix
  $\begin{pmatrix}0 & \sqrt{q}\alpha \\ \sqrt{q}\alpha & 1 \end{pmatrix}$ is $\oh (1 + \sqrt{4 q\alpha^2 + 1})$.  Therefore, since $b$ and $\lambda$ optimize $2 \sqrt{q}\alpha b \lambda + \lambda^2$ with $b^2 + \lambda^2 = 1$, we have, 
  \begin{align}
    \frac{\rmplus}{(q-1)L} = \oh (1 + \sqrt{4q \alpha^2 + 1})
  \end{align}

  Therefore,
  \begin{align}
    A = 
    \frac{|\Rc|^2 - R \Rm[h,h]}{q (q-1)^2L^2}
    &= q \alpha^2 + (\alpha + 1)^2 - \oh (2 \alpha + 1)(1 + \sqrt{4 q \alpha^2 + 1})
  \end{align}
  Now, for $q = 2$ and for each $\alpha$, we have $A\geq 0$ by the three dimensional Anderson-Chow estimate. (We could also check by hand.)  We claim $A$ doesn't decrease as we increase $q$.
  Calculate,
  \begin{align}
    \frac{dA}{dq}
    &= \alpha^2 - (2 \alpha + 1)(4 q \alpha^2 + 1)^{-1/2}\alpha^2 \\
  \end{align}
  So for $q \geq 2$
  \begin{align}
    \frac{dA}{dq} 
    &\geq \alpha^2 \left( 1 - (2 \alpha + 1)(8 \alpha^2 + 1)^{-1/2} \right) \geq 0.
  \end{align}
  
\end{proof}

\begin{corollary}
  The Anderson-Chow inequality \eqref{ac_inequality} holds for $g_{sol}$.
\end{corollary}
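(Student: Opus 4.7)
The plan is to reduce the statement to the lemma above by exploiting the product structure $g_{sol} = g_{bry} \oplus g_{\real^p}$ with a flat Euclidean factor. Since the Riemann tensor of a Riemannian product is the direct sum of the factor Riemann tensors, and since $g_{\real^p}$ is flat, all curvature quantities of $g_{sol}$ are "concentrated" on the Bryant factor. Concretely, working pointwise at a point $(p_1, p_2) \in \Bry \times \real^p$, we have $|\Rc^{g_{sol}}|^2 = |\Rc^{g_{bry}}|^2$ and $R^{g_{sol}} = R^{g_{bry}}$ — the Euclidean directions contribute zero to both.

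The only computation worth carrying out carefully is the identity $\rmplus^{g_{sol}} = \rmplus^{g_{bry}}$. For this I would fix an orthonormal splitting $T_{(p_1,p_2)}(\Bry \times \real^p) = T_{p_1}\Bry \oplus T_{p_2}\real^p$ and decompose any symmetric $2$-tensor $h$ with $|h|=1$ accordingly as $h = h_{11} + h_{12} + h_{21} + h_{22}$. Because $\Rm^{g_{sol}}_{aibj}$ vanishes unless all four indices lie in the Bryant factor, the quadratic form reduces to $\ip{\Rm^{g_{sol}}[h]}{h} = \ip{\Rm^{g_{bry}}[h_{11}]}{h_{11}}$. Since the Bryant soliton has positive sectional curvature, this form is nonnegative, so the supremum over unit $h$ is achieved by taking $h_{12}=h_{22}=0$ and $|h_{11}|=1$, giving the claimed identity.

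Having matched the three curvature quantities between $g_{sol}$ and $g_{bry}$, I would invoke the preceding lemma. The Bryant soliton has the form $ds^2 + u(s) g_{S^q}$ with strictly positive sectional curvature, so it satisfies the hypotheses of the lemma and hence the Anderson--Chow inequality $|\Rc^{g_{bry}}|^2 - R^{g_{bry}}\,\rmplus^{g_{bry}} \geq 0$. Substituting the three equalities above yields the same inequality for $g_{sol}$.

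There is no real obstacle here; the argument is almost entirely bookkeeping. The only subtle point is checking that cross terms $h_{12}$ and $h_{22}$ really do not help increase $\ip{\Rm[h]}{h}$, which is immediate once one notes that in a Riemannian product the curvature operator annihilates any tensor supported in the flat factor or in cross directions.
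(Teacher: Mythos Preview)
Your proof is correct and follows the paper's approach exactly: the flat Euclidean factor contributes nothing to $|\Rc|^2$, $R$, or $\rmplus$, so the inequality reduces to the Bryant factor, where the preceding lemma applies. One minor slip: positive sectional curvature does \emph{not} make the quadratic form $h_{11}\mapsto\ip{\Rm^{g_{bry}}[h_{11}]}{h_{11}}$ nonnegative (try $h_{11}$ with eigenvalues of mixed sign, e.g.\ $\partial_s\otimes\partial_s - V\otimes V$, which gives $-2K<0$); what you actually need---and what suffices---is $\rmplus^{g_{bry}}\geq 0$, which holds on any manifold since the form vanishes on every rank-one tensor $e\otimes e$.
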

\begin{proof}
  The extra flat factor does not affect any of the terms in \eqref{ac_inequality}.  The Bryant soliton has nonnegative curvature, so the previous lemma applies.
\end{proof}

\subsection{Constructing a supersolution}
Using the Anderson-Chow estimate, we construct a supersolution to linearized Ricci-DeTurck flow around $g_{sol}$.  We write $g_{Bry} = ds^2 + u(s) g_{S^q}$.
\begin{lemma}\label{bry_rdt_supsoln}
  Let $(\Bry \times \real^p, g_{sol}, X)$ be the Bryant soliton crossed with a euclidean factor.  There is a function $F : \Bry \times \real^p  \to \real_{>0}$, which is just a function of $u$, with the following properties.
  \begin{enumerate}
  \item For some $c > 0$, $\lap_X F + 2 \rmplus F \leq - c u^{0,-2}\log( 2 + u )F$.
  \item For some $c_1, c_2 > 0$, as $u \to \infty$, $F = c_1u^{-1} \left( 1 + c_{2} \frac{\log u}{u} \right)(1 + o(1)) $.
  \end{enumerate}
\end{lemma}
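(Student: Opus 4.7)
The plan is to construct $F$ as a small positive perturbation of the scalar curvature $R$ of the Bryant soliton. The starting point is Hamilton's identity on a steady gradient soliton with potential $f$ and soliton vector field $X = \nabla f$: writing $\Delta_X := \Delta - \nabla_X$, one has $\Delta_X R = -2|\Rc|^2$. Combined with the Anderson-Chow estimate just established, this gives
\[
\Delta_X R + 2\rmplus R = -2\bigl(|\Rc|^2 - R\,\rmplus\bigr) \leq 0,
\]
with strict inequality everywhere on $\Bry$, since the Bryant soliton has $K > 0$ at every point. The passage from $\Bry$ to $\Bry \times \real^p$ is free: the Ricci tensor is block-diagonal, the Euclidean factor contributes nothing to $R$, $|\Rc|^2$, or $\rmplus$, and $\Delta_X$ on any function of $u$ reduces to the drift Laplacian on the Bryant factor alone. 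So $F_0 := R$ is already a pointwise strict supersolution, but -- as I now quantify -- with too weak a rate near infinity.

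Using the Bryant asymptotics $V_{\Bry}(\sigma) \sim \mu/\sigma$ together with $L \sim 1/u$, $K \sim \mu/(4u^2)$, and $\alpha := K/((q-1)L) \sim 1/(2u)$, the Anderson-Chow defect satisfies $|\Rc|^2 - R\,\rmplus = qK^2 (1 + O(u^{-1}))$, hence $\Delta_X R + 2\rmplus R \sim -u^{-4}$, while the target rate is $-u^{-2}\log u \cdot F \sim -u^{-3}\log u$. I correct this by setting $F = aR + \epsilon\chi$ where $\chi(u) = \log(e+u)/(1+u)^2$ is smooth and positive on $[0,\infty)$, bounded near $0$, and asymptotic to $(\log u)/u^2$ at infinity. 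The identities $\Delta_X u = \mu + v$ and $|\nabla u|^2 = uv$ on the Bryant (derived from $\partial_t u = \Delta u - \mu - v$ under Ricci flow combined with stationarity of the soliton under $X$), together with $\Delta_X u \to \mu$, $|\nabla u|^2 \to \mu$, and $\rmplus \sim (q-1)/u$ at infinity, yield
\[
\Delta_X \chi + 2\rmplus\chi = \chi'(u)\Delta_X u + \chi''(u)|\nabla u|^2 + 2\rmplus \chi \sim -2\mu\,\tfrac{\log u}{u^3} + \mu\,\tfrac{\log u}{u^3} = -\mu\,\tfrac{\log u}{u^3}.
\]
The crucial feature is the cancellation of the two $(\log u)/u^3$ contributions with a single negative copy surviving.

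With this in hand the assembly is straightforward. On any compact $\{u \leq u_\sharp\}$, strict positivity of $K$ and compactness give $|\Rc|^2 - R\,\rmplus \geq c_\sharp > 0$, so for $\epsilon/a$ small enough the $R$-term dominates any bounded error from $\chi$ and produces a uniform strict supersolution margin. On $\{u \geq u_\sharp\}$ with $u_\sharp$ large, the asymptotic calculation for $\chi$ dominates the $u^{-4}$ defect from $R$ and produces $\Delta_X F + 2\rmplus F \leq -c\,\epsilon\mu (\log u)/u^3 \lesssim -c\,u^{-2}\log u\cdot F$, matching the $u^{0,-2}\log(2+u)\cdot F$ rate globally. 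The leading expansion $F = aq(q-1)/u + \epsilon(\log u)/u^2 + o(\cdot)$ rearranges as $c_1 u^{-1}\bigl(1 + c_2 (\log u)/u\bigr)(1 + o(1))$ with $c_1 = aq(q-1)$ and $c_2 = \epsilon/(aq(q-1))$, both positive, as claimed. The main obstacle is the ``resonance'' that forces the logarithm: any pure power correction $u^{-\alpha}$ would produce a coefficient in $\Delta_X + 2\rmplus$ that vanishes exactly at the critical power $\alpha=1$, which is both why $\log u$ must appear in $F$ and why the factor $\log(2+u)$ appears in the lemma's conclusion.
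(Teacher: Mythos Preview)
Your approach has a genuine gap at the origin $u=0$. You claim that the Anderson--Chow defect $|\Rc|^2 - R\,\rmplus$ is strictly positive everywhere because $K>0$, but the equality criterion in the paper's Anderson--Chow lemma reads: equality holds where the sectional curvature is constant \emph{or} where $K$ or $L$ vanishes. At the tip of the Bryant soliton the metric is smooth, so all sectional curvatures coincide there ($K(0)=L(0)>0$), and hence $|\Rc|^2 - R\,\rmplus = 0$ at $u=0$. Consequently your compactness argument ``$|\Rc|^2 - R\,\rmplus \geq c_\sharp > 0$ on $\{u \leq u_\sharp\}$'' fails, and the $R$-term gives you no margin at the origin. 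Since the lemma requires $\lap_X F + 2\rmplus F \leq -c\,u^{0,-2}\log(2+u)\,F$ with the right side strictly negative at $u=0$, you would need your correction $\epsilon\chi$ to furnish the strict sign there. You have not checked this, and there is no reason the particular choice $\chi(u)=\log(e+u)/(1+u)^2$ should yield $\lap_X\chi + 2\rmplus\chi < 0$ at the tip; indeed $\rmplus(0)>0$ pushes the wrong way.

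The paper resolves this by building $F$ from a different pair: $F = F_1 + BR$ with $F_1 = (\bar f + a)^{-1}$, where $\bar f$ is a normalized soliton potential. The potential-based piece $F_1$ is what handles \emph{both} the origin (Claim~2: choose $a = 1/(4\rmplus(0))$ so that $F_1(0) - 2\rmplus(0) > 0$, and use $|\nabla\bar f|(0)=0$) \emph{and} the far field (Claim~3: the second-order expansion $\bar f = \ein^{-1}u - c_{\bar f}(\log u) + o(\log u)$ produces the $(\log u)/u^2$ rate). The scalar curvature term $BR$, with $B$ large, is used only on compact annuli \emph{away} from the origin, where Anderson--Chow is genuinely strict. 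Your decomposition reverses the roles and leaves the origin uncovered. To salvage your route you would need either to modify $\chi$ near $u=0$ so that $\lap_X\chi + 2\rmplus\chi$ is provably negative there, or to add a third ingredient specifically for a neighborhood of the tip.
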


\begin{proof}
  First recall that if $R_0$ is the maximum scalar curvature, $f$ is the soliton potential, and $\bar f(p) = -\frac{f(p)-f(0)}{R_0}$ then $\bar f$ satisfies
  \begin{align}
    \label{eq:124}
    \lap_X \bar f = 1,
    \quad \bar f(0) = 0,
    \quad \nabla \bar f(0) = 0,
  \end{align}
  and has the asymptotics at $\infty$,
  \begin{align}
    \bar f = \ein^{-1} u \left( 1 - c_{\bar f} \frac{\log u}{u} \right) (1 + o(1; u \to \infty) )
  \end{align}
  for some constant $c_{\bar f}$ (see Appendix \ref{bryant_facts} and especially \eqref{bry_second_order_expansion}).  Also, $\bar f$ attains its minimum of $0$ at $u=0$.
  
  Now let $F_1 = \left( \bar f + a \right)^{-1}$ for some $a>0$ to be determined.  Calculate using $\lap_X \bar f = 1$ that
  $
    \lap_X F_1
    = - \left( F_1 - 2 F_1^2 |\nabla \bar f|^2 \right)F_1
    $,
  so
  \begin{align}
    \label{eq:151}
    -\left( \lap_X + 2 \Lambda_{\Rm}  \right) F_1
    &=  \left( F_1 - 2 \Lambda_{\Rm} - 2 F_1^2 |\nabla \bar f|^2 \right) F_1
  \end{align}

  We claim that for large enough $B$, the function $F = F_1 + B R$ satisfies the properties in the lemma.  The asymptotics at infinity (i.e. item (2) of the conclusion) are immediate from the asymptotics for $F_1$ and $R = c_1 u^{-1} + O( u^{-2})$.  Now calculate,
  \begin{align}
    \label{eq:156}
    -(\lap_X + 2\rmplus) F
    &= \left( \frac{-(\lap_X + 2\rmplus) F_1}{F_1 + B R} + B\frac{ -(\lap_X +2 \rmplus) R}{F_1 + B R} \right) F \\
    &\eqdef \left( T_1 + T_2 \right) F
  \end{align}
  Note the term $T_2$ is positive everywhere by the singly-warped Anderson-Chow estimate and the equation satisfied by $R$ under Ricci flow:
  \begin{align}
    \lap_X R + 2 \Lambda_{\Rm} R
    &= - 2 |\Rc|^2 + 2 \Lambda_{\Rm} R \leq 0.
  \end{align}

  \begin{claim}
    Let $K$ be a compact subset of $\Bry$ not containing the origin.  If $B$ is sufficiently large, then there is a $c$ so that $T_1 + T_2 > c$ on $K \times \real^p$.
  \end{claim}
  \begin{claimproof}
    On $K \times \real^p$, the singly-warped Anderson-Chow estimate is not sharp and $R$ is bounded from above, so for some $c_K>0$, $|\Rc|^2 - R \rmplus > c_KR$ in $K \times \real^p$.  Therefore,
    \begin{align}
     -\left( \lap_X + 2 \rmplus \right)R  \geq c_K R \quad \text{in} \quad K \times \real^p
    \end{align}
    By compactness, in $K$, $-(\lap_X + 2 \rmplus) F_1$ is bounded from below and $R$ is strictly positive.  Therefore, examining the dependence of $T_1$ and $T_2$ on $B$, we can chose $B$ large enough so that $T_1 \geq -c_K/4$ and $T_2 \geq c_K/2$ on $K$.
  \end{claimproof}

  \begin{claim}
    For sufficiently small $a$ in the definition of $F_1$ (independent of $B$), and sufficiently small $u_1$ (independent of $B$), there is a $c$ (which may depend on $B$) such that $T_1>c$ in $\{u < u_1\}$.
  \end{claim}
  \begin{claimproof}
    Choose $a = \frac{1}{4 \Lambda_{\Rm}(0)}$.  Then $F_1 - 2 \Lambda_{\Rm} > 0$ in a neighborhood of $0$.  Also, $|\nabla \bar f|^2(0) = 0$. The claim follows from \eqref{eq:151} by choosing $u_1$ and $c$ small enough.
  \end{claimproof}

  \begin{claim}
    For sufficiently large $u_2$ (depending on $a$, but independent of $B$) and sufficiently small $c$ (depending on $B$ and $a$), $T_1$ satisfies $T_1 \geq c u^{0,-2}\log(2 + u)$ on the set $\{u > u_2\}$.
  \end{claim}
  \begin{claimproof}
    The Bryant soliton satisfies, as $u \to \infty$,
    \begin{align}
      \label{eq:152}
      \Rm = u^{-1} \left( u g_{S^q} \odot u g_{S^q} \right) + O(u^{-2}|\nabla u|^2) = u^{-1} \left( u g_{S^q} \odot u g_{S^q} \right) + O(u^{-2}) 
    \end{align}
    Note that the largest eigenvalue of $u^{-1}\left( u g_{S^q} \odot u g_{S^q} \right)$ is $(q-1) = \oh \ein$.
    We can calculate the asymptotics of $F_1$ from the asymptotics of $\bar f$ from \eqref{bry_second_order_expansion} in Section \ref{bryant_facts_nextorder}:
    \begin{align}
      \label{eq:153}
      F_1 = \ein u^{-1} \left( 1 + c_{\bar f} \frac{\log u}{u} \right) ( 1 + o(1; u \to \infty)).
    \end{align}
    Also, $|\nabla \bar f|^2 = O(1; u \to \infty)$.  From this we find,
    \begin{align}
      \label{eq:154}
      (F_1 - 2 \rmplus - 2 F_1^2 |\nabla \bar f|^2)
      &= \ein c_{\bar f} u^{-2} \log u + O(u^{-2}; u \to \infty)
    \end{align}
    The claim follows by choosing $u_2$ large enough and $c$ small enough.
  \end{claimproof}

  To prove the lemma, choose $u_1$ and $u_2$ in accordance with the second and third claims above, and then choose $B$ large enough so the conclusion of the first claim holds on the complement of $\{u_1 < u < u_2\}$.  Then the conclusion of the lemma holds (taking the minimum over the values of $c$).
\end{proof}
\subsection{Completion of the proof of Theorem \ref{bry_stabil}}
\begin{proof}
  In this proof the ever-increasing constant $C$ is chosen independently of $\epsilon$.  First, we write the inequality solved by $\bar F$ in terms of the laplacian $\lap_{X,g_{\Bry}, g}$.  By Lemma \ref{bry_rdt_supsoln}, we have
  \begin{align}\label{barF_ineq}
    - \left( \lap_X \bar F + 2 \rmplus \bar F \right)
    \geq c u^{0,-2}\log(2 + u)\bar F.
  \end{align}
  Since $|\nabla\nabla F| \leq C u^{0,-3} \leq C u^{0,-2}F$, and $|h| \leq \epsilon F$, we have
  \begin{align}
    |\lap_{X, g_{\Bry}, g}F - \lap_X F| \leq C u^{0,-2}\epsilon F^2
    = C\epsilon u^{0,-3}F
  \end{align}
  and multiplying through by $\epsilon$, $|\lap_{X, g, \bar g} \bar F - \lap_X \bar F| \leq C \epsilon u^{0,-3}\bar F$.  Therefore, decreasing $c$ and demanding that $\epsilon$ is sufficiently small, we can replace \eqref{barF_ineq} with
  \begin{align}
    - \left( \lap_{X, g_{\Bry}, g} \bar F + 2 \rmplus \bar F \right)
    \geq c u^{0,-2}\log(2 + u)\bar F.\label{barF_ineq_better}
  \end{align}  

  Next we note the regularity available.  We claim that for some $C$ (independent of $\epsilon$, $P_*$, and $t_*$, but depending on $C_{reg}$), we have $|\nabla h|(P_*,t_*) < C u^{0,-1/2}\bar F(P_*)$.  To see this, let $a = u^{0,-1}(P_*)$ and scale the parabolic system by $a$:
  \begin{align}
    \tilde g_{\Bry} = a g, \quad \tilde h = a h, \quad \tilde t = a t, \quad \tilde X = a^{-1} X, \quad \tilde u = a u.
  \end{align}
  We want to apply regularity in a parabolic neighborhood of some sufficiently small size $r>0$.  The Bryant soliton has a bound $|\nabla u|^2 \leq C$ for some $C$.  So, for any $r$, for all
  $
    P \in B_{\tilde g}(P_*, r) = B_{g}(P_*, r/\sqrt{a})
  $
  we have
  \begin{align}
    |\tilde u(P) - \tilde u(P_*)|
    &= a |u(P)-u(P_*)| \\
    &\leq C a \frac{r}{\sqrt{a}} = C r \sqrt{a}
    = C r u^{0, -1/2} \leq C r\label{uchangebnd}.
  \end{align}
  Therefore for sufficiently small $r$, the ball of radius $r$ around $P_*$, with respect to $\tilde g$, is close to a euclidean ball, uniformly in $P_*$.
  
  To continue the regularity argument, in the case when $I = [0,T]$, the parabolic neighborhood of size $r$ around $P_*$ may see the initial condition.  We need to check what the bounds on the initial condition \eqref{init_nabla_bnds} says about $\tilde h$.  At the initial time,
  \begin{align}
    |\nabla \tilde h|_{\tilde g}(P,0)
    = a^{-1/2}|\nabla h|_{g}(P,0)
    \leq c_h\frac{u^{0,-1/2}(P)}{u^{0,-1/2}(P_*)} \bar F
    = c_h\frac{u^{0,1/2}(P_*)}{u^{0,1/2}(P)} \bar F
    \leq C \bar F
  \end{align}
  where we used \eqref{uchangebnd} and forced $r$ sufficiently small.
  Similarly scaling shows $|\nabla \nabla \tilde h|_{\tilde g}(P,0) \leq C \bar F$.

  Therefore, in the parabolic neighborhood of size $r$ we may apply parabolic regularity to find that $|\nabla \tilde h|_{\tilde g_{\Bry}}$ is bounded by $C\bar F$.  Scaling back, we find $|\nabla h|(P_*, t_*) = a^{1/2} |\nabla h|(P_*, t_*) \leq C u^{0,-1/2}(P_*, t_*) \bar F(P_*, t_*)$.
  
  Now, by the bound on the evolution of $|h|$ \eqref{rdt_norm}, we have that $Z = |h|$ satisfies
  \begin{align}
    \square_{X, g_{\Bry}, g} Z - 2\rmplus Z \leq C |\Rm_{g_{\Bry}}|Z^2 + C |\nabla h|^2.
  \end{align}
  Or, since we have assumed $Z \leq \bar F$, and also $|\nabla h| < Cu^{0, -\onf2}\bar F$ by the discussion on regularity,
  \begin{align}
    \square_{X, g_{\Bry}, g} Z - 2\rmplus Z
    &\leq C u^{0,-1}Z^2 + C u^{0,-1}\bar F^{2}.
  \end{align}
  Then since $Z \leq \bar F \leq \epsilon C u^{0,-1}$,
  \begin{align}
    \square_{X,g_{\Bry}, g} Z - 2\rmplus Z&\leq \epsilon C  u^{0, -2} \bar F.
  \end{align}
  In particular, we can choose $\epsilon$ sufficiently small so that
  \begin{align}
    \square_{X,g_{\Bry}, g} Z - 2\rmplus Z&\leq (c/2) u^{0, -2} \bar F.
  \end{align}
  where $c$ is the constant from \eqref{barF_ineq_better}.

  Therefore
  \begin{align}
    \square_{X, g_{\Bry}, g} (\bar F-Z) - 2 \rmplus (\bar F - Z) \geq (c/2) u^{0,-2} \bar F > 0
  \end{align}
  and the lemma follows by the maximum principle.
\end{proof}


\section{Local stability of forward evolutions}\label{section:local_stability}
Let $g_{mp}$ be an $\Rm$-permissible model pinch and let $(\bar M, g_{wp}(t))$ be a forward evolution from $g_{mp}$ given by Theorem \ref{theorem:model_pinch_flow}.  In this section we prove local stability of $(\bar M, g_{wp}(t))$ assuming a priori control at the boundary of a neighborhood of the origin.    We let $u_0 = u(p, t)$. For any $u_1, u_2$ we let $\bar M_{[u_1, u_2]} = \{p : u_0(p) \in [u_1, u_2]\}$.  Note that while $\{(p,t): u(p,t)\in [0, u_1]\}$ is a subset of space-time which is different for each time-slice, $\bar M_{[0, u_1]}$ is a fixed subset of $M$.  However, note that as in Lemma \ref{u_ctrl_in_x} we can argue that for any $u_1$ there is a $T(u_1)$ such that $\bar M_{[0, u_1/2]} \subset \{p : u(p, t) < u_1\} \subset \bar M_{[0, 2u_1]}$ for $t < T(u_1)$.

Let $\hat u = u + \ein t$ and $Q = \hat u / u$, as in Appendix \ref{nearly_cnst_pde_sect}.  Also let $F$ be the function defined in Lemma \ref{bry_rdt_supsoln}.  For parameters $u_*$,  $\sigma_*$, $\sigma_{**}$, $D$, and $b$ to be chosen, we let $F_{full}$ be defined as
\begin{align}
  F_{full} =
  \begin{cases}
    (1 + DV)^{1/2}Q V_0(\hat u) & \sigma > \sigma_{**}  \\
    \min\left( (1 + DV)^{1/2}Q V_0(\hat u), bF(\sigma) \right)
    & \sigma_* \leq \sigma < \sigma_{**} \\
    bF(\sigma) & \sigma \leq \sigma_*
  \end{cases}
\end{align}


\begin{theorem}\label{local_stability}
  For $r_0 < \barr r_0(g_{mp})$, $\epsilon < \barr \epsilon(g_{mp}, r_0)$,  $D > \berr D(g_{mp})$, $u_* < \bar u_*(g_{mp}, D)$, and $\sigma_* = \sigma_*(g_{mp}, D)$, $\sigma_{**} = \sigma_{**}(g_{mp}, D)$, $b=b(g_{mp}, D)$, there is a  $T_*(g_{mp}, u_*, r_0)$ and $C(g_{mp}, u_*, r_0)$ with the following property.
  Let $0 < T_1 < T_2 < T_*$ be given and suppose that $g(t)$ is a a solution to Ricci-DeTurck flow with background metric $g_{wp}(t)$ on $\bar M_{[0, 2 u_*]}$ for times $[T_1, T_2]$.
  Suppose for all $p \in \bar M_{[0, 2u_*]}$,
  \begin{align}
    |g(T_1) - g_{bg}|_{2, \eta; r_0|\Rm|^{-1/2}}(p, T_1) \leq \epsilon F_{full}(p, T_1),
  \end{align}
  and for all $(p,t) \in \bar M_{[u_*, 2u_*]} \times [T_1, T_2]$,
  \begin{align}
    |g(t) - g_{bg}|_{2, \eta; r_0|\Rm|^{-1/2}}(p, t) \leq \epsilon F_{full}(p, t).
  \end{align}
  Then for all $(p, t) \in \bar M_{[0,u_*]} \times [T_1, T_2]$,
  \begin{align}
    |g(t) - g_{bg}|_{2, \eta; r_0|\Rm|^{-1/2}}(p, t) \leq C\epsilon F_{full}(p, t).
  \end{align}

\end{theorem}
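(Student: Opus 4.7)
Write $h(t) = g(t) - g_{wp}(t)$ and $z = |h|_{g_{wp}}$. By \eqref{rdt_norm},
\begin{align}
\square_{V, g_{wp}, g} z \leq 2\Lambda_{\Rm} z + C\left(|\Rm|\, z^2 + |\nabla h|^2\right).
\end{align}
The goal is to show that $C_0 \epsilon F_{full}$ is a strict supersolution of this inequality for suitably chosen $C_0$, and then invoke the maximum principle. The analysis splits into the productish subregion $\{\sigma \geq \sigma_*\}$ and the tip subregion $\{\sigma \leq \sigma_{**}\}$, and is glued by a buckling argument in the overlap, parallel to Section \ref{section:buckling}. The regularity needed to make sense of $|\nabla h|^2$ and to upgrade to the $C^{2,\eta;r_0|\Rm|^{-1/2}}$ conclusion will come in both regions from interior Schauder estimates applied after parabolic rescaling by $|\Rm|^{-1/2}$, exploiting the uniformly controlled rescaled geometry of $g_{wp}$ from Corollaries \ref{prish_asymptotic_cylinder} and \ref{lemma:barricaded_convergence}.

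\textbf{Productish region.} In $\{\sigma \geq \sigma_*\}$ the background $g_{wp}$ is close to a product metric on $\real \times S^q \times F$, and $\Lambda_{\Rm}$ is comparable to $u^{-1} V_{prish}$. The candidate barrier $(1+DV)^{1/2} Q V_0(\hat u)$ is modeled on $V_{prish}$ stripped of its cylindrical leading factor; the task is to verify that it is a strict supersolution for the scalar $z$-inequality above. This is in the same spirit as Lemma \ref{barriers_prish}: the leading linear operator is a reaction-diffusion operator that is nearly constant at the scale $|\Rm|^{-1/2}$, so Appendix \ref{nearly_cnst_pde_sect} provides the needed subsolution/supersolution estimates, with the factor $(1+DV)^{1/2}$ chosen for $D$ large enough to dominate the reaction-term error. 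The nonlinearities $|\Rm| z^2 + |\nabla h|^2$ are absorbed using $z \leq C_0\epsilon F_{full}$ and the Schauder bound on $|\nabla h|$, by taking $\epsilon$ small.

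\textbf{Tip region.} In $\{\sigma \leq \sigma_{**}\}$, $|\Rm|$ blows up and a direct barrier argument is not available; we use contradiction-compactness. Suppose the conclusion fails; then along sequences of parameters there exist first times $t_k \in [T_1^k, T_2^k]$ and points $P_k$ in the tip subregion at which $z = C_0 \epsilon b F(\sigma)$. Parabolically rescale by $\alpha(t_k) = t_k \nu(t_k)$ and pull back via the rescaling map from Corollary \ref{lemma:barricaded_convergence}. The rescaled backgrounds $\tilde g_{wp,k}$ converge in $C^\infty_{loc}$ to $g_{sol} = g_{\Bry} \times g_{\real^p}$ (the Euclidean factor uses $\Rm$-permissibility), and the DeTurck vector field $V[\tilde g_k, \tilde g_{wp,k}]$ converges to the Bryant soliton vector field $X$ since $g_{wp}$ is a soliton to leading order near the tip. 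Schauder estimates in the weighted $C^{2,\eta;r_0|\Rm|^{-1/2}}$ norm give $C^{2,\eta}_{loc}$ subsequential convergence of $\tilde h_k = \alpha(t_k)^{-1} h_k$ to a limit $h_\infty$ satisfying Ricci-DeTurck flow around $g_{sol}$ modified by $X$, on either $(-\infty, 0]$ (if $t_k/T_1^k \to \infty$) or a half-open interval including the rescaled initial time (otherwise). In the latter case, the initial derivative bounds \eqref{init_nabla_bnds} required by Theorem \ref{bry_stabil} are supplied directly by the $C^{2,\eta;r_0|\Rm|^{-1/2}}$ hypothesis of the theorem. The pointwise bound $|\tilde h_k| \leq C_0\epsilon b \,\alpha(t_k)^{-1} F(\sigma)$, combined with $F \sim c_1 u^{-1}$, shows $|h_\infty| \leq C_0\epsilon F$ with equality at the limit of the $P_k$; Theorem \ref{bry_stabil} demands the strict inequality, provided $C_0 \epsilon \leq \barr \epsilon$, yielding the contradiction.

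\textbf{Buckling and main obstacle.} In the overlap $\sigma \in [\sigma_*, \sigma_{**}]$, the asymptotics $F(\sigma) \sim c_1 u^{-1}$ from Lemma \ref{bry_rdt_supsoln}(2) and $QV_0(\hat u) \sim V_0(u)$ from Section \ref{examining_section} are comparable. Choosing $\sigma_*$, $\sigma_{**}$, $b$, and $D$ in the manner of Lemma \ref{buckling_lemma} ensures that each of the two supersolutions dominates the other on the opposite side of the overlap, so the tip contradiction-compactness supplies the interior boundary data for the productish barrier and vice versa, closing the loop as in Lemma \ref{buckling_wrapup}. The principal obstacle is the tip contradiction-compactness: one must simultaneously (i) establish uniform $C^{2,\eta}_{loc}$ compactness of the rescaled perturbations in the weighted norm, (ii) verify that the rescaled DeTurck vector field converges to the soliton vector field $X$ (so that the limit equation is the one governed by Theorem \ref{bry_stabil}), and (iii) track whether the rescaled initial time is visible in the limit, which determines which clause of Theorem \ref{bry_stabil} is invoked.
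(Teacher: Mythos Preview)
Your proposal is essentially the paper's own proof: the productish barrier via Appendix \ref{nearly_cnst_pde_sect} is Lemma \ref{lemma:asymmetric_prish_barrier}, the tip contradiction--compactness reducing to Theorem \ref{bry_stabil} is Lemma \ref{lemma:asymmetric_tip_barrier}, the buckling is Lemma \ref{lemma:gluing_asymmetric}, and the $C^{2,\eta}$ upgrade via interior Schauder is exactly how the paper finishes.

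Two slips to fix. First, in the productish region $\Lambda_{\Rm}$ is \emph{not} comparable to $u^{-1}V_{prish}$; by Lemma \ref{rmplus_prish} it is $\tfrac{1}{2}\ein u^{-1} + O(u^{-1}v)$, with the leading $u^{-1}$ term coming from the $S^q$ factor. This is precisely why the barrier carries the factor $Q = \hat u/u$ (to absorb the reaction $\ein u^{-1}$ via Lemma \ref{approx_soln} with the correct exponent $a$); your proposed barrier is right, but your stated reason for it is not. Second, in the tip compactness step it is not the DeTurck vector field $V[g,\tilde g]$ that converges to the soliton vector field; rather, the type-II rescaling introduces the drift $X = (\partial_\theta \sigma)\partial_\sigma$ (see \eqref{rescaled_rcf}), and it is this $X$ that converges to the soliton field by Corollary \ref{lemma:barricaded_convergence}, while the DeTurck correction persists separately in the limiting equation.
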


\begin{proof}
The $C^{0}$ estimate follows from Lemmas \ref{lemma:asymmetric_prish_barrier}, \ref{lemma:asymmetric_tip_barrier}, and \ref{lemma:gluing_asymmetric}.  The $C^{2, \eta}$ estimate follows from interior Schauder estimates, once we have the $C^{0}$ estimate.
\end{proof}

\subsection{Control in the productish region}\label{section:asym_productish}
In this section we control the Ricci DeTurck flow in the productish region of the warped-product solution $g_{wp}(t)$.  This uses the general sub- and supersolutions from Appendix \ref{nearly_cnst_pde_sect}.  

Recall the definition
$\rmplus = \max_{h \in Sym_2(M) : |h| = 1}\ip{\Rm[h]}{h}$ from
Section \ref{section:rdt}.  Here, and in this section, by default we are taking all inner products, curvatures, and covariant derivatives with respect to $g_{wp}(t)$.  First we write down a bound for $\rmplus$ on our warped product solution $g_{wp}(t)$.  Remember we are assuming that $g_{mp} = g_{wp}(0)$ is an $\Rm$-permissible model pinch.
\begin{lemma}\label{rmplus_prish}
  There is a constant $C(g_{mp})$ such that in the productish region $g_{wp}(t)$ satisfies
$
  \rmplus \leq (q-1) u^{-1} + C u^{-1}v
  = \oh \ein u^{-1} + C u^{-1}v.
$
\end{lemma}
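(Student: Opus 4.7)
The plan is to read $\rmplus$ directly off the block decomposition of the warped-product curvature tensor supplied by Corollary~\ref{prish_curvature_control},
\[
  \Rm = u^{-1}(u g_{S^q}\KN u g_{S^q}) + w\,\Rm_{g_F} + \Rm_{\mathrm{warp}}, \qquad |\Rm_{\mathrm{warp}}|\le C u^{-1}v.
\]
The warp piece is already of the required error size, so the real question is the first two terms. Given a unit symmetric $h\in Sym_2(T_pM)$, decompose $h$ along the warped-product splitting $T_pM = T_pI \oplus T_pS^q \oplus T_pF$; the sphere term in $\Rm$ only pairs with the sphere-sphere block $h^{SS}$ and the $F$ term only with $h^{FF}$, and $|h^{SS}|^2 + |h^{FF}|^2 \le 1$.

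I would next bound each of those blocks by the curvature of a rescaled fiber. The sphere block is the curvature tensor of the round sphere of sectional curvature $1/u$, with maximum eigenvalue $\Lambda_{S^q}/u = (q-1)u^{-1}$ on $Sym_2$; the $F$ block is the curvature tensor of $(F, w g_F)$, with maximum eigenvalue $\Lambda_F/w$ when $\Lambda_F>0$ and $\le 0$ otherwise. The estimate is therefore reduced to showing
\[
  \frac{\Lambda_F}{w} \le \frac{q-1}{u} + Cu^{-1}v
\]
pointwise in the productish region in the case $\Lambda_F > 0$.

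The key inequality is supplied by $\Rm$-permissibility \ref{lambda_assump}, which says $W_0(u')/u' \ge \Lambda_F/(q-1)$ for every $u'>0$. Applying this at $u' = u + \ein t$, combined with the elementary Einstein inequality $\ein_F \le 2\Lambda_F$ (obtained by testing the definition of $\Lambda_F$ with $h = g_F/\sqrt{\dim F}$, which gives $\Lambda_F \ge R_{g_F}/\dim F = \ein_F/2$), yields
\[
  W_{prish}(u,t) = W_0(u + \ein t) - \ein_F t \;\ge\; \tfrac{\Lambda_F}{q-1}u,
\]
since $\tfrac{\Lambda_F \ein}{q-1} - \ein_F = 2\Lambda_F - \ein_F \ge 0$. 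The productish barrier \ref{conc:prish_barrier} then gives $w \ge (1-DV)W_{prish}$, so $\Lambda_F/w \le (q-1)u^{-1}(1+O(DV))$, and since $V$ is comparable to $v$ in the productish region by \ref{conc:prish_barrier}, the error $O(DV)(q-1)u^{-1}$ is absorbed into $Cu^{-1}v$.

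The only potential subtlety is the clean cancellation $2\Lambda_F - \ein_F \ge 0$, which exploits both Einstein-ness of $g_F$ and the permissibility assumption \ref{lambda_assump} in tandem; without it the lower bound on $W_{prish}$ would only be proportional to $u + \ein t$ rather than to $u$, and the lemma would fail. This is exactly the reason the hypothesis of $\Rm$-permissibility appears. Everything else in the argument is block-diagonal bookkeeping using Corollary~\ref{prish_curvature_control}.
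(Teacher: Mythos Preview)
Your proposal is correct and follows essentially the same route as the paper: use Corollary~\ref{prish_curvature_control} to split $\Rm$ into the sphere block, the $F$ block, and a warp remainder of size $O(u^{-1}v)$; observe that the two fiber blocks act on orthogonal pieces of $Sym_2$, so $\rmplus$ is the maximum of $(q-1)u^{-1}$ and $\Lambda_F/w$ up to the warp error; then show $\Lambda_F/w \le (q-1)u^{-1} + Cu^{-1}v$ via the permissibility lower bound on $W_0$ and the productish barriers on $w$.

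The one genuine difference is in this last step. The paper sets $k = \max\bigl(\Lambda_F/\Lambda_{S^q},\,(1+c)\ein_F/\ein\bigr)$ and invokes both \ref{lambda_assump} and \ref{w_big} to secure $k\ein - \ein_F \ge 0$. You instead use only \ref{lambda_assump} together with the elementary Einstein inequality $\ein_F \le 2\Lambda_F$ (from testing $h = g_F/\sqrt{\dim F}$) to get the clean cancellation $\tfrac{\Lambda_F\ein}{q-1} - \ein_F = 2\Lambda_F - \ein_F \ge 0$, which yields $W_{prish} \ge \tfrac{\Lambda_F}{q-1}u$ directly. This is a small but genuine simplification: it shows that for this particular lemma the hypothesis \ref{w_big} is redundant once \ref{lambda_assump} is in force.
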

\begin{proof}
  Note the use of the metric in $\Rm[h]$ to contract tensors. Let us write
  \begin{align}
    \left(\Rm_{g_1}[h; g_2]\right)_{ef}
    = (g_2)^{ab}(g_2)^{cd}\left(\Rm_{g_1}\right)_{acef}h_{bd}
  \end{align}
  so that $\Rm[h] = \Rm_{g_{wp}}[h; g_{wp}]$.  Now we can compute some scaling for the components of $\Rm_{g_{wp}}$ given by Corollary \ref{prish_curvature_control}.
  \begin{align}
    \left(u\Rm_{g_{S^q}} \right)[h; g_{wp}]
    = \left(u\Rm_{g_{S^q}} \right)[h; u g_{S^q}] 
    &= u^{-1} \left( \Rm_{g_{S^q}} \right)[h; g_{S^q}].
  \end{align}
  Therefore
  \begin{align}
    \max_{|h|_{g_{wp}} = 1}
    \ip{(u \Rm_{g_{S^q}})[h; g_{wp}]}{h}_{g_{wp}} 
    &= u^{-1} \max_{|h|_{g_{S^q}} = 1}\ip{\Rm_{g_{S^q}}[h; g_{S^q}]}{h}_{g_{S^q}} \\
    &\defeq u^{-1}\Lambda_{S^q} = u^{-1}(q-1). \label{lambdacalc0}
  \end{align}
  Similarly,
  \begin{align}
    \max_{|h|_{g_{wp}} = 1}
    \ip{(w \Rm_{g_{F}})[h; g_{wp}]}{h}_{g_{wp}} 
    &= w^{-1}\Lambda_{F}.\label{lambdacalc1}
  \end{align}
  Let $k = \max\left( \frac{\Lambda_F}{\Lambda_{S^q}}, (1+c)\frac{\ein_F}{\ein}\right)$.  By the $\Rm$-permissible assumption, and the assumption \ref{w_big} of all model pinches, $W_0(u + \ein t) \geq k \cdot (u + \ein t)$.  Now, we use that $w$ is barricaded in the productish region, i.e. it satisfies \ref{conc:prish_barrier} of Definition \ref{productish_barricaded}:
  \begin{align}
    \frac{w}{u}
    &\geq \frac{W_0(u + \ein t)-\ein_F t}{u} - \frac{DVW_0(u + \ein t)}{u} \\
    &\geq \frac{k\cdot (u + \ein t) - \ein_F t}{u} - \frac{kDV\cdot (u + \ein t)}{u}.
  \end{align}
  Next, first using $k \geq (1+c) \frac{\ein_F}{\ein}$ and then $k \geq \frac{\Lambda_F}{\Lambda_{S^q}}$,
  \begin{align}
    \frac{w}{u}
    &\geq k + \frac{kDV\cdot (u + \ein t)}{u} \geq \frac{\Lambda_F}{\Lambda_{S^q}} - C u^{-1}V
  \end{align}
  for some $C$ depending only on $g_{wp}$ and the parameter $D$.  Therefore, coming back to \eqref{lambdacalc1}, and increasing $C$,
  \begin{align}
    \max_{|h|_{g_{wp}} = 1}
    \ip{(w \Rm_{g_{F}})[h; g_{wp}]}{h}_{g_{wp}} 
    &\leq u^{-1}\Lambda_{S^{q}} + Cu^{-1}V.\label{lambdacalc2}
  \end{align}

  Now we put together \eqref{lambdacalc0} and \eqref{lambdacalc2}.  Since $\Rm_{g_F}$ and $\Rm_{g_{S^q}}$ act only on the orthogonal components $Sym_2(TF) \subset Sym_2(TM)$ and $Sym_2(TS^q) \subset Sym_2(TM)$ respectively, we can take the maximum of the two pieces to find
  \begin{align}
    \max_{|h|_{g_{wp}} = 1}
    \ip{(u \Rm_{g_S^q} + w \Rm_{g_F})[h; g_{wp}]}{h}_{g_{wp}}
    \leq u^{-1}\Lambda_{S^{q}} + Cu^{-1}V.\label{lambdacalc3}
  \end{align}

  Finally, adding in $\Rm_{warp}$ can only change this result by something proportional to its norm.  So, increasing $C$,
  \begin{align}
    \max_{|h|_{g_{wp}} = 1}
    \ip{\Rm_{g_{wp}}[h; g_{wp}]}{h}_{g_{wp}}
    &=
    \max_{|h|_{g_{wp}} = 1}
    \ip{(u \Rm_{g_S^q} + w \Rm_{g_F} + \Rm_{warp})[h; g_{wp}]}{h}_{g_{wp}}\\
    &\leq u^{-1}\Lambda_{S^{q}} + Cu^{-1}V.\label{lambdacalc3}
  \end{align}

\end{proof}

Consider a solution to Ricci-DeTurck flow around $g_{wp}(t)$ given by $g(t) = g_{wp}(t) + h(t)$.  Let $y = |h(t)|^2$.  By the equation for the evolution of the norm of the perturbation \eqref{dtevo_square}, in the productish region $y$ satisfies 
$
  \square_{g_{wp}, g} y \leq u^{-1}\left(2\ein + Cv + Cy^{1/2}\right) y
$,
or, just rewriting,
\begin{align}
  (\square_{g_{wp}, g} - 2 \mu u^{-1})y\leq C u^{-1}\left(v + y^{1/2}\right) y.  \label{eq:54}
\end{align}
We now use the supersolutions found in Appendix \ref{nearly_cnst_pde_sect} to control $y$ in the productish region.  The parameters $D$, $u_*$, $\sigma_*$ here are not the same as the parameters of control of $g_{wp}(t)$.  Recall $\hat u = u + \ein t$ and $Q = u^{-1}\hat u$.

\begin{lemma}\label{lemma:asymmetric_prish_barrier}
  Suppose $D > \berr D(g_{wp})$, $0<\epsilon<1$, $u_{*} < \barr u_{*}(g_{wp}, D), \sigma_* < \barr \sigma_*(g_{wp}, D)$, and $T_* < \barr T_*(g_{wp}, D)$.  Suppose $0 < T_1 < T_2 < T_*$.
  Set
  \begin{align}
    \Omega_{prish, [T_1, T_2]} =
    \left\{(p, t) : u < u_*, \sigma = \frac{u}{t \nu(t)} > \sigma_*, t \in [T_1, T_2]\right\}.
  \end{align}

  Suppose $g(t) = g_{wp}(t) + h(t)$ solves Ricci-DeTurck flow around $g_{wp}(t)$.  Let $y = |h|$, $Y^+_{prish} = (1 + D V)Q\left(V_0 \circ \hat u \right) = (1+DV)V$, and $\bar Y^+_{prish} = \epsilon Y^+$.  If $y < \bar Y^+$ on the parabolic boundary of $\Omega_{prish, [T_1, T_2]}$, then $y < \bar Y^+$ in $\Omega_{prish, [T_1, T_2]}$.  
\end{lemma}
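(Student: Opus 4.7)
The plan is to argue by the parabolic maximum principle on $\Omega_{prish, [T_1, T_2]}$, showing that $\bar Y^+_{prish} = \epsilon Y^+_{prish} = \epsilon(1+DV)V$ is a strict supersolution to the evolution inequality satisfied by $y = |h|$, while the hypothesis gives the strict boundary inequality $y < \bar Y^+_{prish}$ on the parabolic boundary of $\Omega_{prish, [T_1, T_2]}$.

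First I would rederive a usable form of the evolution inequality for $y = |h|$. Starting from \eqref{rdt_norm}, together with the bound $2\rmplus \leq \ein u^{-1} + Cu^{-1}v$ from Lemma \ref{rmplus_prish} and the curvature bound $|\Rm_{g_{wp}}| \leq Cu^{-1}$ from Corollary \ref{prish_curvature_control_time} (applicable because $g_{mp}$ is $\Rm$-permissible), one obtains
\begin{align*}
(\square_{g_{wp}, g} - \ein u^{-1}) y \;\leq\; Cu^{-1}(v + y)\,y + c_0\,|\nabla h|^2.
\end{align*}
The gradient term is controlled by a standard bootstrap: as long as the $C^0$ bound $|h| \leq \bar Y^+_{prish}$ continues to hold in the interior (which is the case up to any first hypothetical violation), interior parabolic Schauder estimates applied on parabolic balls of scale $r_0 |\Rm|^{-1/2} \sim r_0 u^{1/2}$ yield $|\nabla h| \leq C\epsilon Y^+_{prish} \cdot u^{-1/2}$, and hence $|\nabla h|^2 \leq C\epsilon^2 u^{-1}(Y^+_{prish})^2$, using the $C^{2,\eta}$ data from the parabolic-boundary hypothesis.

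Next, I would verify that $\bar Y^+_{prish}$ is a strict supersolution. The function $V^+ = (1+DV)V$ is the same construction used in Lemma \ref{barriers_prish} (and in Appendix \ref{nearly_cnst_pde_sect} abstractly), so the same computation gives, for some dimensional $c>0$ and all $D \geq \berr D$ sufficiently large,
\begin{align*}
(\square_{g_{wp}} - \ein u^{-1}) V^+ \;\geq\; cD\, u^{-1} V^2
\end{align*}
throughout $\Omega_{prish}$. Passing from $g_{wp}$ to $g = g_{wp} + h$ alters this by at most $|h|\cdot|\nabla\nabla V^+| \leq C\epsilon u^{-1}V^2$, reabsorbed for $\epsilon$ small. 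Multiplying through by $\epsilon$,
\begin{align*}
(\square_{g_{wp}, g} - \ein u^{-1}) \bar Y^+_{prish} \;\geq\; \tfrac{c}{2}\,\epsilon D\, u^{-1} V^2.
\end{align*}
At a hypothetical interior first touching point $(p_0, t_0)$ with $y = \bar Y^+_{prish}$, the right-hand side of the $y$-inequality is bounded by $Cu^{-1}V \cdot \epsilon V^+ + C\epsilon^2 u^{-1}(V^+)^2 \leq C'\epsilon u^{-1}V^2$. For $D \geq \berr D$ large enough that $cD/2 > C'$, the difference satisfies $(\square_{g_{wp}, g} - \ein u^{-1})(\bar Y^+_{prish} - y) > 0$ at $(p_0, t_0)$, contradicting the sign $\leq 0$ forced by the first-touching configuration. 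Combined with the boundary hypothesis, this yields $y < \bar Y^+_{prish}$ throughout $\Omega_{prish, [T_1, T_2]}$.

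The main obstacle is the gradient bootstrap used to tame $|\nabla h|^2$: the lemma assumes only $C^0$ control on the parabolic boundary, while an estimate on $\nabla h$ is needed interior to $\Omega_{prish, [T_1, T_2]}$. The resolution follows the pattern in Section \ref{regularity}: interior parabolic Schauder is applied on parabolic balls of the natural curvature scale $r_0 |\Rm|^{-1/2}$, chaining the inductive $C^0$ bound $|h| \leq \bar Y^+_{prish}$ (valid strictly before the touching time) to the $C^{2,\eta}$ boundary and initial data from the hypothesis. Once this gradient control is in hand, the remaining algebra parallels the warped-product barrier argument of Lemma \ref{barriers_prish}, with the extra room provided by taking $D$ large and $\epsilon$, $u_*$, $T_*$ small.
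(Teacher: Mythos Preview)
Your maximum-principle outline is correct in spirit, but you have taken on an unnecessary burden by working with $z = |h|$ and \eqref{rdt_norm}, which carries the bad-sign term $+c_0|\nabla h|^2$ and forces the gradient bootstrap you flag as ``the main obstacle.''

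The paper avoids this entirely. Despite the lemma statement writing $y = |h|$, the proof (and the displayed inequality \eqref{eq:54} just above it) actually works with $y = |h|^2$ via \eqref{dtevo_square}. There the gradient contribution is $-2(1-c_0 y^{1/2})|\nabla h|^2$, which is \emph{negative} once $|h|$ is small and is simply discarded. Combined with Lemma~\ref{rmplus_prish} (so $4\rmplus \leq 2\ein u^{-1} + Cu^{-1}v$) and $|\Rm| \leq Cu^{-1}$, this yields
\[
(\square_{g_{wp},g} - 2\ein u^{-1})\,y \;\leq\; C u^{-1}\bigl(v + y^{1/2}\bigr)\,y
\]
with no gradient term present. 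The barrier is then built at level $a=2$ in Lemma~\ref{sup_solns}; indeed the proof uses the step $(\bar Y^+)^{1/2} \leq CV$, which holds only if $Y^+ \sim V^2$ (consistent with $Y^+ = (1+DV)Q^2(V_0\circ\hat u)^2$ as in Lemma~\ref{lemma:gluing_asymmetric}), not $(1+DV)V$ as the statement literally reads. The change $\lap_{g_{wp}} \to \lap_{g_{wp},g}$ costs only $|h|\,|\nabla\nabla \bar Y^+| \leq C u^{-1}v(\bar Y^+)^{3/2}$, and the comparison closes by the maximum principle using nothing beyond the $C^0$ parabolic-boundary hypothesis.

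Your route, by contrast, invokes ``$C^{2,\eta}$ boundary and initial data from the hypothesis,'' but the lemma only assumes the $C^0$ inequality $y < \bar Y^+$ on the parabolic boundary. Making your bootstrap rigorous would require either strengthening the hypothesis or a more delicate interior-Schauder argument handling parabolic balls that reach $t = T_1$; neither is needed once you pass to $|h|^2$.
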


\begin{proof}
  In the end, we will choose $\barr u_*$, $\berr \sigma_*$, and $\barr T_*$ to ensure that $DV < 1$ and $\bar Y^+$ is smaller than $\oh$ in the region under consideration.  (We may do this since we can make $V$ arbitrarily small by Lemma \ref{lemma:V_small}.)  Therefore equation \eqref{rdt_norm} is valid.
  
  By Lemma \ref{sup_solns} we have that, for some $c > 0$,
  \begin{align}
    \label{eq:112}
    \left(\square_{g_{wp}}  -  2 \mu u^{-1}\right)\bar Y^{+} \geq (c D) u^{-1}v \bar Y^{+}.
  \end{align}
  
Since $(\bar Y^+)^{1/2} \leq CV$, we find by decreasing $c$,
\begin{align}
  \label{eq:120}
  \left(\square_{g_{wp}} - 2 \mu u^{-1}\right)  \bar Y^+ \geq (c D) u^{-1} \left( v + (\bar Y^+)^{1/2} \right) \bar Y^+.
\end{align}
  We can change the $\square_{g_{wp}}$ to $\square_{g_{wp}, g}$.  As long as $y < \bar Y^{+}$ we have
\begin{align}
  |\square_{g_{wp}}\bar Y^{+} - \square_{g_{wp},g}\bar Y^{+}|
  \leq C (\bar Y^+)^{1/2}|\nabla \nabla \bar Y^{+}|
  \leq Cu^{-1}v(\bar Y^+)^{3/2}.
\end{align}
In the second inequality we use Lemma \ref{dd1z_bnd},  and the bound $|\nabla \nabla u| < Cv$ (see the calculation in Lemma \ref{lemma:ycontrol_productish}).  Again decreasing $c$, we have
\begin{align}
  \left( \square_{g_{wp}, g} -2 \mu u^{-1}\right) \bar Y^{+} \geq
  (cD) u^{-1} \left( v +  (\bar Y^+)^{1/2} \right)  \bar Y^+ \label{Yplus_pdi}
\end{align}
The lemma follows from the maximum principle by comparing \eqref{Yplus_pdi} to the evolution for $y$ \eqref{eq:54} and choosing $D$ large enough.
\end{proof}

\subsection{Control in the tip region}\label{assym_tip_section}
Recall the rescaled coordinates, $\alpha(t) = t \nu(t)$, $\partial_{\theta} = \alpha \partial_t$, and  $\sigma = u/\alpha$.

\begin{lemma}\label{lemma:asymmetric_tip_barrier}
  Let $u_*$, $\sigma_*$, $\zeta_*$, $r_0$, and $\epsilon < \barr \epsilon(g_{wp}, r_0)$ be given.   Let $F$ be the function from Lemma \ref{bry_rdt_supsoln} and let $\bar F = \epsilon F$.  There is a $T_*(u_*, \sigma_*, g_{wp})$ such that we have the following. 

  Suppose $g(t) = g_{wp}(t) + h(t)$ is a solution to Ricci-DeTurck flow with background metric $g_{wp}(t)$ in $\bar M_{[0, u_*]}$, on a time interval $[T_1, T_2]$, and $T_2 < T_*$.  Suppose
  \begin{align}
    \twoeta{h}{r_0|\Rm|^{-1/2}}{g_{wp}} < \bar F \quad \text{for} \quad t = T_1 \text{ and } \sigma < \nu^{-1/2}(T_1)\zeta_*,
  \end{align}
  \begin{align}
    |h|_{g_{wp}} < \bar F \quad \text{for} \quad t \in [T_1, T_2]\text{ and }\sigma \in [\sigma_*, \nu^{-1/2}\zeta_*].
  \end{align}

  Then $|h|_{g_{wp}} \leq \bar F$ for $\sigma < \sigma_*$ and $t \in [T_1, T_2]$.
\end{lemma}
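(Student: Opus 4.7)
The plan is to argue by contradiction and compactness, rescaling to the Bryant soliton and then invoking the stability result Theorem \ref{bry_stabil}. Suppose no such $T_*$ exists. Then we may pick a sequence $T_{*,i} \searrow 0$ and, for each $i$, data $T_{1,i} < T_{2,i} \leq T_{*,i}$ and a Ricci-DeTurck flow $g_i(t) = g_{wp,i}(t) + h_i(t)$ satisfying the hypotheses but for which $|h_i|_{g_{wp,i}} \geq \bar F$ somewhere in the tip region. By continuity and the spatial boundary hypothesis, there is a first touching time $t_i \in (T_{1,i}, T_{2,i}]$ and a point $p_i$ with $\sigma(p_i) < \sigma_*$ where $|h_i|(p_i,t_i) = \bar F(p_i)$; in particular $t_i \searrow 0$ and $\sigma(p_i)$ is bounded.

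Next I would perform the type-II rescaling $\tilde g_i(\tilde\theta) = \alpha(t_i)^{-1} g_i\big(t_i + \alpha(t_i)\tilde\theta\big)$ (and analogously for $g_{wp,i}$), pulling back by the map $\sigma^{-1}$ of Corollary \ref{lemma:barricaded_convergence}, composed with an appropriate diffeomorphism of the $F$ factor. By that corollary $\tilde g_{wp,i}$ converges in $C^\infty_{loc}$ on $\Bry \times \real^p$ to $g_{sol} = g_{Bry} + g_{\real^p}$, the vector field $(\partial_{\tilde\theta}\sigma)\partial_\sigma$ converges to the Bryant soliton vector field $X$, and $\tilde g_i$ solves Ricci-DeTurck flow around $\tilde g_{wp,i}$ modified by this vector field. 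The bound $|\tilde h_i| \leq \bar F = \epsilon F \lesssim \epsilon \sigma^{0,-1}$ provides a uniform $C^0$ control. Applying interior Schauder estimates to the Ricci-DeTurck equation (using the uniform smoothness of $\tilde g_{wp,i}$ and the uniform boundedness of the modifying vector field) one extracts, by a diagonal argument, a subsequential limit $h_\infty$ defined on a time interval $I_\infty$ of the form $(-\infty, 0]$ (if $(T_{1,i} - t_i)/\alpha(t_i) \to -\infty$, which is the generic case since $\alpha(t_i)/t_i = \nu(t_i) \to 0$) or $[\tilde\theta_\infty, 0]$ with finite initial time; in the latter case the scale-invariant hypothesis $\twoeta{h_i}{r_0|\Rm|^{-1/2}}{g_{wp,i}} < \bar F$ at $t = T_{1,i}$ passes to the limit and yields the initial bound \eqref{init_nabla_bnds} (with constant depending on $r_0$ and $g_{wp}$).

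The limit $h_\infty$ is a Ricci-DeTurck flow around $g_{sol}$ modified by $X$, satisfies $|h_\infty| \leq \bar F$ pointwise on $I_\infty$, and satisfies the appropriate initial/ancient condition required by Theorem \ref{bry_stabil}. Taking $\bar\epsilon$ to be the smallness constant from Theorem \ref{bry_stabil}, that theorem yields the \emph{strict} inequality $|h_\infty| < \bar F$ throughout $I_\infty$. Choosing a further subsequence so that $p_i$ converges to some $p_\infty \in \Bry \times \real^p$ (possible since $\sigma(p_i)$ is bounded and by translating along $\real^p$), continuity gives $|h_\infty|(p_\infty, 0) = \bar F(p_\infty)$, contradicting the strict inequality.

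The main obstacle is the convergence step: ensuring that (i) the modifying vector fields $(\partial_{\tilde\theta}\sigma)\partial_\sigma$ converge smoothly to the Bryant soliton vector field $X$ so that the limit equation is precisely Ricci-DeTurck modified by $X$ (this is the content of Corollary \ref{lemma:barricaded_convergence}), (ii) parabolic regularity for $\tilde h_i$ is uniform in $i$ up to the required order, including possibly up to the initial time slice when $\tilde\theta(T_{1,i})$ stays bounded, and (iii) the rescaling of the $C^{2,\eta}$-norm preserves the hypothesis on the initial slice. A further subtlety is that the points $p_i$ might escape to infinity in the $\real^p$ direction; this is handled by pre-composing with a translation, which is a symmetry of $g_{sol}$ and preserves the equation, since $F$ depends only on $\sigma$ so $\bar F(p_i)$ is controlled solely by $\sigma(p_i)$.
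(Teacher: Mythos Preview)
Your approach is essentially the same as the paper's: contradiction, rescale by $\alpha$, use Corollary \ref{lemma:barricaded_convergence} to pass the background to $g_{sol}$, extract a limiting perturbation, and contradict the strict inequality of Theorem \ref{bry_stabil}.

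One point you gloss over that the paper handles explicitly: you need to know the rescaled time interval does not collapse to a point. You write that $(T_{1,i}-t_i)/\alpha(t_i)\to -\infty$ is ``the generic case since $\alpha(t_i)/t_i = \nu(t_i)\to 0$'', but this is not automatic, since $T_{1,i}$ may be arbitrarily close to $t_i$. The paper inserts a short argument here: because $|h_i|\le \bar F$ on $[T_{1,i},t_i)$, interior Schauder estimates at scale $\alpha$ give a uniform speed limit on $|h_i|$ in the rescaled variables, so $|h_i|$ cannot move from strictly below $\bar F$ at $T_{1,i}$ to touching $\bar F$ before a fixed rescaled time $\Theta_*>0$ has elapsed. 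This guarantees the limiting interval $I_\infty$ is either $(-\infty,0]$ or $[-\Theta_1,0]$ with $\Theta_1\ge \Theta_*>0$, so that Theorem \ref{bry_stabil} applies nontrivially. Your remark about pre-composing with translations along the $\real^p$ factor (to keep $p_i$ from escaping) is a good point that the paper leaves implicit.
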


\begin{proof}
  We will choose $\barr \epsilon$ sufficiently small in the end.  We use a contradiction-compactness argument to move the situation to Ricci-DeTurck flow around the Bryant soliton crossed with a euclidean factor.

  For contradiction, assume that there is no such $T_*$.  This means that there is a sequence of counterexamples: there are solutions $g^{(i)} = g_{wp} + h^{(i)}$ to the Ricci-DeTurk flow around $g_{wp}$, defined on intervals $[T_1^{(i)}, T_2^{(i)}]$, satisfying the conditions of the Lemma, but $|h^{(i)}(p^{(i)}, T_2^{(i)})| = \bar F(\sigma(p^{(i)}, T_2^{(i)}))$ for some sequence $p^{(i)}$ with $\sigma(p^{(i)}, T_2^{(i)}) \leq \sigma_*$ and $T_2^{(i)} \searrow 0$.  Let $\sigma^{(i)} = \sigma(p^{(i)}, T_2^{(i)})$.  We may pass to subsequence so that the $\sigma^{(i)}$ converge to some $\sigma^{(\infty)} \leq \sigma_*$.  

  Let $\alpha^{(i)} = \alpha(T_1^{(i)})$.  We claim that there is a $T_{**}$ depending on $g_{wp}$ and $\sigma_*$ such that $T_2^{(i)} - T_1^{(i)} \geq \alpha^{(i)}T_{**}$.  Indeed for $t < T_2^{(i)}$, $|h^{(i)}|$ is bounded by $\bar F$.  Therefore we can apply the interior Schauder estimates at the scale $\alpha(T_1^{(i)})$, to get a bound on how far $|h^{(i)}|$ can move.  Here we use our bound $\twoeta{h^{(i)}(T_1)}{r_0|\Rm|^{-1/2}}{g_{wp}} < \bar F$, as well as our control on the geometry of $g_{wp}$ at scale $\alpha$.

  Now, this says that in terms of the rescaled time coordinate $\theta$, the time difference is bounded from below.  Indeed, since by definition $d\theta = \alpha^{-1}dt$:
  $$\theta(T_2^{(i)}) - \theta(T_1^{(i)})
  = \int_{T_1^{(i)}}^{T_2^{(i)}} \alpha^{-1} dt
  \geq \int_{T_1^{(i)}}^{T_1^{(i)}+\alpha^{(i)}T_{**}} \alpha^{-1}(t) dt
  \geq T_{**}
  \frac
  {\alpha\left(T_1^{(i)} \right)}
  {\alpha\left(T_1^{(i)} + \alpha(T_{1}^{(i)}) T_{**} \right)}.
  $$
  For the last inequality we just take the value of the decreasing integrand at the right endpoint and multiply by the length of the interval.
  Since $\frac{t|\partial_t\alpha|}{\alpha} + \frac{t^2|\partial_t^2\alpha|}{\alpha}$ is bounded (using assumption \ref{modelpinch_reg} of Definition \ref{definition:model_pinch}) and $\alpha(T_1^{(i)}) = o(T_1^{(i)})$ we can argue by using a Taylor expansion on the denominator that the right hand side is bounded from below by some $\Theta_* > 0$.  So, passing to a subsequence, the sequence $\theta(T_2^{(i)}) - \theta(T_1^{(i)})$ either converges to $\infty$ or converges to some $\Theta_1 > 0$.

Let $G_{wp}$ be the family of metrics $G_{wp} = \alpha^{-1}(\sigma^{-1})^*g_{wp}$ which is $g_{wp}$ modified by scaling by $\alpha^{-1}$ and pulling back by $\sigma$.  Also let $G^{(i)} = \alpha^{-1}(\sigma^{-1})^* g^{(i)}$ and $H^{(i)} = G^{(i)} - G_{wp} = \alpha^{-1}(\sigma^{-1})^* h^{(i)}$.

Now $G^{(i)}$ satisfies
\begin{align}
  \partial_{\theta} G^{(i)} = -2 \Rc[G^{(i)}] - \lie_{X + V[G^{(i)}, G_{wp}]}G^{(i)} - \beta G^{(i)},\label{rescaled_rcf}
\end{align}
for $\theta \in [\theta(T_1^{(i)}), \theta(T_2^{(i)})]$.  Here $X$ is the vector field $\partial_\theta \sigma$.

  Translate the $\theta$ intervals so that the times $\theta(T_2^{(i)})$ all land at time $0$.  By Corollary \ref{lemma:barricaded_convergence}, the background metrics $G_{wp}$ converge to the Bryant soliton crossed with $\real^{dim(F)}$, and the vector field $X$ converges to the soliton vector field.  Passing to a subsequence, the $H^{(i)}$ converge to a solution $H$ of Ricci-DeTurck flow around the Bryant soliton, modified by the Bryant soliton vector field $X$.  (Note that the term $\beta G$ in \eqref{rescaled_rcf} converges to zero.)  The time interval is either $\theta \in (-\infty, 0]$, or $\theta \in [-\Theta_1, 0]$.  In the second case, we can translate the regularity we assume at time $T_1$, and we find that the bounds in Theorem \ref{bry_stabil} are satisfied for some $C_{reg}$ (independent of $\epsilon$).  So, provided we take $\barr \epsilon$ small enough in this lemma to satisfy Theorem \ref{bry_stabil}, we can apply Theorem \ref{bry_stabil}.

  However, at time $0$ and at some point $p \in \Bry \times \real^{dim(F)}$ with $\sigma_{Bry}(p) = \sigma^{(\infty)}$, we will have $|H| = |\bar F|$.  This contradicts the strict inequality in the conclusion of Theorem \ref{bry_stabil}.
\end{proof}

\subsection{Buckling Barriers}
In this lemma, we show that the function
$
  Y^+ = (1 + D V)Q^2 (V_0 \circ \hat u)^2,
$
which we use as a barrier for $|h|^2$ in the productish region,
crosses the function $F^2$, which we use as a barrier in the tip region.  This shows that they ensure each others' boundary conditions.

The following Lemma deals with the unscaled functions $Y^+$ and $F^2$.  Of course, the inequalities \eqref{eq:cross_ineq1} and \eqref{eq:cross_ineq2} also hold for $\bar Y^+ = \epsilon^2 Y$ and $\bar F^2 = \epsilon^2 F^2$.

\begin{lemma}\label{lemma:gluing_asymmetric}
  Let the constant $D$, in the definition of $Y^+$ be given.  There are $\sigma_* > 0$, $\sigma_2 > 0$, $\zeta_* >0$, and $b \in \real_+$ such that we have the following inequalities.

  For $t < T_*$, at $\sigma = \sigma_*$, we have
  \begin{align}\label{eq:cross_ineq1}
     bF^2 < Y^+.
  \end{align}
  For $t < T_*$, and $\sigma \in [\sigma_2, \zeta_*\nu^{-1/2}]$, we have
  \begin{align}\label{eq:cross_ineq2}
    Y^+ <  bF^2.
  \end{align}
\end{lemma}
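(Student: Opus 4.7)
The plan is to compare the leading-order behavior of $Y^+$ and $F^2$ as functions of $\sigma$ in the overlap region and verify that the ratio $Y^+/F^2$ interpolates monotonically from a large value at small $\sigma$ to roughly $\ein^2/c_1^2$ at large $\sigma$; the parameter $b$ can then be chosen to sit between these two extremes.

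First I would use the Taylor expansion \eqref{V_expr_smallrho} to write $V = \ein/\sigma + O(\nu)$ uniformly on $[\sigma_*, \zeta_*\nu^{-1/2}]$, which is valid because $\sigma\nu \leq \zeta_*\nu^{1/2}$ can be made arbitrarily small by taking $T_*$ small. Consequently
\begin{align*}
Y^+ = (1+DV)V^2 \approx \frac{\ein^2}{\sigma^2} + \frac{D\ein^3}{\sigma^3},
\end{align*}
so $Y^+$ is comparable to $D\ein^3/\sigma^3$ for $\sigma \lesssim D\ein$ and to $\ein^2/\sigma^2$ for $\sigma \gtrsim D\ein$. Lemma \ref{bry_rdt_supsoln} provides that $F$ is smooth, strictly positive, bounded near $\sigma = 0$, and $F \sim c_1/\sigma$ as $\sigma \to \infty$; in particular $\sigma F(\sigma)$ has uniform positive upper and lower bounds on any interval $[\sigma_2, \infty]$ once $\sigma_2$ is chosen large enough.

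With these asymptotics in hand, I would next pick $\sigma_2 \geq D\ein$ large enough that $Y^+ \leq C\ein^2/\sigma^2$ and $F^2 \geq c_F^2/\sigma^2$ hold for $\sigma \geq \sigma_2$; then $Y^+ < bF^2$ on $[\sigma_2, \zeta_*\nu^{-1/2}]$ follows provided $b > C\ein^2/c_F^2$. Fix such a $b$, depending only on $g_{mp}$ and $D$. For the reverse inequality at $\sigma = \sigma_*$, take $\sigma_* \leq D\ein$: then $Y^+(\sigma_*) \gtrsim D\ein^3/\sigma_*^3$, while $F(\sigma_*)^2 \leq F_0^2$ for some $F_0$ independent of $\sigma_*$, so $bF_0^2 < Y^+(\sigma_*)$ reduces to $\sigma_*^3 < cD\ein^3/(bF_0^2)$, which is satisfied by choosing $\sigma_*$ small enough in terms of $g_{mp}, D, b$. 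Finally, $\zeta_*$ may be taken freely and $T_*$ is chosen last and small to absorb the error terms in the Taylor expansion of $V$.

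There is no serious analytic obstacle: the proof is essentially a matching-of-constants argument between the productish asymptotic $V \sim \ein/\sigma$ and the Bryant-soliton asymptotic $F \sim c_1/\sigma$. The only delicate point is ensuring the Taylor expansion of $V$ is accurate uniformly across the full range $[\sigma_*, \zeta_*\nu^{-1/2}]$, and that the corrections to the leading $V \sim \ein/\sigma$ behavior do not spoil the strict inequalities on $Y^+$; this is why $T_*$ must be chosen after $\sigma_*, \sigma_2, \zeta_*, b$ are fixed.
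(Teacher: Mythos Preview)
Your argument is correct for the lemma as stated, but it takes a genuinely different route from the paper's. The paper chooses $\sigma_*$ \emph{large} (in the asymptotic regime of $F$) and pins down the crossing using the second-order expansion: writing $\sigma^2(bF^2 - Y^+) = (bc_1^2 - \ein^2) - \sigma^{-1}(bc_4\log\sigma + c_7 D) + o(\cdot)$, it picks $b$ so that $d = bc_1^2 - \ein^2$ is positive but tiny; then the negative subleading piece wins at $\sigma = \sigma_*$ (giving $bF^2 < Y^+$) while $d$ wins for $\sigma \geq \sigma_2$ (giving $Y^+ < bF^2$). Thus the paper actually \emph{uses} the $c_2\log\sigma/\sigma$ term in item (2) of Lemma~\ref{bry_rdt_supsoln}. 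Your argument avoids this entirely by taking $\sigma_*$ \emph{small}, where the factor $(1+DV)$ forces $Y^+ \sim D\ein^3/\sigma_*^3$ to blow up and dominate the bounded quantity $bF(\sigma_*)^2$.

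What each approach buys: yours is more elementary, needing only the leading $F \sim c_1/\sigma$ and the boundedness of $F$ near the origin. The paper's approach, however, is dictated by the global structure of Theorem~\ref{local_stability}: the same $\sigma_*$ must also serve in Lemma~\ref{lemma:asymmetric_prish_barrier}, whose proof requires $DV < 1$ on the productish region $\{\sigma > \sigma_*\}$, forcing $\sigma_* \gtrsim D\ein$. Your choice $\sigma_* \leq D\ein$ (and in fact $\sigma_* \lesssim D^{1/3}$ from your cubic inequality) would violate this. So while your proof establishes the lemma in isolation, the $\sigma_*$ it produces would not slot into the proof of Theorem~\ref{local_stability} without reworking the productish-region step.
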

\begin{proof}
  Below $c_i$ are positive constants, and all asymptotics are as $\sigma \to \infty$ and $t \searrow 0$.  Recall the asymptotics of $F$ from Theorem \ref{bry_stabil}:
  \begin{align}
    F &= c_1 \sigma^{-1} - c_2 \sigma^{-2}\log\sigma + o(\sigma^{-2}\log \sigma),\\
    F^2 &=  \sigma^{-2}\left(c_3 - c_4 \sigma^{-1} \log \sigma + o(\sigma^{-1}\log \sigma) \right).
  \end{align}
  Recall the asymptotics of $V$ from \eqref{lemma:V_small}:
  \begin{align}
    V &= c_5 \sigma^{-1}\left(1 + O(\nu + \nu^2 \sigma) \right),\\
    Y^+
    &= (1 + DV)V^2 \\
    &=  \sigma^{-2}\left(c_6 + c_7D \sigma^{-1} + O(\nu+ \nu^2 \sigma) + O(D\sigma^{-2}) \right).
  \end{align}

  Letting $d = b c_3 - c_6$, we find,
  \begin{align}
    \sigma^2 (bF^2 - Y^+) =
    d
    -  \sigma^{-1}\left(c_4 \log \sigma + c_7 D \sigma^{-1} + o(\log \sigma) + O(D \sigma^{-2}) \right) + O(\nu + \nu^2\sigma)
  \end{align}
  Now choose $\sigma_*$ large enough so that for $\sigma > \sigma_*$ the asymptotic terms $o(\log \sigma)$ and $O(D\sigma^{-2})$ above apply well.  Furthermore, since $\sigma \nu^2 < \zeta_* \nu^{3/2}$ in the region $\{\sigma \leq \zeta_* \nu^{-1/2}\}$ under consideration, we can choose $T_*$ small enough so that the $O(\nu + \nu^2 \sigma)$ term is smaller, in absolute value, than $d/2$.   Specifically, for $\sigma \in [\sigma_*, \zeta_* \nu^{-p}]$ and $t < T_*$ we have
  \begin{align}
    &\tfrac{1}{2}d - \tfrac{3}{2} \sigma^{-1}\left( c_4\log \sigma + c_7 D \sigma^{-1} \right) \\
    &\leq \sigma^2 (bF^2 - Y^+) \\
    &\leq
    \tfrac{3}{2}d - \oh \sigma^{-1}\left( c_4 \log \sigma + c_7 D \sigma^{-1} \right).
  \end{align}
  Now choose $b = b(\sigma_*, D)$ so that $d = b c_3 - c_6$ is positive but small enough that
  \begin{align}
    \tfrac{3}{2}d - \oh \sigma_*^{-1} \left( c_4\log \sigma_* + c_7 D \sigma_*^{-1} \right) < 0
  \end{align}
  so the desired inequality holds at $\sigma = \sigma_*$.  Then choose $\sigma_1$ large enough so that
  \begin{align}
    \oh d - \tfrac{3}{2}\sigma^{-1}\left( c_4\log \sigma + c_7 D \sigma^{-1} \right) > 0
  \end{align}
  for $\sigma > \sigma_1$.  Then the desired inequality for $\sigma \in [\sigma_1, \zeta_* \nu^{-1/2}]$ also holds, for small enough times.
\end{proof}


\section{Constructing asymmetric forward evolutions}\label{asymmetric}
\begin{figure}[t]
  \centering
  \includegraphics[scale=.8]{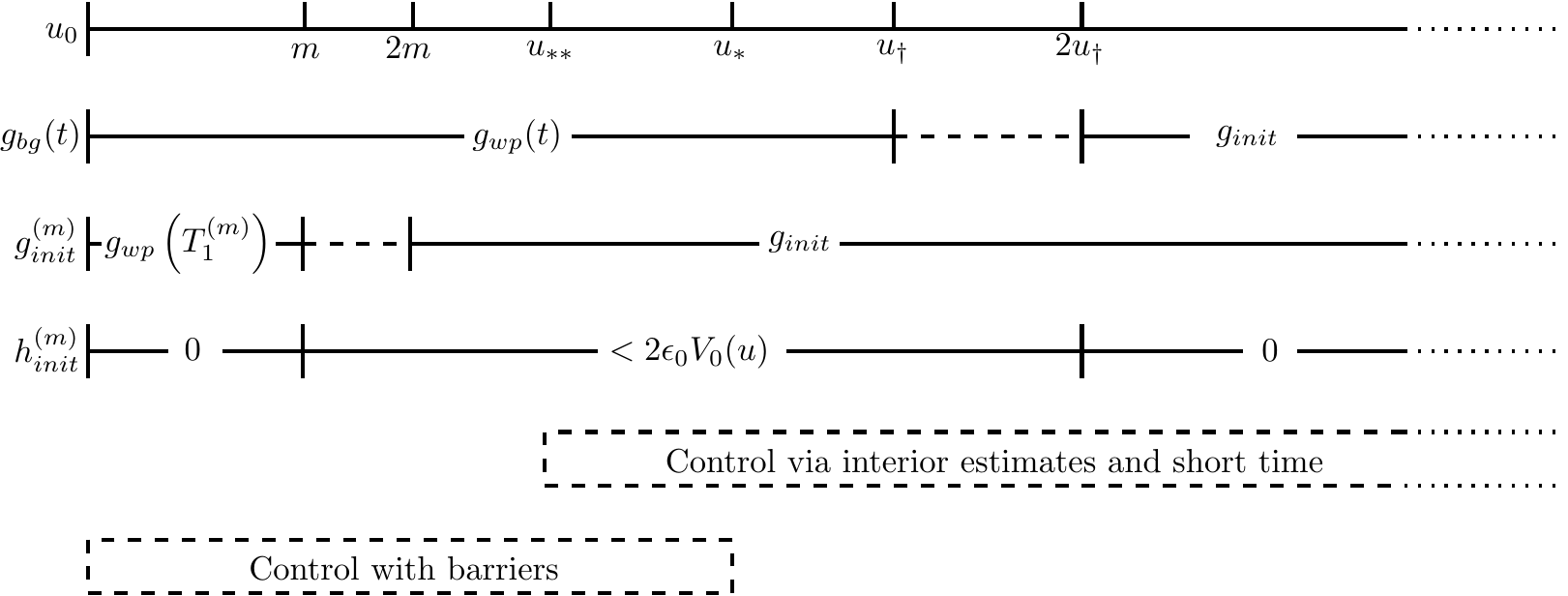}
  \caption[Map of mollification of asymmetric metrics]{
    A map of our background metric and mollified metrics.  The background metric $g_{bg}$ is defined in Section \ref{section:bg_setup} and the mollified metric $g^{(m)}_{init}$ is defined in Section \ref{section:mollified_setup}.  The dashed lines indicate that the metric is being interpolated between the value on the left and the value on the right.  
    }
  \label{np_basic}
\end{figure}

In this section we prove Theorem \ref{theorem:unsymmetrical_flow}.  We begin by gluing the control from Theorem \ref{local_stability} to a uniformly smooth Ricci flow using pseudolocality and regularity in the region strictly away from $u = 0$.  Then we will construct mollified metrics and take a limit.

Assume all the setup of Section \ref{section:local_stability}, including an $\Rm$-permissible model pinch $g_{mp}$ with a forward Ricci flow $(\bar M, g_{wp}(t))$. Fix a compact manifold $\bar N$ with an open subset $U$ and a diffeomorphism $\Phi : U \to \bar M$.  Assume that for some time interval $(0, T_{max}]$ and for some $u_{max} > 0$, the image of $\Phi$, $\Phi(U) \subset \bar M$, contains the set $\{(p, t): u(p, t) < u_{max}\}$.  Hereafter we suppress the diffeomorphism $\Phi$ and consider all of the functions that we had on $\bar M$ related to $g_{wp}$-- such as $u(p, t)$, $\sigma(p,t) = u(p, t)/\alpha(t)$, $w(p, t)$, and $v(p, t)$-- as functions on $\bar N \cap U$. The function $u_0: \bar N \cap U \to \real_{\geq 0}$ is still the value of $u$ for $g_{mp}$, as a notational convenience we extend the function $u_0$ to all of $\bar N$ so that $u_0 \geq u_{max}$ in $ \bar N \setminus U$.  For $u_1, u_2$ we define, similarly to how we defined $\bar M$, $\bar N_{[u_1, u_2]} = \{p: u_0(p) \in [u_1, u_2]\}$; for example $\bar N_{[u_1, \infty)} = \{p \in U: u_0(p) \geq u_1\} \cup \bar N \setminus U$.

\subsection{Flowing complete manifolds near smoothed model pinches}

\begin{lemma}\label{lemma:global_stable}
  Suppose $\epsilon < \barr \epsilon(g_{mp})$, $r_0 < \barr r_0(g_{mp})$, $u_* < \bar u_*(g_{mp})$, and $B>0$.  Suppose $g_{bg}(t)$ is a complete and smoothly time-dependent metric on $\bar N$ which agrees with $g_{wp}(t)$ in $\bar N_{[0, 4u_*]} \times [0, T_*]$, and $g_{init}$ is a smooth complete metric on $\bar N$.  Suppose,
  \begin{itemize}
  \item $g_{init}$ is close to $g_{\#} = g_{bg}(T_1)$ globally:
    \begin{align}
      \sup_{p \in \bar N_{[0, 4u_*]}}&|g(T_1) - g_\#|_{2, \eta; r_0|\Rm|^{-1/2}_{g_\#}, g_\#  } \leq \epsilon F_{full}(p, T_1).\\
      \sup_{p \in \bar N_{[4u_*, \infty)}}&|g(T_1) - g_\#|_{2, \eta; r_0, g_\#} \leq \epsilon
    \end{align}
  \item $g_{bg}$ does not change much at any point $p \in \bar N_{[u_*/2, \infty)}$:
    \begin{align}
      \sup_{p \in \bar N_{[u_*/2, \infty)}}\sum_{k=0}^3\left|\partial_t \left(\nabla^{g_{bg}(0)}\right)^kg_{bg}(t)\right|_{g_{bg}(0)} \leq B
    \end{align}
  \end{itemize}

  Then there is a  $T_*$ and $C$, depending on $g_{mp}$, $u_*$, $r_0$, and $\epsilon$ as well as $g_{init}$ restricted to the compact set $\bar N_{[u_*/2, \infty)}$, with the following property.  If $T_1 < T_*$ then there is a solution $g(t)$ to Ricci-DeTurck flow with background metric $g_{bg}$, which exists at least on the time interval $[T_1, T_*]$, with $g(T_1) = g_{init}$ and for all $t  \in [T_1, T_*]$,
  \begin{align}
    \sup_{p \in \bar N}\twoeta{g(t) - g_{bg}}{r_0|\Rm|^{-1/2}}{g_{bg}}(p, T_1) \leq C\epsilon F_{full}(p, T_1).
  \end{align}
\end{lemma}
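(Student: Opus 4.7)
The plan is to run Ricci--DeTurck flow with background $g_{bg}$ starting from $g_{init}$ and to maintain the required closeness by a bootstrap that splits $\bar N$ into an ``inner'' region $\bar N_{[0,2u_*]}$, handled by Theorem \ref{local_stability}, and an ``outer'' region $\bar N_{[u_*/2,\infty)}$, handled by standard Schauder estimates for the perturbation $h = g - g_{bg}$. Since $T_1 > 0$, the background $g_{bg}(T_1) = g_{wp}(T_1)$ has bounded geometry on the inner region (by Corollaries \ref{prish_curvature_control_time} and \ref{tip_curvature_control}), and by hypothesis $g_{bg}$ has uniformly bounded derivatives up to order three on the outer region. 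Together with the $C^{2,\eta}$ closeness of $g_{init}$ to $g_{bg}(T_1)$, this makes $g_{init}$ a complete metric with globally bounded curvature, so a standard short-time existence argument for Ricci--DeTurck flow produces a solution $g(t) = g_{bg}(t) + h(t)$ on some initial interval $[T_1, T_1 + \tau)$ with $\tau > 0$.

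Let $T_2 \in (T_1, T_*]$ be the supremum of times up to which the conclusion $\twoeta{h(\cdot,t)}{r_0|\Rm|^{-1/2}}{g_{bg}} \leq C\epsilon F_{full}$ holds on all of $\bar N$, for a large constant $C$ to be chosen. I will close the bootstrap by showing that the same inequality actually holds with $C/2$ in place of $C$ on $[T_1,T_2]$, forcing $T_2 = T_*$. On the outer region $\bar N_{[u_*/2,\infty)}$, the Ricci--DeTurck equation for $h$ has coefficients depending only on $g_{bg}$ and its derivatives, which are uniformly controlled by $B$; Bamler's interior Schauder estimates, as recalled in Section \ref{regularity}, applied at the fixed scale $r_0$, propagate the initial $C^{2,\eta}$ bound of size $\epsilon$ up to a multiplicative factor tending to $1$ as $T_* - T_1 \searrow 0$. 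Since $F_{full}$ is bounded above and below by positive constants on the annulus $\bar N_{[u_*,2u_*]}$, this supplies the $C^{2,\eta}$ boundary hypothesis required by Theorem \ref{local_stability} there.

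Theorem \ref{local_stability} then yields $\twoeta{h}{r_0|\Rm|^{-1/2}}{g_{bg}} \leq C'\epsilon F_{full}$ throughout $\bar N_{[0,u_*]}$, with $C'$ depending only on $g_{mp}$, $u_*$, and $r_0$. Since $\bar N_{[0,u_*]} \cup \bar N_{[u_*/2,\infty)} = \bar N$, combining the two estimates and choosing $C$ to be twice the resulting maximum closes the bootstrap. A standard continuation argument, using that the resulting estimates uniformly bound the curvature of $g(t)$ up to $t = T_2$, then extends the solution past $T_2$ and ultimately to $t = T_*$.

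The main obstacle is ensuring that the two regional estimates are mutually consistent: Theorem \ref{local_stability} requires a $C^{2,\eta}$ boundary bound on $\bar N_{[u_*,2u_*]}$, while the outer Schauder estimate in turn implicitly requires some crude $C^0$ control of $h$ on the overlap in order to make sense of the linearization. The cleanest way to arrange this is to run both estimates simultaneously inside the bootstrap window $[T_1,T_2]$ rather than sequentially; defining $T_2$ as a single maximal time already enforces this. A minor subtlety is that the outer Schauder estimate uses a background that is \emph{time-dependent} through $g_{wp}$ on the overlap annulus, but as noted in Section \ref{regularity}, Bamler's estimates accommodate time-dependent coefficients with no essential change, so the argument goes through.
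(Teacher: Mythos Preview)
Your overall strategy—split into an inner region handled by Theorem~\ref{local_stability} and an outer region handled by parabolic estimates—matches the paper's. The gap is in your outer-region step. The claim that interior Schauder estimates ``propagate the initial $C^{2,\eta}$ bound of size $\epsilon$ up to a multiplicative factor tending to $1$ as $T_*-T_1\searrow 0$'' is not what Schauder gives. In the outer region the background $g_{bg}$ is \emph{not} a Ricci flow, so by Lemma~\ref{dtevo} the equation for $h$ carries an inhomogeneous term $-\Rf[g_{bg}]$ of size $O(1)$. Parabolic Schauder then yields only $|h|_{2,\eta}\le C_{\mathrm{Sch}}\bigl(|h(T_1)|_{2,\eta}+|\Rf[g_{bg}]|_{0,\eta}\bigr)=O(1)$, not $O(\epsilon)$. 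You might hope to recover smallness by time-integration of $\partial_t\nabla^k h$, but bounding $\partial_t\nabla^2 h$ pointwise requires control of $\nabla^4 h$, which a $C^{2,\eta}$ bootstrap does not supply; and the H\"older seminorm of the Duhamel contribution from an $O(1)$ source does not tend to zero as $t-T_1\to 0$. So you cannot feed the required $C^{2,\eta}$ boundary hypothesis of size $\epsilon F_{full}$ back into Theorem~\ref{local_stability}, and the bootstrap does not close.

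The paper sidesteps this by working with Ricci flow rather than Ricci--DeTurck in the outer region. It runs the Ricci flow $g_{rcf}$ from $g_{init}$, applies pseudolocality followed by Shi's local derivative estimates to bound $|\nabla^k\Rm_{g_{rcf}}|$ for $k\le 3$ by constants \emph{independent of $\epsilon$}, and only then constructs the DeTurck diffeomorphism $\Psi$ via the harmonic map heat flow $\partial_t\Psi=\lap_{g_{rcf},g_{bg}}\Psi$, controlling $|\partial_t\nabla^k\Psi|$ by the same kind of regularity. The point is that $g_{rcf}$, $g_{bg}$, $\Psi$ and their derivatives up to order $3$ then all have time derivatives bounded by constants depending only on the outer geometry, so $|\nabla^k h(t)-\nabla^k h(T_1)|\le C(t-T_1)$ for $k\le 3$, and taking $T_*$ small depending on $\epsilon$ forces $|h|_{C^3}\le 2\epsilon F_{full}$ in $\bar N_{[u_*,\infty)}$. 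This supplies the $C^{2,\eta}$ boundary data for Theorem~\ref{local_stability} directly, with no bootstrap needed.
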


\begin{proof}
  By standard theory there is a solution to Ricci flow on some time interval $[T_1, T_{final}]$ with $g(T_1) = g_{init}$.
  Let $u_1 = (5/8)u_*$ and $u_2 = (7/8)u_*$ so $u_*/2 < u_1 < u_2 < u_*$.

  We can apply pseudolocality followed by regularity (i.e. Lemma A.5 of \cite{topping}) for any point in $\bar N_{[u_1, \infty)}$, which gives us control $|(\nabla^{g_{init}})^k \Rm_{g_{init}}|_{g_{rcf}} < C$ for $k = 0, 1,2,3$, for $t < \min(T_*, T_{final})$.  Here $C$  and $T_*$ depend on the things $C$ and $T_*$ are allowed to in the statement of the theorem.  We used compactness of $\bar N_{[u_1, \infty)}$ to get a lower bound on the radius at which we can apply pseudolocality.  By differentiation the Ricci flow equation, we get that $|\partial_t \left(\nabla^{g_{init}}\right)^k g_{rcf}|_{g_{init}} \leq C$ for $k = 0, 1, 2, 3$.

  Now let $\Psi:\bar N \times [T_1, T_{final}) \to \bar N$ be the solution to $\partial_t \Psi = \lap_{g_{rcf}, g_{bg}} \Psi$ with $\Psi(p, 0) = p$, so that $g = (\Psi^{-1})^*g_{rcf}$ is a solution to Ricci-DeTurck flow with background metric $g_{bg}$. (The existence of $\Psi$, as well as of $g_{rcf}(t)$, is given by Theorem 6.7 of \cite{Shi}.)  In local harmonic coordinates, the equation $\partial_t \Psi = \lap_{g_{rcf}, g_{bg}} \Psi$ has the form
  \begin{align}
    \partial_t \Psi^l
    = g^{ij} \partial_i \partial_{j}\Psi^l
    - g^{ij}\left( \Gamma_{g_{rcf}} \right)_{ij}^k \partial_{k}\Psi^l
    + g^{ij} \left(\left(\Gamma_{g_{bg}}\right)^l_{mk} \circ \Psi \right)\partial_i\Psi^m\partial_{j}\Psi^k.    
  \end{align}
  We apply regularity within $\bar N_{[u_2, \infty)}$.  We can use compactness again to get a lower bound on the radius at which we can find harmonic coordinates around any point, as well as to get a bound on derivatives of $\Gamma_{g_{rcf}}$ and $\Gamma_{g_{bg}}$ in those coordinates.  All in all, we can get that in $\bar N_{[u_2, \infty)}$ and for $t < \min(T_*, T_{final})$ we have $|\partial_t \nabla^k \Psi| \leq C$ for $k = 0, 1, 2, 3$ (say with connection and norms with respect to $g_{init}$, although now we know that all the metrics are comparable).  

  Now we can take $T_*$ small enough so that for $t < \min(T_*, T_{final})$ we have $\Psi(\bar N_{[u_2, \infty)}, t) \subset \bar N_{[u_*, \infty)}$ and $\Psi^{-1}(\bar N_{[u_2, \infty)}, t) \subset \bar N_{[u_*, \infty)}$.  Then we can use our estimates on time derivatives to restrict $T_*$ and get the bound $|(\nabla^{g_{bg}})^k\left( g(t) - g_{bg}(t) \right)| \leq 2\epsilon F_{full}$ in $\bar N_{[u_*, \infty)}$ for $k = 0, 1, 2, 3$.  Now we can apply Lemma \ref{local_stability} (with a larger $\epsilon$) to get the desired $C^{2, \eta}$ control in $\bar N_{[0, u_*]}$ for $t < \min(T_*, T_{final})$.

  Finally, we can use our $C^2$ bounds on $g(t)-g_{bg}(t)$ to estimate that the curvature at time $\min(T_*, T_{final})$ is bounded, we find $T_{final} > \min(T_*, T_{final})$ and so the Ricci-DeTurck flow exists for $t \in [T_1, T_*]$.
  
\end{proof}

\subsection{Setup of the background metric}\label{section:bg_setup}

Now, we wish to set up a background metric to use for Ricci-DeTurck flow.  We chose constants $u_*$ and $u_{\dagger}$ with $4 u_{*} < u_{\dagger}$ and $4u_{\dagger}<u_{max}$.  Let $\eta:[0, \infty) \to \real$ be a fixed smooth cutoff function satisfying $\eta(x) \in [0, 1]$ and 
\begin{align}
  \eta(x) = 1 \text{ for } x < 1
  \quad
  \eta(x) = 0 \text{ for } x > 2,
\end{align}
and define $\eta_{r}(x) = \eta(x/r)$.
Then define
\begin{align}
  g_{bg}(t)
  =
  \eta_{u_\dagger}\left(u_0\right) g_{wp}(t)
  +
  \left(1-\eta_{u_\dagger}(u_0)\right)g_{init}. \label{eq:gbg_def}
\end{align}
We define $u_0(p) \geq u_{max}$ for $p \not \in U$.  So, $g_{bg}(t)$ is a time-dependent metric which agrees with $g_{wp}(t)$ for points $p \in  \bar N_{[0, u_\dagger]}$, and agrees with $g_{init}$ for points $p \in \bar N_{[2u_{\dagger}, \infty)}$.  

Note that we can always choose $T_*$ small enough (depending on $u_*$ and $u_\dagger$) so that for $t < T_*$ we have $u_0(p) < u_\dagger$ wherever $u(p,t) < 4u_*$.  Therefore $g(p,t) = g_{wp}(t)$ on the set $\{(p,t) : u(p,t) < 4u_*\}$, and $g_{bg}$ will satisfy the hypotheses of Lemma \ref{lemma:global_stable}.

\subsection{Setup of the mollified initial metrics}\label{section:mollified_setup}
As in the proof of Theorem \ref{theorem:model_pinch_flow}, we will construct the forward evolution from $g_{init}$ as a limit of mollified flows.  A parameter $m \in [0, 1]$ determines the space scale of the mollification.  For $T_1^{(m)}$ to be chosen, we define the mollified initial metric $g_{init}^{(m)}$ by 
\begin{align}
  g^{(m)}_{init} = \eta_{m}\left( u_0 \right) g_{wp}(T_1^{(m)}) + \left( 1 - \eta_m\left(u_0 \right) \right) g_{init}.
\end{align}
Let $h^{(m)}_{init} = g^{(m)}_{init} - g_{bg}(T_1^{(m)})$.  We derive bounds on $h_{init}^{(m)}$ and its derivatives.

\begin{lemma}\label{inith_close}
  There is a constant $C > 0$ such that for $m < \barr m$ and $T_1^{(m)} < \barr T_1^{(m)}$, we have
  \begin{align}
    \sup_{p \in \bar N_{[0, 4 u_*]}}&|h^{(m)}_{init}|_{2, \eta; r_0|\Rm|^{-1/2}, g_{bg}(T_1)} \leq C\epsilon_0 F_{full} \\
    \sup_{p \in \bar N_{[4u_*, \infty)}}&|h^{(m)}_{init}|_{2, \eta; r_0, g_{bg}(T_1)} \leq C\epsilon_0
  \end{align}
\end{lemma}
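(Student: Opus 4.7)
The plan is to exploit the explicit decomposition
\begin{align}
h^{(m)}_{init} = \bigl(\eta_m(u_0) - \eta_{u_\dagger}(u_0)\bigr)\bigl(g_{wp}(T_1^{(m)}) - g_{init}\bigr),
\end{align}
which comes directly from subtracting the two defining interpolations. Its support lies in $\{m \le u_0 \le 2u_\dagger\}$, and outside this annulus there is nothing to prove. On the support we insert $g_{mp}$ as an intermediate metric and write
\begin{align}
g_{wp}(T_1^{(m)}) - g_{init} = \bigl(g_{wp}(T_1^{(m)}) - g_{mp}\bigr) + \bigl(g_{mp} - g_{init}\bigr).
\end{align}
The second summand is controlled directly by the hypothesis of Theorem \ref{theorem:unsymmetrical_flow}, which gives exactly the scaled $C^{2,\eta}$-bound $\leq \epsilon_0 V_0 \circ \Phi$. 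The first summand is controlled by Corollary \ref{improved_convergence}, and this is where we are allowed to choose $T_1^{(m)}$ as small as we wish relative to $m$.

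More concretely, first I would apply Corollary \ref{improved_convergence} to write $g_{wp}(T_1^{(m)}) - g_{mp} = -T_1^{(m)}(\mu g_{S^q} + \mu_F g_F) + h_{wp}(T_1^{(m)})$, where the explicit linear term has $g_{mp}$-pointwise size $\lesssim T_1^{(m)} u_0^{-1}$ and the remainder satisfies $|(\nabla^{g_{mp}})^k h_{wp}(T_1^{(m)})|_{g_{mp}} \lesssim T_1^{(m)}(u_0^{-1}v_0)^{2+k}$ on $\{u_0 \le u_*/2\}$ once $T_1^{(m)} \le \epsilon_k u_0$. Requiring $T_1^{(m)} \le \epsilon_0 \, m\, V_0(m)$ then makes both terms $\le \epsilon_0 V_0(u_0)$ on $\{u_0 \ge m\}$. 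The rescaling from $g_{mp}$ to the intrinsic parabolic scale $r_0|\Rm|^{-1/2} \sim r_0 u_0^{1/2}$ combined with Corollary \ref{prish_curvature_control} shows that $V_0(u_0)$ is a uniformly bounded multiple of $F_{full}$ throughout a ball of that radius, giving the desired bound in $\bar N_{[m, 4u_*]}$. On $\bar N_{[4u_*, 2u_\dagger]}$ the curvature of $g_{mp}$ is bounded, so the relevant scale is just $r_0$ and the bound becomes the uniform $\epsilon_0$-type estimate claimed in the second line of the lemma.

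The main technical obstacle is absorbing the derivatives of the inner cutoff $\eta_m(u_0)$, which scale like $m^{-1}$ per derivative and thus contribute a factor $(r_0|\Rm|^{-1/2})^k m^{-k} \sim (r_0 m^{-1/2})^k$ in the rescaled $C^{2,\eta}$ norm on the annulus $\{m \le u_0 \le 2m\}$. This amplification is exactly compensated by the fact that Corollary \ref{improved_convergence} gives a gain of $(u_0^{-1}v_0)^{2+k}$ per derivative on the factor $g_{wp}(T_1^{(m)}) - g_{mp}$ and an analogous gain on $g_{init} - g_{mp}$ from the hypothesis of Theorem \ref{theorem:unsymmetrical_flow} (both of which are $C^{2,\eta}$ in the rescaled sense); once $T_1^{(m)}$ is chosen as above, the total contribution of the $k$-th derivative of the cutoff is still $\le C\epsilon_0 F_{full}$ uniformly in $m$. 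The cutoff $\eta_{u_\dagger}(u_0)$ is easier since it lives on a fixed compact annulus away from the tip where everything is uniformly smooth, so its derivatives cost a harmless $u_\dagger$-dependent constant. Finally, the Hölder seminorm is handled in exactly the same fashion by interpolating between the pointwise and derivative bounds on each parabolic ball.
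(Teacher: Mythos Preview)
Your approach is essentially the paper's: the same explicit formula $h^{(m)}_{init}=(\eta_m-\eta_{u_\dagger})(g_{wp}(T_1^{(m)})-g_{init})$, the same insertion of $g_{mp}$, the hypothesis of Theorem~\ref{theorem:unsymmetrical_flow} for $g_{init}-g_{mp}$, and the convergence $g_{wp}(t)\to g_{mp}$ (via Corollary~\ref{improved_convergence}) for the other piece, with $T_1^{(m)}$ chosen small depending on $m$.

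One correction to your cutoff discussion: the inner cutoff $\eta_m\circ u_0$ does \emph{not} contribute a blowing-up factor $(r_0 m^{-1/2})^k$ in the rescaled norm. You computed $\rho^k|\partial_{u_0}^k\eta_m|$, but the rescaled $C^{2,\eta}$ norm involves covariant derivatives with respect to $g_{bg}(T_1^{(m)})$, and each such derivative of $u_0$ carries a factor $|\nabla u_0|_{g_{bg}}\sim (u_0 V_0(u_0))^{1/2}$. On the annulus $\{m\le u_0\le 2m\}$ this gives
\[
\rho\,|\nabla(\eta_m\circ u_0)|_{g_{bg}}
\;\sim\; r_0\,m^{1/2}\cdot m^{-1}\cdot (mV_0(m))^{1/2}
\;=\; r_0\,V_0(m)^{1/2},
\]
which is small, and the second derivative is similarly $\lesssim r_0^2 V_0(m)$. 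So the cutoff has uniformly bounded rescaled $C^{2,\eta}$ norm and contributes only a dimensional constant $C$---exactly what the paper asserts. No compensation from the other factor is needed, and in particular you do not need the higher-$k$ statement of Corollary~\ref{improved_convergence} (which would otherwise be unavailable for the $g_{init}-g_{mp}$ piece, since the hypothesis only gives $C^{2,\eta}$ control).
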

\begin{proof}
  Note that $g^{(m)}_{init}$ agrees with $g_{bg}(T_1^{(m)})$ in $\bar N_{[0, m]}$ and in $\bar N_{[2u_{\dagger}, \infty)}$ so we just have to worry about the compact set $\bar N_{[m/2, 4u_{\dagger}]}$.  In the region $\bar N_{[0, u_{\dagger}/2}$ which is strictly in the interior of where $g_{bg}(T_1)$ agrees with $g_{wp}(T_1)$, we can estimate
  \begin{align}
    |h^{(m)}_{init}|_{2, \eta; r_0|\Rm|^{-1/2}, g_{bg}(T_1^{(m)})}
    &\leq C|g_{init} - g_{mp}|_{2, \eta; r_0|\Rm|^{-1/2}, g_{mp}} \\
    &+ C\left(|g_{init}-g_{mp}|_{2, \eta; r_0|\Rm|^{-1/2}, g_{wp}(T_1^{(m)})} - |g_{init} - g_{mp}|_{2, \eta; r_0|\Rm|^{-1/2}, g_{mp}} \right) \\
    &+ C\left(|g_{mp} - g_{wp}(T_1^{(m)})|_{2, \eta; r_0|\Rm|^{-1/2}, g_{wp}(T_1^{(m)})}\right).
  \end{align}
  The constant $C$ comes from estimating terms coming from the cutoff function $\eta$.
  The first line is bounded by $C \epsilon_0 F_{full}$ by the assumption on $g_{init}$, and the following lines can be bounded by $C \epsilon_0 F_{full}(m)$ by taking $T_1^{(m)}$ sufficiently small (using the convergence of $g_{wp}(t)$ to $g_{mp}$ as $t \searrow 0$).  
  In the region $\bar N_{[u_\dagger/2, \infty)}$ we can use compactness similarly.
\end{proof}

\subsection{Global control and convergence}
The following lemma implies Theorem \ref{theorem:unsymmetrical_flow}.
\begin{lemma}
  Suppose we choose $\epsilon_0 < \bar \epsilon/C$ where $C$ is the constant from Lemma \ref{inith_close} and $\bar \epsilon$ is the constant from Lemma \ref{lemma:global_stable}.

  Let $g^{(m)}(t)$, $t \in [T_1^{(m)}, T_{final}^{(m)})$ be the Ricci-DeTurck flow starting from $g^{(m)}_{init}$.  Then $T_{final}^{(m)} > T_*$ for some $T_*$ independent of $m$.
  
  There is a sequence $m_j \searrow 0$ such that the time-dependent metrics $g^{(m_j)}(t)$ converge to a solution $g(t)$ of Ricci-DeTurck flow around $g_{bg}(t)$, with $g(0) = g_{init}.$  The convergence happens in $C^{2, \eta/2}_{loc}\left( \bar M \times [0, T_*]\setminus P\times \{0\} \right)$, where $P = \bar M \setminus M$.  Furthermore, the DeTurck vector fields $V[g^{(m)}, g_{bg}]$ converge, in $C^{1, \eta/2}_{loc}$, to $V[g(t), g_{bg}]$.
\end{lemma}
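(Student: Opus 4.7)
The plan is to verify the hypotheses of Lemma \ref{lemma:global_stable} uniformly in $m$, then pass to a subsequential limit using parabolic regularity away from $P \times \{0\}$. First I would check the input bounds: by Lemma \ref{inith_close}, for $T_1^{(m)} < \barr T_1^{(m)}(m)$ the initial perturbation $h^{(m)}_{init} = g^{(m)}_{init} - g_{bg}(T_1^{(m)})$ is controlled by $C \epsilon_0 F_{full}$ on $\bar N_{[0, 4 u_*]}$ and by $C \epsilon_0$ on $\bar N_{[4 u_*, \infty)}$; since $\epsilon_0 < \bar \epsilon / C$, this is an $\bar \epsilon$-perturbation in the sense required by Lemma \ref{lemma:global_stable}. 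The background $g_{bg}(t)$ agrees with $g_{wp}(t)$ on $\bar N_{[0, u_\dagger]}$ and hence with $g_{wp}(t)$ on a neighborhood of the set where the weighted norm uses $|\Rm|^{-1/2}$; on the compact set $\bar N_{[u_*/2, \infty)}$ the smoothness bound $B$ on time derivatives of $g_{bg}$ is uniform in $m$, being determined by $g_{wp}$ (Theorem \ref{theorem:model_pinch_flow} and compactness) and by the fixed metric $g_{init}$. Crucially, for $m < u_*/4$ the restriction of $g^{(m)}_{init}$ to $\bar N_{[u_*/2, \infty)}$ equals $g_{init}$, so the constants $T_*$ and $C$ in Lemma \ref{lemma:global_stable} can be chosen independently of $m$. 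This yields $T_{final}^{(m)} > T_*$ together with a uniform-in-$m$ bound $\twoeta{g^{(m)}(t) - g_{bg}(t)}{r_0 |\Rm|^{-1/2}}{g_{bg}} \leq C \epsilon_0 F_{full}$ on $[T_1^{(m)}, T_*]$.

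Second, to extract a subsequential limit, I would apply interior Schauder estimates to the Ricci–DeTurck equation around the fixed background $g_{bg}$, which is strictly parabolic. For any compact $K \subset \bar M \times [0, T_*] \setminus P \times \{0\}$, there exist $\delta, \tau > 0$ with $u_0 \geq \delta$ and $t \geq \tau$ on a slightly enlarged neighborhood of $K$; on that neighborhood $g_{bg}$ and its derivatives of all orders are uniformly bounded, and the uniform $C^{2,\eta}$ control on $h^{(m)}$ bootstraps through the Schauder estimates (in the form used in Section \ref{regularity}) to uniform $C^{k, \eta}$ bounds for every $k$. By Arzelà–Ascoli and a diagonal extraction over a compact exhaustion of $\bar M \times [0, T_*] \setminus P \times \{0\}$, I obtain $m_j \searrow 0$ and a limit $g(t)$ with $g^{(m_j)} \to g$ in $C^{2, \eta/2}_{loc}$. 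Passage to the limit in the Ricci–DeTurck equation is then immediate, and the Christoffel-symbol formula for $V[\cdot, g_{bg}]$ yields $V[g^{(m_j)}, g_{bg}] \to V[g, g_{bg}]$ in $C^{1, \eta/2}_{loc}$ from the $C^{2, \eta/2}_{loc}$ convergence of the metrics.

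Finally, I would verify the initial condition $g(0) = g_{init}$ on $M = \bar M \setminus P$. For any compact $K \subset M$, set $\delta = \tfrac{1}{2} \inf_K u_0 > 0$; for $m < \delta$ the cutoff $\eta_m(u_0)$ vanishes on $K$, so $g^{(m_j)}(T_1^{(m_j)}) = g^{(m_j)}_{init} = g_{init}$ on $K$. Since $T_1^{(m_j)} \searrow 0$ and the $C^{2, \eta/2}_{loc}$ convergence extends up to $\{t = 0\}$ away from $P$, the limit satisfies $g(0)|_K = g_{init}|_K$, as required. The main potential obstacle is keeping $T_*$ genuinely uniform in $m$, which relies on the observation that the $m$-dependence of $g^{(m)}_{init}$ is confined to $\bar N_{[0, 2m]}$ and on the uniform smoothness of $g_{bg}$ on $\bar N_{[u_*/2, \infty)}$; once that uniformity is in place, the remaining steps are standard parabolic bootstrapping and compactness.
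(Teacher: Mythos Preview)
Your approach is essentially the same as the paper's and is correct in outline, but one technical slip should be fixed. You claim that for any compact $K \subset \bar M \times [0,T_*] \setminus P \times \{0\}$ there exist $\delta,\tau>0$ with $u_0 \ge \delta$ \emph{and} $t \ge \tau$ on a neighborhood of $K$. That is false: $K$ can touch $\{t=0\}$ away from $P$, or touch $P$ at positive time. The paper handles this by treating the two regimes separately---uniform $C^{2,\eta}$ control on $K' \times [0,T_*]$ for any $K'$ compactly contained in $M$, and uniform $C^{2,\eta}$ control on all of $\bar M$ for $t \ge t_0 > 0$---and then covering $K$ by sets of these two kinds before running Arzel\`a--Ascoli plus diagonalization. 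Your argument goes through verbatim once you make this split.

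A second, harmless, difference: you bootstrap via interior Schauder estimates to $C^{k,\eta}$ for all $k$, whereas the paper just uses the $C^{2,\eta}$ bound coming straight out of Lemma~\ref{lemma:global_stable} (on compact sets away from $P\times\{0\}$ the weight $r_0|\Rm|^{-1/2}$ is bounded below, so the weighted H\"older norm already controls the unweighted one). The $C^{2,\eta}$ bound alone is enough for Arzel\`a--Ascoli compactness in $C^{2,\eta/2}$, so the bootstrap is unnecessary here. Your observation that $g^{(m)}_{init}|_{\bar N_{[u_*/2,\infty)}} = g_{init}$ for $m < u_*/4$, which pins down why the $T_*$ from Lemma~\ref{lemma:global_stable} is independent of $m$, is a point the paper leaves implicit; it is worth keeping.
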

\begin{proof}
  By the previous sections, Lemma \ref{lemma:global_stable} applies to $g_{init}^{(m)}$ with background metric $g_{bg}$.  This implies that in any set $K$ compactly contained in $M$ we have $C^{2, \eta}$ control on $g^{(m)}$ for $t \in [0, T_*]$. Also, for any $t_0 > 0$ we have $C^{2, \eta}$ control on $g^{(m)}$ for $t \in [t_0, T_*]$.  Therefore, we can apply Arzela-Ascoli and a diagonalization argument to get convergence on $C^{2, \eta/2}_{loc} \left( \bar M \times [0,T_*] \setminus P \times \{0\} \right)$ of a subsequence.

  Since the convergence happens in $C^{2, \eta/2}$, the equation passes to the limit.  Since $V[g^{(m)}, g_{bg}]$ depends on one derivative of $g^{(m)}$ with respect to $g_{bg}$, we get the convergence of the vector fields.
\end{proof}


\appendix
\section{Nearly constant regions of reaction-diffusion equations}\label{nearly_cnst_pde_sect}
Let $\ein > 0$ and $c_v \in \real$.  We study solutions to
\begin{align}
  \label{eq:29}
  \square u
  &= -\ein + c_v u^{-1}|\nabla u|^2 \\
  &= -\ein + c_v v \label{mypde}
\end{align}
where we have defined $v = u^{-1}|\nabla u|^2$.  
We consider $u:M \times [T_1,T_2] \to \real$ which satisfies \eqref{mypde}, on an evolving Riemannian manifold $(M, g(t))$ which satisfies Ricci flow. (If $(M, g(t))$ does not satisfy Ricci flow, there is another term in \eqref{v_eqn_crude} below.)  The value of $c_v$ does not come into play very much here.  We are interested investigating regions where $v$ is small, and controlling other functions (and in particular $v$) in terms of $u$.  All constants in this section may implicitly depend on $c_v$ and $\ein$.

Applying the parabolic version of the Bochner formula ((1.6) of \cite{weakhaslhofernaber}) yields the following: $v$ satisfies
\begin{align}
  \square v &= u^{-1}\ein v + E_{error} v^2  \label{EQ:22}
\end{align}
where $E_{error}:M \times [T_1, T_2] \to \real$ satisfies
\begin{align}
  \label{v_eqn_crude}
  -C \left(1 +\frac{|\nabla \nabla u|^2}{v^2}   \right)
  \leq E_{error}
  \leq C 
\end{align}
for some constant $C$. Using this equation, Lemma \ref{sup_solns} will allow us to control $v$ from above, and also from below if we obtain a priori that $\frac{|\nabla \nabla u|^2}{v^2}$ is bounded.

We will use functions of $u$ and $t$ to create sub- and supersolutions to other PDE, and in particular to control $v$.
If $F$ depends on $u$ and $t$ alone then
\begin{align}
  \label{eq:116}
  \square F = \left( (\square u)\dd1 F + \dd{t} F - v \dd{2} F \right) \left( u^{-1} F \right).
\end{align}
Here, we use the notation $\dd{k}{F} = u^k \frac{1}{F}\partial_u^k F$ and $\dd{t} F = u \frac{1}{F} \partial_t F$.  These are both invariant under scaling the system or $F$, and in our situation they will always be bounded.  Since we have an equation for $\square u$ we can calculate further,
\begin{align}
  \square F = \left( \dd{t} F - \ein \dd1 F + v\left(c_v \dd{1}F -  \dd{2} F\right) \right) \left( u^{-1} F \right). \label{utdep_square_partic}
\end{align}
This formula tells us that when $v$ is small and $\dd{1}F, \dd{2}F$ are controlled, $\square F$ is approximately the first order linear operator $L[F] \defeq \left( \partial_{t;u} - \ein \partial_u \right) F = \left(\dd{t}F - \ein \dd1 F \right) (u^{-1}F) $.

A relevant function is $\hat u(u,t) \defeq u + \ein t$.  We will also use $Q(u,t) \defeq u^{-1}\hat u$.  These are related to the linear operator $L[F]$.  $\hat u$ gives the characteristic curves of the equation, and $Q$ is a solution to $L[F] = \ein u^{-1}F$ with constant initial data 1.

The following lemma uses these functions to partially solve certain linear equations on the evolving manifold.  We claim that for a given smoooth initial function $Z_0 : \real_+ \to \real_+$, $Z \defeq Q^a \cdot \left( Z_0 \circ \hat u \right)$ approximately solves a certain equation. To be explicit,
\begin{align}
  Z(x, t) = \left( \frac{u(x, t) + \ein t}{u(x, t)} \right)^a Z_0\left( u(x, t) + \ein t \right).
\end{align}
\begin{lemma}{(Approximate solutions to equations)}\label{approx_soln}
  $Z$ satisfies
  \begin{align}
    \left( \square - a \ein u^{-1} \right) Z = E u^{-1}v {Z} \label{squareZ}\\
    {Z}(p, 0) = Z_0(u(p,0)) 
  \end{align}
  where $E : M \times [0, T) \to \real$ satisfies $|E| \leq C(1 + |\dd1{Z_0}| + |\dd2{Z_0}|)$, for some constant $C$.  Also,
  \begin{align}
    |\dd1 Z | + |\dd2 Z| \leq C(1 + |\dd1 Z_0| + |\dd2 Z_0|).
  \end{align}
\end{lemma}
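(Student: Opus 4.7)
The plan is to apply formula \eqref{utdep_square_partic} directly to $Z = Q^a(Z_0 \circ \hat u)$ and verify an exact cancellation in the first-order piece. Since $Z$ depends only on $u$ and $t$, \eqref{utdep_square_partic} gives
\begin{align}
\square Z = \Bigl( \dd{t}Z - \ein \dd{1}Z + v(c_v \dd{1}Z - \dd{2}Z)\Bigr)\,(u^{-1}Z),
\end{align}
so the claim $(\square - a\ein u^{-1})Z = E u^{-1}vZ$ is equivalent to the identity $\dd{t}Z - \ein \dd{1}Z = a\ein$, with $E = c_v \dd{1}Z - \dd{2}Z$.

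I would first compute $\dd{1}Z$ and $\dd{t}Z$ by differentiating $\log Z = a\log\hat u - a\log u + \log Z_0(\hat u)$. Using $\partial_u \hat u = 1$ and $\partial_t \hat u = \ein$, a short calculation gives
\begin{align}
\dd{1}Z &= -\frac{a\ein t}{\hat u} + \frac{u}{\hat u}\,\dd{1}Z_0\bigr|_{\hat u}, \\
\dd{t}Z &= \frac{au\ein}{\hat u} + \frac{u\ein}{\hat u}\,\dd{1}Z_0\bigr|_{\hat u}.
\end{align}
Subtracting $\ein\dd{1}Z$ from $\dd{t}Z$, the $\dd{1}Z_0$ terms cancel and the remaining pieces combine as $a\ein(u + \ein t)/\hat u = a\ein$. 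This is the key identity: it is the exact linear cancellation that motivates the choice of $Z = Q^a(Z_0\circ\hat u)$, since $\hat u$ is the characteristic variable for $L = \partial_{t;u} - \ein \partial_u$ and $Q^a$ is chosen to absorb the $a\ein u^{-1}$ shift. With this identity in hand, the only remaining contribution to $\square Z - a\ein u^{-1}Z$ is the stated $v$-term, yielding $E = c_v \dd{1}Z - \dd{2}Z$.

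Next I would bound $\dd{1}Z$ and $\dd{2}Z$. From the explicit formula above, $|\dd{1}Z| \leq a + |\dd{1}Z_0|$, using $|\ein t/\hat u|,|u/\hat u|\leq 1$. For $\dd{2}Z$, write $\dd{2}Z = (\dd{1}Z)^2 + u^2 \partial_u^2 \log Z$, and differentiate $\log Z$ once more to get
\begin{align}
u^2 \partial_u^2\log Z = a\bigl(1 - (u/\hat u)^2\bigr) + (u/\hat u)^2\bigl(\dd{2}Z_0 - (\dd{1}Z_0)^2\bigr)\bigr|_{\hat u},
\end{align}
with $1 - (u/\hat u)^2 = \ein t(2u + \ein t)/\hat u^2 \leq 2$. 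Combining, $|\dd{2}Z|$ is controlled by a constant plus $|\dd{1}Z_0| + |\dd{2}Z_0|$ (using the standing implicit bound on $\dd{1}Z_0$ to absorb the quadratic term). Plugging these bounds into $E = c_v\dd{1}Z - \dd{2}Z$ delivers the claimed estimate on $E$, and the initial condition $Z(p,0) = Z_0(u(p,0))$ is immediate since $\hat u = u$ and $Q = 1$ at $t = 0$.

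The entire proof is a direct calculation, so there is no real obstacle; the only nontrivial point is recognizing that the ansatz $Q^a(Z_0\circ\hat u)$ is built precisely so that the first-order terms collapse to the constant $a\ein$, leaving only a $v$-suppressed error.
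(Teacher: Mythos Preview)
Your proposal is correct and follows the same approach as the paper: both apply \eqref{utdep_square_partic} and verify that the first-order piece $\dd{t}Z - \ein\,\dd{1}Z$ collapses to the constant $a\ein$, leaving only the $v$-weighted error $E = c_v\,\dd{1}Z - \dd{2}Z$. The paper phrases the cancellation more abstractly via $L[\hat u]=0$ and $L[Q]=\ein u^{-1}Q$, while you carry out the logarithmic-derivative computation explicitly (and correctly flag that bounding the $(\dd{1}Z_0)^2$ term requires the standing boundedness of $\dd{1}Z_0$, a point the paper's ``just some more calculus'' leaves implicit).
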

\begin{proof}
  This is just a calculation, but the following steps give the idea.  First, note that $L[\hat u] = 0$ and $L[Q] = \ein u^{-1}Q$.  This lets us calculate that $L[Z_0 \circ \hat u] = 0$ and then $L[Z] = a \ein u^{-1}Z$.  Therefore by \eqref{utdep_square_partic} all that's left to see is that $|\dd1 Z| + |\dd2 Z| \leq C ( 1 + |\dd1 Z_0| + |\dd2 Z_0|)$, which is just some more calculus.
\end{proof}  

Now suppose the term $E_{error}$ in \eqref{EQ:22} is bounded.  Then we may expect $v$ itself to be approximately given by a solution to $\square v - \ein u^{-1}v = 0$.  By Lemma \ref{approx_soln} we find that $v$ should be approximately given by $V \defeq Q \cdot \left( V_0 \circ \hat u \right)$ for some initial data $V_0:M \to \real$.  This, in turn, will give us control on the error term $E u^{-1} v Z$ in Lemma \ref{approx_soln}.

Now we create sub- and supersolutions to $\square z = a \ein u^{-1}z$, based off of the approximate solution $Z$, which beat the error in this approximate solution.  The supersolution is defined as $Z^+ = (1 + DV)Z$, and the subsolution as $Z^- = (1-DV)Z$, for some sufficiently large $D > 0$.  We will assume that
\begin{align}
  \sup_{\real_+} |\dd1 V_0| + |\dd2 V_0| + |\dd1 Z_0| + |\dd2 Z_0| \leq C_0.
\end{align}

\begin{lemma}{(Supersolutions to parabolic equations)}\label{sup_solns}
  Suppose $D > \berr D(C_0, a) >0$.  There is a  $c > 0$ and $\epsilon(D, C_0, a) > 0$ with the following property.

  Let $\Omega$ be a subset of space-time where
  $v(p,t) \leq 2 V$ and $V < \epsilon$.
  Then $Z^-$ and $Z^+$ are sub- and supersolutions to $(\square - a \ein u^{-1})$ on $\Omega$.  More precisely, 
  \begin{align}
    \left( \square - a \ein u^{-1} \right) Z^+ &\geq (cD) u^{-1}v Z^+ \label{supsoln_ineq}\\
    \left( \square - a \ein u^{-1} \right) Z^- &\leq - (cD) u^{-1}v Z^- \label{subsoln_ineq}
  \end{align}
  on $\Omega$.
\end{lemma}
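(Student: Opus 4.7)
The plan is to compute $(\square - a\mu u^{-1})Z^{\pm}$ directly using the product rule for $\square$ and the approximate equations from Lemma~\ref{approx_soln} for both $Z$ and $V$. Since $V = Q\cdot(V_0\circ\hat u)$ is itself an instance of the construction of Lemma~\ref{approx_soln} with $a=1$, we have
\begin{align*}
(\square - \mu u^{-1})V &= E_V\, u^{-1} v V, \qquad |E_V| \le C, \\
(\square - a\mu u^{-1})Z &= E_Z\, u^{-1} v Z, \qquad |E_Z| \le C.
\end{align*}
Applying $\square(fg) = (\square f)g + f(\square g) - 2\ip{\nabla f}{\nabla g}$ to $Z^+ = (1+DV)Z$ and subtracting $a\mu u^{-1}Z^+$, the terms $a\mu u^{-1}(1+DV)Z$ cancel and I would arrive at
\begin{align*}
(\square - a\mu u^{-1})Z^+ = D\mu\, u^{-1}VZ \,+\, (1+DV)E_Z\, u^{-1}vZ \,+\, D E_V\, u^{-1}v V Z \,-\, 2D\ip{\nabla V}{\nabla Z}.
\end{align*}
The first term on the right is the good term, of order $D u^{-1}VZ$; the remainder are errors I need to beat.

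The key simplification is for the cross-gradient term. Since $V$ and $Z$ depend only on $u$ and $t$, we have $\nabla V = (\partial_u V)\nabla u$ and $\nabla Z = (\partial_u Z)\nabla u$, so
\begin{align*}
\ip{\nabla V}{\nabla Z} = (\partial_u V)(\partial_u Z)|\nabla u|^2 = \dd1 V \cdot \dd1 Z \cdot u^{-1} v V Z,
\end{align*}
and Lemma~\ref{approx_soln} bounds $|\dd1 V|$ and $|\dd1 Z|$ by a constant depending only on $C_0$. Combined with the bound $V<\epsilon$ from the hypothesis, this shows the cross term is of size at most $CD\epsilon\, u^{-1}vZ$. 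Similarly the two error terms are bounded by $(1 + DV)C\, u^{-1}v Z$ and $DC\epsilon\, u^{-1}v Z$.

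To finish, I would use the hypothesis $v \le 2V$ to replace the good term's $V$ by $v/2$, giving a lower bound $(D\mu/2)u^{-1}vZ$. Grouping errors yields
\begin{align*}
(\square - a\mu u^{-1})Z^+ \ge \Big(\tfrac{D\mu}{2} - C - C_1 D\epsilon\Big) u^{-1}v Z.
\end{align*}
Choose $D$ large (depending on $C_0$ and $a$) so that $D\mu/2 - C \ge 3cD$ for some $c = c(\mu) > 0$, and then $\epsilon$ small (depending on $D$) so that $C_1 D\epsilon \le cD$ and moreover $1+DV \le 2$ so that the target $cD u^{-1} v Z^+ \le 2cD u^{-1} v Z$. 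This gives \eqref{supsoln_ineq}. The subsolution inequality \eqref{subsoln_ineq} for $Z^-=(1-DV)Z$ follows from the identical calculation with $D$ replaced by $-D$, where the good term again has the favorable sign.

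The only real step is the explicit cancellation of the $a\mu u^{-1}Z$ terms and the recognition that the cross-gradient term is effectively small by the functional dependence on $u$. The rest is bookkeeping, and the quantifier order (pick $D$ first, then $\epsilon$) is dictated by the appearance of $D\epsilon$-sized errors.
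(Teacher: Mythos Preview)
Your proof is correct and follows essentially the same approach as the paper. The paper organizes the computation slightly differently by writing $Z^+ = Z + Z_2$ and observing that $Z_2 = DVZ = DQ^{a+1}\bigl((V_0 Z_0)\circ \hat u\bigr)$ is itself an instance of Lemma~\ref{approx_soln} with parameter $a+1$, which absorbs your cross-gradient term into the error $E_2$; but the identification of the good term $D\ein\, u^{-1}VZ$, the error bookkeeping, and the quantifier order (first $D$ large, then $\epsilon$ small) are identical to yours.
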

\begin{proof}
  Write $Z^+ = Z + Z_2$ with $Z_2 = D V Z = DQ^{p+1}((V_0\cdot Z_0) \circ \hat u)$.  Then we can use Lemma \ref{approx_soln} and in particular \eqref{squareZ} to calculate the heat operator applied to $Z_2$:
  \begin{align}
    \label{eq:52}
    \square Z_2 - (a+1) \ein u^{-1}Z_2 = E_2 u^{-1}v Z_2
  \end{align}
  where $E_2$ is some error which is absolutely bounded depending on $C_0$. 
  In terms of the linear equation we are interested in, this means
  \begin{align}
    \label{eq:52}
    \left( \square  - a \ein u^{-1} \right)Z_2
    &=\ein u^{-1} \left(1 +  E_2 v \right) Z_2 
    =\ein Du^{-1} \left(1 + E_2 v \right) VZ.
  \end{align}
  By choosing $\epsilon$ small enough we can force $1 + E_2 v \geq \on2$ to hold in $\Omega$.  Now using again equation \eqref{squareZ} from Lemma \ref{approx_soln}, but now applied to $Z$, we find
  \begin{align}
    \label{eq:53}
    (\square - a \ein u^{-1})(Z^+)
    &= (\square - a \ein u^{-1})(Z + Z_2) \\
    &\geq \left( E u^{-1}vZ + \on2 \ein u^{-1}D VZ \right) \\
    &= 
      \left( \frac{E}{D}  + \on2\ein \frac{V}{v} \right)
      \frac{1}{1 + D V} Du^{-1}v Z^+
  \end{align}
  Here, $E$, another error term of unknown sign, is bounded independently of $D$. The lemma follows by using the assumption that $v \leq \oh V$, choosing $\berr D$ large enough to force $|\frac{E}{D}| \leq \on8 \ein$, and then choosing $\epsilon$ small enough so that $\frac{1}{1 + D V} \geq \oh$.  Then we take $c = \on{16} \ein$.
\end{proof}

The next lemma claims that the bounds on $|\dd{i}{(Z^+)}|$ carry over to the sub- and supersolutions.
\begin{lemma}\label{dd1z_bnd}
There is a constant $C$ depending on $C_0$ and $p$, and in particular independent of $D$, such that
\begin{align}
  |\dd1 {(Z^+)}| + |\dd2{(Z^+)}| \leq C,
\end{align}
and similarly for the subsolution $Z^-$.  

If in addition we assume that $|\nabla \nabla u| \leq C_{hess} v$, then
$|\nabla\nabla Z^+| \leq Cv$ for a constant depending on $C_0$ and $C_{hess}$.
\end{lemma}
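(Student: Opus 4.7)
My plan is to prove all three bounds by direct logarithmic differentiation of $Z^{\pm} = (1 \pm DV) Z$, exploiting the key structural observation that although $D$ may be chosen large, the ratio $DV/(1+DV)$ lies in $[0,1]$ uniformly in $D$ (since $V \geq 0$), so the $D$-dependence is automatically controlled.

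For the first-order bound, the product rule applied to $Z^+ = Z(1+DV)$ followed by division by $Z^+$ and multiplication by $u$ gives
\begin{align*}
\dd1{Z^+} = \dd1{Z} + \frac{DV}{1+DV}\,\dd1{V}.
\end{align*}
Since the coefficient of $\dd1{V}$ is bounded by $1$, we read off $|\dd1{Z^+}| \leq |\dd1{Z}| + |\dd1{V}|$, and both summands are bounded by a constant depending only on $C_0$ by Lemma \ref{approx_soln}. An entirely analogous second-order expansion yields
\begin{align*}
\dd2{Z^+} = \dd2{Z} + \frac{DV}{1+DV}\bigl( 2\,\dd1{Z}\,\dd1{V} + \dd2{V}\bigr),
\end{align*}
which is bounded by the same principle. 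The bounds for $Z^-$ follow identically with $D \mapsto -D$; one only needs $\epsilon$ small enough that $1 - DV$ is bounded away from zero in the relevant region, which is already part of the standing hypothesis $V < \epsilon$ for $D$ fixed.

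For the Hessian estimate, since $Z^+$ is a function of $u$ and $t$ alone,
\begin{align*}
\nabla_i \nabla_j Z^+ = (\partial_u Z^+)\, \nabla_i \nabla_j u + (\partial_u^2 Z^+)\, \nabla_i u\, \nabla_j u.
\end{align*}
Rewriting $\partial_u Z^+ = \dd1{Z^+}\,Z^+/u$ and $\partial_u^2 Z^+ = \dd2{Z^+}\,Z^+/u^2$, and using $|\nabla u|^2 = uv$ together with the hypothesis $|\nabla\nabla u| \leq C_{hess} v$, we obtain
\begin{align*}
|\nabla\nabla Z^+| \leq C \bigl( |\dd1{Z^+}| + |\dd2{Z^+}| \bigr)\,\frac{Z^+}{u}\,v,
\end{align*}
and the preceding bounds on $|\dd k Z^+|$ complete the proof. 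There is no serious obstacle here; the lemma is essentially an algebraic consequence of the factored form $Z^+ = Z(1 + DV)$, the only noteworthy point being the uniform-in-$D$ control extracted from the ratio $DV/(1+DV)$.
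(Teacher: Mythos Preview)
Your proof is correct and follows essentially the same approach as the paper: the key observation in both is that $\dd1(1+DV) = \tfrac{DV}{1+DV}\,\dd1 V$ with $\tfrac{DV}{1+DV}\in[0,1]$, making the bound independent of $D$, and the Hessian formula is identical. The only cosmetic difference is that you invoke Lemma~\ref{approx_soln} for the bounds on $\dd i Z$ and $\dd i V$, whereas the paper recomputes $\dd1 V$ directly from $V = Q\cdot(V_0\circ\hat u)$; you are also slightly more explicit about the second-order expansion and about the $Z^-$ case requiring $1-DV$ bounded below.
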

\begin{proof}
  First, derive bound for $V = Q V_0 \circ \hat u$ and $Z = Q^a Z_0 \circ \hat u$.
  \begin{align}
    \dd1 V &= \dd1 Q + \left( \dd1{(V_0)}\circ \hat u \right) \dd1{(\hat u)}\\
           &= -(1-Q^{-1}) + \left( \dd1{(V_0)}\circ \hat u \right)Q^{-1},
  \end{align}
  so $|\dd1 V| \leq 1 + \sup |\dd1 V_0|$.  Similarly, we can bound $\dd1 Z$ by $p + \sup |\dd1 Z_0|$.

  Now calculate,
  \begin{align}
    \label{eq:32}
    \dd1{(1 + D V)}
    &= \frac{u \partial_u (1 + D V)}{1 + DV} \\
    &= \frac{DV}{1 + DV} \frac{u \partial_u V}{V} \leq \dd1{V}.
  \end{align}
  Once we have this, the full bound on $\dd1{(Z^+)}$ follows from
  \begin{align}
    \dd1{(Z^+)} = \dd1{\left((1 + DV)Z\right)} = \dd1{\left( 1+DV \right)} + \dd1{Z}.
  \end{align}
  The bound on $\dd2{(Z^+)}$ is similar.

  To get the second claim, use the following which is valid for any function $F$ of $u$ and $t$:
  \begin{align}
    \nabla \nabla F
    &= (\partial_u^2F)\nabla u \otimes \nabla u + \partial_u F \nabla \nabla u \\
    &= u^{-1}F \cdot \left( (\dd2 F)u^{-1}\nabla u \otimes \nabla u + \dd1 F \nabla \nabla u \right).
\end{align}
\end{proof}

\section{Calculations on the Bryant soliton}\label{bryant_facts}
Let $(Bry, g_{Bry}, X)$ be the Bryant steady soliton with minimum scalar curvature $R_0$.  Bryant's original work is \cite{bryant_orig}, see also Section 1.4 of \cite{bookgeometric} for an exposition of the construction.  The extra analysis carried out here is generally justified by the analyticity of the solution.  Let
$
  g_{Bry} = ds^2 + u_{Bry}g_{S^q} = ds^2 + \phi^2_{Bry}g_{S^q}
$
and
$
  X = \grad f$.

On any steady soliton we have $R + |\nabla f|^2 = R_0$  (Corollary 1.16 in \cite{bookgeometric}).  Taking the trace of the soliton equation we have $R + \lap f = 0$, so we find
$
  \lap_f (-f) = R_0
$.  Since the Bryant soliton is a singly warped product, we have more precisely $df = - \sqrt{R_0 - R} ds$.  

Either \cite{bryant_orig} or \cite{bookgeometric} show that $\phi_{Bry} = O(\sqrt{s})$ as $s \to \infty$ and $R = O(s^{-1})$.  To find the exact coefficient use the equation for $\phi$,
\begin{align}
  0 = \phi_{ss} - f_s \phi_s - (q-1)\phi^{-1}\left(1 - \phi_s^2\right),
\end{align}
so $\phi \sim R_0^{-1/4}\sqrt{\ein s}$ and $u \sim R_0^{-1/2}\ein s$ at $\infty$.

\subsection{Next order approximation}\label{bryant_facts_nextorder}

So far we have found as $s \to \infty$
\begin{align}
  f &= -(1 + o(1)) R_0^{-1/2} s  \\
  u &= (1 + o(1)) \mu R_0^{-1/2} s.
\end{align}
Now we seek the next term in the asymptotic expansion.  The function $u$ satisfies
\begin{align}
  0 = u_{ss} - f_s u_s + c_v u^{-1}u_s^2 - \ein\label{u_eqn_bry1}
\end{align}
where $c_v = \oh (\oh \ein - 1)$.  We also have $\lap_f (-f) = R_0$ or
\begin{align}
  0 = (-f)_{ss} - f_s(-f)_s + q \phi^{-1}\phi_s (-f_s) = R_0.
\end{align}
Strictly in terms of $u$ and $\bar f = -f/R_0$ we have
\begin{align}
  u_{ss} + R_0 \bar f_s u_s + c_v u^{-1}u_s^2 &= \mu \\
  \bar f_{ss} + R_0 \bar f_s^2 + \oh q u^{-1}u_s \bar f_s &= 1 
\end{align}
Write $G = \bar f_s$. 
\begin{align}
  u_{ss} + R_0 G u_s + c_v u^{-1}u_s^2 &= \mu \label{ug_1}\\
  G_{s} + R_0 G^2 + \oh q u^{-1}u_s G &= 1  \label{ug_2}
\end{align}
Now write $u = \mu R_0^{-1/2} s + u_1$ and $G = R_0^{-1/2} + G_1$.  Partially writing out \eqref{ug_1} and \eqref{ug_2},
\begin{align}
  u_{1, ss}
  + R_0
  \left( R_0^{-1} \ein + \ein R_0^{-1/2} G_1 + R_0^{-1/2}u_{1,s} + u_{1,s}G_1 \right)
  + c_v u^{-1}u_s^2 &= \ein, \\
  G_{1,s}
  + R_0
  \left( R_0^{-1} + 2 R_0^{-1/2} G_1 + G_1^2 \right)
  + \oh q u^{-1}u_s G 
  &= 1.
\end{align}
Simplifying,
\begin{align}
  u_{1,ss} + \ein R_0^{1/2} G_1 + R_0^{1/2} u_{1,s} + R_0 u_{1,s} G_1 + c_v u^{-1}u_s^2 = 0,\\
  G_{1,s} + 2 R_0^{1/2}G_1 + R_0 G_1^2 + \oh q u^{-1}u_s G = 0.
\end{align}
We have $u^{-1} = \ein^{-1} R_0^{1/2} s^{-1} \left( 1 - u_1 + o(u_1) \right)$.  The highest order terms in the equation for $G_1$ are $2 R_0^{1/2}G_1 + \oh q R_0^{-1/2}s^{-1}$,
therefore
\begin{align}
  G_1 = (1 + o(1)) \left( - \on4 R_0^{-1} s^{-1} \right).
\end{align}
Then the highest order terms in the equation for $u_1$ are
$
  \ein R_0^{1/2} G_1 + R_0^{1/2}u_{1,s} - c_v \ein R_0^{-1/2}s^{-1}
$
which gives
\begin{align}
  u_1 = (1 + o(1))R_0^{-1}\left(\on4 q \ein + c_v \ein \right) \log s
\end{align}

Unravelling definitions, we have found
\begin{align}
  \bar f &= R_0^{-1/2}s + \on4 q R_0^{-1}\log s + o(\log s),  \\
  u &= \ein R_0^{-1/2}s + R_0^{-1}\left( \on4 q + c_v \right) \ein \log s + o(\log s),
\end{align}
so writing $f$ in terms of $u$,
\begin{align}\label{bry_second_order_expansion}
  \bar f = \ein^{-1} u - \on4 q R_0^{-1} \log u + o(\log u).
\end{align}

\subsection{Continuation of the proof of Lemma \ref{wpert_exist}}
\label{wpert_exist_continuation}

For the Bryant soliton, we have \eqref{u_eqn_bry1} or equivalently
\begin{align}
  \lap_X u - \ein + (c_v - \oh q)v_{\Bry} = 0\label{u_eqn_bry}.
\end{align}
(The term $-\oh q v_{\Bry}$ comes because $\lap u = u_{ss} + \oh q u^{-1}u_s^2$.)
We also have, defining $\bar f = -f/R_0$,
$
  \lap_X \bar f = 1
$. (See the introduction of this appendix.)
Thinking in terms of $u$ this says,
\begin{align}
  \partial_uf \lap_X u + u_s^2\partial_u^2f &= -R_0
\end{align}
Then, using \eqref{u_eqn_bry}
\begin{align}
  ( \ein - (c_v - \oh q)v_{\Bry})\partial_uf +  u_s^2\partial_u^2f &= -R_0 \\
  ( \ein - (c_v - \oh q)v_{\Bry})\partial_uf +  uv_{\Bry}\partial_u^2f &= -R_0
\end{align}
The asymptotics claimed in the Lemma are given in \eqref{bry_second_order_expansion}.


\section{Equations for warped products and Ricci flow}\label{warped_product_section}
In this section we review some of the properties of Ricci flow on warped products.  The  metrics are on  the topology $M = B^m \times N^q$, for some manifold $B$ which we call the base.    The metrics have the form
\begin{align}
  g = g_B + \phi^2(b)g_{N},
\end{align}
where $g_B$ is a metric on $B$, $g_{N}$ is a metric on $N$, and $\phi: B \to \real_+$.  We assume that $g_N$ is an Einstein manifold: $2\Rc[g_N] = \ein_N g_N$.  

In this thesis we are mostly concerned with doubly warped products over intervals, i.e. metrics on $I \times S^q \times F$ of the form
$
  a(x)dx^2 + \phi^2(x) g_{S^q} + \psi^2(x) g_{F}
$.
These are singly warped products in two ways: with base $I \times S^q$ and fiber $F$ or with base $I \times F$ and fiber $S^q$.  Both points of view have been useful for our intuition.  A big simplification for a doubly warped product over an interval is that the hessian of a function of $x$ is much simpler than that of a function of a general base.

Everything in this section can be found or derived from Section 7 of \cite{semiriem}.

\subsection{Curvatures}
The curvature of a warped product can be described as follows.  If $U$ and $V$ are perpendicular unit vectors on the fiber, then
\begin{align}
  R(U,V,U,V) = \frac{R_N(U,V,U,V) - |\nabla \phi|^2}{\phi^2}.
\end{align}
In particular, if $(g_N, N)$ is the metric of constant sectional curvature $Sec$, then
\begin{align}
  R(U,V,U,V) = \frac{Sec - |\nabla \phi|^2}{\phi^2}.
\end{align}

For vectors $U$ on the fiber and $X$, $Y$ on the base, we have
\begin{align}
  R(U,X,U,Y) = - \frac{\nabla_X \nabla_Y \phi}{\phi}, \label{hess_sec}
\end{align}
and if both $W$, $X$, $Y$, and $Z$ are all vectors on the base, then
\begin{align}
  R(X,Y,Z,W) = R_B(X,Y,Z,W).
\end{align}

From these formulae, we can calculate the Ricci curvature directly from definition. Using $2\Rc[g_N] = \ein_N g_N$, 
\begin{align}
  \Rc(U,V) &= \left( \Rc_{B}(X,Y) - q \phi^{-1}\nabla_X \nabla_Y \phi \right)\\
  &+ 
  \left(- \phi\lap_{B} \phi
             + \oh \mu \left(1 - \frac{2(q-1)}{\mu} |\nabla \phi|^2 \right)
             \right) \phi^2g_{N}
\end{align}

\subsection{Ricci flow for warped products}\label{rf_warped_products}
If $g$ evolves by Ricci flow, then
\begin{align}
  \Rf[g_B] &= 2 q \phi^{-1}\nabla \nabla \phi \\
  \square_B \phi &= - \oh \ein \phi^{-1}\left( 1 - \frac{2(q-1)}{\ein}|\nabla \phi|^2 \right) \\
  \square_M \phi &= - \phi^{-1}|\nabla \phi|^2 - \oh \ein \phi^{-1}
\end{align}
and $u = \phi^2$ satisfies, setting $v = u^{-1}|\nabla u|^2$,
\begin{align}
  \square_M u
  &= \left( - u^{-1}\mu  \right)u - v\label{evo_u_M}\\
  \square_B u
  &= - \ein + \on4 (2(q-1) - 2) v\label{evo_u_singlywarped}.
\end{align}
Recall in the case $g_N = g_{S^q}$ we have $\ein = 2(q-1)$.

\subsection{Curvatures for doubly warped products}
Now consider a metric of the form
\begin{align}
  g = a(x) dx^2 + \phi^2 g_{F_1} + \psi^2 g_{F_2}, \quad x \in I.
\end{align}
We define an arclength coordinate $s$ (up to a constant) by $ds^2 = a dx^2$.
We can view $g$ as a warped product with fiber $g_{F_1}$ over base $I \times F_2$, as well as a warped product with fiber $g_{F_2}$ over the base $I \times F_1$.  Consider for simplicity the case when $g_{F_1}$ has constant sectional curvature $Sec_1$ and $g_{F_2}$ has constant sectional curvature $Sec_2$.  Then there are five special sectional curvatures:
\begin{align}
  L_1 &= \frac{Sec_1 - |\nabla \phi|^2}{\phi^2} = \frac{Sec_1 - \phi_s^2}{\phi^2},
  \quad L_2 = \frac{Sec_2 - \psi_s^2}{\psi^2}, \\
  K_1 &= - \frac{\phi_{ss}}{\phi},
        \quad K_2 = - \frac{\psi_{ss}}{\psi},
        \quad K_{mix} = - \frac{\phi_s \psi_s}{\phi \psi}.
\end{align}
The curvatures $L_1$ and $L_2$ are those that we get from planes spanned by two perpendicular vectors tangent to the same fiber.  $K_1$ and $K_2$ come from planes spanned by $\partial_s$ and a vector on one of the fibers.  $K_{mix}$ comes from a plane spanned by a vector on $F_1$ and a vector on $F_2$; this comes from the extra terms (compared to a product) in computing the hessian in \eqref{hess_sec}.

\subsubsection{Curvatures in terms of $u$, $v$ and $w$}\label{curvatures_our_coordinates}
We put the curvatures of a doubly warped product in terms of $v$ and $w$, and their $u$ derivatives.  Recall the definitions
\begin{align}
  u = \phi^2, \quad w = \psi^2, \quad v = u^{-1}|\nabla u|^2 = 4 |\nabla \phi|^2
\end{align}
First, we have
\begin{align}
  L_1 = u^{-1}Sec_1 - \on4 u^{-1} v.
\end{align}
Now calculate,
$
  \partial_s u \partial_u v = (\partial_s u) 4 (\partial_s \phi) (\partial_u \partial_s \phi) = 4 (\partial_s \phi) (\partial_s^2 \phi)
$
so
$
  2 \phi \partial_u v = 4 (\partial_s^2 \phi)
$
and
\begin{align}
  K_1 =-\on2  \partial_u v = -\frac{\partial_s^2 \phi}{\phi}.
\end{align}
Now we calculate the curvatures involving $\psi$.
\begin{align}
  \psi_s = \on2 w^{-1/2}w_s = \on2 w^{-1/2}w_uu_s = \on2 w^{-1/2}u^{1/2}v^{1/2}w_u
\end{align}
so
\begin{align}
  L_2 &= \frac{Sec_2}{w} - \on4 u^{-1}v \left( u^2 w^{-2}w_u^2 \right) \\
  K_{mix} &= (\on2 u^{-1/2}v^{1/2}) (\on2 w^{-1}u^{1/2}v^{1/2}w_u) = \on4 u^{-1}v \left( uw^{-1}w_u \right) .
\end{align}
Finally, we calculate
\begin{align}
  \psi_{ss} &= \on4 \left( w^{-3/2} u^{1/2} v^{1/2}w_u w_s + w^{-1/2}u^{-1/2}v^{1/2}w_uu_s + w^{1/2}u^{1/2}v^{-1/2}w_uv_s + w^{-1/2}u^{1/2}v^{-1/2}w_{us} \right) \\
  &= \on4 w^{-1/2} u v (w^{-1}w_u^2 + u^{-1} w_u + v^{-1} w_u v_u + w_{uu}).
\end{align}
Therefore,
\begin{align}
  K_2 = - \on4 u^{-1} v (u^2w^{-2}w_u^2 + uw^{-1}w_u + u^2v^{-1} w^{-1}w_u v_u + u^2w^{-1}w_{uu})
\end{align}

\subsection{Deriving the evolution of $v$.}\label{v_deriving_section}

In this Lemma, $\Rf[g_B] = \partial_t g_B - \left( - 2 \Rc_{g_B} \right)$.

\begin{lemma}\label{z_calc_lemma}
  Suppose $(B, g_B)$ is an evolving Riemannian manifold and $\phi:B \times [T_1, T_2] \to \real_+$ is an evolving function on $B$.  Suppose $g_B$ and $\phi$ satisfy
  \begin{align}
    \label{eq:70}
    \Rf[g_B]
    &= 2 c_1 \phi^{-1}\nabla\nabla \phi \\
    \square_B \phi &= \oh\phi^{-1}\cdot (-\ein+ c_z z) 
  \end{align}
  where $z = |\nabla \phi|^2$.  Let $\kappa(p,t)$ be the norm of the second fundamental form of the level set of $u$ passing through $p$ at time $t$.  Then $z$ satisfies
  \begin{align}
    \label{eq:65}
    \square z
    &= \phi^{-2}(\ein -  c_z z) z \\
    &+ (c_z -  c_1) \ip{\nabla z}{\nabla \log \phi}  - z^{-1}|\nabla z|^2 + \oh \phi^2 z^{-2}\left( \ip{\nabla z}{ \nabla \log \phi} \right)^2\\
    &- 2 z \kappa^2 
  \end{align}
\end{lemma}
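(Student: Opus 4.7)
The proof is a direct computation, proceeding in three stages. First, I would compute $\partial_t z$. Writing $z = g_B^{ij}\phi_i\phi_j$, we have
\begin{align}
\partial_t z = (\partial_t g_B^{ij})\phi_i\phi_j + 2\langle\nabla\phi,\nabla(\partial_t\phi)\rangle.
\end{align}
The first piece is read off from $\Rf[g_B] = 2c_1\phi^{-1}\nabla\nabla\phi$: since $\partial_t g_B^{ij} = 2\Rc^{ij} - 2c_1\phi^{-1}(\nabla\nabla\phi)^{ij}$ and $(\nabla\nabla\phi)(\nabla\phi,\nabla\phi) = \tfrac{1}{2}\langle\nabla z,\nabla\phi\rangle$, this contributes $2\Rc(\nabla\phi,\nabla\phi) - c_1\phi^{-1}\langle\nabla z,\nabla\phi\rangle$. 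The second piece uses $\partial_t\phi = \Delta_B\phi + \tfrac{1}{2}\phi^{-1}(-\mu + c_z z)$ and gives, after differentiating the reaction term, $2\langle\nabla\phi,\nabla\Delta_B\phi\rangle + \phi^{-2}(\mu-c_z z)z + c_z\phi^{-1}\langle\nabla z,\nabla\phi\rangle$.

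Next I would invoke the Bochner formula $\Delta_B z = 2|\nabla\nabla\phi|^2 + 2\langle\nabla\phi,\nabla\Delta_B\phi\rangle + 2\Rc(\nabla\phi,\nabla\phi)$ and subtract to obtain
\begin{align}
\square_B z = \phi^{-2}(\mu-c_z z)z + (c_z-c_1)\langle\nabla z,\nabla\log\phi\rangle - 2|\nabla\nabla\phi|^2.
\end{align}
The Ricci and $\nabla\Delta\phi$ terms cancel cleanly; this is the point of using $\square_B$ on $B$ rather than working upstairs.

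The remaining — and the only geometrically substantive — step is the identity
\begin{align}
|\nabla\nabla\phi|^2 = \frac{|\nabla z|^2}{2z} - \frac{\langle\nabla z,\nabla\phi\rangle^2}{4z^2} + z\kappa^2.
\end{align}
I would derive this by picking an orthonormal frame $(N,T_1,\dots,T_{m-1})$ with $N = \nabla\phi/|\nabla\phi|$. Polarizing $\nabla z = 2(\nabla\nabla\phi)(\nabla\phi,\cdot)$ gives the normal--normal and normal--tangential blocks in terms of $\partial_N z$ and $\partial_{T_i}z$, while the tangent--tangent block is $\sqrt{z}\,A_{ij}$ with $A$ the second fundamental form of $\{u=\text{const}\}$, whose squared norm is $\kappa^2$. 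Adding the blocks (remembering the factor of $2$ for off-diagonal entries), using $|\nabla_T z|^2 = |\nabla z|^2 - (\partial_N z)^2$, and rewriting $(\partial_N z)^2 = \langle\nabla z,\nabla\phi\rangle^2/z$ gives the identity. Substituting into the expression for $\square_B z$ and converting $\langle\nabla z,\nabla\phi\rangle = \phi\langle\nabla z,\nabla\log\phi\rangle$ in the squared term yields the claimed formula.

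The main obstacle is purely bookkeeping: correctly assembling the hessian-norm decomposition and tracking which inner products are with $\nabla\phi$ versus $\nabla\log\phi$. The cancellation of $\Rc(\nabla\phi,\nabla\phi)$ between the Bochner contribution and the $\partial_t g_B$ contribution is automatic once one writes the Ricci-flow-modified evolution of $g_B$, so no curvature of $B$ enters the final expression — this is the structural reason the formula looks "metric-free."
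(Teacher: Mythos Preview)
Your proposal is correct and follows essentially the same approach as the paper. The only difference is packaging: the paper invokes the parabolic Bochner identity $\square|\nabla\phi|^2 = 2\langle\nabla\square\phi,\nabla\phi\rangle - 2|\nabla\nabla\phi|^2 - \Rf(\nabla\phi,\nabla\phi)$ as a single citation, whereas you derive it by computing $\partial_t z$ and $\Delta_B z$ separately and subtracting; the hessian decomposition via the frame adapted to the level sets is handled identically in both.
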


\begin{proof}

We can apply the parabolic Bochner formula ((1.6) of \cite{weakhaslhofernaber}) to these equations to find
\begin{align}
  \label{eq:55}
  \square |\nabla \phi|^2
  &= 2 \ip{\nabla \square \phi }{\nabla \phi} - 2 |\nabla \nabla \phi|_B^2 - \Rf(\nabla \phi, \nabla \phi)  \\
  &= 2 \ip{\nabla \square \phi }{\nabla \phi} - 2 |\nabla \nabla \phi|_B^2 - 2c_1 \phi^{-1}\nabla_{\nabla \phi}\nabla_{\nabla \phi}\phi
\end{align}

We calculate the first term:
\begin{align}
  \label{eq:61}
  2 \ip{\nabla \square \phi}{\nabla \phi}
  &=  \phi^{-2}(\ein-c_zz) |\nabla \phi|^2 + c_z \phi^{-1}\ip{\nabla z}{\nabla \phi} \\
  &=  \phi^{-2}(\ein - c_z z)z  + c_z \ip{\nabla z}{\nabla \log \phi}
\end{align}
For the second term, we can change the hessian to
\begin{align}
  \label{eq:68}
  -2 |\nabla \nabla \phi|^2
  &= -2 z \fund^2 -  z^{-1}|\nabla z|^2  + \on2 z^{-2}\ip{\nabla z}{\nabla \phi}^2 \\
  &= -2 z \fund^2 -  z^{-1}|\nabla z|^2  + \on2 z^{-2}\phi^2\ip{\nabla z}{\nabla \log \phi}^2 
\end{align}
And for the third term, we can change the hessian using
\begin{align}
  \label{eq:64}
  -2c_1\phi^{-1}\nabla_{\nabla \phi}\nabla_{\nabla \phi}\phi
  &= -  c_1 \phi^{-1}\ip{\nabla z}{ \nabla \phi} = - c_1 \ip{\nabla z}{\nabla \log \phi} 
\end{align}

Putting everything together, we find the desired equation.
\end{proof}

\begin{corollary}\label{v_calc_lemma}
  In the setting of Lemma \ref{z_calc_lemma}, suppose $g_B$ and $u$ satisfy
  \begin{align}
    \Rf[g_B]
    &= 2 c_1 u^{-1/2}\nabla\nabla u^{1/2} \label{base_eqn}\\
    \square_B u &= - \ein + c_v v \label{u_eqn}
  \end{align}
  where $v = u^{-1}|\nabla u|^2$.  
Define the constants $c_z = (4 c_v + 2)$, $c_{v}' =  \on4 c_z$,  and $c_3 = \oh \left( c_z -  c_1 \right)$.  Then $v$ satisfies
  \begin{align}
  \square v
  &= u^{-1}(\ein -  c_v' v) v - 2 v \fund^2 \\
  &+ c_3 \ip{\nabla v}{\nabla \log u}  - v^{-1}|\nabla v|^2 + \oh u v^{-2}\left( \ip{\nabla v}{ \nabla \log u} \right)^2\\
  \end{align}
\end{corollary}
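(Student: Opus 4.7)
The plan is to reduce Corollary \ref{v_calc_lemma} to Lemma \ref{z_calc_lemma} via the substitution $\phi = u^{1/2}$, so that all of the work has already been done at the level of $\phi$ and $z = |\nabla \phi|^2$. The chain rule gives $\nabla u = 2 \phi \nabla \phi$ and therefore $|\nabla u|^2 = 4 u z$, so the key identity is $z = \tfrac14 v$. This is the only substitution in the entire proof.

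First I would verify that the hypotheses \eqref{base_eqn} and \eqref{u_eqn} on $(g_B, u)$ imply the hypotheses of Lemma \ref{z_calc_lemma} on $(g_B, \phi)$. The base equation \eqref{base_eqn} is literally the form assumed in Lemma \ref{z_calc_lemma}. For the scalar evolution, apply $\square_B$ to $u = \phi^2$: a direct calculation gives
\begin{align}
\square_B u = 2 \phi \,\square_B \phi - 2 |\nabla \phi|^2 = 2\phi \,\square_B\phi - 2z,
\end{align}
and combining with \eqref{u_eqn} together with $z = v/4$ yields
\begin{align}
\square_B \phi
= \tfrac{1}{2}\phi^{-1}\bigl(\square_B u + 2z\bigr)
= \tfrac{1}{2}\phi^{-1}\bigl(-\ein + (c_v + \tfrac{1}{2})v\bigr)
= \tfrac{1}{2}\phi^{-1}(-\ein + c_z z),
\end{align}
with $c_z = 4c_v + 2$. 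This is exactly the second hypothesis of Lemma \ref{z_calc_lemma}.

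Next I would apply Lemma \ref{z_calc_lemma} to obtain the evolution equation for $z$ and then multiply through by $4$ to get the evolution of $v = 4z$. Under the dictionary $\phi^{-2} = u^{-1}$, $z = v/4$, $\nabla z = \nabla v /4$, and $\nabla \log \phi = \tfrac{1}{2}\nabla \log u$, each of the five terms in the conclusion of Lemma \ref{z_calc_lemma} maps precisely onto the corresponding term in the desired equation for $\square v$, with constants $c_v' = c_z/4$ and $c_3 = \tfrac{1}{2}(c_z - c_1)$. The second fundamental form term $-2z\kappa^2$ is diffeomorphism-invariant and the level sets of $u$ and of $\phi$ coincide, so $\kappa$ is unchanged; scaling by $4$ turns it into $-2 v \kappa^2$.

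There is no real obstacle — this is pure bookkeeping. The one place to be careful is the $-2z$ correction arising from $\square_B(\phi^2) \neq 2\phi \square_B \phi$, which is precisely what shifts $c_v$ to $c_v + \tfrac{1}{2} = c_v'$ in the final equation. Tracking this shift through the nonlinear terms (in particular the cross term $(c_z - c_1)\ip{\nabla z}{\nabla \log \phi}$) is the only thing worth explicit calculation; everything else is forced by homogeneity.
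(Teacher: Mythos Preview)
Your proposal is correct and is exactly the intended argument: the paper states Corollary \ref{v_calc_lemma} without proof, implicitly expecting the reader to perform precisely the substitution $\phi = u^{1/2}$, $z = v/4$, $\nabla\log\phi = \tfrac12\nabla\log u$ and translate each term of Lemma \ref{z_calc_lemma}. Your verification of the hypothesis via $\square_B(\phi^2) = 2\phi\,\square_B\phi - 2z$, yielding $c_z = 4c_v + 2$, and your observation that $c_v' = c_v + \tfrac12$, are the only nontrivial pieces of bookkeeping, and you have them right.
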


\subsubsection{Equidistant Level Sets}
Now, suppose that the level sets of $u$ are equidistant.  Then $v$ is dependent on $u$ and $t$ alone  so we find $\nabla v = |\nabla u|^{-1}\ip{\nabla v}{\nabla u}$.  Then from Corollary \ref{v_calc_lemma},
\begin{align}
  \label{square_v_edh1}
  \square_B v
  &= u^{-1}(\ein - c_v' v)v - 2 \fund^2 v - u^{-1}\bar T v \\
  &+ c_3v(\partial_u v) - \oh u(\partial_u v)^2   
\end{align}
On the other hand, since the level sets of $u$ are equidistant, we can use that $v$ is a function of $u$ and $t$ to calculate $\square_B v$ in terms of derivatives with respect to $u$, using \eqref{u_eqn}.  
\begin{align}
  \label{square_v_edh}
  \square_B v
  &=  (- \ein + c_vv)\partial_uv + \partial_{t;u}v - uv\partial_u^2v.   
\end{align}
From \eqref{square_v_edh1} and \eqref{square_v_edh} it follows that,
\begin{align}
  \label{c_4_intro}
  \partial_{t;u} v
  &= u v \partial_u^2v - \oh u (\partial_uv)^2+ u^{-1}(\ein - c_v' v)v + (c_4 v + \ein )\partial_uv \\
  &- 2 \fund^2 v - u^{-1} \bar T v 
\end{align}
where $c_4 = c_3 - c_v$.

\subsubsection{The case of warped product Ricci flow}\label{wprcf_eqns}
In the case of Ricci flow of a metric $g = g_B + u g_{S^q}$, where the Ricci curvature of $g_{S^q}$ is $\ein g_{S^q} = 2(q-1)g_{S^q}$, we have
\begin{align}
  \Rf[g_B] &= 2 q u^{-1/2}\nabla\nabla u^{1/2} \\
  \square_B u &= - \ein + \on4 (\ein-2)v \label{u_eqn_wpsphere}
\end{align}
Therefore in Lemma \ref{v_calc_lemma} we have $c_v = \on4(\ein-2)$ and $c_1 = q = \oh \ein + 1$.  Then we find 
$c_z = 4c_v + 2 = \ein$, $c_v' = \on4 c_z = \on4 \ein$, $c_3 =  \on4 \ein -\on2$, and $c_4 = c_3 - c_v = \on4 \ein - \oh - (\on4 \ein - \oh) = 0$.  So, from \eqref{c_4_intro},
\begin{align}
  \partial_{t;u} v
  &= u v \partial_u^2 v - \oh u (\partial_u v)^2 
  + \ein \left(1 - \on4 v \right) u^{-1} v +  \ein \partial_u v \label{vevo_basic}\\
  &- 2 (\fund^2) v
\end{align}
One convenient way to write this is as,
\begin{align}
  \label{V_evo}
  \partial_{t;u}v = u^{-1}\qop[v,v] + u^{-1}\lop[v] - 2 (\fund^2) v
\end{align}
where $\lop$ and $\qop$ are the operators
\begin{align}
  \label{eq:74}
  \lop[w]
  = L(w, \partial_uw), \quad L(A,B) = \ein A + \ein B.
\end{align}
and
\begin{align}
  \label{eq:75}
  \qop[w,w]
  = Q(w, u\partial_uw, u^2 \partial_u w_{uu}),\quad
  Q(A,B,C)
  = AC - \oh B^2 - \on4 \ein C^2.
\end{align}
For $w_1$ and $w_2$ different functions, we define $\qop[w_1,w_2]$ to be the extension of $\qop$ to a symmetric bilinear operator.

\subsubsection{Writing the evolution in terms of $L$ and $\phi$}
It is also convenient to consider the evolution of $L = \frac{1 - \on4 v}{u}$.  $L$ is a sectional curvature, so it is a geometrically natural quantity to consider.  If the metric is smooth near $u=0$ then $L$ will be bounded there, which gives us more information that $v$ being bounded.  

Coming from \eqref{vevo_basic}, replace $v = 4(1-uL)$ and divide through by $-4u$ to find
\begin{align}
  \partial_{t;u} L
  &= 4u\left(1 - uL\right)\partial_u^2L
    + 2 u^2 (\partial_u L)^2 \\
  &+ (\ein + 8 - 4uL)\partial_uL
    + (\ein + 2)L^2 + \oh u^{-1} \kappa^2 v.
\end{align}
An important point here is that the terms $u^{-1}L$ cancel.  This is expected, since for example the sphere has constant non-zero curvature $L$ despite $u$ going to zero.
Let us also put this in terms of derivatives with respect to $\phi = \sqrt{u}$.  Note 
\begin{align}
  \partial_u = \oh \phi^{-1}\partial_\phi , \quad
  u \partial^2_u = \on4 \left( \partial^2_\phi - \phi^{-1}\partial_\phi \right). \label{u_to_phi_1}
\end{align}
Since $\phi$ is a function of $u$, $\partial_{t;u} = \partial_{t;\phi}$.  So, we have
\begin{align}
  \partial_{t;\phi} L
  &= \left( 1 - \phi^2L \right)(\partial^2_\phi L - \phi^{-1}\partial_\phi L)
    + \on2\phi^2 (\partial_\phi L)^2 \\
  &+ \phi^{-1}(\oh \ein + 4 - 2\phi^2L)(\partial_\phi L) + (\ein + 2)L^2
    + \oh \kappa^2 v \\
  &= \left(1 - \phi^2 L\right)\partial^2_\phi L + \oh \phi^2 (\partial_\phi L)^2\\
  &+ \phi^{-1}(\oh \ein + 5 - \phi^2L)\partial_\phi L + (\ein + 2)L^2
    + \oh \kappa^2 v.\label{evo_L_in_phi}
\end{align}
The advantage of this is the clear regularity around $\phi = 0$ provided $L$ is bounded.

\subsection{Deriving the evolution of $w$}\label{additional_wp}
We continue considering the Ricci flow of a metric of the form $g = g_B + u g_{S^q}$:
\begin{align}
  \label{eq:10}
  \Rf[g_B] &= 2 c_1 u^{-1/2}\nabla \nabla u^{1/2}, \\
  \square_B u &= - \ein + c_v v. 
\end{align}
Here $c_v = \on4(\ein - 2)$.  Suppose that $g_B$ itself has a warped product factor: $B = B_2 \times F^{p}$ and $g_B = g_{B_2} + w g_F$.  Take $y = w^{-1}|\nabla w|^2$ and suppose that $2\Rc[g_F] = \ein_F g_F$.  We make no assumptions on the sign on $\ein_F$.

To quickly derive an equation for $h$ in terms of $\square_B$, go from \eqref{evo_u_M} which says
\begin{align}
  \square_{B_2 \times F \times S^q} w = - \ein_F - y
\end{align}
where $y = w^{-1}|\nabla w|^2$.
Since
\begin{align}
  \square_{B_2 \times F \times S^q} w
  &= 
  \partial_t w - \left( \lap_{B_2 \times F \times S^q}w \right) \\
  &=
    \partial_t w -
    \left(\lap_{B}w + \oh q u^{-1}\ip{\nabla u}{\nabla w} \right)\\
  &= \square_B w - \oh q u^{-1}\ip{\nabla u}{\nabla w} ,
\end{align}
we find,
\begin{align}
  \square_B w
  &= - \ein_F - y - \oh q u^{-1}\ip{\nabla u}{\nabla w} \\
  &= - \ein_F - y - \oh q v \partial_u w. \label{squareb_w} 
\end{align}
Now, using  and the fact that $w$ is a function of $u$ and $t$,
\begin{align}
  \square_B w = (- \ein + c_v v) \partial_u w + \partial_{t;u}w - uv \partial_u^2 w
\end{align}
so by \eqref{squareb_w} we find
\begin{align}
  \partial_{t;u}w - u v \partial_u^2 w
  &= - \ein_F - y + \ein \partial_u w - c_v v \partial_u w - \oh q v \partial_u w \\
  &= - \ein_F - y + \ein \partial_u w - \ein/2 v\partial_u w \label{evo_w_in_u}.
\end{align}
Note we may also write $y = w^{-1}|\nabla w|^2 = w^{-1}uv(\partial_u w)^2$.

\subsubsection{Writing the evolution in terms of $\phi$}
We also write \eqref{evo_w_in_u} in terms of $\phi$.  Using \eqref{u_to_phi_1} we have
\begin{align}
  \partial_{t;u}w
  &= v \left( \partial_\phi^2 w - \phi^{-1}\partial_\phi w \right) \\
  &- \ein_F - y + (\ein - \ein/2 v) \on2 \phi^{-1}\partial_\phi w
\end{align}
or, simplifying,
\begin{align}
  \partial_{t;\phi}w
  &= v \partial_\phi^2 w  - \ein_F - y + (\on2\ein - (\on4\ein - 1) v) \phi^{-1}\partial_\phi w. \label{eqn_w}
\end{align}

\subsection{Second fundamental form for doubly warped products}
Consider the case of a doubly warped product over an interval.  The second fundamental form $\kappa$ in Section \ref{v_deriving_section} is the second fundamental form of a surface $(s, p) \times F$, which is
\begin{align}
  \on4 dim(F) w^{-1}y
  &= \on4 dim(F) w^{-2}u^2v (\partial_u w)^2.
\end{align}
Therefore the term $-2(\kappa^2)v$ is $- \on2 dim(F) w^{-2}u^2 v^2 (\partial_u w)^2 = - \on8 dim(F)w^{-2} v^2 \phi^2(\partial_\phi w)^2$.


\section{Ricci-DeTurck flow}\label{rcdt}

Under Ricci-DeTurck flow with background metric $\tilde g$, the evolution $h(t) = g(t) - \tilde g(t)$ is given by the following lemma.  This copies the calculation in Lemma 3.1 of \cite{steady_alix} and generalizes it.
\begin{lemma}\label{dtevo}
  Let $\tilde g$ be a time-dependent family of metrics, and let $X$ be a time-dependent vector field. Let $g$ be a metric satisfying
  \begin{align}
    \Rf_X[g] =  \lie_{V[g, \tilde g]}g.    \label{rdt_modified}
  \end{align}
  Let $g = \tilde g + h$,  $g^{-1} = \tilde g^{-1} - \bar h$, and $\hat h^{ij} = \tilde g^{ai}\tilde g^{bj}h_{ab} - \bar h^{ab}$.
  
  Then
  \begin{align}
    \label{eq:5}
    \square_{X, \tilde g, g} h
    &= 2\Rm[h] + \UT[h] + Q[h] + \Cov[g, \nabla h]\\
    &- \Rf_X[\tilde g] - \left( \Rf_X[\tilde g] \cdot h \right) 
  \end{align}
  where all covariant derivatives and curvatures are with respect to $\tilde g$, and the terms are as follows:
  \begin{align}
    \label{eq:6}
    \left( \lap_{\tilde g,g} h \right)_{ij} = g^{ab} \nabla_a \nabla_b h_{ij},&\quad
    \lap_{X,\tilde g, g} = \lap_{\tilde g, g} - \nabla_X ,\quad
    \square_{X, \tilde g, g} = \partial_t - \lap_{X, \tilde g, g}, \\
    \Rm[h] = \tilde g^{ac}\tilde g^{bd}\Rm_{ajbi}h_{cd},&\quad
    Q[h] = \symme{i}{j}{\Rm^p_{ajb}h^{ab}h_{ip} - \Rm_{ajbi}\hat h^{ab}},\\
    (A \cdot B)_{ij} = \oh
    \on2\symme{i}{j}{ \tilde g^{ab}A_{ai}B_{bj}}
    ,&\quad
    \UT[B] = \left( (\partial_t \tilde g) \cdot B \right), \\
    |\Cov[g, \nabla h]| \leq c_0\left(1 + |h| \right)|\nabla h|^2.&
  \end{align}
  In the last line $c_0$ is a constant depending only on the dimension.
\end{lemma}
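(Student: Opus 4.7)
The plan is to adapt the derivation of the Ricci-DeTurck evolution equation for $h$ given in Lemma 3.1 of \cite{steady_alix}, accounting for two extensions: the modification of the flow by $X$, and the fact that the background $\tilde g$ is time-dependent and need not satisfy modified Ricci flow.

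I would first compute $\partial_t h$ by subtracting the two flow equations. From the hypothesis $\Rf_X[g] = \lie_{V[g,\tilde g]} g$, we have $\partial_t g = -2\Rc[g] - \lie_X g + \lie_V g$, and by definition of $\Rf_X[\tilde g]$ we have $\partial_t \tilde g = -2\Rc[\tilde g] - \lie_X \tilde g + \Rf_X[\tilde g]$, so
\begin{align*}
  \partial_t h = -2\bigl(\Rc[g]-\Rc[\tilde g]\bigr) + \lie_V g - \lie_X h - \Rf_X[\tilde g].
\end{align*}

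The core input is the DeTurck identity: because $V = V[g,\tilde g]$ is exactly the tension field (map Laplacian) of the identity map $(M,g)\to (M,\tilde g)$, the combination $-2\Rc[g] + \lie_V g$, written using the $\tilde g$-connection, reduces to $\lap_{\tilde g, g} g$ plus a curvature correction from commuting covariant derivatives, plus algebraic terms quadratic in the Christoffel tensor $T = \Gamma[g] - \Gamma[\tilde g]$. This expansion is purely spatial and identical to the time-independent, Ricci-flat-background case, so I would import it from \cite{steady_alix} essentially verbatim. Since $\tilde\nabla \tilde g = 0$, the Laplacian acts only on $h$; subtracting $-2\Rc[\tilde g]$ organizes the curvature contribution into the stated $2\Rm[h]$ term. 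The quadratic-in-$T$ pieces split into (i) terms of the form $g^{-1}\otimes \tilde\nabla h\otimes \tilde\nabla h$, which give $\Cov[g,\nabla h]$ with the $(1+|h|)$ factor in its bound tracking the expansion $g^{-1}=\tilde g^{-1}-\bar h$ (smooth in $h$, vanishing at $h=0$); and (ii) algebraic terms bilinear in $h$ and the curvature of $\tilde g$, which assemble into $Q[h]$. The precise roles of $\bar h$ and $\hat h$ in $Q[h]$ encode the difference between raising indices via $g^{-1}$ versus $\tilde g^{-1}$, and arise naturally in this expansion.

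Finally I would unpack $\lie_X h$ and the time-dependence of $\tilde g$. Covariantly, $(\lie_X h)_{ij} = X^k\tilde\nabla_k h_{ij} + h_{kj}\tilde\nabla_i X^k + h_{ik}\tilde\nabla_j X^k$; the first term is $\nabla_X h$ and moves to the left-hand side to form $\square_{X,\tilde g, g}h$. The remaining two terms, together with the time-dependence of $\tilde g^{-1}$ used when raising indices in the DeTurck expansion, are then reorganized using the identity $\lie_X \tilde g + \partial_t \tilde g + 2\Rc[\tilde g] = \Rf_X[\tilde g]$ derived directly from the definition of $\Rf_X[\tilde g]$. This produces, with the correct signs, the inhomogeneous $-\Rf_X[\tilde g]$, the linear-in-$h$ correction $-(\Rf_X[\tilde g]\cdot h)$, and the $\UT[h] = (\partial_t \tilde g)\cdot h$ contribution, with any residual $\Rc[\tilde g]\cdot h$ pieces absorbed by the symmetrizations already built into $2\Rm[h]$ and $Q[h]$. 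I expect the main obstacle to be the index-level bookkeeping in Step 2 (organizing the DeTurck remainder so that $Q[h]$ and $\Cov[g,\nabla h]$ appear in exactly the claimed form) and in Step 3 (tracking how the extras from $\lie_X$ and from the time-dependence of $\tilde g^{-1}$ combine). A useful sanity check is $h\equiv 0$, which correctly forces the right-hand side to collapse to $-\Rf_X[\tilde g]$, matching what one gets by plugging $g = \tilde g$ directly into the flow equations.
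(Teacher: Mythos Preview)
Your proposal is correct and follows essentially the same route as the paper. The paper starts from Shi's Lemma 2.1, which has already absorbed the DeTurck term into the $g^{ab}\nabla_a\nabla_b g$ form, whereas you start one step earlier by subtracting the two flow equations and then invoking the same DeTurck expansion from \cite{steady_alix}; after that, both proofs expand $g^{ab}g_{ip}$ via $g^{-1}=\tilde g^{-1}-\bar h$ to extract $2\Rm[h]$ and $Q[h]$, rewrite $\lie_X h = \nabla_X h + (\lie_X\tilde g)\cdot h$-type terms, and regroup using $\Rf_X[\tilde g]=\partial_t\tilde g+2\Rc[\tilde g]+\lie_X\tilde g$ to produce $-\Rf_X[\tilde g]$, $-(\Rf_X[\tilde g]\cdot h)$, and $\UT[h]$.
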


\begin{proof}
  The convention in this proof is that all curvatures and covariant derivatives are taken with respect to $\tilde g$.  By Lemma 2.1 of \cite{Shi} we have
  \begin{align}
    \label{eq:7}
    \partial_t g_{ij} = g^{ab}\nabla_a \nabla_b g_{ij} - \symme{i}{j}{g^{ab}g_{ip}\Rm_{ajb}^p} - \left( \lie_X g \right)_{ij}+ \Cov(g, \nabla g)
  \end{align}
  Since $g = \tilde g + h$ and $\nabla$ is the connection of $\tilde g$,
    \begin{align}
      \partial_t h_{ij} &= g^{ab}\nabla_a \nabla_b h_{ij} \label{dtevo:shi1}\\
                        &- \partial_t \tilde g - \symme{i}{j}{g^{ab}g_{ip}\Rm_{ajb}^p} - \left( \lie_X g \right)_{ij} \label{dtevo:shi2}\\
                        &+ \Cov(g, \nabla h) \label{dtevo:shi3}
  \end{align}
  
  \textbf{Rewriting the curvature term.}  Let $ g^{ij} = \tilde g^{ij} - \bar h^{ij}$.  Expand $g^{ab}g_{ip} = (\tilde g^{ab} - \bar h^{ab})(\tilde g_{ip} + h_{ip})$ in the curvature term.
  \begin{align}
    \label{eq:8}
    -g^{ab}g_{ip}\Rm_{ajb}^p
    &= \left( -\tilde g^{ab}\tilde g_{ip} + \bar h^{ab}\tilde g_{ip} - \tilde g^{ab}h_{ip} + \bar h^{ab}h_{ip}\right)\Rm_{ajb}^p \\
    &= - \Rc_{ij} + \bar h^{ab}\Rm_{ajbi} - \Rc_j^ph_{ip} + \bar h^{ab}h_{ip}\Rm^p_{ajb} \label{dtevo:l1}
  \end{align}
  Now let
  \begin{align}
    \label{eq:9}
    \hat h^{ij} = \tilde g^{ci}\tilde g^{dj}h_{cd} - \bar h^{ab}
  \end{align}
  so that
  \begin{align}
    \label{eq:10}
    \bar h^{ab} \Rm_{ajbi} = \tilde g^{ca}\tilde g^{db}\Rm_{cjdi}h_{ab} - \Rm_{ajbi}\hat h^{ab}.
  \end{align}
  Putting this together with \eqref{dtevo:l1} we have
  \begin{align}
    \label{eq:11}
    -g^{ab}g_{ip}\Rm_{ajb}^p
    &= - \Rc_{ij} + \tilde g^{ac}\tilde g^{bd}\Rm_{cjdi}h_{ab} - \Rc_j^ph_{ip} \\
    &+ h^{ab}h_{ip}\Rm^p_{ajb} - \Rm_{ajbi}\hat h^{ab}.
  \end{align}
  Finally taking the symmetrization we find
  \begin{align}
    \symme{i}{j}{-g^{ab}g_{ip}\Rm_{ajb}^p}
    &= - 2 \Rc_{ij} + 2\Rm[h]_{ij} - (\Rc \cdot h)_{ij} + Q(h)_{ij} \label{dtevo:curvrewrite}
  \end{align}

  \textbf{Rewriting the Lie term}
  We have
  $
    -\lie_X g = -\lie_X \tilde g - \lie_X h
    $.
  We can relate the lie derivative with the covariant derivative and the lie derivative of the metric,
  \begin{align}
    \label{eq:14}
    \left( -\lie_X h \right)_{ij}
    &= \left( -\nabla_X h \right)_{ij} - \oh \symme{i}{j}{h_{pi}\tilde g^{pq}(\lie_X \tilde g)_{qj}} \\
    &= \left( -\nabla_X h \right)_{ij} - \oh \left( \left( \lie_X \tilde g \right) \cdot h \right)_{ij}
  \end{align}
  (The first line is true in general, the second line uses that $h_{ij}$ is symmetric.)
  Thus
  \begin{align}
    -\lie_X g = -\lie_X \tilde g - \nabla_X h - \oh \left( \lie_X \tilde g \right) \cdot h \label{dtevo:lierewrite}
  \end{align}

  \textbf{Coming back to the evolution.} Using \eqref{dtevo:curvrewrite} and \eqref{dtevo:lierewrite}, the evolution \eqref{dtevo:shi1}-\eqref{dtevo:shi2} becomes
  \begin{align}
    \label{eq:17}
    \partial_t h
    &= \hat \lap h - \nabla_X h \\
    &- \partial_t \tilde g - 2 \Rc[\tilde g] - \lie_X \tilde g \\
    &- \oh \left( \left( 2 \Rc + \lie_X \tilde g \right) \cdot h \right) \\
    &+ 2\Rm[h] + Q(h) + \Cov(g, h). \\
  \end{align}
  So unraveling definitions,
  \begin{align}
    \label{eq:18}
    \hat \square_X h
    &= - \Rf_X[\tilde g] \\
    &+ \oh \left( \left(\partial_t g \right) \cdot h \right) - \oh \left( \left( \partial_t g + 2 \Rc + \lie_X \tilde g \right) \cdot h \right) \\
    &+ 2\Rm[h] + Q(h) + \Cov(g, h) \\
    &\\
    &= - \Rf_X[\tilde g] \\
    &+ \oh \UT[h] - \oh \left( \Rf_X[\tilde g]\cdot h \right) \\
    &+ 2\Rm[h] + Q(h) + \Cov(g, h) 
  \end{align}
  as desired.
\end{proof}

  What we will use is the following evolution of $|h|$ and $|h|^2$.  For $p \in M$ We set 
$
  \rmplus(p) = \max_{h \in Sym_2(T_pM) : |h| = 1}\ip{\Rm[h]}{h}(p)
$.
Now consider the case when $\Rf_X[\tilde g] = 0$.  Then, for $y = |h|^2$ we have (we allow $c_0$ to change from line to line)
\begin{align}
  \square_{X, \tilde g, g} y
  &\leq 4 \rmplus y - 2 g^{ab}\tilde g^{cd}\tilde g^{ef}\nabla_a h_{ce} \nabla_b h_{df} \\
  &+ c_0\left( |\Rm|y^{3/2} + |\nabla h|^2y^{1/2} \right).
\end{align}
Note that the linear term $\UT[h]$ may be removed using the Uhlenbeck trick, and disappears in the evolution of the norm.  The term  $-2 g^{ab}\tilde g^{cd}\tilde g^{ef}\nabla_a h_{ce}\nabla_b h_{df}$ is strictly negative.  If $|h| < \oh$ we can use $g^{ab}\tilde g^{cd}\tilde g^{ef}\nabla_a h_{ce} \nabla_b h_{df} \geq (1-c_0 y^{1/2})|\nabla h|^2$ and find
\begin{align}
  \square_{X, \tilde g, g} y
  &\leq 4 \rmplus y - 2(1 - c_0 y^{1/2})|\nabla h|^2 + c_0 |\Rm|y^{3/2}. \label{dtevo_square}
\end{align}
Alternatively, we can derive the evolution of $z = |h|$ and use the inequality $g^{ab}\nabla_a |h|^2 \nabla_b |h|^2
  \leq
  4|h|^2 g^{ab}\tilde g^{cd}\tilde g^{ef}\nabla_a h_{ce}\nabla_b h_{df}$
  to find 
\begin{align}
  \square_{X, \tilde g, g}z
  &\leq 2 \rmplus z + c_0 \left(|\Rm| z^2 + |\nabla h|^2 \right).\label{rdt_norm}
\end{align}


\section{Notation}\label{appendix_notation}
The heat operator is $\square u = \partial_t u - \lap u$.  If $X$ is a vector field then $\lap_Xu = \lap u - \nabla_Xu$ and $\square_X = \partial_t - \lap_X$.

The curvature tensors are $\Rm$ for the full Riemannian $(0, 4)$ tensor, $\Rc$ for the Ricci curvature, and $R$ for the scalar curvature.  The indices of $\Rm$ are such that $\Rm_{ijij}$ is a sectional curvature in an orthonormal frame.

The vector field $V[g, \tilde g]$, the operator $\lap_{g, \tilde g}$, and $\Rf[g]$ are defined in Section \ref{section:rdt}.  There we also define $\Rm[h]$ for a symmetric two-tensor $h$, and $\Lambda_{\Rm}:M \to \real$.

Everywhere $g_{S^q}$ is the metric of sectional curvature $1$ on the $q$ dimensional sphere $S^q$.  We define $\ein = 2(q-1)$ so that $2\Rc_{g_{S^q}} = \ein g_{S^q}$.  We also have a general Einstein manifold $(F, g_F)$ in play, its Ricci curvature satisfies $2\Rc_F = \ein_F g_F$ for some $\ein_F \in \real$.

Usually we have a metric of the form
\begin{align}
  a dx^2 + u g_{S^q} + w g_F
\end{align}
for $x$ in some interval $I$.  Here $a$, $u$, and $w$ are functions of $I$.  The functions $a$, $u$, and $w$ may also depend on time.  On these manifolds we have the derived functions $v = u^{-1}|\nabla u|^2$ and $y = w^{-1}|\nabla w|^2$.  Rarely we also use $\phi = \sqrt{u}$ and $\psi = \sqrt{w}$.

We have a lot of scaling.  Briefly:
\begin{align}
  \nu(t) = V_0(\ein t),
  \quad \omega(t) = W_0(\ein t),
  \quad \alpha(t) = t \nu(t),
  \quad \beta(t) = \alpha'(t)\\
  \rho = t^{-1}u,
  \quad \sigma = (t \nu(t))^{-1}u,
  \quad \zeta = t \nu(t)^{-1/2}u = \nu(t)\sigma,\\
  \hat u = u + \ein t,
  \quad \hat w = w + \ein_F t,
  \quad \bar w = \omega(t)^{-1}(w + \ein_F t).
\end{align}

We have some functions which are written in terms of $u$.  Generally capital letters denote known functions which are written in terms of $u$, whereas lowercase letters denote unknown functions.  The functions $V_0$ and $W_0$ are the initial values for $v$ and $w$ in a model pinch.  $V_{prish}$ and $W_{prish}$ are our approximations for $v$ and $w$ in the productish region, and $V^{\pm}_{prish}$ and $W^{\pm}_{prish}$ are upper and lower barriers for $v$ and $w$ based on these approximations.  Similarly these names with the subscript $tip$ are approximations and barriers in the tip region.  In Section \ref{section:productish}, we only refer to the functions for the produtish region, and therefore we drop the subscripts for cleanliness. Similarly in Section \ref{section:tip} we only refer to the tip functions, so we drop the subscript there as well.

Other functions of $u$ are $V_{bry}$, $V_{pert}$, and $W_{pert}$ (introduced in Section \ref{section:tip}, and with an overview in Section \ref{function_summary}).

We define $x^{a,b} = x^a(1 + x)^{b-a}$.  The point is that it's a smooth function on $(0, \infty)$ which behaves like $x^a$ at 0 and $x^{b}$ at $\infty$.


\bibliographystyle{amsalpha}
\bibliography{bibliography}

\end{document}